\DeclareMathOperator{\cov}{Cov}
\newcommand{\N}{\mathbb{N}} 
\newcommand{\R}{\mathbb{R}} 
\newcommand{\whp}{whp}
\newcommand{\prob}[1]{\mathbb{P}\left[#1\right]} 
\newcommand{\condprob}[2]{\mathbb{P}\left[#1 \;\middle|\; #2\right]}
\newcommand{\variance}[1]{\mathbb{V}\left[#1\right]}
\newcommand{\covariance}[2]{\cov\left[#1, #2\right]}
\newcommand{\expec}[1]{\mathbb{E}\left[#1\right]} 
\newcommand{\rounddown}[1]{\left\lfloor#1\right\rfloor} 
\def\Erdos{Erd\H{o}s}
\def\Renyi{R\'enyi}
\def\ER{\Erdos-\Renyi}
\newtheorem{thm}{Theorem}[section]
\newtheorem{prop}[thm]{Proposition}
\newtheorem{coro}[thm]{Corollary}
\newtheorem{lem}[thm]{Lemma}
\newtheorem{conjecture}[thm]{Conjecture}
\theoremstyle{remark}
\newtheorem{remark}[thm]{Remark}
\theoremstyle{definition}
\newtheorem{definition}[thm]{Definition}
\newcommand{\proofof}[1]{\subsection{Proof of \Cref{#1}}}
\newcommand{\proofofL}[1]{\subsection{Proof of \Cref{#1}}\label{proof:#1}}
\crefname{thm}{theorem}{theorems}
\crefname{prop}{proposition}{propositions}
\crefname{coro}{corollary}{corollaries}
\crefname{lem}{lemma}{lemmas}
\crefname{definition}{definition}{definitions}
\crefname{conjecture}{conjecture}{conjectures}
\newtheoremstyle{claim}
{}
{}
{\itshape}
{}
{\bf}
{.}
{.5em}
{}
\theoremstyle{claim}
\newtheorem{claim}{Claim}
\crefname{claim}{claim}{claims}
\def\nc{N}
\def\nd{k}
\def\val{x}
\def\valSecond{y}
\def\ind{i}
\def\indSecond{j}
\def\fixGraph{H}
\def\fixMultigraph{H}
\def\randomGraph{A}
\def\ur{\in_R}
\def\minimum{{X_*}}
\def\maximum{{X^*}}
\def\LargestComponent{L_1}
\def\planarClass{\mathcal{P}}
\def\planarRandomGraph{P}
\def\randomCore{C}
\def\randomMultiCore{\tilde{C}}
\def\constant{M}
\def\generalGraphClass{\mathcal{A}}
\def\generalRandomGraph{A}
\def\colour{F}
\def\pl{kernel-stable}
\def\Pl{Kernel-stable}
\def\loopInsertion{loop insertion}
\def\loopInsertions{loop insertions}
\def\LoopInsertion{Loop insertion}
\def\cl{\mathcal{A}}
\def\property{\mathcal{Q}}
\def\2s{2-simple}
\def\ce{\alpha}
\def\rest{R}
\def\bridgeInsertion{bridge insertion}
\def\bridgeInsertions{bridge insertions}
\def\BridgeInsertion{Bridge insertion}
\def\bridgeClass{\mathcal{M}}
\def\bridgeMarkedClass{\mathcal{M}'}
\def\bridgeGraph{M}
\def\bridgeStable{bridge-stable}
\def\BridgeStable{Bridge-stable}
\def\bridgeNumber{bridge number}
\def\BridgeNumber{Bridge number}
\def\bridgeNumbers{bridge numbers}
\def\block{B}
\def\bridge{e}
\def\largeBlock{dominant}
\def\bb{b}
\def\wb{w}
\def\condRandomGraph{F}
\def\exponent{\mu}
\def\func{\Phi}
\def\seq{\mathbf{s}}
\newcommand{\girth}[1]{g\left({#1}\right)}
\newcommand{\longestCycle}[1]{c\left({#1}\right)}
\newcommand{\numberLoops}[1]{\lambda\left(#1\right)}
\def\NumberLoops{\lambda}
\newcommand{\weight}[1]{w\left(#1\right)}
\newcommand{\numberVertices}[1]{v\left({#1}\right)}
\newcommand{\numberEdges}[1]{e\left({#1}\right)}
\newcommand{\core}[1]{C\left(#1\right)}
\newcommand{\kernel}[1]{K\left(#1\right)}
\newcommand{\complexPart}[1]{Q\left(#1\right)}
\newcommand{\largestComponent}[1]{L_1\left(#1\right)}
\newcommand{\vertexSet}[1]{V\left(#1\right)}
\newcommand{\edgeSet}[1]{E\left(#1\right)}
\newcommand{\Rest}[1]{R\left(#1\right)}
\newcommand{\bn}[1]{\beta\left(#1\right)}
\newcommand{\Bn}[2]{\beta_{#1}\left(#2\right)}
\newcommand{\Nb}[2]{\beta\left(#1, #2\right)}
\newcommand{\blockOrder}[2]{b_{#1}\left(#2\right)}
\newcommand{\blockLargest}[2]{B_{#1}\left(#2\right)}
\newcommand{\subdivisionNumber}[1]{S\left(#1\right)}
\newcommand{\condGraph}[2]{#1 \mid #2}
\definecolor{lightGray}{RGB}{220,220,220}
\definecolor{Green}{RGB}{34,139,34}
\title{Longest and shortest cycles in random planar graphs}
\author{Mihyun Kang, Michael Missethan}
\address{Institute of Discrete Mathematics, Graz University of Technology, Steyrergasse 30, 8010 Graz, Austria}
\email{\{kang,missethan\}@math.tugraz.at}
\thanks{Supported by Austrian Science Fund (FWF): I3747 and W1230}
\keywords{Random graphs, planar graphs, cycles, blocks, P\'olya urn}
\begin{document}

\begin{abstract}
Let $P(n,m)$ be a graph chosen uniformly at random from the class of all planar graphs on vertex set $\{1, \ldots, n\}$ with $m=m(n)$ edges. We study the cycle and block structure of $P(n,m)$ when $m\sim n/2$. More precisely, we determine the asymptotic order of the length of the longest and shortest cycle in $P(n,m)$ in the critical range when $m=n/2+o(n)$. In addition, we describe the block structure of $P(n,m)$ in the weakly supercritical regime when $n^{2/3}\ll m-n/2\ll n$.
\end{abstract}

\maketitle

\section{Introduction and results}\label{sec:intro}

\subsection{Motivation}\label{sec:background}
In their seminal papers \cite{erdoes1, erdoes2}, \Erdos\ and \Renyi\ introduced the uniform random graph $G(n,m)$, also known as the \ER\ random graph, which is a graph chosen uniformly at random from the class $\mathcal{G}(n,m)$ of all vertex-labelled graphs on vertex set $[n]:=\{1, \ldots, n\}$ with $m=m(n)$ edges, denoted by $G(n,m) \ur \mathcal G(n,m)$. Since then, $G(n,m)$ and its variants, in particular their component structure, were extensively studied (see e.g. \cite{rg3, rg1, rg2,general1, erdoes2, general2}). For example, \Erdos\ and \Renyi\ \cite{erdoes2} showed that there is a drastic change of the component structure of $G(n,m)$ when $m\sim n/2$. More precisely, letting $m=dn/2$ for a positive constant $d$ they showed that the following hold in $G(n,m)$ with high probability (meaning with probability tending to 1 as $n$ tends to infinity, {\em \whp\ } for short): if $d<1$, then every component has at most a logarithmic number of vertices; in contrast, if $d>1$, there is a unique component containing linearly many vertices. These results raised the question whether also the cycle structure of $G(n,m)$ undergoes such a significant change when $d\sim 1$. Ajtai, {Koml\'os}, and {Szemer\'edi} \cite{longCycle} proved that whp there is a cycle of linear length when $d>1$, and {Bollob\'as} \cite[Corollary 5.8]{rg3} showed that whp every cycle is bounded when $d<1$. (Throughout the paper, we use the standard Landau notation as well as notations in \Cref{def:asy} for asymptotic orders.)

\begin{thm}[\cite{longCycle, rg3}]\label{known_0}
Let $m=dn/2$ for a constant $d>0$. Then the following hold in $G(n,m)\ur \mathcal G(n,m)$.
\begin{enumerate}
\item
If $d<1$, then all cycles are of length $O_p(1)$.
\item
If $d>1$, then \whp\ there is a cycle of length $\Theta(n)$.
\end{enumerate}
\end{thm}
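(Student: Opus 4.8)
The plan is to prove both parts in the binomial model $G(n,p)$ with $p = d/n$ and then transfer to $G(n,m)$ via the standard coupling between $G(n,p)$ and $G(n,m)$ for monotone properties (see e.g. \cite{rg3}): the event ``$G$ contains a cycle of length $\geq cn$'' is increasing and ``all cycles of $G$ have length $<\ell_0$'' is decreasing, so after perturbing $p$ by a lower-order term it suffices to prove each statement for $G(n,p)$.

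Part (a) is a first-moment computation. Pick $d''$ with $d < d'' < 1$ and work in $G(n,p)$ with $p = d''/n$. The expected number of cycles of length exactly $\ell$ is
\begin{equation*}
\binom{n}{\ell}\,\frac{(\ell-1)!}{2}\,p^{\ell}\ \leq\ \frac{n^{\ell}}{\ell!}\cdot\frac{(\ell-1)!}{2}\cdot\Bigl(\frac{d''}{n}\Bigr)^{\ell}\ =\ \frac{(d'')^{\ell}}{2\ell},
\end{equation*}
so the expected number of cycles of length at least $\ell_0$ is at most $\sum_{\ell\geq\ell_0}(d'')^{\ell}/(2\ell)\leq (d'')^{\ell_0}/\bigl(2\ell_0(1-d'')\bigr)$, which tends to $0$ as $\ell_0\to\infty$, uniformly in $n$. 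By Markov's inequality, for every $\delta>0$ there is an $\ell_0$ with $\mathbb{P}[G(n,p)\text{ has a cycle of length}\geq\ell_0]<\delta$ for all $n$, which is exactly the assertion that the longest cycle has length $O_p(1)$.

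For part (b) I would first extract a path of linear length by a depth-first search exploration of $G(n,p)$ with $p=(1+\varepsilon)/n$, where $\varepsilon:=d-1>0$, exposing the edge slots one by one as i.i.d.\ $\mathrm{Bernoulli}(p)$ variables and maintaining the partition of $[n]$ into the finished set $S$, the stack $T$ (which always induces a path in $G$), and the untouched set $U$. Deterministically, there is never an edge between $S$ and $U$, and in fact every $S$--$U$ pair has already been queried with answer $0$; hence at any time the number of queried non-edges is at least $|S|\,|U|$, while the number of queried edges is at most $|S|+|T|$ (each such query pushes a distinct vertex onto the stack). Fix $a:=\lceil\varepsilon n/(2(1+\varepsilon))\rceil$ and a sufficiently small constant $c=c(\varepsilon)>0$, set $k:=\lceil cn\rceil$, and stop at the first moment $t^{*}$ at which $|S|=a$. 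If the stack never exceeded $k$ up to $t^{*}$, then $|U|\geq n-a-k$ at time $t^{*}$, so at least $a(n-a-k)$ queries have been made; a Chernoff bound for the sum of the first $a(n-a-k)$ Bernoulli variables shows that whp this sum is at least $a(n-a-k)p-o(n)$, which is $\Theta(n)$ and, for our choice of $a$ and $c$, strictly larger than $a+k\geq|S|+|T|$ — a contradiction. Hence whp the stack reaches size $k$ at some point, yielding a path of length $\Theta(n)$ in $G(n,p)$. To upgrade the path to a cycle I would use a two-round exposure $G(n,p)=G_1\cup G_2$ with $G_1\sim G(n,p_1)$, $p_1=(1+\varepsilon/2)/n$, and $G_2\sim G(n,p_2)$ independent, $p_2=\Theta(1/n)>0$; by the previous step $G_1$ whp contains a path $P=v_0v_1\cdots v_\ell$ with $\ell=\Theta(n)$, and $P$ is a function of $G_1$ only, so conditioning on $G_1$ each of the $\Theta(\ell^2)=\Theta(n^2)$ ``long chord'' pairs $\{v_i,v_j\}$ with $i\leq\ell/4$ and $j\geq 3\ell/4$ lies in $G_1\cup G_2$ with probability at least $p_2$, independently over these pairs; whp some long chord $\{v_i,v_j\}$ is then present, and $v_iv_{i+1}\cdots v_jv_i$ is a cycle of length $j-i+1\geq\ell/2=\Theta(n)$. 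Transferring back to $G(n,m)$ finishes the proof.

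The crux is the path-existence step in part (b): choosing the stopping threshold $a=\Theta(\varepsilon n)$ correctly and combining the purely deterministic DFS invariant (a short stack forces many queried non-edges) with the concentration of the Bernoulli sum. A secondary point to watch is that the path cannot be closed into a cycle within a single exposure, since the DFS history may have already ruled out all long-chord pairs as non-edges; this is exactly why the independent second round $G_2$ is needed. (An alternative route for part (b) would be to find the long cycle inside the $2$-core of the giant component, but that requires controlling the cycle structure of random graphs with a prescribed degree sequence and seems less self-contained.)
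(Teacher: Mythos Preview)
The paper does not prove this theorem; it is quoted as a known result with citations to \cite{longCycle} (Ajtai--Koml\'os--Szemer\'edi) and \cite{rg3} (Bollob\'as), so there is no in-paper proof to compare against.

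Your proposal is correct. Part~(a) is the standard first-moment computation. For part~(b), the DFS argument you give (in the style of Krivelevich--Sudakov) is a clean, self-contained modern route; the original argument in \cite{longCycle} is organised differently, but your approach is entirely valid. The DFS invariant together with Chernoff yields the linear path essentially for free, and the two-round sprinkling to close the path into a cycle is exactly the right fix for the exposure issue you correctly flag. The constants work out (any $c<\varepsilon^{2}/\bigl(2(1+\varepsilon)(2+\varepsilon)\bigr)$ suffices), and the Chernoff step is legitimate because it is applied at the deterministic time $a(n-a-k)$ rather than at the random stopping time $t^{*}$. The monotonicity transfer to $G(n,m)$ is routine once $p$ is perturbed to the correct side of $d/n$, as you note in your first paragraph.
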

Kolchin \cite{kolchin} and later {\L}uczak \cite{general2} took a closer look at the critical range when $m=n/2+o(n)$ and provided a relation between longest and shortest cycles and the component structure of $G(n,m)$. Their results are strengthened by {\L}uczak \cite{luczak} and later by {\L}uczak, Pittel, and Wierman \cite{luczak_pittel_wierman}. Given a graph $H$, we denote by $\longestCycle{\fixGraph}$ the length of the longest cycle of $\fixGraph$ (also known as the {\em circumference}) and by $\girth{\fixGraph}$ the length of the shortest cycle of $\fixGraph$ (also known as the {\em girth}).
\begin{thm}[\cite{kolchin,general2, luczak, luczak_pittel_wierman}]\label{thm:known}
Let $m=n/2+s$ for $s=s(n)=o\left(n\right)$. Let $G=G(n,m)\ur \mathcal G(n,m)$, $\LargestComponent=\largestComponent{G}$ be the largest component of $G$, and $\rest=\rest(G)=G\setminus\LargestComponent$.
\begin{enumerate}
\item\label{thm:known1}
If $s^3 n^{-2} \to -\infty$, then \whp\ $\LargestComponent$ is a tree. Furthermore, we have $\longestCycle{\rest}=\Theta_p\left(n|s|^{-1}\right)$.
\item\label{thm:known2}
If $s=O\left(n^{2/3}\right)$, then the probability that $\LargestComponent$ is a tree is bounded away from both 0 and 1. Provided there is a cycle in $\LargestComponent$, we have $\longestCycle{\LargestComponent}=\Theta_p\left(n^{1/3}\right)$ and $\girth{\LargestComponent}=\Theta_p\left(n^{1/3}\right)$. Moreover, we have $\longestCycle{\rest}=\Theta_p\left(n^{1/3}\right)$.
\item\label{thm:known3} If $s^3 n^{-2} \to \infty$, then \whp\ the longest cycle in $G$ is contained in $\LargestComponent$ and $\longestCycle{\LargestComponent}=\Theta\left(s^2 n^{-1}\right)$. In addition, we have $\girth{\LargestComponent}=\Theta_p\left(n s^{-1} \right)$ and $\longestCycle{\rest}=\Theta_p\left(n s^{-1}\right)$. 
\end{enumerate}	
\end{thm}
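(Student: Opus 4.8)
We sketch a strategy for proving \Cref{thm:known}. The plan is to reduce the whole statement to a sufficiently precise description of the component structure of $G(n,m)$ near criticality, which is then read off component by component. The three ingredients we would use are: the \emph{symmetry} of $G(n,m)$ — conditioned on the multiset of pairs (order, size) of its components, each component is a uniformly random connected graph of that order and size, so in particular $\rest=G\setminus\LargestComponent$ is, conditioned on its vertex set and number of edges, again uniform; the \emph{core--kernel decomposition} (peel off degree-$1$ vertices to get the $2$-core, then suppress degree-$2$ vertices to get the kernel); and the scaling relations for near-critical connected graphs — a connected graph on $v$ vertices with excess (edges minus vertices plus one) $\ell\ge1$ has a $2$-core on $\Theta(\sqrt{v\ell})$ vertices, whose kernel is ``cubic-like'' and whose edges carry on average $\Theta(\sqrt{v/\ell})$ subdivision vertices. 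Writing $\varepsilon:=2|s|/n$, the relevant structural facts are: in range (a) (with $|s|\gg n^{2/3}$, i.e.\ $\varepsilon^3 n\to\infty$, $s<0$) the largest component is a tree \whp\ and the largest cyclic component is essentially unicyclic of order $\Theta_p(\varepsilon^{-2})$; in range (b) ($|s|=O(n^{2/3})$) the largest component is a tree with limiting probability in $(0,1)$ and otherwise has order $\Theta_p(n^{2/3})$ and excess $O_p(1)$; in range (c) ($s\gg n^{2/3}$) one has $v(\LargestComponent)=\Theta(s)$, excess $\ell=\Theta(\varepsilon^3 n)=\Theta(s^3n^{-2})$, $v(\core{\LargestComponent})=\Theta(\varepsilon^2 n)=\Theta(s^2n^{-1})$, and $\kernel{\LargestComponent}$ close to a uniformly random cubic multigraph on $\Theta(\ell)$ vertices with edges subdivided by approximately independent geometric amounts of mean $\mu:=\Theta(\varepsilon^{-1})=\Theta(ns^{-1})$.

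Given this, the circumference bounds would follow quickly. In range (a), since $\LargestComponent$ is a tree, $\longestCycle G=\longestCycle\rest$ equals the length of the unique cycle of the largest cyclic component, which is $\Theta_p(\sqrt{\varepsilon^{-2}})=\Theta_p(n|s|^{-1})$. In range (b), conditioned on a cycle, $\LargestComponent$ (and the other large components) have $2$-cores on $\Theta_p(n^{1/3})$ vertices, so $\longestCycle{\LargestComponent}$ and $\longestCycle\rest$ are $\Theta_p(n^{1/3})$: the upper bound is because a cycle uses at most as many vertices as the $2$-core, and the lower bound because one of the boundedly many cycles of such a component has length of the order of its $2$-core. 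In range (c), $\longestCycle{\LargestComponent}=\longestCycle{\core{\LargestComponent}}\le v(\core{\LargestComponent})=O(s^2n^{-1})$, and for the matching lower bound I would invoke near-Hamiltonicity of a random multigraph with degree sequence concentrated on $\{2\}\cup\{3,4,\dots\}$ to get a cycle through a positive fraction of $\core{\LargestComponent}$; since $\rest$ is subcritical-like with parameter of order $s$, range (a) gives $\longestCycle\rest=\Theta_p(ns^{-1})$, so the overall longest cycle lies in $\LargestComponent$ because $s^2n^{-1}\gg ns^{-1}$ when $s\gg n^{2/3}$. For the girths, $\girth{\LargestComponent}=\girth{\core{\LargestComponent}}$ and every cycle of $\core{\LargestComponent}$ projects to a cycle of some length $L\ge1$ in $\kernel{\LargestComponent}$ (loops and double edges allowed), of total length the sum of the $L$ associated subdivided path lengths $X_1,\dots,X_L$. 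A shortest cycle of the near-cubic kernel has length $O_p(1)$, so subdividing it gives a cycle of length $O_p(\mu)$; and since the $X_i$ are approximately independent geometric of mean $\mu$, a first-moment bound $\expec{\#\{\text{cycles of }\core{\LargestComponent}\text{ of length}\le\mu/C\}}\le\sum_{L\ge1}(\#\,L\text{-cycles of }\kernel{\LargestComponent})\cdot\prob{X_1+\dots+X_L<\mu/C}=O\bigl(\sum_{L\ge1}\tfrac{2^L}{2L}\tfrac{C^{-L}}{L!}\bigr)=O(C^{-1})\to0$ as $C\to\infty$, uniformly in $n$, yields $\girth{\LargestComponent}=\Theta_p(\mu)=\Theta_p(ns^{-1})$ in range (c); the same computation with the bounded kernel of range (b) gives $\girth{\LargestComponent}=\Theta_p(n^{1/3})$ there.

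The main obstacle is twofold. First, one must establish the structural input above with enough precision and independence — conditional uniformity of components, the exact orders of $\LargestComponent$, its excess and its $2$-core, the near-cubic kernel, the approximately independent geometric subdivisions — and this is most delicate near the regime boundaries $|s|\asymp n^{2/3}$ and $s=o(n)$, where $G$ is only marginally off-critical; in particular there the naive first-moment count of long cycles overcounts the probability that they exist, so the circumference upper bounds in ranges (a) and (b) cannot come from a crude union bound but must go through the full component profile (component orders, excesses and $2$-core orders). Second, and harder, is the circumference lower bound inside $\LargestComponent$: that the $2$-core, a random multigraph with degree sequence concentrated on $\{2\}\cup\{3,4,\dots\}$, contains a cycle through a positive proportion of its vertices. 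I would attack this by a P\'osa-type rotation--extension argument run on the configuration model of that degree sequence, or by quoting a suitable long-cycle/Hamiltonicity result for random graphs with prescribed degree sequences; this is the technical heart.
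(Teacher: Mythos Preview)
The paper does not prove \Cref{thm:known}; it is quoted from the literature \cite{kolchin,general2,luczak,luczak_pittel_wierman} and used as input. So there is no ``paper's own proof'' to compare against. Your sketch is broadly in line with how these results are obtained in the cited references (component profile near criticality, core--kernel decomposition, long cycles in the $2$-core via a configuration-model argument), and you correctly identify the circumference lower bound in $\core{\LargestComponent}$ as the technical heart.

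One point to tighten in your girth argument for range (c): the bound
\[
\expec{\#\{\text{short core cycles}\}}\ \le\ \sum_{L\ge1}\bigl(\#\,L\text{-cycles of }\kernel{\LargestComponent}\bigr)\cdot\frac{C^{-L}}{L!}
\]
cannot use $2^L/(2L)$ for the cycle count, since that is only the \emph{limiting} Poisson mean for fixed $L$, not a uniform bound. If you insert the deterministic upper bound $O\bigl(N\cdot 2^L/L\bigr)$ (with $N=\Theta(s^3n^{-2})$ the kernel order) the $L=1$ term already gives $\Theta(N/C)$, which does not tend to $0$ for bounded $C$. What does work is to take the expectation over the random kernel as well: in the configuration model for cubic multigraphs the expected number of $L$-cycles is at most $c_0^L/(2L)$ uniformly for $L$ up to a constant fraction of $N$ (with $c_0$ an absolute constant), and since each subdivided path has length at least $1$ the sum is truncated at $L\le \mu/C$. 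With that bound the series $\sum_L (c_0/C)^L/(L\cdot L!)$ is $O(C^{-1})$ uniformly in $N$, and Markov gives $\girth{\LargestComponent}=\Omega_p(\mu)$. This is a genuine gap in your write-up as stated, though easily fixed once you compute the expected cycle counts correctly.

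For comparison, the paper's proof of the analogous planar statement (\Cref{thm:main1}) formalises the subdivision step differently: rather than approximate independent geometrics, it conditions on the kernel and the total subdivision number and models the edge-by-edge allocation by a P\'olya urn (\Cref{main1}, \Cref{lem:random_core2}). The girth lower bound then comes from $\min_i X_i=\Omega_p(k/N^2)$ and the upper bound from the minimum over loops. This avoids the cycle-counting issue above entirely, at the cost of needing the loop count in the kernel; for $G(n,m)$ the kernel is a uniform cubic multigraph with only $\Theta_p(1)$ loops, so both approaches ultimately rely on the kernel having girth $O_p(1)$.
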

Perhaps the most interesting case in \Cref{thm:known} is when $s^3 n^{-2} \to \infty$ (in the so-called {\em weakly supercritical} regime): \whp\ the longest cycle is contained in the largest component $\LargestComponent$. 
Moreover, the length of the shortest cycle in $\LargestComponent$ is of the same asymptotic order as the length of the longest cycle {\em outside} $\LargestComponent$. In other words, there exists a \lq threshold\rq\ function $f(n):=n s^{-1}$ in the sense that \whp\ for all cycles $K$ in $G$ we have
\begin{align}\label{equ:threshold}
\begin{cases}
K \text{ is contained in } L_1 & \text{if } |K|=\omega\left(f(n)\right); \\[0.1cm]
K \text{ is not contained in } L_1 & \text{if } |K|=o\left(f(n)\right), 
\end{cases}
\end{align}
where $|K|$ denotes the length of $K$.

An important structure related to cycles are {\em blocks}, because every cycle is contained in a block: a block of a graph $H$ is a maximal 2-connected subgraph of $H$. We emphasise that we do not consider a bridge, i.e. an edge whose deletion increases the number of components, to form a block. {\L}uczak \cite{luczak} investigated the block structure of $G(n,m)$ when $m=n/2+s$ for $s^3 n^{-2}\to \infty$, showing that whp there is a unique largest block, while all other blocks are \lq small\rq. In addition, the largest component contains only a few number of blocks. Given a graph $\fixGraph$, let $\blockOrder{i}{\fixGraph}$ denote the number of vertices in the $i$-th largest block $\blockLargest{i}{\fixGraph}$ of $\fixGraph$ for each $i\in \N$.

\begin{thm}[\cite{luczak}]\label{thm:block_luczak}
	Let $m=n/2+s$ for $s=s(n)=o\left(n\right)$ and $s^3 n^{-2} \to \infty$. 
	Let $G=G(n,m)\ur \mathcal G(n,m)$ and $\LargestComponent=\largestComponent{G}$ be the largest component of $G$. Then the following hold.
	\begin{enumerate}
		\item \label{thm:block_luczaka}
		$\blockOrder{1}{G}=\Theta\left(s^2 n^{-1}\right)$ \whp.
		\item \label{thm:block_luczakb}
		$\blockOrder{i}{G}=O_p\left(n s^{-1}\right)$ for each $i\in \N$ with $i\geq 2$.
		\item \label{thm:block_luczakc}
		The number of blocks in $\LargestComponent$ is $O_p(1)$.
	\end{enumerate}
\end{thm}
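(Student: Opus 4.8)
The plan is to reduce the statement to the block structure of the \emph{kernel} of $\LargestComponent$ and then transfer back through the two ``blow-up'' operations that build $\LargestComponent$ from its kernel: subdividing kernel edges, and attaching trees to the 2-core. First recall the standard description of the giant in this regime (see e.g. \cite{luczak,luczak_pittel_wierman,general2}): \whp\ $\numberVertices{\LargestComponent}=\T{s}$ and the excess $\ell:=\numberEdges{\LargestComponent}-\numberVertices{\LargestComponent}+1=\T{s^3n^{-2}}$, so we may condition on typical values of $\numberVertices{\LargestComponent}$ and $\numberEdges{\LargestComponent}$. Conditioned on these, $\LargestComponent$ is a uniformly random connected graph on a uniformly random $\numberVertices{\LargestComponent}$-subset of $[n]$ with $\numberEdges{\LargestComponent}$ edges. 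Let $C$ be its 2-core and $K$ the multigraph obtained from $C$ by suppressing every degree-2 vertex. Since attaching trees creates only bridges, and suppressing degree-2 vertices changes neither bridges nor membership in 2-connected subgraphs, the blocks of $\LargestComponent$ correspond bijectively to the blocks of $K$ (a loop of $K$ counting as a degenerate block): a $K$-block with $a$ vertices and $b$ edges blows up to an $\LargestComponent$-block with $a$ plus the number of suppressed vertices lying on those $b$ edges. Moreover, conditionally, $K$ is a uniformly random multigraph with minimum degree at least 3 and bounded average degree (in fact asymptotically cubic), \whp\ $\numberVertices{C}=\T{s^2n^{-1}}$ and $\numberVertices{K}=\T{s^3n^{-2}}$, and the ``subdivision lengths'' that distribute the remaining $\numberVertices{C}-\numberVertices{K}=\T{s^2n^{-1}}$ core vertices among the $\T{s^3n^{-2}}$ edges of $K$ are independent of $K$, exchangeable, each with mean $\T{ns^{-1}}$ and an exponential-type upper tail.

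The heart of the proof is then the block structure of $K$. Using first-moment estimates in the configuration model together with known connectivity properties of random (near-)regular multigraphs, I would show that \whp\ $K$ is connected, carries only $\Op{1}$ loops and multiple edges, and has a \emph{dominant} block $B^{*}$ containing all but $\Op{1}$ of its vertices, while every remaining block of $K$ has $O(1)$ vertices and $O(1)$ edges; in particular $K$ has $\Op{1}$ blocks. The point is that $K$ behaves like a random cubic multigraph, which is 3-connected \whp, the only obstructions to a single block being the $\Op{1}$ loops, multiple edges, and tiny ``ears'' cut off by a separating pair.

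Transferring back is routine. For part (c), the above bijection gives that $\LargestComponent$ has $\Op{1}$ blocks. For part (b), every block of $\LargestComponent$ except the blow-up of $B^{*}$ is the blow-up of a non-dominant block of $K$, and therefore comes from $O(1)$ vertices and $O(1)$ edges of $K$; over all non-dominant blocks only $\Op{1}$ edges of $K$ are involved, and a single prescribed edge carries $\Op{ns^{-1}}$ suppressed vertices (by the mean-and-tail bound and independence from $K$), so the maximum over $\Op{1}$ prescribed edges is still $\Op{ns^{-1}}$; hence $\blockOrder{i}{G}=\Op{ns^{-1}}$ for every $i\geq 2$. For part (a), the blow-up of $B^{*}$ has at most $\numberVertices{C}=\T{s^2n^{-1}}$ vertices; and since $B^{*}$ contains a $(1-o(1))$-fraction of the edges of $K$, it receives a $(1-o(1))$-fraction of the $\T{s^2n^{-1}}$ suppressed vertices \whp, so its blow-up has $\T{s^2n^{-1}}$ vertices, which exceeds the $\Op{ns^{-1}}=o\!\left(s^2n^{-1}\right)$ size of every other block. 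Thus $\blockOrder{1}{G}=\T{s^2n^{-1}}$ \whp. (The lower bound also follows at once from the case $s^3n^{-2}\to\infty$ of \Cref{thm:known}: a cycle of length $\T{s^2n^{-1}}$ lies inside one block.)

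I expect the main obstacle to be twofold. First, one must set up the structural decomposition rigorously, which amounts to re-deriving the relevant part of the near-critical giant-component structure and the conditional joint law of the kernel and its subdivisions with the stated concentration. Second, and more delicate, is the block/connectivity analysis of the near-cubic random multigraph $K$: ruling out a second linear-sized block (a balanced separating pair) requires a genuine small-subgraph first-moment computation in the configuration model with the kernel's degree sequence rather than a soft argument, and it is precisely this that forces the gap between the dominant block, of order $s^2n^{-1}$, and all others, of order $ns^{-1}$.
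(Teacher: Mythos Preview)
This theorem is not proved in the paper; it is quoted from \cite{luczak} as a known background result on $G(n,m)$, stated for comparison with the planar analogues in \Cref{thm:main1,thm:block_structure}. There is therefore no in-paper proof to compare your proposal against.

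That said, your outline is correct and is essentially {\L}uczak's original argument. The core--kernel decomposition you invoke is exactly the machinery this paper develops for the \emph{planar} case, and the contrast is instructive: for $G(n,m)$ the kernel is (contiguous to) a uniformly random cubic multigraph with no planarity constraint, which is \whp\ $3$-connected apart from $O_p(1)$ loops and multi-edges, giving $O_p(1)$ blocks with a single dominant one; for $\planarRandomGraph(n,m)$ the kernel is a random cubic \emph{planar} multigraph, which by \Cref{thm:linear_loop} carries $\Theta(n)$ loops and by \Cref{lem:bridge_number1,lem:bridge_number6,lem:bridge_number7} has a nontrivial bridge structure, producing $\Theta\left(sn^{-2/3}\right)$ blocks in $\LargestComponent$ rather than $O_p(1)$. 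One minor caveat in your write-up: the kernel of $G(n,m)$, conditioned on its order and size, is not \emph{literally} uniform over multigraphs of minimum degree at least three with those parameters; a contiguity step (the $G(n,m)$ analogue of \Cref{lem:kernel_cubic}) is needed to import connectivity properties from the configuration model. You flag this implicitly and it is not a real gap.
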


We note that Theorems \ref{thm:known}\ref{thm:known3} and \ref{thm:block_luczak} imply that \whp\ the longest cycle lies in the largest block $\blockLargest{1}{G}$ and its length grows asymptotically like $\blockOrder{1}{G}$.

In the last few decades various models of random graphs have been introduced by imposing additional constraints to $G(n,m)$, e.g. topological constraints or degree restrictions. 
Particularly interesting models are random {\em planar} graphs and, more generally, random graphs on surfaces which have attained considerable
attention~\cite{chap,chap2,gim,planar,surface,dks,mcdr}, since the pioneering work of McDiarmid, Steger, and Welsh~\cite{msw} on random planar graphs and that of McDiarmid~\cite{mcd} on random graphs on surfaces. Many exciting results have been obtained, revealing richer and more complex behaviour.
In particular, 
results are often found to feature thresholds,
meaning that the probabilities of various properties change dramatically 
according to which \lq region\rq\ the edge density falls into. 

A natural question is whether random planar graphs satisfy similar properties as in \Cref{thm:known,thm:block_luczak}. Kang and {\L}uczak \cite{planar} showed that the component structure of a random planar graph $\planarRandomGraph(n,m)$ changes drastically when $m\sim n/2$, analogously to $G(n,m)$. In contrast, not much is known about the cycle and block structure of $\planarRandomGraph(n,m)$. In this paper we investigate this open problem, determining the length of the shortest and longest cycle in $\planarRandomGraph(n,m)$ and the order of blocks in $\planarRandomGraph(n,m)$, in the light of \Cref{thm:known,thm:block_luczak}.

\subsection{Main results}\label{sec:main}
Throughout this section, we let $\mathcal \planarRandomGraph(n,m)$ denote the class of all vertex-labelled planar graphs on vertex set $[n]$ with $m=m(n)$ edges and $\planarRandomGraph(n,m)$ be a graph chosen uniformly at random from $\mathcal \planarRandomGraph(n,m)$, denoted by $\planarRandomGraph(n,m) \ur \mathcal \planarRandomGraph(n,m)$.

Our first main result concerns the distribution of cycles in the random planar graph $\planarRandomGraph(n,m)$. 
\begin{thm}\label{thm:main1}
Let $\planarRandomGraph=\planarRandomGraph(n,m) \ur \mathcal \planarRandomGraph(n,m)$, $\LargestComponent=\largestComponent{\planarRandomGraph}$ be the largest component of $\planarRandomGraph$, and $\rest=\rest(\planarRandomGraph)=\planarRandomGraph \setminus \LargestComponent$. Assume $m=n/2+s$ for $s=s(n)=o\left(n\right)$. Then the following hold.
	\begin{enumerate}
		\item \label{thm:main1_a}
If $s^3 n^{-2} \to -\infty$, then \whp\ $\LargestComponent$ is a tree. Furthermore, we have $\longestCycle{\rest}=\Theta_p\left(n|s|^{-1}\right)$.
		\item \label{thm:main1_b}
		If $s=O\left(n^{2/3}\right)$, then the probability that $\LargestComponent$ is a tree is bounded away from both 0 and 1. Provided there is a cycle in $\LargestComponent$, we have $\longestCycle{\LargestComponent}=\Theta_p\left(n^{1/3}\right)$ and $\girth{\LargestComponent}=\Theta_p\left(n^{1/3}\right)$. Moreover, we have $\longestCycle{\rest}=\Theta_p\left(n^{1/3}\right)$.
		\item \label{thm:main1_c}
		If $s^3 n^{-2} \to \infty$, then \whp\ the longest cycle in $\planarRandomGraph$ is contained in $\LargestComponent$ and $\longestCycle{\LargestComponent}=O\left(sn^{-1/3}\right)$.
		In addition, we have $\longestCycle{\LargestComponent}=\Omega_p\left(n^{1/3}\log \left(sn^{-2/3}\right)\right)$, $\girth{\LargestComponent}=\Theta_p\left(n s^{-1}\right)$, and $\longestCycle{\rest}=\Theta_p\left(n^{1/3}\right)$.
	\end{enumerate}	
\end{thm}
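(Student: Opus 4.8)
Our approach is based on the observation that $\planarRandomGraph(n,m)$ is distributed as $G(n,m)$ conditioned on the event that $G(n,m)$ is planar, so that the three regimes measure how severe this conditioning is: $\prob{G(n,m)\text{ is planar}}$ tends to $1$ in regime~\ref{thm:main1_a}, to a constant in $(0,1)$ in regime~\ref{thm:main1_b}, and to $0$ in regime~\ref{thm:main1_c}. To handle the conditioning I would use a core--kernel--block decomposition: write $\core{\planarRandomGraph}$ for the $2$-core of $\planarRandomGraph$, $\complexPart{\planarRandomGraph}$ for the union of the $2$-cores of the components with at least two independent cycles, and $\kernel{\planarRandomGraph}$ for the kernel obtained from $\core{\planarRandomGraph}$ by suppressing all vertices of degree $2$. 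Conditionally on the list of component orders and cyclomatic numbers, $\planarRandomGraph(n,m)$ is recovered from $\kernel{\planarRandomGraph}$ by subdividing each kernel edge with a prescribed number of labelled degree-$2$ vertices and attaching a uniformly random forest, and within each component the allocation of those degree-$2$ vertices to the kernel edges is a uniform random composition --- equivalently a P\'olya urn with one colour per kernel edge. Everything is then controlled by two facts: the length of a cycle of $\core{\planarRandomGraph}$ equals the number of kernel edges it traverses plus the number of degree-$2$ vertices deposited on them, and every cycle lies inside a single block of $\core{\planarRandomGraph}$.

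Regime~\ref{thm:main1_a} should follow almost verbatim from \Cref{thm:known}\ref{thm:known1}: by Kang and {\L}uczak~\cite{planar} (together with a moment computation in $\mathcal\planarRandomGraph(n,m)$ using known asymptotics for $|\mathcal\planarRandomGraph(n,m)|$) \whp\ every component has at most one cycle and $\LargestComponent$ is a tree, so all cycles lie in unicyclic components; each such component is a uniformly random connected unicyclic graph, whose cycle has the same law as in $G(n,m)$, and the orders of the largest unicyclic components match those in $G(n,m)$, whence $\longestCycle{\rest}=\Theta_p\left(n|s|^{-1}\right)$. In regime~\ref{thm:main1_b} the conditioning is mild and --- this is the key point --- restricts only the kernel $\kernel{\planarRandomGraph}$, which in this window has $\Op{1}$ vertices, while the $2$-core of $\LargestComponent$ still has $\Theta_p\left(n^{1/3}\right)$ vertices, obtained from $\kernel{\planarRandomGraph}$ by depositing $\Theta_p\left(n^{1/3}\right)$ degree-$2$ vertices on its $\Op{1}$ edges and attaching a forest on the remaining $\Op{n^{2/3}}$ vertices of $\LargestComponent$. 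Hence, conditionally on $\LargestComponent$ containing a cycle --- an event of probability bounded away from $0$ and $1$ --- we obtain $\longestCycle{\LargestComponent}=\Theta_p\left(n^{1/3}\right)$ and $\girth{\LargestComponent}=\Theta_p\left(n^{1/3}\right)$, and $\longestCycle{\rest}=\Theta_p\left(n^{1/3}\right)$ follows by the same analysis applied to the second-largest component; all quantitative inputs are imported from \Cref{thm:known}\ref{thm:known2}.

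Regime~\ref{thm:main1_c} carries the bulk of the work, and its crucial --- and, I expect, hardest --- step is the enumerative analysis of planar cores that replaces {\L}uczak's treatment of the $2$-core of $G(n,m)$. Using asymptotics for the number of connected and of $2$-connected planar graphs with a prescribed core, I would show that \whp\ $\complexPart{\planarRandomGraph}\subseteq\LargestComponent$, pin down the order of $\complexPart{\planarRandomGraph}$ and the cyclomatic number $\ell$ of $\LargestComponent$, establish that $\kernel{\planarRandomGraph}$ is a uniformly random \emph{planar} multigraph of minimum degree $\geq 3$ with $\numberVertices{\kernel{\planarRandomGraph}}=\Theta(\ell)=\numberEdges{\kernel{\planarRandomGraph}}$, and --- the point behind the upper bound --- show that the largest block of $\complexPart{\planarRandomGraph}$ has $O\left(sn^{-1/3}\right)$ vertices. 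Since every cycle lies in one block, $\longestCycle{\LargestComponent}=O\left(sn^{-1/3}\right)$ is then immediate. The vertex and edge count of $\LargestComponent$ obtained along the way --- which differs from its $G(n,m)$ counterpart --- forces $\rest=\planarRandomGraph\setminus\LargestComponent$ to lie in the critical window $\bigl|\numberVertices{\rest}/2-\numberEdges{\rest}\bigr|=O\left(n^{2/3}\right)$, in sharp contrast with $G(n,m)$ whose analogous remainder is strictly subcritical; hence $\longestCycle{\rest}=\Theta_p\left(n^{1/3}\right)$ by the argument of regime~\ref{thm:main1_b}, and since $sn^{-1/3}\gg n^{1/3}$ the longest cycle of $\planarRandomGraph(n,m)$ lies in $\LargestComponent$ \whp.

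It remains to estimate $\girth{\LargestComponent}$ and the circumference of $\LargestComponent$, and here the P\'olya urn enters. A typical kernel edge of $\complexPart{\planarRandomGraph}$ receives $\Theta_p\left(ns^{-1}\right)$ degree-$2$ vertices, while the shortest cycle of a block uses only $O(1)$ kernel edges, so minimising the resulting length over all blocks --- using concentration of the urn and the enumerative fact that $\kernel{\planarRandomGraph}$ has only few short cycles --- gives $\girth{\LargestComponent}=\Theta_p\left(ns^{-1}\right)$. For the lower bound $\longestCycle{\LargestComponent}=\Omega_p\left(n^{1/3}\log\left(sn^{-2/3}\right)\right)$ I would run a large deviation argument over the $\Theta\left(sn^{-2/3}\right)$ blocks of $\complexPart{\planarRandomGraph}$: each has a small but not-too-small probability that its urn concentrates an atypically long path of degree-$2$ vertices on one of its cycles, so \whp\ at least one block does, and optimising the deviation over that many trials produces the extra logarithmic factor over the $n^{1/3}$ baseline. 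The gap between this bound and the upper bound $O\left(sn^{-1/3}\right)$ appears to be real: a random $2$-connected planar graph of order $\Theta\left(sn^{-1/3}\right)$ is not known to contain a cycle through a constant fraction of its vertices, so the exact order of $\longestCycle{\LargestComponent}$ would require new input on the circumference of random $2$-connected planar graphs. In short, the genuinely new work --- the planar-enumerative control of $\kernel{\planarRandomGraph}$ and of the block sizes of $\complexPart{\planarRandomGraph}$, which reshapes the complex part relative to $G(n,m)$, together with the large deviation analysis of the subdivision urn --- is concentrated in regime~\ref{thm:main1_c}, the other two regimes staying close to \Cref{thm:known}.
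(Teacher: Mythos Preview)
Your high-level architecture is right --- core--kernel decomposition plus a P\'olya urn for the subdivisions --- and so is your handling of $\longestCycle{\rest}$ in regime~\ref{thm:main1_c} (the remainder lands in the critical window and one invokes \Cref{thm:known}\ref{thm:known2}). Two points, however, need correction.

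First, regimes~\ref{thm:main1_a} and~\ref{thm:main1_b} require much less than you propose. In both, the probability that $G(n,m)$ has no complex component is bounded away from $0$ (\Cref{thm:critcalRegime}\ref{thm:critcalRegime_a}), and any such graph is planar; hence $\liminf_n \prob{G(n,m)\in\planarClass(n,m)}>0$, and every \whp\ statement for $G(n,m)$ transfers verbatim to $\planarRandomGraph(n,m)$. No reanalysis of the core is needed --- \Cref{thm:known} already gives the answers.

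Second, and more substantively, your treatment of $\girth{\LargestComponent}$ and of the circumference lower bound in regime~\ref{thm:main1_c} misidentifies the mechanism and contains a numerical slip. A \emph{typical} kernel edge receives $k/N=\Theta(n^{1/3})$ subdivision vertices, not $\Theta(ns^{-1})$; the quantity $ns^{-1}$ is $k/N^2$, the \emph{minimum} over all $N$ edges. More importantly, the paper does not argue via ``few short cycles in the kernel'' but the opposite: it proves (\Cref{thm:linear_loop}, \Cref{cor:linear_loop}) that the kernel has $\lambda=\Theta(sn^{-2/3})=\Theta(N)$ \emph{loops}. Each loop becomes a cycle in the core of length equal to its subdivision count, so the girth upper bound is the minimum subdivision over these $\lambda$ loops, $\Theta_p(k/(N\lambda))=\Theta_p(ns^{-1})$, which matches the lower bound $\min_i X_i=\Theta_p(k/N^2)$ coming from the fact that every cycle traverses at least one kernel edge. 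Likewise the circumference lower bound is simply the \emph{maximum} subdivision among the $\lambda$ loops, which by \Cref{main1}\ref{thm:main1_2} is $\Theta_p\bigl((k/N)\log\lambda\bigr)=\Theta_p\bigl(n^{1/3}\log(sn^{-2/3})\bigr)$ --- a direct urn extreme rather than a large-deviation argument over blocks. Establishing that the kernel has linearly many loops (via the loop-insertion operation of \Cref{def:loop_insertion} and a second-moment computation on $\mathcal K(2n,3n)$) is the genuinely new enumerative input; without it neither the matching girth bounds nor the logarithmic gain in the circumference go through as stated.

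Finally, for the upper bound $\longestCycle{\LargestComponent}=O(sn^{-1/3})$ you do not need any block analysis: the trivial inequality $\longestCycle{\LargestComponent}\le \numberVertices{\core{\LargestComponent}}$ together with \Cref{thm:internal_structure}\ref{thm:internal_structure_a} already suffices.
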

As we will see in \Cref{thm:general}, \Cref{thm:main1} holds for a more general universal class of graphs, the so-called {\em \pl} classes of graphs (see \Cref{def:planar_like} for a formal definition), which include the class of series-parallel graphs, the class of planar graphs, and the class of graphs on a surface, to mention a few.

Our second main result deals with the block structure of $\planarRandomGraph(n,m)$. Due to \Cref{thm:main1}\ref{thm:main1_a} and \ref{thm:main1_b} we focus on the weakly supercritical regime and will show that whp $\planarRandomGraph(n,m)$ contains a unique largest block $B_1$ which is significantly larger than all other blocks, similarly as in $G(n,m)$. However, the largest component in $\planarRandomGraph(n,m)$ contains \lq many\rq\ blocks, while the largest component in $G(n,m)$ contains only a bounded number of blocks (cf. \Cref{thm:block_luczak}\ref{thm:block_luczakc}).
\begin{thm}\label{thm:block_structure}
Let $\planarRandomGraph=\planarRandomGraph(n,m) \ur \mathcal \planarRandomGraph(n,m)$ and $\LargestComponent=\largestComponent{\planarRandomGraph}$ be the largest component of $\planarRandomGraph$. 
Assume $m=n/2+s$ for $s=s(n)=o\left(n\right)$ and $s^3 n^{-2} \to \infty$. Then the following hold.
	\begin{enumerate}
		\item\label{thm:block_structure_a}
		$\blockOrder{1}{\planarRandomGraph}=\Theta_p\left(sn^{-1/3}\right)$.
		\item\label{thm:block_structure_b}
		$\blockOrder{i}{\planarRandomGraph}=\Theta_p\left(s^{2/3}n^{-1/9}\right)$, for each $i\in \N$ with $i\geq 2$.
		\item\label{thm:block_structure_c}
		The number of blocks in $\LargestComponent$ is \whp\ $\Theta\left(sn^{-2/3}\right)$.
	\end{enumerate}
\end{thm}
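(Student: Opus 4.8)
The plan is to transfer the well-understood block structure of $G(n,m)$ from \Cref{thm:block_luczak} to the planar setting via the standard \emph{core--kernel} decomposition, exploiting the fact that in the weakly supercritical regime the complex part of a random planar graph looks, after contracting paths, like a random cubic (multi)graph, but with the crucial difference that the paths being contracted are themselves \emph{planar} pieces whose typical length is governed by a different scaling than in $G(n,m)$. Concretely, write $m = n/2 + s$ with $n^{2/3} \ll s \ll n$. First I would recall (from the companion results / \cite{planar}) that whp the largest component $\LargestComponent$ of $\planarRandomGraph(n,m)$ has a kernel $\kernel{\LargestComponent}$ --- the multigraph obtained from the $2$-core by suppressing all degree-$2$ vertices --- with $\Theta_p(s^3 n^{-2})$ vertices and edges, and that conditioned on its size and on the degree sequence this kernel is a uniform random (essentially cubic) multigraph; this is exactly the same kernel that appears for $G(n,m)$ in the same regime, since planarity is not a binding constraint on a sparse random cubic multigraph whp. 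The blocks of $\LargestComponent$ are then in bijection (up to bridges, which contribute no blocks) with the $2$-connected blocks of the kernel, each ``inflated'' by replacing kernel-edges with paths and attaching pendant planar trees.

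The second step is to control the inflation. Each edge of the kernel is subdivided by a random number of core-vertices; by a P\'olya-urn / balls-in-boxes argument (the paper's keyword list flags P\'olya urns) the core of $\LargestComponent$ has $\Theta_p(s^2 n^{-1}\cdot \text{(correction)})$ vertices --- more precisely the total number of core vertices is $\Theta_p(sn^{-1/3}\cdot s n^{-2/3}) $-type bookkeeping --- and these are distributed among the $\Theta_p(s^3n^{-2})$ kernel-edges so that the \emph{largest} edge-subdivision has length $\Theta_p(sn^{-1/3})$ while a \emph{typical} edge, and in particular the longest edge inside any fixed non-dominant kernel-block, has length $\Theta_p(s^{2/3}n^{-1/9})$. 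Here is where planarity enters quantitatively: the number of vertices available to be placed on the core, and hence the urn parameters, differ from the $G(n,m)$ case because in $\planarRandomGraph(n,m)$ the ``condensation'' of excess edges into the dominant block is weaker --- this is precisely what makes $\blockOrder{1}{\planarRandomGraph}=\Theta_p(sn^{-1/3})$ rather than $\Theta(s^2 n^{-1})$, and what forces many blocks (part \ref{thm:block_structure_c}). I would make this rigorous by combining the enumerative estimates for planar graphs with a given core (à la Giménez--Noy / the paper's \pl\ framework) with concentration for the associated urn.

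For part \ref{thm:block_structure_a}: the dominant kernel-block $\blockLargest{1}{\kernel{\LargestComponent}}$ has $\Theta_p(s^3 n^{-2})$ vertices (this follows from \Cref{thm:block_luczak}\ref{thm:block_luczaka} applied to the kernel, whose law is shared with $G(n,m)$'s kernel), and inflating it by paths of typical length $\Theta_p(sn^{-1/3}\cdot (s^3n^{-2})^{-1}\cdot s^2 n^{-1})$ --- i.e. distributing the $\Theta_p(\cdot)$ core-vertices among its $\Theta_p(s^3n^{-2})$ edges --- yields a block on $\Theta_p(sn^{-1/3})$ vertices; the upper bound matches the circumference bound already proved in \Cref{thm:main1}\ref{thm:main1_c} (the longest cycle has length $O(sn^{-1/3})$, and the largest block cannot be longer by more than the added pendant trees, which I would bound separately). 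For part \ref{thm:block_structure_b}: a fixed $i\ge 2$, the $i$-th largest kernel-block has $\Theta_p(1)$ vertices (by the $G(n,m)$ analogue), so its inflated version has order equal to its longest subdivided edge, which the urn analysis puts at $\Theta_p(s^{2/3}n^{-1/9})$; one must also check no block \emph{outside} the kernel's blocks (e.g. inside a single long subdivided edge --- impossible, that's a path) or among the small tree-decorations can be larger, which is immediate since trees contain no blocks. For part \ref{thm:block_structure_c}: the number of blocks of $\LargestComponent$ equals the number of $2$-connected blocks of $\kernel{\LargestComponent}$ plus the number of its cycle-edges that survive, which is $\Theta(\text{number of kernel edges})=\Theta(s^3 n^{-2})$? --- no: I expect instead that the count is dominated by the number of ``ears'' beyond a spanning structure, giving $\Theta(s^3 n^{-2})$, and I would reconcile this with the claimed $\Theta(sn^{-2/3})$ by noting $(s^3n^{-2})=(sn^{-2/3})\cdot(s^2n^{-4/3})$ and that only $\Theta(sn^{-2/3})$ of the kernel components/bridges actually sit inside $\LargestComponent$ --- so the precise step is to show the $2$-core of $\LargestComponent$, as opposed to of all of $\planarRandomGraph$, has kernel with $\Theta(sn^{-2/3})$ edges, which again follows from the $\planarRandomGraph$-specific enumeration rather than from $G(n,m)$.

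\medskip

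\textbf{Main obstacle.} The hard part will be establishing that, \emph{within the largest component of the random planar graph}, the kernel is distributed (conditionally on its order and degree sequence) as a uniform random multigraph --- i.e. that the planarity constraint "factors through" the core/kernel decomposition and imposes no extra bias on the sparse kernel --- together with the matching quantitative claim that the largest component's kernel has exactly $\Theta_p(s^3 n^{-2})$ edges and $\Theta_p(sn^{-2/3})$ of them lie in $\LargestComponent$. This requires the enumerative/generating-function machinery for planar graphs with a prescribed core (which I expect is developed in the paper under the \pl-class framework referenced right after \Cref{thm:main1}), and the P\'olya-urn concentration for how core-vertices distribute over kernel-edges; everything else --- inflating blocks, bounding pendant trees, ruling out large blocks outside the dominant one --- is then routine given \Cref{thm:block_luczak} and \Cref{thm:main1}.
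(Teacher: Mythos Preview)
Your overall framework (core--kernel decomposition, then P\'olya-urn transfer from kernel blocks to core blocks) matches the paper's, but the proposal contains a fundamental quantitative error that derails every computation. You assert that the kernel $\kernel{\LargestComponent}$ of the random planar graph has $\Theta_p(s^3 n^{-2})$ vertices and edges, ``exactly the same kernel that appears for $G(n,m)$,'' and that ``planarity is not a binding constraint on a sparse random cubic multigraph whp.'' Both claims are false. In the planar case the kernel has $\Theta(sn^{-2/3})$ vertices (\Cref{thm:internal_structure}\ref{thm:internal_structure_b}), which is \emph{much smaller} than $s^3 n^{-2}$; and the kernel is distributed (conditionally on its order) as a uniform random cubic \emph{planar} multigraph, not a uniform random cubic multigraph---planarity is very much a binding constraint on cubic graphs, since a uniform random cubic multigraph is whp non-planar. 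This is why the paper cannot import \Cref{thm:block_luczak} and instead builds a separate theory (\Cref{lem:bridge_number1}--\Cref{lem:bridge_number7}) for the block structure of a random cubic multigraph drawn from a ``bridge-stable'' class $\bridgeClass$ satisfying $|\bridgeClass(2n,3n)|\sim cn^{-\ce}\gamma^n(2n)!$: the largest block is $\Theta_p(n)$ and, for $i\ge 2$, the $i$-th largest block is $\Theta_p(n^{1/(\ce-2)})$, not $\Theta_p(1)$ as you assume. For the planar class $\ce=7/2$, so $b_i(\kernel{\planarRandomGraph})=\Theta_p\big((sn^{-2/3})^{2/3}\big)$, and only after inflating via the P\'olya urn (core of size $\Theta(sn^{-1/3})$ spread over $\Theta(sn^{-2/3})$ kernel edges) does one arrive at $\Theta_p(s^{2/3}n^{-1/9})$.

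Your confusion surfaces explicitly in part \ref{thm:block_structure_c}, where you compute $\Theta(s^3n^{-2})$ blocks and then attempt to ``reconcile'' with the stated $\Theta(sn^{-2/3})$. There is nothing to reconcile: the number of blocks in $\LargestComponent$ is sandwiched between the number of loops in $\kernel{\LargestComponent}$ (which the paper shows is $\Theta(sn^{-2/3})$ via the loop-insertion argument of \Cref{thm:linear_loop}) and $\numberVertices{\kernel{\LargestComponent}}=\Theta(sn^{-2/3})$. Once you use the correct kernel parameters and accept that the kernel's block structure must be analysed afresh for the planar class (this is the genuine work, done via the bridge-insertion double counting of \Cref{sec:block_structure}), the rest of your outline---conditional-random-graph transfer and urn concentration---is indeed how the paper proceeds.
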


It is well known that when $s^3 n^{-2}\to -\infty$ or $s=O\left(n^{2/3}\right)$, the probability that $G(n,m)$ is planar is bounded away from 0 (see e.g. \Cref{thm:critcalRegime}\ref{thm:critcalRegime_a} and \cite{comp_fac,luczak_pittel_wierman,prob_planarity}). Hence, each graph property that holds \whp\ in $G(n,m)$ is also true \whp\ in $\planarRandomGraph(n,m)$. In particular, this implies that the cycle structure of $\planarRandomGraph(n,m)$ \lq behaves\rq\ similarly like that of $G(n,m)$ (see Theorems \ref{thm:known}\ref{thm:known1}, \ref{thm:known2} and \ref{thm:main1}\ref{thm:main1_a}, \ref{thm:main1_b}). However, when $s^3n^{-2}\to \infty$, \whp\ $G(n,m)$ is not planar (see e.g. \cite{luczak_pittel_wierman,prob_planarity}) and therefore, $G(n,m)$ and $\planarRandomGraph(n,m)$ can exhibit different asymptotic behaviours. Theorems \ref{thm:main1}\ref{thm:main1_c} and \ref{thm:block_structure} indicate that in view of the cycle and block structure this is indeed the case. For example, a \lq threshold\rq\ function in the sense of (1) does not exist in $\planarRandomGraph(n,m)$, because $\girth{\LargestComponent}= \Theta_p\left(n s^{-1}\right)\ \ll\ \longestCycle{\rest}=\Theta_p\left(n^{1/3}\right)$. However, \whp\ the longest cycle in $\planarRandomGraph(n,m)$ is still contained in the largest component.
	
Kang and {\L}uczak \cite{planar} proved that in the case of $s^3 n^{-2} \to \infty$ the core, i.e. the maximal subgraph of minimum degree at least two (also known as 2-core), is much smaller in $\planarRandomGraph(n,m)$ compared to $G(n,m)$. More precisely, \whp\ the core of $\planarRandomGraph(n,m)$ is of order $\Theta\left(sn^{-1/3}\right)$, while that of $G(n,m)$ is of order $\Theta\left(s^2n^{-1}\right)$. This has a natural impact on the order of the longest cycle and largest block in $\planarRandomGraph(n,m)$ (cf. Theorems \ref{thm:known}\ref{thm:known3}, \ref{thm:main1}\ref{thm:main1_c} and Theorems \ref{thm:block_luczak}\ref{thm:block_luczaka} and \ref{thm:block_structure}\ref{thm:block_structure_a}):
\begin{align*}
\longestCycle{\planarRandomGraph(n,m)}=O\left(sn^{-1/3}\right)\quad \text{\whp} \quad &\ll \quad \longestCycle{G(n,m)}=\Theta\left(s^2 n^{-1}\right) \quad \text{\whp}
\\
\blockOrder{1}{\planarRandomGraph(n,m)}=\Theta_p\left(sn^{-1/3}\right) \quad &\ll \quad \blockOrder{1}{G(n,m)}=\Theta\left(s^2 n^{-1}\right) \quad \text{\whp}.
\end{align*}	
Furthermore, it is known that the \lq edge density\rq\ in the part without the largest component is typically much larger in $P(n,m)$ than in $G(n,m)$ (see e.g. \cite[Theorem 1.7]{surface}). This affects the order of the longest cycle outside the largest component (cf. Theorems \ref{thm:known}\ref{thm:known3}, \ref{thm:main1}\ref{thm:main1_c}):
\begin{align*}
\longestCycle{\rest (\planarRandomGraph(n,m))}=\Theta_p\left(n^{1/3}\right)\quad &\gg \quad \longestCycle{\rest (G(n,m))}=\Theta_p\left(n s^{-1}\right).
\end{align*}

\subsection{Related work}
The so-called {\em $n$-vertex} model of a random planar graph is a graph chosen uniformly at random from the class of all vertex-labelled planar graphs on vertex set $[n]$, denoted by $P(n)\ur \planarClass(n)$. Gim\'enez and Noy \cite{gim} showed that \whp\ $P(n)$ has $\left(1+o(1)\right)\kappa n$ edges for a constant $\kappa\approx 2.21$, i.e. $P(n)$ \lq behaves\rq\ like $P(n,m)$ where $m\approx \kappa n$. Many exciting results on the block structure of $P(n)$ were obtained in recent literature, revealing a different behaviour from that of $P(n,m)$ as observed in \Cref{thm:block_structure}. For example, Panagiotou and Steger \cite{ps} proved that \whp\ $\blockOrder{1}{P(n)}=\Theta(n)$. Later, Gim\'{e}nez, Noy, and Ru\'{e} \cite[Proposition 5.3]{GNR2013} established that in fact an Airy-type central limit theorem holds for $\blockOrder{1}{P(n)}$. Stufler \cite[Theorem 6.20, Corollary 6.42]{Stu2020} determined the limiting distribution of $\blockOrder{i}{P(n)}$ for any fixed integer $i\geq 2$, showing, among others, that $\blockOrder{i}{P(n)}=O_p\left(n^{2/3}\right)$. Furthermore, Stufler \cite[Remark 9.13]{Stu2019} provided a detailed structural description of the graph $P(n)\setminus \blockLargest{1}{P(n)}$, i.e. $P(n)$ without its largest block.

Another model related to $P(n,m)$ is the random {\em connected} planar graph $C(n,m)$, which is a graph chosen uniformly at random from the class of all connected planar graphs on vertex set $[n]$ having $m=m(n)$ edges. Panagiotou \cite[Theorem 1, Corollary 1]{Pan09} proved that if $m=\rounddown{cn}$ for a constant $c\in (1,3)$, then $C(n,m)$ has a block of linear order. Moreover, there is a discussion in \cite[End of Section 5]{GNR2013} sketching how much stronger results on the order of the largest block in $C(n,m)$ can in principle be obtained.

\subsection{Key techniques}
One of the main proof techniques is the so-called {\em core-kernel approach}. We decompose a graph into the simple part (in which each component contains at most one cycle) and the complex part (in which each component contains at least two cycles). Then we decompose the complex part into its core and then into its kernel, a key structure obtained from the core by replacing each path whose internal vertices all have degree exactly two by an edge. Conversely, each graph can be uniquely constructed from the kernel by first subdividing the edges of the kernel, thereby obtaining the core, then replacing vertices of the core with rooted trees and adding the simple part (see \Cref{sub:dec}).

In order to investigate the cycle and block structure of a random planar graph $\planarRandomGraph=\planarRandomGraph(n,m)$, we begin with the analysis of the structure of its core $\core{\planarRandomGraph}$, which is itself a random graph. Instead of directly analysing the random core $\core{\planarRandomGraph}$, we introduce an auxiliary random core model $\tilde C$, in which we split the \lq randomness\rq\ into smaller parts. More precisely, we choose randomly a kernel and then randomly a subdivision number which is a total number of vertices that will be used for a subdivision of the kernel. Given these two random bits (i.e. a random kernel and a random subdivision number) we then randomly construct a core by randomly inserting vertices on the edges of the kernel. A crucial technique to analyse the random core $\tilde C$ is the famous {\em P\'olya urn model}: 
We derive results on the maximum and minimum number of drawn balls of some colour in order to determine the length of the longest and shortest cycles in the core, respectively.

\subsection{Outline of the paper} 
The rest of the paper is organised as follows. After providing the necessary notations, definitions, and concepts in \Cref{sec:prelim}, we present our proof strategy in \Cref{sec:proof_strategy}. In \Cref{sec:planar_like} we define \pl\ classes of graphs. In \Cref{sec:polya} we provide results on the P\'olya urn model, which we use in \Cref{sec:CoreLabelled} to derive the cycle structure of a core randomly built from a fixed kernel and a fixed subdivision number. \Cref{sec:random_kernel} is devoted to a random kernel and \Cref{sec:block_structure} to the block structure of a random planar graph. In \Cref{sec:proofs_main,sec:proofs_aux} we provide the proofs of our main and auxiliary results, respectively. Finally in \Cref{sec:discussion}, we discuss various questions that remain unanswered.

\section{Preliminaries}\label{sec:prelim}

\subsection{Notations and parameters for graphs}
Unless stated otherwise, all considered (simple or multi) graphs are {\em undirected}.

\begin{definition}
Given a (simple or multi) graph $\fixMultigraph$ we denote by
	\begin{itemize}
		\item 
		$\vertexSet{\fixMultigraph}$ the vertex set of $\fixMultigraph$ and \item[] 
		$\numberVertices{\fixMultigraph}$ the order of $H$, i.e. the number of vertices in $\fixMultigraph$;
		\item 
		$\edgeSet{\fixMultigraph}$ the edge set of $\fixMultigraph$ and
		\item[] 
		$\numberEdges{\fixMultigraph}$ the size of $H$, i.e. the number of edges in $\fixMultigraph$;	
		\item 
		$\largestComponent{\fixMultigraph}$ the largest component of $\fixMultigraph$;
		\item
		$\Rest{\fixMultigraph}:=\fixMultigraph \setminus \largestComponent{\fixMultigraph}$ the graph outside the largest component;
		\item 
		$\girth{\fixMultigraph}$ the girth of $\fixMultigraph$, i.e. the length of the shortest cycle in $\fixMultigraph$;
		\item
		$\longestCycle{\fixMultigraph}$ the circumference of $\fixMultigraph$, i.e. the length of the longest cycle in $\fixMultigraph$;
		\item 
		$\numberLoops{\fixMultigraph}$ the number of loops in $\fixMultigraph$;
		\item $\blockLargest{i}{\fixMultigraph}$ the $i$-th largest block of $\fixMultigraph$ and
			\item[] 
		$\blockOrder{i}{\fixMultigraph}$ the number of vertices in $\blockLargest{i}{\fixMultigraph}$ for $i \in \N$.
	\end{itemize}
\end{definition}

\begin{definition}
Let $\fixMultigraph$ be a (simple or multi) graph and let $v, w \in \vertexSet{\fixMultigraph}$ be distinct. We denote by
\begin{itemize}
\item
$vw$ or $\{v,w\}$ an edge between $v$ and $w$;
\item
$vv$ a loop at $v$;
\item
$\fixMultigraph+vw$ the graph obtained from $\fixMultigraph$ by adding an additional edge $vw$;
\item
$\fixMultigraph-vw$ the graph obtained from $\fixMultigraph$ by deleting the edge $vw$.
\end{itemize}
\end{definition}

\begin{definition}\label{def:graph_class}
Given a class $\generalGraphClass$ of vertex-labelled graphs (under consideration of certain constraints, e.g. planarity or degree restrictions), we denote by 
$\generalGraphClass(n)$ the subclass of $\generalGraphClass$ consisting of graphs on vertex set $[n]$ 
and by $\generalGraphClass(n,m)$ the subclass of $\generalGraphClass$ consisting of graphs on vertex set $[n]$ with $m$ edges, respectively. We denote 
by $\generalRandomGraph(n)\ur \generalGraphClass(n)$ a graph chosen uniformly at random from $\generalGraphClass(n)$
and by $\generalRandomGraph(n,m)\ur \generalGraphClass(n,m)$ a graph chosen uniformly at random from $\generalGraphClass(n,m)$, respectively.
\end{definition}

\begin{definition}
Let $\fixGraph$ be a graph and $\property$ a set of graphs. We call $\property$ a {\em graph property}. And if $\fixGraph \in \property$, then we say that $\fixGraph$ satisfies $\property$; or that $\property$ holds in $\fixGraph$; or that $\property$ is true in $\fixGraph$.
\end{definition}

Next, we introduce some notion for random graphs which have the \lq same\rq\ asymptotic behaviour in the sense that they are indistinguishable in view of properties that hold \whp.
\begin{definition}\label{def:contiguous}
For each $n\in \N$, let $G_n$ and $H_n$ be random graphs. We say that $G_n$ and $H_n$ are {\em contiguous} if for every graph property $\property$
\begin{align*}
\lim_{n \to \infty}\prob{G_n \in \property} = 1 \quad \Longleftrightarrow \quad \lim_{n \to \infty}\prob{H_n \in\property} = 1.
\end{align*}
\end{definition}

\subsection{Weighted Multigraphs}
Throughout the paper, we always assume implicitly that multigraphs are weighted by the so-called {\em compensation factor}, which was first introduced by Janson, Knuth, {\L}uczak, and Pittel \cite{comp_fac}.

\begin{definition}\label{def:weight}
Given a multigraph $\fixMultigraph$ and $i \in \N$, we denote by $m_i(\fixMultigraph)$ the number of unordered pairs $\{v, w\}$ of distinct vertices $v, w\in V(H)$ such that there are precisely $i$ edges between $v$ and $w$. Similarly, let $\NumberLoops_i(\fixMultigraph)$ be the number of vertices $v\in V(H)$ such that there are exactly $i$ loops at $v$ and let $\numberLoops{\fixMultigraph}:=\sum_{i \in \N}i\NumberLoops_i(\fixMultigraph)$ be the total number of loops in $H$. Then the {\em compensation factor} (or weight, for short) of $\fixMultigraph$ is defined as
	\begin{align}\label{eq:weight}
	\weight{\fixMultigraph}:=2^{-\numberLoops{\fixMultigraph}}\prod_{i \in \N}\left(i!\right)^{-\NumberLoops_i(\fixMultigraph)-m_i(\fixMultigraph)}.
	\end{align}
	For a finite class $\mathcal{A}$ of multigraphs we define
	\[
	|\mathcal{A}|:=\sum_{H \in \mathcal{A}}w(H).
	\]
\end{definition}

\subsection{Complex part, core, and kernel}\label{sub:dec}
We call a component of a graph $\fixGraph$ {\em complex} if it has at least two cycles and define the {\em complex part} $\complexPart{\fixGraph}$ as the union of all complex components of $\fixGraph$. We decompose the complex part $\complexPart{\fixGraph}$ further into the {\em core} $\core{\fixGraph}$, which is the maximal subgraph of $\complexPart{\fixGraph}$ of minimum degree at least two. Finally, we extract the {\em kernel} $\kernel{\fixGraph}$ from the core $\core{\fixGraph}$ by considering paths $(v_0, v_1, \ldots, v_i)$ such that $v_0$ and $v_i$ have degree at least three and all internal vertices $v_1, \ldots, v_{i-1}$ have degree two. We allow the case $v_0=v_i$, in which $(v_0, v_1, \ldots, v_i)$ is a cycle. To obtain the kernel $\kernel{\fixGraph}$, we replace any such path in the core $\core{\fixGraph}$ by an edge $v_0v_i$. By doing that, loops and multiple edges can be created and therefore in general the kernel $\kernel{\fixGraph}$ is a multigraph. Finally, we reverse the above decomposition and note that we can construct the core $\core{\fixGraph}$ by subdividing the edges of the kernel $\kernel{\fixGraph}$ with additional vertices, thereby ensuring that no loops and multiple edges of the kernel survive in the core. The number of additional vertices that are used to subdivide the kernel $\kernel{\fixGraph}$ to obtain the core $\core{\fixGraph}$ is called the {\em subdivision number}, denoted by $\subdivisionNumber{\fixGraph}$, i.e. $\subdivisionNumber{\fixGraph}:=\numberVertices{\core{\fixGraph}}-\numberVertices{\kernel{\fixGraph}}$.

\subsection{Asymptotic notation}
We will study asymptotic properties of random graphs on vertex set $[n]$ as $n$ tends to $\infty$, and all asymptotics are taken with respect to $n$.
In addition to the standard Landau notation, we will use the notations by Janson \cite{asy_not} to express asymptotic orders of random variables.

\begin{definition}\label{def:asy}
	Let $\left(X_n\right)_{n \in N}$ be a sequence of random variables and $f: \N \to \R_{\geq 0}$.
	Then, we write
	\begin{itemize}
		\item
		\whp\ $X_n=O\left(f\right)$ if there exists a $c>0$ such that \whp\ $|X_n|\leq c f(n)$;
		\item
		\whp\ $X_n=\Omega\left(f\right)$ if there exists a $c>0$ such that \whp\ $|X_n|\geq c f(n)$;
		\item
		\whp\ $X_n=\Theta\left(f\right)$ if \whp\ $X_n=O\left(f\right)$ and $X_n=\Omega\left(f\right)$;
		\item 
		$X_n=O_p\left(f\right)$ if for every $\delta>0$ there exist $c>0$ and $N \in \N$ such that $\prob{|X_n|\leq c f(n)}\geq 1-\delta$ for all $n\geq N$;
		\item 
		$X_n=\Omega_p\left(f\right)$ if for every $\delta>0$ there exist $c>0$ and $N \in \N$ such that $\prob{|X_n|\geq c f(n)}\geq 1-\delta$ for all $n\geq N$ ;
		\item 
		$X_n=\Theta_p\left(f\right)$ if $X_n=O_p\left(f\right)$ and $X_n=\Omega_p\left(f\right)$.
	\end{itemize}
Moreover, for a sequence $\left(A_n\right)_{n\in\N}$ of events we say that the probability that $A_n$ occurs is bounded away from 0 and 1 if $\liminf_{n \to \infty}\prob{A_n}>0$ and $\limsup_{n \to \infty}\prob{A_n}<1$, respectively. 
\end{definition} 

We note that $X_n=O_p\left(f\right)$ if and only if $\prob{\left|X_n\right|\geq h(n)f(n)}=o(1)$ for any function $h=\omega(1)$. Similarly, we have $X_n=\Omega_p\left(f\right)$ if and only if $\prob{\left|X_n\right|\leq f(n)/h(n)}=o(1)$ for any function $h=\omega(1)$. Furthermore, the statement $X_n=O_p\left(f\right)$ is slightly weaker than having \whp\ $X_n=O\left(f\right)$. The latter says that there exists a constant $c>0$ such that for every $\delta>0$ there is a $N\in \N$ satisfying $\prob{|X_n|\leq c f(n)}\geq 1-\delta$ for all $n\geq N$. In other words, we have a uniform constant $c$ if \whp\ $X_n=O\left(f\right)$, while $c$ may depend on $\delta$ in the case of $X_n=O_p\left(f\right)$.

\section{Proof strategy}\label{sec:proof_strategy}
In order to analyse the cycle and block structure of a random planar graph $\planarRandomGraph=\planarRandomGraph(n,m)$, we decompose $\planarRandomGraph$ into smaller parts: We first decompose $\planarRandomGraph$ into the complex part $\complexPart{\planarRandomGraph}$, the core $\core{\planarRandomGraph}$, and the kernel $\kernel{\planarRandomGraph}$ as in \Cref{sub:dec}. Moreover, we obtain the core $\core{\planarRandomGraph}$ by subdividing the edges of the kernel $\kernel{\planarRandomGraph}$ by $\subdivisionNumber{\planarRandomGraph}$ many additional vertices. We note that all blocks and cycles that do not lie in unicyclic components are contained in the core $\core{\planarRandomGraph}$. Therefore, it is crucial to understand the cycle and block structure of the core $\core{\planarRandomGraph}$. To this end, we introduce an auxiliary random core $\randomCore$ which is created stepwise as follows. We choose randomly a typical kernel $K$ and a typical subdivision number $\nd$. Then we construct $\randomCore$ from $K$ by randomly subdividing the edges of $K$ with $\nd$ additional vertices. In order to analyse the cycle and block structure of $\core{\planarRandomGraph}$, we ask the following questions. (a) Which properties do a typical kernel $K$ and a typical subdivision number $k$ have, in particular with respect to cycles and blocks? 
(b) How do these properties translate to $\randomCore$ when choosing a random subdivision? 
(c) How can we relate the random graph $\randomCore$ to the original core $\core{\planarRandomGraph}$ where we first choose the random graph $\planarRandomGraph$ and then extract (deterministically) the core $\core{\planarRandomGraph}$. In the rest of this section we will give an overview how to deal with these questions. We will start by considering question (b) in \Cref{sub:construction_kernel_core}. The main idea is to find a relationship between a random core $C$ and the P\'olya urn model. Next, we will deal with question (a). We note that already lots of results about a typical kernel $K$ and a typical subdivision number $\nd$ are known, e.g. asymptotic order of $\numberVertices{K}, \numberEdges{K}$, and $\nd$ (see \Cref{thm:internal_structure}). In contrast, there are no results on the cycle and block structure of $K$ known. We will deal with these open problems as follows. Firstly, we will show that a typical kernel \lq behaves\rq\ asymptotically like a random cubic (i.e. 3-regular) planar multigraph (see \Cref{lem:kernel_cubic}). Then we will analyse the cycle (in \Cref{sub:cycles_kernel}) and block structure (in \Cref{sub:block_kernel}) of a random cubic planar multigraph by double counting arguments. Finally, we will introduce the concept of {\em conditional random graphs} in \Cref{sub:conditional} to give an answer to question (c).

\subsection{Random core and P\'olya urn model}\label{sub:construction_kernel_core}
Given a (typical) kernel $K$ and a (typical) subdivision number $k\in \N$ we let $C=C(K,k)$ be a random core chosen uniformly at random from the set of all cores with kernel $K$ and subdivision number $k$. In order to study the cycle and block structure of a random core $C$, we consider an auxiliary random multigraph $\randomMultiCore$ that \lq behaves\rq\ similarly like $C$ and is easier to study. We randomly place $\nd$ additional labelled vertices $\{v_1, \ldots, v_\nd\}$ one after another on the edges $e_1, \ldots, e_{\nc}$ of $K$ to obtain $\randomMultiCore$. More precisely, in the $i$-th step of the random process we choose uniformly at random an edge of the current multigraph and place the vertex $v_i$ on that edge. We note that loops and multiple edges are allowed in $\randomMultiCore$, although they do not appear in $C$. However, we will show that $C$ and $\randomMultiCore$ behave asymptotically quite similarly (see \Cref{lem:random_core} and \Cref{cor:contiguous}). Thus, it suffices to consider the cycle and block structure of $\randomMultiCore$ instead of $C$. 

We note that each cycle of $\randomMultiCore$ is a cycle in the kernel $K$ together with some additional vertices placed on the edges. Thus, we are interested in the distribution of $(X_1, \ldots, X_\nc)$, where $X_i$ is the number of vertices placed on edge $e_i$. We observe that we can model the random vector $(X_1, \ldots, X_\nc)$ with the following P\'olya urn model (see e.g. \cite{johnson_kotz, mahmoud} for details of P\'olya urns). We start with $\nc$ balls of $\nc$ distinct colours $\colour_1, \ldots, \colour_\nc$ in a urn, where colour $\colour_i$ represents edge $e_i$. Then we draw one ball uniformly at random from the urn, say $F_j$, and subdivide the corresponding edge $e_j$ by vertex $v_1$. By doing so the edge $e_j$ is split into two new edges. Hence, we need in the next step two balls of the colour corresponding to $e_j$. Therefore, we return the drawn ball to the urn along with an additional ball of the same colour. We repeat that procedure $\nd$ times and observe that the number of drawn balls (after $\nd$ steps) of colour $\colour_i$ is distributed like $X_i$ for each $i\in [N]$ (see also \Cref{fig:polya}). 

The P\'olya urn model provides bounds on $\min_{1\leq \ind \leq f} X_\ind$ and $\max_{1\leq \ind \leq f} X_\ind$ for various $f$ satisfying $1\leq f \leq \nc$ (see \Cref{main1}). Assuming $e_1, \ldots, e_{\NumberLoops}$ are the loops of $K$, we will derive bounds on the length of the shortest and longest cycle in $\randomMultiCore$ (see \Cref{lem:random_core2}), denoted by $\girth{\randomMultiCore}$ and $\longestCycle{\randomMultiCore}$, by applying the following inequalities:
\begin{align}
\girth{\randomMultiCore}&\ \leq\ 1+\min_{1\leq \ind \leq \NumberLoops}X_\ind; \label{bound1}
\\
\girth{\randomMultiCore}&\ \geq\ \min_{1\leq \ind \leq \nc}X_\ind; \label{bound2}
\\
\longestCycle{\randomMultiCore}&\ \geq\ \max_{1\leq \ind \leq \NumberLoops}X_\ind. \label{bound3}
\end{align}

Next, we consider how the block structure of $K$ translates to the block structure of $\randomMultiCore$. We observe that each block in $\randomMultiCore$ is a block or loop in $K$ together with additional vertices placed on the edges. So in general we can use similar ideas as for the cycle structure above. However, we need to slightly modify our P\'olya urn model (see \Cref{sub:polya2}). For the cycle structure, we use many \lq small\rq\ cycles (in fact, loops) simultaneously, but for the block structure we will fix one \lq large\rq\ block $B$ of $K$ and consider how many vertices are placed on the edges of $B$. Therefore, we just need balls of two different colours, one representing edges in $B$ and the other representing edges outside $B$. Using standard results on this P\'olya urn model, we will show that each \lq large\rq\ block $B$ of $K$ translates to a \lq large\rq\ block $\tilde{B}$ of $\randomMultiCore$ and that $\numberVertices{\tilde{B}}$ is concentrated around its expectation (see \Cref{lem:bridge_number6,lem:bridge_number7}). 

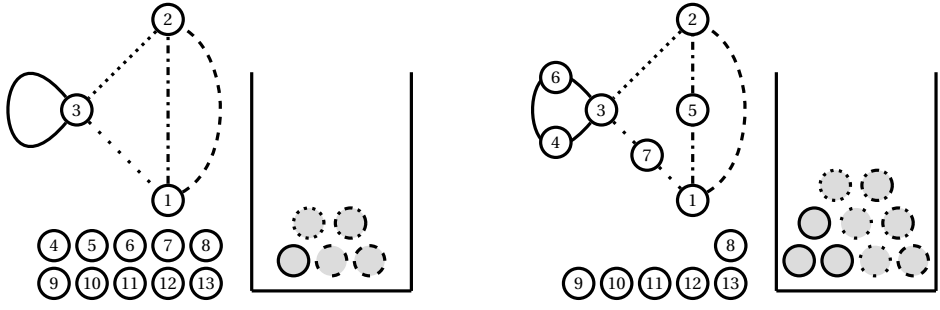
\begin{figure}[t]
	\centering
\begin{tikzpicture}[scale=1, very thick, every node/.style={circle, minimum size=0.4cm, inner sep=0pt}]
{
\node (A) at (0,0) [circle,draw] {\tiny 1};

\node (E) at (0,2.4) [circle,draw] {\tiny 2};
\node (F) at (-1.2,1.2) [circle,draw] {\tiny 3};

\draw[-, dotted] (E) to (F);
\draw [out=30,in=-30,looseness=0.9, dashed] (A) to (E);
\draw[-, dashdotted] (A) to (E);
\draw [out=130,in=230,looseness=12, solid](F) to (F);	
\draw[-, loosely dotted] (F) to (A);

\node (G) at (-1.5,-0.6) [circle,draw, fill=white] {\tiny 4};

\node (H) at (-1,-0.6) [circle, draw, fill=white] {\tiny 5};

\node (I) at (-0.5,-0.6) [circle, draw, fill=white] {\tiny 6};

\node (J) at (0,-0.6) [circle, draw, fill=white] {\tiny 7};

\node (K) at (0.5,-0.6) [circle, draw, fill=white] {\tiny 8};

\node (L) at (-1.5,-1.1) [circle, draw, fill=white] {\tiny 9};
\node (M) at (-1,-1.1) [circle, draw, fill=white] {\tiny 10};

\node (N) at (-0.5,-1.1) [circle, draw, fill=white] {\tiny 11};
\node (O) at (0,-1.1) [circle, draw, fill=white] {\tiny 12};
\node (P) at (0.5,-1.1) [circle, draw, fill=white] {\tiny 13};	
\draw[-] (1.1,-1.2) to (3.2,-1.2);
\draw[-] (1.1,-1.2) to (1.1,1.7);
\draw[-] (3.2,1.7) to (3.2,-1.2);
\node (a) at (1.65,-0.8) [circle, draw, solid, fill=lightGray] {};	
\node (b) at (2.15,-0.8) [circle, draw, loosely dashed,fill=lightGray] {};	
\node (c) at (2.65,-0.8) [circle, draw, dashed,fill=lightGray] {};
\node (d) at (1.85,-0.3) [circle, draw, dotted,fill=lightGray] {};	
\node (e) at (2.4,-0.3) [circle, draw, dashdotted, fill=lightGray] {};	

\node (A1) at (6.9,0) [circle,draw] {\tiny 1};
\node (E1) at (6.9,2.4) [circle,draw] {\tiny 2};
\node (F1) at (5.7,1.2) [circle,draw] {\tiny 3};

\draw[-, dotted] (E1) to (F1);
\draw [out=30,in=-30,looseness=0.9, dashed] (A1) to (E1);
\draw [out=130,in=230,looseness=12, solid](F1) to (F1);

\node (G1) at (5.1,0.77) [circle,draw, fill=white] {\tiny 4};

\node (H1) at (6.9,1.2) [circle, draw, fill=white] {\tiny 5};

\node (I1) at (5.1,1.63) [circle, draw, fill=white] {\tiny 6};

\node (J1) at (6.3,0.6) [circle, draw, fill=white] {\tiny 7};

\node (K1) at (7.4,-0.6) [circle, draw, fill=white] {\tiny 8};

\node (L1) at (5.4,-1.1) [circle, draw, fill=white] {\tiny 9};
\node (M1) at (5.9,-1.1) [circle, draw, fill=white] {\tiny 10};
\node (N1) at (6.4,-1.1) [circle, draw, fill=white] {\tiny 11};
\node (O1) at (6.9,-1.1) [circle, draw, fill=white] {\tiny 12};
\node (P1) at (7.4,-1.1) [circle, draw, fill=white] {\tiny 13};	
\draw[-] (8,-1.2) to (10.1,-1.2);
\draw[-] (8,-1.2) to (8,1.7);
\draw[-] (10.1,1.7) to (10.1,-1.2);
\node (a1) at (8.3,-0.8) [circle, draw, solid, fill=lightGray] {};	
\node (b1) at (8.8,-0.8) [circle, draw, solid,fill=lightGray] {};	
\node (c1) at (9.3,-0.8) [circle, draw, loosely dotted,fill=lightGray] {};
\node (d1) at (8.5,-0.3) [circle, draw, solid,fill=lightGray] {};	
\node (e1) at (9.05,-0.3) [circle, draw, loosely dotted, fill=lightGray] {};	
\node (f1) at (9.8,-0.8) [circle, draw, dashed, fill=lightGray] {};	
\node (g1) at (9.6,-0.3) [circle, draw, dashdotted,fill=lightGray] {};
\node (h1) at (8.775,0.2) [circle, draw, dotted, fill=lightGray] {};	
\node (i1) at (9.325,0.2) [circle, draw, dashdotted , fill=lightGray] {};	
\draw[-, loosely dotted] (J1) to (A1);
\draw[-, loosely dotted] (J1) to (F1);

\draw[-, dashdotted] (A1) to (H1);
\draw[-, dashdotted] (H1) to (E1);

}			
\end{tikzpicture}
	\caption{Use of a P\'olya urn to construct a random core by sequentially subdividing the edges of a kernel with $\nc=5$ edges by $\nd=10$ additional vertices. The left-hand side represents the situation at the beginning, the right-hand side after four drawings.}
\label{fig:polya}
\end{figure}

\subsection{Loops in the kernel}\label{sub:cycles_kernel}
When studying the shortest and longest cycles in the kernel, the number of loops in the kernel plays a crucial role. We will prove in \Cref{sec:random_kernel} that a typical kernel $K$ on $\nc$ edges has $\Theta\left(\nc\right)$ many loops. Firstly, we will show that the kernel $\kernel{\planarRandomGraph}$ \lq behaves\rq\ asymptotically like a random cubic planar multigraph (see \Cref{lem:kernel_cubic}). Secondly, we estimate the typical number of loops in a random cubic planar multigraph by the second moment method (see \Cref{thm:linear_loop}). To this end, we introduce the so-called {\em \loopInsertion} (see \Cref{def:loop_insertion}), which is a natural operation that changes an arbitrary cubic graph with $2n-2$ vertices to a cubic graph with $2n$ vertices and an additional loop. By using this \loopInsertion\ we can estimate the probability that there is a loop at some fixed vertex in a random cubic planar multigraph, from which we deduce the typical number of loops in a random cubic planar multigraph.

\subsection{Blocks of the kernel}\label{sub:block_kernel}
As in \Cref{sub:cycles_kernel} we will use the fact that the kernel $\kernel{\planarRandomGraph}$ behaves asymptotically like a random cubic planar multigraph. In order to analyse the block structure of a random connected cubic planar multigraph $\bridgeGraph$, we assign the \bridgeNumber\ $\bn{\bridge}$ to a bridge $\bridge$, defined as the order of the smaller component which we obtain from $\bridgeGraph$ by deleting $\bridge$ (see \Cref{def:bridge_number}). We will show that the \bridgeNumber\ $\bn{\bridge}$ is typically quite \lq small\rq: In fact, we will determine the distribution of a \bridgeNumber\ (see \Cref{lem:bridge_number1}), using the so-called {\em \bridgeInsertion} operation (see \Cref{def:bridge_insertion}). Then we combine it with a double counting argument to show that there is one block $B_1$ of linear order (see \Cref{lem:bridge_number6}). For the second largest block $B_2$ in $\bridgeGraph$ we consider the maximum $A$ of all \bridgeNumbers. As there is a bridge $\bridge$ such that $B_1$ and $B_2$ lie in different components of $\bridgeGraph-\bridge$, we will get $\numberVertices{B_2}\leq A$. On the other hand, if $\bridge$ is a bridge with $\bn{e}=A$, then the smaller component of $\bridgeGraph-\bridge$ is distributed similarly as a random connected cubic planar multigraph on $A$ vertices. Hence, this smaller component should contain a block of linear order (in $A$). Thus, the second largest block (and by induction also the $i$-th largest block for every $i\geq 2$) is of the same order as the maximum bridge number $A$. 

\subsection{Conditional random graphs}\label{sub:conditional}
In \Cref{sub:construction_kernel_core} we considered a random core $C=C(K,k)$ obtained from a (candidate) kernel $K$ by randomly subdividing the edges of $K$ by $\nd$ additional vertices. In other words, given $K$ and $k$, we considered the \lq conditional\rq\ random core $C$ conditioned on the event that its kernel is equal to $K$ and its subdivision number is equal to $k$. However, we are actually interested in the \lq unconditional\rq\ random core $\core{\planarRandomGraph}$ of a random planar graph $\planarRandomGraph = \planarRandomGraph(n,m)\ur \planarClass(n,m)$ for some function $m=m(n)$. In this section we describe a method how to obtain results on an \lq unconditional\rq\ random graph by studying the corresponding \lq conditional\rq\ random graphs (see \Cref{lem:splitting}). To do so, we need the following definition.

\begin{definition}\label{def:feasible}
Given a class $\cl$ of graphs, a set $\mathcal{S}$, and a function $\func:\cl\to \mathcal{S}$, we call a sequence $\seq=(s_n)_{n\in \N}$ {\em feasible} for $\left(\cl, \func\right)$ if for each $n \in \N$ there exists a graph $H \in \cl(n)$ such that $\func(H)=s_n$. Moreover, for each $n\in \N$ we denote by $\left(\condGraph{\randomGraph}{\seq}\right)(n)$ a graph chosen uniformly at random from the set $\left\{H \in \cl(n): \func(H)=s_n\right\}$. We will often omit the dependence on $n$ and write just $\condGraph{\randomGraph}{\seq}$ (i.e. \lq $\randomGraph$ conditioned on $\seq$\rq) instead of $\left(\condGraph{\randomGraph}{\seq}\right)(n)$.
\end{definition}
\begin{lem}\label{lem:splitting}
Let $\cl$ be a class of graphs, $\mathcal{S}$ a set, $\func:\cl\to \mathcal{S}$ a function, and $\property$ a graph property. Let $\randomGraph=\randomGraph(n)\ur \cl(n)$. If for every sequence $\seq=(s_n)_{n\in \N}$ that is feasible for $\left(\cl, \func\right)$ we have \whp\ $\condGraph{\randomGraph}{\seq} \in \property$, then we have \whp\ $\randomGraph \in \property$.
\end{lem}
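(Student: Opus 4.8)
The plan is to argue by contraposition, or rather directly by extracting a \lq bad\rq\ sequence. Suppose, for contradiction, that it is not the case that \whp\ $\randomGraph(n)\in \property$. Then there exists a constant $\varepsilon>0$ and an infinite set $N_0\subseteq \N$ such that $\prob{\randomGraph(n)\notin \property}\geq \varepsilon$ for all $n\in N_0$. The first step is to decompose the probability space of $\randomGraph=\randomGraph(n)$ according to the value of $\func$: writing $\mathcal{S}_n:=\{\func(H): H\in \cl(n)\}$ for the set of attained values, we have for each $n$
\begin{align*}
\prob{\randomGraph(n)\notin \property} \;=\; \sum_{t\in \mathcal{S}_n} \prob{\func(\randomGraph(n))=t}\cdot \prob{\left(\condGraph{\randomGraph}{t}\right)(n)\notin \property},
\end{align*}
since conditioned on $\func(\randomGraph(n))=t$, the graph $\randomGraph(n)$ is precisely uniform on $\{H\in\cl(n):\func(H)=t\}$, which by \Cref{def:feasible} is the distribution of $\left(\condGraph{\randomGraph}{t}\right)(n)$. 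This is just the law of total probability, grouping the outcomes by the value of the statistic $\func$.

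The second step is to pick out, for each $n\in N_0$, a value $t_n\in \mathcal{S}_n$ that witnesses a large conditional failure probability. Since the weighted average on the right-hand side is at least $\varepsilon$ and the weights $\prob{\func(\randomGraph(n))=t}$ sum to $1$, there must exist some $t_n\in \mathcal{S}_n$ with $\prob{\left(\condGraph{\randomGraph}{t_n}\right)(n)\notin \property}\geq \varepsilon$. For $n\notin N_0$ simply let $t_n$ be an arbitrary element of $\mathcal{S}_n$ (nonempty because $\cl(n)\neq\emptyset$; if some $\cl(n)$ were empty the statement is vacuous for that $n$). By construction the sequence $\seq:=(t_n)_{n\in\N}$ is feasible for $\left(\cl,\func\right)$, because each $t_n\in \mathcal{S}_n$ is attained by some $H\in\cl(n)$.

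The third step closes the argument: along the infinite set $N_0$ we have $\prob{\condGraph{\randomGraph}{\seq}\notin\property}\geq\varepsilon$, so it is \emph{not} the case that \whp\ $\condGraph{\randomGraph}{\seq}\in\property$ (the liminf of the success probability is at most $1-\varepsilon<1$). This contradicts the hypothesis, which asserts the \whp\ conclusion for \emph{every} feasible sequence. Hence the assumption was false and \whp\ $\randomGraph(n)\in\property$, as claimed. There is essentially no analytic obstacle here; the only point requiring a little care is the bookkeeping that conditioning $\randomGraph(n)$ on the event $\{\func(\randomGraph(n))=t_n\}$ really does reproduce the uniform distribution of \Cref{def:feasible} — this is immediate from the fact that $\randomGraph(n)$ is uniform on $\cl(n)$ and that $\{\func=t_n\}$ is one of the level sets of $\func$ — together with the extraction of the witness $t_n$, which is the standard \lq averaging \rq\ pigeonhole step.
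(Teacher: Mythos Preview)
Your proof is correct and follows essentially the same approach as the paper's: both decompose $\prob{\randomGraph(n)\in\property}$ via the law of total probability over the level sets of $\func$, and then select for each $n$ a value in $\mathcal{S}_n$ that witnesses the worst conditional probability. The paper phrases this directly (take $s_n^\ast$ to be the minimiser of $\condprob{\randomGraph(n)\in\property}{\func(\randomGraph(n))=s}$ and bound the weighted average below by its minimum), whereas you run the contrapositive and use averaging to extract a bad $t_n$ along a subsequence --- logically equivalent and equally valid.
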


The proof of \Cref{lem:splitting} is provided in \Cref{sec:proofs_aux}. In the following we illustrate how one can use \Cref{lem:splitting} to deduce that a graph property $\property$ holds \whp\ in the core $\core{\planarRandomGraph}$ of $\planarRandomGraph=\planarRandomGraph(n,m)$ by studying the random core $C(K,k)$, which is obtained by randomly subdividing the edges of a kernel $K$ with $k$ additional vertices. We start with a \lq strong\rq\ property $\mathcal{T}$ that is \whp\ satisfied by the kernel $\kernel{\planarRandomGraph}$ and the subdivision number $\subdivisionNumber{\planarRandomGraph}$, e.g. $\numberVertices{\kernel{\planarRandomGraph}}$ and $\subdivisionNumber{\planarRandomGraph}$ lie in certain intervals (see \Cref{thm:internal_structure}). Then we let $\cl(n)\subseteq\planarClass(n,m)$ be the subclass of all graphs in $\planarClass(n,m)$ fulfilling $\mathcal{T}$ and $\cl:=\bigcup_{n\in\N}\cl(n)$. We define the function $\func: \cl \to \mathcal{K}\times \N$ by $\func(H):=\left(\kernel{\fixGraph}, \subdivisionNumber{\fixGraph}\right)$ and let $\seq=(K_n, \nd_n)_{n\in \mathbb N}$ be a sequence that is feasible for $\left(\cl, \func\right)$. The core $\core{\condGraph{\randomGraph}{\seq}}$ of the conditional random graph $\condGraph{\randomGraph}{\seq}$ is distributed like $C\left(K_n,k_n\right)$ (see \Cref{rem:core_equal} for details). Now the main step is to show that \whp\ $C\left(K_n,k_n\right)\in\property$. This is usually much easier than proving \whp\ $\core{\planarRandomGraph}\in\property$ directly, as the kernel and subdivision number are not random anymore in $C\left(K_n,k_n\right)$ and furthermore, fulfil property $\mathcal{T}$. Knowing that \whp\ $C\left(K_n,k_n\right)\in\property$, it follows by \Cref{lem:splitting} that \whp\ also the core $\core{\randomGraph}$ of the random graph $\randomGraph=\randomGraph(n)\ur\cl(n)$ satisfies $\property$. Finally, this implies that \whp\ $\core{\planarRandomGraph}\in \property$ as desired, because \whp\ $\planarRandomGraph\in\mathcal{A}$ by definition of $\cl$. Applications of \Cref{lem:splitting} can be found e.g. in the proofs of \Cref{thm:general,thm:block_structure_general}.
 
\section{\Pl\ classes of graphs}\label{sec:planar_like}
In this section we will show that \Cref{thm:main1} holds for a more general class of graphs, called a \pl\ class, in which graphs satisfy certain properties that are extracted from the class of planar graphs and are essential for the aforementioned core-kernel approach. Before defining the \pl\ class, we first recall well-known classes of graphs (see e.g. \cite{kp, msw, msw1, bbg} for details). A class $\generalGraphClass$ of graphs is called
\begin{itemize}
 \item {\em weakly addable} (also known as {\em bridge-addable}) if it is closed under adding an edge between two components;
 \item {\em addable} if $\generalGraphClass$ is weakly addable and fulfils in addition the property that a graph $\fixGraph$ is in $\generalGraphClass$ if and only if all components of $\fixGraph$ are in $\generalGraphClass$;
 \item {\em closed under taking minors} if each minor of a graph $H \in \generalGraphClass$ is again in $\generalGraphClass$.
\end{itemize}
Furthermore, we assume that all considered graph classes are closed under isomorphism, i.e. closed under relabelling of vertices.

\begin{definition}\label{def:planar_like}
A class $\planarClass$ of graphs is called {\em \pl} if it satisfies the following properties.
	\begin{itemize}[labelsep=0.5mm]
		\item[(P1)] {\bf [global]}. The class $\planarClass$ is weakly addable and closed under taking minors. \\ 
		\item[(P2)] {\bf [kernel]}. Let $\mathcal{K}$ be the class of all kernels of graphs in $\planarClass$ and $\mathcal{K}_C$ be the subclass of $\mathcal{K}$ containing all {\em connected} kernels. Then $\mathcal{K}$ and $\mathcal{K}_C$ satisfy the following conditions. 
		\begin{itemize}
			\item[(K1)] {\bf [stability]}. 
			A graph is in $\planarClass$ if and only if its kernel is in $\mathcal{K}$.
			\item[(K2)] {\bf [asymptotic behaviour]}. Let $\mathcal{K}(2n,3n)$ be the subclass of $\mathcal{K}$ consisting of all kernels on vertex set $[2n]$ having $3n$ edges. Then there exist constants $\gamma>0$, $c\geq c_1>0$, and $\ce \in \R$ such that 
			\begin{align*}
			|\mathcal{K}(2n, 3n)|&=(1+o(1))cn^{-\ce}\gamma^{n}(2n)! \\
			\text{and} \quad 
			|\mathcal{K}_C(2n, 3n)|&=(1+o(1))c_1n^{-\ce}\gamma^{n}(2n)!.
			\end{align*}
		\item[(K3)] {\bf [giant component]}. Let $K(2n,3n) \ur \mathcal{K}(2n, 3n)$. Then $\numberVertices{\largestComponent{K(2n,3n)}}=2n-O_p\left(1\right)$. In addition, for each $i \in \N$, the asymptotic probability that $\numberVertices{\largestComponent{K(2n,3n)}}=2n-2i$ is bounded away from both 0 and 1. 
 		\end{itemize}
	\end{itemize}
The constant $\ce$ in (K2) is called the {\em critical exponent}. For short, we say $\planarClass$ is {\em \pl\ with critical exponent $\ce$} if it satisfies (P1) and (P2).
\end{definition}

We can show that condition (K3) in \Cref{def:planar_like} can be deduced from conditions (K1) and (K2) if $\planarClass$ is addable. In addition, each graph without complex components is in any \pl\ class. The proofs of \Cref{lem:k3,lem:k4} can be found in \Cref{sec:proofs_aux}.

\begin{lem}\label{lem:k3}
Let $\planarClass$ be a class of graphs satisfying (K1) and (K2). If, in addition, $\planarClass$ is addable, then $\planarClass$ satisfies (K3).
\end{lem}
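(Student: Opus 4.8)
The plan is to determine the limiting distribution of the deficiency $2i:=\numberVertices{K}-\numberVertices{\largestComponent{K}}$ of $K=K(2n,3n)$, to show that each of its atoms lies strictly between $0$ and $1$, and that it is tight. First I record the structural input. A multigraph in $\mathcal K(2n,3n)$ has minimum degree $\ge 3$ and $3n$ edges on $2n$ vertices, hence is $3$-regular, so each of its components is cubic and therefore has an even number of vertices. Using that $\planarClass$ is addable together with (K1), one checks that a multigraph lies in $\mathcal K$ iff all of its components do and that each component of an element of $\mathcal K$ is a connected kernel; this rests on the observations that the kernel of a graph is the disjoint union of the kernels of its complex components, that a complex component of a graph in $\planarClass$ is again in $\planarClass$ by addability, and that the kernel of a connected graph is connected. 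Since the compensation factor is multiplicative over components, this gives, for each fixed $i$ and $n>2i$, the exact weighted identity
\begin{equation*}
\bigl|\{K\in\mathcal K(2n,3n)\colon\numberVertices{\largestComponent{K}}=2n-2i\}\bigr|=\binom{2n}{2i}\,\bigl|\mathcal K_C\!\left(2(n-i),3(n-i)\right)\bigr|\cdot\bigl|\mathcal K(2i,3i)\bigr|,
\end{equation*}
these summing over $i$ to $|\mathcal K(2n,3n)|$, while for \emph{every} $i$ the left-hand side is at most $\binom{2n}{2i}\,|\mathcal K(2(n-i),3(n-i))|\,|\mathcal K(2i,3i)|$.

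Substituting the asymptotics (K2) into the exact identity (the binomial coefficient cancels the factorials) gives, for each fixed $i$,
\begin{equation*}
\prob{\numberVertices{\largestComponent{K(2n,3n)}}=2n-2i}\ \longrightarrow\ p_i:=\frac{c_1}{c}\,\gamma^{-i}\,\frac{|\mathcal K(2i,3i)|}{(2i)!}\qquad(n\to\infty),
\end{equation*}
and by (K2) again $p_i=(1+o(1))\,c_1\,i^{-\ce}$. Since $\sum_{i\le J}\prob{\cdot=2n-2i}\le 1$ for all $n$, letting $n\to\infty$ yields $\sum_{i\le J}p_i\le 1$ for every $J$, which together with $p_i\sim c_1 i^{-\ce}$ forces $\ce>1$. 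For positivity, $p_0=c_1/c>0$, and for $i\ge1$ one has $p_i>0$ as soon as $\mathcal K(2i,3i)\neq\emptyset$. To obtain the latter, note that (K2) provides, for large $n$, a connected complex graph $G\in\planarClass$ whose kernel is a cubic multigraph on $2n$ vertices; a connected cubic multigraph on $\ge 4$ vertices has two independent cycles but no vertex of degree four, so by the classical structure of graphs carrying two independent cycles it contains a subdivision of the triple edge (a theta graph) or of a ``dumbbell'' (two cycles joined by a path), but not of the figure-eight. Hence the core of $G$ — a subdivision of this kernel, and a subgraph of $G$ — contains a theta graph or a dumbbell; by minor-closedness of $\planarClass$ that graph lies in $\planarClass$, and its kernel is a cubic multigraph on two vertices, so $\mathcal K(2,3)\neq\emptyset$. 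Taking $i$ disjoint copies and using addability, $\mathcal K(2i,3i)\neq\emptyset$, so $p_i>0$ for all $i\ge0$.

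It remains to prove tightness, i.e.\ $\sum_i p_i=1$, equivalently $\numberVertices{\largestComponent{K(2n,3n)}}=2n-O_p(1)$. From the crude bound above and (K2) one gets $\prob{\numberVertices{\largestComponent{K(2n,3n)}}=2n-2i}\le C\,\bigl(n/(i(n-i))\bigr)^{\ce}$ uniformly for $1\le i\le n-1$, which for $i\le n/2$ is $\le C'i^{-\ce}$; since $\ce>1$, the mass on $\{J<i\le n/2\}$ is $\le C'\sum_{i>J}i^{-\ce}\to0$ as $J\to\infty$, and by the symmetry $i\leftrightarrow n-i$ the mass on $\{n/2\le i\le n-L(n)\}$ with $L(n):=\lfloor\log n\rfloor$ also tends to $0$. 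The only remaining regime is a largest component of size $O(\log n)$, where the crude bound is useless; here I would instead bound the probability that \emph{all} components have size $\le 2\ell$ ($\ell\le L(n)$) by a Cauchy estimate on $[y^{2m}]\exp\bigl(\sum_{j\le\ell}\tfrac{|\mathcal K_C(2j,3j)|}{(2j)!}\,y^{2j}\bigr)$: the coefficients of this entire truncated generating function decay fast enough to beat the growth rate $\gamma$ of $|\mathcal K(2n,3n)|/(2n)!$ supplied by (K2). Combining the regimes gives $\prob{\numberVertices{\largestComponent{K(2n,3n)}}\le n}\to0$ and hence $\numberVertices{\largestComponent{K(2n,3n)}}=2n-O_p(1)$; together with $\sum_{i\le J}p_i\ge1-C'\sum_{i>J}i^{-\ce}$ this forces $\sum_i p_i=1$, so $p_i<1$ for every $i$ (as $\sum_j p_j=1$ while $p_0>0$). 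I expect this last regime — ruling out a logarithmic-size largest component, where plain enumeration is too lossy and the ``set of connected kernels'' structure $\mathcal K=\mathrm{SET}(\mathcal K_C)$ must be exploited — to be the main obstacle.
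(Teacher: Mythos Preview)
The paper's own proof is merely a pointer to \cite{planar}, so there is no detailed argument to compare against; your approach---the exponential formula for $\mathcal K$ as a set of connected kernels, combined with the asymptotics (K2)---is the natural one and is essentially what that reference does. Two points, however, need attention.

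First, to obtain $p_i>0$ you show $\mathcal K(2,3)\neq\emptyset$ by locating a theta or dumbbell subdivision inside some core and then invoking \emph{minor-closedness} of $\planarClass$. But minor-closedness is part of (P1) and is not among the hypotheses of this lemma; you only have (K1), (K2) and addability. The fix is direct and avoids your structural detour entirely: from (K1), (K2) and addability one sees (exactly as in the proof that non-complex graphs lie in any \pl\ class) that every graph without complex components lies in $\planarClass$. In particular two disjoint triangles do, and by weak addability so does the graph obtained by joining them with a single edge. The kernel of the latter is the dumbbell on two vertices, so $\mathcal K(2,3)\neq\emptyset$, and disjoint unions then give $\mathcal K(2i,3i)\neq\emptyset$ for all $i$.

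Second, your tightness argument is made harder than necessary by pushing the middle cut-off out to $L(n)=\lfloor\log n\rfloor$ and then leaving the regime ``all components of size $\le 2L(n)$'' as an admitted obstacle. Once your crude bound and $\ce>1$ yield $\sum_{J\le i\le n-J}\prob{\numberVertices{\largestComponent{K}}=2(n-i)}\le 2C'\sum_{j\ge J}j^{-\ce}\to 0$ as $J\to\infty$, the only remaining piece is $\prob{\numberVertices{\largestComponent{K}}<2J}$ for each \emph{fixed} $J$. This probability equals $(2n)!\,[y^{2n}]\exp\bigl(P_J(y)\bigr)\big/\,|\mathcal K(2n,3n)|$ with $P_J$ a \emph{fixed polynomial} of degree at most $2(J-1)$; the numerator therefore decays super-exponentially in $n$ while the denominator grows like $cn^{-\ce}\gamma^n(2n)!$, so the ratio tends to $0$. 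No uniform estimate over $\ell\le\log n$ is required, and what you flag as the ``main obstacle'' disappears.
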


\begin{lem}\label{lem:k4}
Let $\planarClass$ be a \pl\ class of graphs and $\fixGraph$ a graph without complex components. Then $\fixGraph \in \planarClass$.
\end{lem}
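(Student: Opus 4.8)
The plan is to reduce the statement to the stability condition (K1), using the elementary observation that a graph without complex components has an empty kernel. First I would note that if $\fixGraph$ has no complex components then, by definition, its complex part $\complexPart{\fixGraph}$ is empty; hence its core $\core{\fixGraph}$ (a subgraph of $\complexPart{\fixGraph}$) is empty, and therefore its kernel $\kernel{\fixGraph}$ is empty as well. In other words, $\kernel{\fixGraph}$ is the empty graph (on the empty vertex set), regardless of how many vertices or edges $\fixGraph$ has.

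The second step is to check that the empty graph belongs to $\mathcal{K}$, the class of all kernels of graphs in $\planarClass$. By (K2) we have $|\mathcal{K}(2n,3n)|=(1+o(1))cn^{-\ce}\gamma^n(2n)!>0$ for all sufficiently large $n$, so $\mathcal{K}\neq\emptyset$; since by definition every element of $\mathcal{K}$ is the kernel of some graph in $\planarClass$, the class $\planarClass$ is itself non-empty. Because $\planarClass$ is closed under taking minors (part of (P1)) and the empty graph is a minor of every graph (delete all vertices), the empty graph lies in $\planarClass$. Its kernel is again the empty graph, and hence the empty graph lies in $\mathcal{K}$.

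Finally, applying (K1) to $\fixGraph$: its kernel $\kernel{\fixGraph}$ is the empty graph, which we have just shown lies in $\mathcal{K}$, so condition (K1) yields $\fixGraph\in\planarClass$, as desired. I do not expect a genuine obstacle here; the only point requiring a small amount of care is the bootstrap that $\planarClass$ (hence $\mathcal{K}$) is non-empty, which is immediate from the asymptotic count in (K2), after which minor-closedness forces the empty graph — indeed every forest — into $\planarClass$, so that the empty graph is a legitimate kernel. Note that weak addability (the other half of (P1)) is not needed for this lemma.
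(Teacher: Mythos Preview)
Your proof is correct and uses the same three ingredients as the paper's proof: non-emptiness of $\planarClass$ from (K2), minor-closedness from (P1), and the stability condition (K1). The paper organizes them slightly differently---it picks some $H_1\in\planarClass$, observes that $\kernel{H\cup H_1}=\kernel{H_1}\in\mathcal{K}$, applies (K1) to get $H\cup H_1\in\planarClass$, and then extracts $H$ as a minor---whereas you first pass to the empty graph via minor-closedness and then apply (K1) directly to $H$; the content is the same.
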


The next lemma indicates that a \pl\ class is quite universal and rich, because it includes the class of cactus graphs (a cactus graph is a graph in which every edge belongs to at most one cycle), the class of series-parallel graphs, the class of planar graphs, and the class of graphs on a surface.

\begin{lem}[\cite{planar, sparse_outerplanar, surface, master_thesis_mosshammer}]\label{lem:planar_like}
The following classes of graphs are \pl\ with critical exponent $\ce$:
\begin{enumerate}
\item the class of cactus graphs with $\ce=5/2$;
\item the class of series-parallel graphs with $\ce=5/2$;
\item \label{lem:planar_like_1} the class of planar graphs with $\ce=7/2$;
\item the class of graphs embeddable on an orientable surface of genus $g \in \N$ with $\ce=-5g/2+7/2$.
\end{enumerate}
\end{lem}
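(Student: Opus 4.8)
The plan is to verify, for each of the four classes in turn, the defining properties (P1), (K1), (K2), (K3) of \Cref{def:planar_like}, reading off the critical exponent $\ce$ from the enumerative input; (P1) and (K1) are handled uniformly and structurally, (K2) is the analytic heart and is imported from the cited works, and (K3) is then obtained from \Cref{lem:k3} for the three addable classes and from the surface-graph literature for the genus-$g$ class, which is not addable. For (P1) I would first note that all four classes are closed under taking minors: planar graphs by Wagner's theorem, series-parallel graphs by the excluded minor $K_4$, cactus graphs by the excluded minor $K_4-e$ (the diamond), and graphs embeddable on an orientable surface of genus $g$ by the elementary fact that deleting or contracting an edge of an embedded graph leaves it embedded. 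For weak addability, observe that for cactus, series-parallel, and planar graphs all excluded minors are $2$-connected, so a graph lies in the class if and only if each of its components does; in particular each of these three classes is even addable. For the genus-$g$ class, additivity of the orientable genus over the blocks of a connected graph (Battle--Harary--Kodama--Youngs), combined with additivity over components, shows that joining two graphs of the class by a bridge leaves the genus unchanged and hence at most $g$, so the class is weakly addable; for $g\ge 1$ it is not addable, as two disjoint copies of $K_5$ have genus $2$.

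For (K1), recall that $\mathcal{K}$ is by definition the class of kernels of graphs in $\planarClass$, so $\fixGraph\in\planarClass$ immediately gives $\kernel{\fixGraph}\in\mathcal{K}$. For the converse, suppose $\kernel{\fixGraph}=\kernel{\fixGraph_0}$ for some $\fixGraph_0\in\planarClass$. The graph $\fixGraph$ is obtained from its kernel by subdividing edges to form the core, attaching rooted trees at the core vertices to form the complex part, and then taking the disjoint union with the simple part, whose components are all forests or unicyclic graphs and hence lie in $\planarClass$ by \Cref{lem:k4}. Each of these operations preserves membership in every one of the four classes: subdivision and tree-attachment change neither the family of minors that appear nor the genus, and adjoining disjoint forest or unicyclic components does not change the genus either. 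Since $\kernel{\fixGraph}=\kernel{\fixGraph_0}$ already arises from $\fixGraph_0\in\planarClass$, it follows that $\fixGraph\in\planarClass$; the only delicate point is the surface class, where one uses that the complex part of $\fixGraph$ has the same genus as $\kernel{\fixGraph}$, which is at most $g$, and that adjoining genus-$0$ components keeps the total genus at most $g$.

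For (K2), note that a kernel on $[2n]$ with $3n$ edges has minimum degree at least $3$ and is therefore exactly $3$-regular, so $\mathcal{K}(2n,3n)$ is the compensation-factor-weighted class of cubic multigraphs in the relevant kernel class and $\mathcal{K}_C(2n,3n)$ its connected subclass. The asymptotic counts of these classes, in the precise form $(1+o(1))\,cn^{-\ce}\gamma^{n}(2n)!$ and $(1+o(1))\,c_1 n^{-\ce}\gamma^{n}(2n)!$, are supplied by \cite{planar} for planar graphs, by \cite{surface} for graphs on an orientable surface of genus $g$, and by \cite{sparse_outerplanar, master_thesis_mosshammer} for series-parallel and cactus graphs; a singularity analysis of the associated generating functions in these works yields the critical exponents $\ce=7/2$ (planar), $\ce=-5g/2+7/2$ (genus $g$, consistent with the planar value at $g=0$), and $\ce=5/2$ (series-parallel and cactus), and gives $c\ge c_1>0$ since $\mathcal{K}_C\subseteq\mathcal{K}$.

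Finally, for (K3): the cactus, series-parallel, and planar classes are addable by the discussion of (P1), so (K1) and (K2) together with \Cref{lem:k3} yield (K3); and for the genus-$g$ class, the assertions $\numberVertices{\largestComponent{K(2n,3n)}}=2n-\Op{1}$ and that $\prob{\numberVertices{\largestComponent{K(2n,3n)}}=2n-2i}$ is bounded away from $0$ and $1$ for each fixed $i\in\N$ are read off from the component analysis of random cubic graphs on surfaces in \cite{surface}. The main obstacle is the analytic enumeration behind (K2): the precise normalized asymptotics for cubic kernels in each class rest on the generating-function machinery of the cited papers and constitute the only genuinely heavy ingredient; within the present paper the care required is confined to verifying in (K1) that the reconstruction operations preserve membership (in particular getting the non-additivity of the surface class right) and to checking that the imported asymptotics are already in, or can be brought into, exactly the normalized form demanded by (K2) and (K3).
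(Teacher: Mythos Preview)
Your proposal follows essentially the same strategy as the paper's proof: import (K2) from the cited enumerative references, note (P1) and (K1) directly, and obtain (K3) from \Cref{lem:k3} for the three addable classes while citing \cite{surface} for the genus-$g$ class. The paper's own proof is much terser (it simply asserts (P1) and (K1) as obvious for cactus, series-parallel, and planar, and cites \cite{surface} for all of (P2) in the surface case), but the logical skeleton is the same.

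One small circularity: in your argument for (K1) you invoke \Cref{lem:k4} to conclude that forests and unicyclic graphs lie in $\planarClass$. But \Cref{lem:k4} is stated for \pl\ classes and its proof uses (K1), which is precisely what you are establishing. The fix is immediate for the specific classes at hand: forests and unicyclic graphs are trivially cactus, series-parallel, planar, and of genus $0$, so you can simply replace the appeal to \Cref{lem:k4} by this direct observation. With that adjustment your argument is correct and, apart from the additional detail, matches the paper's route.
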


\begin{proof}[Proof of \Cref{lem:planar_like}]
As shown in \cite{planar, master_thesis_mosshammer, sparse_outerplanar}, the class of planar graphs \cite{planar}, the class of series-parallel graphs \cite{master_thesis_mosshammer}, and the class of cactus graphs \cite{sparse_outerplanar} satisfy (K2).
 
Obviously, these classes also fulfil (P1), (K1) and are addable. Thus, they are \pl\ classes due to \Cref{lem:k3}. Moreover, in \cite{surface} it was shown that the class of graphs that are embeddable on an orientable surface of genus $g\in \N\cup\{0\}$ satisfies (P2). Thus, they also form a \pl\ class of graphs, since they trivially fulfil (P1).
\end{proof}

Instead of proving \Cref{thm:main1} only for the class of planar graphs, we will show the following generalisation to \pl\ classes of graphs in \Cref{sec:proofs_main}. 
\begin{thm}\label{thm:general}
\Cref{thm:main1} is true for any \pl\ class of graphs.
\end{thm}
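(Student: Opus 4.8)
The plan is to prove all three parts of \Cref{thm:main1} for an arbitrary \pl\ class $\planarClass$; throughout fix $\planarRandomGraph=\planarRandomGraph(n,m)\ur\planarClass(n,m)$, $\LargestComponent=\largestComponent{\planarRandomGraph}$, $\rest=\planarRandomGraph\setminus\LargestComponent$, and write $m=n/2+s$. For parts~(a) and~(b) I would reduce to $G(n,m)$. By \Cref{lem:k4} every graph without complex components lies in $\planarClass$, so $\mathcal{G}^0(n,m)\subseteq\planarClass(n,m)\subseteq\mathcal{G}(n,m)$, where $\mathcal{G}^0(n,m)$ is the class of all graphs on $[n]$ with $m$ edges and no complex component. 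When $s^3n^{-2}\to-\infty$ it is classical that $\prob{G(n,m)\in\mathcal{G}^0(n,m)}\to1$, hence $|\mathcal{G}^0(n,m)|/|\mathcal{G}(n,m)|\to1$, hence also $|\mathcal{G}^0(n,m)|/|\planarClass(n,m)|\to1$, so $\planarRandomGraph(n,m)$ is contiguous to $G(n,m)$ and part~(a) follows from \Cref{thm:known}\ref{thm:known1}. When $s=O(n^{2/3})$ the complex part of $G(n,m)$ is present with probability bounded away from $0$ and $1$ but has $O_p(1)$ vertices and edges in its kernel; since $\planarClass$ is closed under taking minors and only bounded multigraphs can occur as kernels, $\prob{G(n,m)\in\planarClass}$ stays bounded away from $0$ (\Cref{thm:critcalRegime}\ref{thm:critcalRegime_a}), so every property holding \whp\ in $G(n,m)$ holds \whp\ in $\planarRandomGraph(n,m)$, and part~(b) follows from \Cref{thm:known}\ref{thm:known2}; the claim that $\prob{\LargestComponent\text{ is a tree}}$ is bounded away from $0$ and $1$ needs one extra observation, namely that \whp\ in this range every component of $G(n,m)$ is a tree, unicyclic, or has a bounded kernel in $\mathcal{K}$, so up to an $o(1)$ error the events $\{\LargestComponent\text{ is a tree}\}$ and $\{\LargestComponent\text{ is unicyclic}\}$ are each contained in $\{G(n,m)\in\planarClass\}$ and each has probability bounded away from $0$.

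The substantial case is part~(c), $s^3n^{-2}\to\infty$, which I would handle by the core--kernel approach. Every cycle of $\planarRandomGraph$ lies in the $2$-core of its component, hence either in $\core{\planarRandomGraph}$ or is the unique cycle of a unicyclic component, so $\longestCycle{\planarRandomGraph}=\max\{\longestCycle{\core{\planarRandomGraph}},\longestCycle{\rest}\}$ and it suffices to understand the cycle structures of $\core{\planarRandomGraph}$ and of $\rest$ separately. For the core, I apply \Cref{lem:splitting}: restrict to the subclass on which $\left(\kernel{\planarRandomGraph},\subdivisionNumber{\planarRandomGraph}\right)$ satisfies a strong property $\mathcal{T}$ --- $\numberEdges{\kernel{\planarRandomGraph}}$ and $\subdivisionNumber{\planarRandomGraph}$ in the typical intervals of \Cref{thm:internal_structure} (so $\numberEdges{\kernel{\planarRandomGraph}}=\Theta\left(sn^{-2/3}\right)$, $\subdivisionNumber{\planarRandomGraph}=\Theta\left(sn^{-1/3}\right)$), the giant component of $\kernel{\planarRandomGraph}$ carrying all but $O(1)$ of its vertices, and $\numberLoops{\kernel{\planarRandomGraph}}=\Theta\left(\numberEdges{\kernel{\planarRandomGraph}}\right)$, which holds \whp\ by \Cref{lem:kernel_cubic} (a typical kernel behaves like a random cubic planar multigraph) together with \Cref{thm:linear_loop} (such a multigraph has linearly many loops). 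Conditioning via $\func(H)=\left(\kernel{\fixGraph},\subdivisionNumber{\fixGraph}\right)$, for every feasible $(K,\nd)$ the core of $\condGraph{\planarRandomGraph}{(K,\nd)}$ is distributed like the random core $\randomCore(K,\nd)$ obtained by subdividing $K$ by $\nd$ vertices, and $K$ now has $\numberEdges{K}=\Theta\left(sn^{-2/3}\right)$, $\Theta\left(\numberEdges{K}\right)$ loops, and $\numberVertices{\randomCore(K,\nd)}=\numberVertices{K}+\nd=\Theta\left(sn^{-1/3}\right)$. Passing to the auxiliary core $\randomMultiCore$ (contiguous to $\randomCore(K,\nd)$ by \Cref{lem:random_core} and \Cref{cor:contiguous}) and letting $X_i$ be the number of vertices placed on the $i$-th edge of $K$, \Cref{lem:random_core2} controls $\girth{\randomMultiCore}$ and $\longestCycle{\randomMultiCore}$ through \eqref{bound1}--\eqref{bound3}, and the P\'olya-urn estimates of \Cref{main1} give $\min_{1\le i\le\numberEdges{K}}X_i=\Theta_p\bigl(\nd\,\numberEdges{K}^{-2}\bigr)=\Theta_p\left(ns^{-1}\right)$ and $\max_{1\le i\le\NumberLoops}X_i=\Omega_p\bigl(\tfrac{\nd}{\numberEdges{K}}\log\numberEdges{K}\bigr)=\Omega_p\bigl(n^{1/3}\log(sn^{-2/3})\bigr)$, while $\longestCycle{\core{\planarRandomGraph}}\le\numberVertices{\core{\planarRandomGraph}}=O\left(sn^{-1/3}\right)$ \whp; transferring back via \Cref{cor:contiguous} and \Cref{lem:splitting} yields $\girth{\core{\planarRandomGraph}}=\Theta_p\left(ns^{-1}\right)$, $\longestCycle{\core{\planarRandomGraph}}=\Omega_p\bigl(n^{1/3}\log(sn^{-2/3})\bigr)$, and $\longestCycle{\core{\planarRandomGraph}}=O\left(sn^{-1/3}\right)$, and since by (K3) $\LargestComponent$ contains all but an $O_p(1)$-sized part of the core these hold with $\LargestComponent$ in place of $\core{\planarRandomGraph}$.

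To finish part~(c) I still need $\longestCycle{\rest}=\Theta_p\left(n^{1/3}\right)$: here $\rest$ has $(1+o(1))n$ vertices and, because planarity keeps the excess spread out rather than concentrating it in $\LargestComponent$ (cf. \cite[Theorem~1.7]{surface}), stays essentially at criticality, so $\rest$ is distributed --- up to negligible corrections --- as a \pl\ random graph in the critical window and $\longestCycle{\rest}=\Theta_p\left(n^{1/3}\right)$ follows exactly as in part~(b). Finally, since $\longestCycle{\LargestComponent}=\Omega_p\bigl(n^{1/3}\log(sn^{-2/3})\bigr)$ exceeds every fixed multiple of $n^{1/3}$ \whp\ (because $\log(sn^{-2/3})\to\infty$), whereas $\longestCycle{\rest}=O_p\left(n^{1/3}\right)$, the longest cycle of $\planarRandomGraph$ is \whp\ contained in $\LargestComponent$, which completes part~(c). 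I expect the main obstacle to be the pair \Cref{lem:kernel_cubic}--\Cref{thm:linear_loop}: showing that a typical kernel in an arbitrary \pl\ class is exchangeable (for \whp-properties) with a uniformly random cubic planar multigraph --- this is precisely where conditions (K1)--(K3) and the sharp enumeration constant in (K2) enter --- and then that such a multigraph has a \emph{linear} number of loops, which I would establish via the \loopInsertion\ operation and a second-moment computation. A secondary difficulty, which this plan does not overcome, is that the lower bound $\Omega_p\bigl(n^{1/3}\log(sn^{-2/3})\bigr)$ for $\longestCycle{\LargestComponent}$ only detects cycles built from a single loop of the kernel and hence does not match the upper bound $O\left(sn^{-1/3}\right)$; closing this gap would require understanding the longest cycle of a random cubic planar (multi)graph, which appears to be open.
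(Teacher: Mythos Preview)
Your proposal is correct and follows essentially the same route as the paper: transfer from $G(n,m)$ in the (sub)critical regimes via \Cref{lem:k4} and \Cref{thm:critcalRegime}, and in the supercritical regime use \Cref{lem:splitting} to condition on the kernel and subdivision number, then apply the P\'olya-urn estimates (\Cref{lem:random_core2}, \Cref{cor:contiguous}) together with the loop count (\Cref{thm:linear_loop} via \Cref{lem:kernel_cubic}). Two small points are worth tightening.

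First, your treatment of $\longestCycle{\rest}=\Theta_p(n^{1/3})$ is the one place where the sketch is too loose. Conditioned on $\LargestComponent=H_n$, the remainder $\rest$ is \emph{not} distributed as $\planarRandomGraph(n_U,m_U)$: membership of $R'\cup H_n$ in $\planarClass$ depends on whether $\kernel{H_n}\cup\kernel{R'}\in\mathcal{K}$, which for a general \pl\ class is not the same as $\kernel{R'}\in\mathcal{K}$. The paper sidesteps this by comparing to $G(n_U,m_U)$ instead: it conditions on $\LargestComponent$ (using \Cref{thm:internal_structure}\ref{thm:internal_structure_f},\ref{thm:internal_structure_g} to pin down $n_U,m_U$ in the critical window), and then observes that any $R'\in\mathcal{G}(n_U,m_U)$ with no complex component and with $\numberVertices{\largestComponent{R'}}<\numberVertices{H_n}$ is a valid remainder, because then $\kernel{R'\cup H_n}=\kernel{H_n}\in\mathcal{K}$ by (K1). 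Since such $R'$ occupy a fraction of $\mathcal{G}(n_U,m_U)$ bounded away from~$0$ (\Cref{thm:critcalRegime}), one can transfer $\longestCycle{R_n}=\Theta_p(n^{1/3})$ from \Cref{thm:known}\ref{thm:known2}.

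Second, the paper conditions on $\bigl(\kernel{\LargestComponent},\subdivisionNumber{\LargestComponent}\bigr)$ rather than on $\bigl(\kernel{\planarRandomGraph},\subdivisionNumber{\planarRandomGraph}\bigr)$. Your choice also works (the small components of the kernel carry only $O_p(1)$ vertices and loops, so the min/max over loops is unaffected), but conditioning directly on the giant piece makes the passage from $\girth{\core{\cdot}}$ to $\girth{\LargestComponent}$ immediate and avoids the extra bookkeeping.
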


This immediately implies that \Cref{thm:main1} is also true for the graph classes in \Cref{lem:planar_like}.
\begin{coro}\label{coro:general}
	\Cref{thm:main1} is true for the class of cactus graphs, the class of series-parallel graphs, and the class of graphs embeddable on an orientable surface of genus $g \in \N\cup\{0\}$.
\end{coro}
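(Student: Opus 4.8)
The plan is to obtain \Cref{coro:general} as an immediate consequence of \Cref{thm:general} together with \Cref{lem:planar_like}. First I would invoke \Cref{lem:planar_like}, which asserts that the class of cactus graphs, the class of series-parallel graphs, and the class of graphs embeddable on an orientable surface of genus $g\in\N\cup\{0\}$ are each \pl\ (with critical exponents $5/2$, $5/2$, and $-5g/2+7/2$, respectively). Then I would apply \Cref{thm:general}, which states that \Cref{thm:main1} holds verbatim for every \pl\ class of graphs, irrespective of the value of the critical exponent $\ce$. Since the conclusions of \Cref{thm:main1} only involve the largest component $\LargestComponent$, the graph $\rest$ outside it, the girth, and the circumference --- all well defined for any isomorphism-closed class of graphs --- no further adaptation of the statement is required, and the corollary follows.

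The one point worth a word of bookkeeping is the boundary case $g=0$, where the class of graphs embeddable on the sphere coincides with the class of planar graphs: substituting $g=0$ into item (d) of \Cref{lem:planar_like} gives critical exponent $-5\cdot 0/2+7/2=7/2$, consistent with item (c), so there is no ambiguity. Beyond this, the corollary has no independent content: all the substance lies in \Cref{thm:general} and in the verification of axioms (P1) and (P2) for each of the three classes, the latter being precisely \Cref{lem:planar_like}.

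Accordingly I do not anticipate any genuine obstacle in \Cref{coro:general} itself; it is a purely formal deduction. The hard work is upstream and already done: a self-contained argument would require reproving \Cref{thm:general}, whose proof is the technical core of the paper (the P\'olya urn analysis of the auxiliary random core $\randomMultiCore$, the reduction of a typical kernel to a random cubic planar multigraph, and the conditional-random-graph reduction of \Cref{lem:splitting}), but for the corollary at hand it suffices to quote \Cref{thm:general} and \Cref{lem:planar_like}.
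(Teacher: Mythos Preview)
Your proposal is correct and matches the paper's own proof exactly: the paper simply states that the result follows directly from \Cref{lem:planar_like} and \Cref{thm:general}. Your additional remarks about the $g=0$ boundary case and about where the substantive work lies are accurate but superfluous for the corollary itself.
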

In contrast to the classes of graphs in \Cref{coro:general}, the class of {\em outerplanar} graphs is {\em not} \pl, since subdividing an edge in an outerplanar graph can lead to a non-outerplanar graph. Hence, a non-outerplanar graph can have an outerplanar kernel, and thus (K1) is violated. Nevertheless, we can prove that \Cref{thm:main1} is also true for outerplanar graphs by using some results from \cite{sparse_outerplanar}: (i) for the cases $s^3 n^{-2} \to -\infty$ and $s=O\left(n^{2/3}\right)$ we use that the asymptotic probability that the uniform random graph $G(n,m)$ is outerplanar is bounded away from 0 (see \Cref{thm:critcalRegime}\ref{thm:critcalRegime_a}); (ii) if $s^3n^{-2}\to \infty$, we use the fact that a random outerplanar graph is \whp\ a cactus graph \cite[Theorem 4]{sparse_outerplanar}.
\begin{coro}\label{coro:outerplanar}
	\Cref{thm:main1} is true for the class of outerplanar graphs.
\end{coro}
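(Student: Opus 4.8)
The plan is to split into the same three ranges of $s$ as in \Cref{thm:main1} and treat each by a different transfer argument, so that in none of the cases do we need to rerun the full core--kernel machinery for the (non-\pl) class of outerplanar graphs. Let $\mathcal{O}(n,m)$ denote the class of outerplanar graphs on $[n]$ with $m$ edges, and $O(n,m)\ur\mathcal O(n,m)$. Throughout I would use that the class of cactus graphs \emph{is} \pl\ (by \Cref{lem:planar_like}), so that \Cref{thm:main1} already holds for it via \Cref{thm:general}; call this statement $\property$ (it is really the conjunction of the three items, but each is a graph property and the same argument applies to each separately).

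First, the subcritical and critical cases $s^3n^{-2}\to-\infty$ and $s=O(n^{2/3})$. Here I would invoke \Cref{thm:critcalRegime}\ref{thm:critcalRegime_a} (to be stated later), which says that $\liminf_n\prob{G(n,m)\text{ is outerplanar}}>0$. Consequently, for any graph property $\mathcal Q$ with $\prob{G(n,m)\in\mathcal Q}\to1$, conditioning on outerplanarity still gives $\prob{G(n,m)\in\mathcal Q\mid G(n,m)\text{ outerplanar}}\to1$, and the conditioned random graph is exactly $O(n,m)$; so every property holding \whp\ in $G(n,m)$ holds \whp\ in $O(n,m)$. Since \Cref{thm:known}\ref{thm:known1} and \ref{thm:known2} give precisely the statements of \Cref{thm:main1}\ref{thm:main1_a} and \ref{thm:main1_b} for $G(n,m)$ (note items \ref{thm:main1_a}, \ref{thm:main1_b} of \Cref{thm:main1} are literally identical to \Cref{thm:known}\ref{thm:known1}, \ref{thm:known2}), the outerplanar statements follow. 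One small point to check: the assertions in item \ref{thm:main1_b} are conditional on ``there is a cycle in $\LargestComponent$'' and on the event having probability bounded away from $0$ and $1$; these too are inherited, because the conditioning event (outerplanarity) has probability bounded away from $0$, so lower bounds on probabilities are preserved up to a constant factor and ``$\Theta_p$'' statements transfer verbatim.

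Second, the weakly supercritical case $s^3n^{-2}\to\infty$. Here $G(n,m)$ is \whp\ not even planar, so the previous trick fails; instead I would use \cite[Theorem 4]{sparse_outerplanar}, that in this range $O(n,m)$ is \whp\ a cactus graph, i.e. $O(n,m)$ and $A(n,m)\ur\mathcal{C}_{\mathrm{act}}(n,m)$ are contiguous (in the sense of \Cref{def:contiguous}) --- more precisely, $O(n,m)$ lies \whp\ in the cactus class, so any property holding \whp\ for a uniform random cactus graph with $m$ edges also holds \whp\ for $O(n,m)$ after conditioning on the \whp-event that $O(n,m)$ is a cactus. Since the cactus class is \pl\ with $\ce=5/2$, \Cref{thm:general} gives \Cref{thm:main1}\ref{thm:main1_c} for it, and hence for $O(n,m)$. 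The only thing to verify is that the uniform distribution on outerplanar graphs with $m$ edges, conditioned on being a cactus, is the uniform distribution on cactus graphs with $m$ edges --- but this is immediate, since cactus graphs with $m$ edges form a sub\emph{class} of outerplanar graphs with $m$ edges and conditioning a uniform measure on a subset gives the uniform measure on that subset.

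The main obstacle is not any single deep step but making sure the citations do the work claimed: specifically, that \Cref{thm:critcalRegime}\ref{thm:critcalRegime_a} indeed gives outerplanarity (not merely planarity) with probability bounded away from $0$ in both subranges $s^3n^{-2}\to-\infty$ and $s=O(n^{2/3})$, and that \cite[Theorem 4]{sparse_outerplanar} is stated for the edge-constrained model $O(n,m)$ in exactly the range $s^3n^{-2}\to\infty$ with $s=o(n)$ rather than only for some narrower window. If the cited outerplanarity result is phrased only for $m=O(n^{2/3})$ and not also for the full subcritical range, one would additionally note that for $s^3n^{-2}\to-\infty$ one may further restrict: \Cref{thm:known}\ref{thm:known1} says $\LargestComponent$ is \whp\ a tree and all of $G(n,m)$ is then \whp\ a forest plus a sparse remainder, and a forest is trivially outerplanar, so the conditioning event again has probability bounded away from $0$; I would spell this out if needed. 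Modulo these bookkeeping checks, the corollary follows with no new computation.
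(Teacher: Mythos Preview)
Your proposal is correct and follows essentially the same route as the paper: transfer from $G(n,m)$ in the subcritical and critical regimes via \Cref{thm:critcalRegime}\ref{thm:critcalRegime_a}, and transfer from the cactus class in the weakly supercritical regime via \cite[Theorem~4]{sparse_outerplanar}. Your residual worry about whether \Cref{thm:critcalRegime}\ref{thm:critcalRegime_a} yields outerplanarity (rather than mere planarity) is unfounded: the theorem gives ``no complex component'' with probability bounded away from $0$, and any graph whose components are trees or unicyclic is trivially outerplanar (indeed a cactus), so the conditioning argument goes through verbatim; likewise \Cref{thm:critcalRegime} is stated for all $s\le Mn^{2/3}$, which covers both the subcritical and critical ranges.
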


In order to prove \Cref{thm:general} in \Cref{sec:proofs_main}, we will need the following two known facts. The first statement was shown in \cite{surface} by applying the core-kernel approach and provides useful information about a typical core and a typical kernel when $s^3n^{-2} \to \infty$. The later one deals with the cases $s^3n^{-2} \to -\infty$ and $s=O\left(n^{2/3}\right)$.

\begin{thm}[{\cite{surface}}]\label{thm:internal_structure}
Let $\planarClass$ be a \pl\ class of graphs, $P=\planarRandomGraph(n,m) \ur \mathcal \planarRandomGraph(n,m)$, $\LargestComponent=\largestComponent{\planarRandomGraph}$ the largest component of $P$, and $\rest=\Rest{\planarRandomGraph}=P\setminus L_1$. Assume $m=n/2+s$ for $s=s(n)=o(n)$ and $s^3n^{-2} \to \infty$. Then the following hold:
	\begin{enumerate}
		\item \label{thm:internal_structure_a}
		\whp\ $\numberVertices{\core{\LargestComponent}}=\Theta\left(sn^{-1/3}\right);$
		\item \label{thm:internal_structure_b}
		\whp\ $\numberVertices{\kernel{\LargestComponent}}=\Theta\left(sn^{-2/3}\right);$
		\item \label{thm:internal_structure_c}
		$\numberVertices{\core{{\rest}}}=O_p\left(n^{1/3}\right)$;
		\item \label{thm:internal_structure_d}
		$\numberVertices{\kernel{{\rest}}}=O_p\left(1\right)$;	
		\item \label{thm:internal_structure_e}
		\whp\ $\kernel{\planarRandomGraph}$ is cubic (i.e. 3-regular);
		\item \label{thm:internal_structure_f}
		$\numberVertices{\LargestComponent}=2s+O_p\left(n^{2/3}\right)$;
		\item \label{thm:internal_structure_g}
		$\numberEdges{\LargestComponent}=2s+O_p\left(n^{2/3}\right)$.	
	\end{enumerate}
\end{thm}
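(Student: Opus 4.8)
This is proved in \cite{surface}; the plan I would follow combines the core-kernel decomposition with an asymptotic enumeration of $\planarClass(n,m)$. By the bijective decomposition recalled in \Cref{sub:dec}, every graph in $\planarClass(n,m)$ arises uniquely by: (i) choosing a weighted multigraph kernel $K\in\mathcal{K}$, say on $\nu$ vertices with $\nu+q$ edges, where $q$ is the excess; (ii) placing an ordered sequence of $k$ fresh labelled vertices on the edges of $K$ to form the core, which then has $\nu+k$ vertices; (iii) attaching a rooted tree to each core vertex to form the complex part, which then has some number $r\ge\nu+k$ of vertices; and (iv) adding a ``simple part'' that is an arbitrary vertex-labelled union of trees and unicyclic graphs on the remaining $n-r$ vertices. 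First I would turn this into an exact formula for $|\planarClass(n,m)|$ as a sum over $(\nu,q,k,r)$ of a product of four factors: a \emph{kernel factor} (the weighted number of kernels on $\nu$ vertices with $\nu+q$ edges), a \emph{subdivision factor} of P\'olya-urn type (an ordered placement of $k$ labelled vertices on the $\nu+q$ edges), a \emph{tree factor} of Cayley type (the number of rooted forests on $r$ labelled vertices with a prescribed $(\nu+k)$-set of roots is $(\nu+k)\,r^{\,r-\nu-k-1}$), and a \emph{simple-part factor}. The key point is that the simple-part factor coincides with the one for general graphs, because any graph all of whose components are trees or unicyclic is automatically planar; so planarity enters only through the kernel factor, and the simple-part factor is exactly the one from the classical near-critical analysis of $G(n,m)$.

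Next I would locate the dominant term, which is where (K2) enters. A kernel has minimum degree at least $3$, so $\nu+q\ge\tfrac{3}{2}\nu$ with equality exactly when the kernel is cubic (i.e.\ $\nu=2q$); moreover $\mathcal{K}(2q,3q)$ consists precisely of the cubic kernels, for which (K2) gives $|\mathcal{K}(2q,3q)|=(1+o(1))\,c\,q^{-\ce}\gamma^{q}(2q)!$. Substituting this asymptotics — and checking that kernels with a vertex of degree $\ge 4$ (so $\nu<2q$) contribute only an $o(1)$ fraction, which already yields \ref{thm:internal_structure_e} — makes the summand a smooth function of $(q,k,r)$ whose maximiser one finds by a Laplace-type largest-term estimate. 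The tension between the factor $\gamma^{q}$ (penalising a large kernel), the Stirling and binomial factors coming from the subdivision and the trees, and the constraints imposed by $m=n/2+s$ together with $\LargestComponent$ being complex forces $q=\Theta(sn^{-2/3})$, $k=\Theta(sn^{-1/3})$, and a total complex-part order $r=2s+O_p(n^{2/3})$. Feeding in (K3) — which makes the random cubic kernel on $2q$ vertices connected up to $O_p(1)$ vertices, so that the complex part is one giant complex component, necessarily $\LargestComponent$, together with $O_p(1)$ small ones — one transfers these to $\LargestComponent$ and obtains $\numberVertices{\kernel{\LargestComponent}}=\Theta(sn^{-2/3})$ (this is \ref{thm:internal_structure_b}), $\numberVertices{\core{\LargestComponent}}=\Theta(sn^{-1/3})$ (this is \ref{thm:internal_structure_a}), and $\numberVertices{\LargestComponent}=\numberEdges{\LargestComponent}=2s+O_p(n^{2/3})$ (these are \ref{thm:internal_structure_f} and \ref{thm:internal_structure_g}). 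The concentration in all of these comes from the same estimate, as the summand decays at least geometrically away from its maximiser.

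Finally, \ref{thm:internal_structure_c} and \ref{thm:internal_structure_d} would follow by conditioning on the order and size of $\LargestComponent$, in the manner of \Cref{lem:splitting}. Given \ref{thm:internal_structure_f} and \ref{thm:internal_structure_g}, the graph $\rest=\planarRandomGraph\setminus\LargestComponent$ is, for the purpose of statements holding \whp, a uniformly random planar graph on $n'=n-2s+O_p(n^{2/3})$ vertices with $m'=n/2-s+O_p(n^{2/3})$ edges (the constraint that no component of $\rest$ be as large as $\LargestComponent$ is not binding, since $\numberVertices{\LargestComponent}\sim 2s\gg n^{2/3}$), so $m'-n'/2=O_p(n^{2/3})$ and $\rest$ lies in the critical window $(m'-n'/2)^3(n')^{-2}=O_p(1)$. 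In this window $G(n',m')$ is planar with probability bounded away from $0$, so the classical critical-regime estimates transfer to $\rest$ and give $\numberVertices{\core{\rest}}=O_p(n^{1/3})$ and $\numberVertices{\kernel{\rest}}=O_p(1)$ (the planar analogues of \Cref{thm:known}\ref{thm:known2}).

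The step I expect to be the main obstacle is the second one: it requires uniform, sufficiently precise asymptotics for the full summand — in particular uniform control of the subdivision and simple-part factors over the entire relevant range of $(q,k,r)$ — in order to pinpoint the maximiser, to discard non-cubic kernels and atypical subdivision numbers, and to read off concentration with the stated error terms. This is exactly where both ingredients of (K2), the exponential growth rate $\gamma$ and the polynomial correction $q^{-\ce}$, are indispensable.
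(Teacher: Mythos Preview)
The paper does not give its own proof of this theorem: it simply records that the statements follow by combining specific results from \cite{surface} (Theorems~1.4(iii) and 5.4(iii),(v),(vi) and Corollaries~5.3 and 5.5 there), together with the remark that those results extend verbatim from surface classes to arbitrary \pl\ classes. Your sketch is precisely the proof strategy of \cite{surface} itself --- the exact core--kernel enumeration formula, the Laplace-type location of the dominant $(q,k,r)$ using (K2), the observation that non-cubic kernels are negligible (giving \ref{thm:internal_structure_e}), the use of (K3) to identify $\LargestComponent$ with the complex part up to $O_p(1)$ vertices, and finally the conditioning argument placing $\rest$ in the critical window --- so in substance you and the paper (via its citation) agree.

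One small slip worth fixing: in your last paragraph you specialise to ``planar'' (``a uniformly random planar graph on $n'$ vertices'', ``$G(n',m')$ is planar with probability bounded away from~$0$''). For the theorem as stated you need this for a general \pl\ class $\planarClass$; the correct justification is \Cref{lem:k4} (every graph without complex components lies in $\planarClass$) together with \Cref{thm:critcalRegime}\ref{thm:critcalRegime_a}, exactly as the paper does later in the proof of \Cref{thm:general}. This is cosmetic rather than a gap in the argument.
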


We note that the results in \Cref{thm:internal_structure} were not explicitly proven in \cite{surface}, but they immediately follow by combining Theorems 1.4(iii) and 5.4(iii), (v), (vi) and Corollaries 5.3 and 5.5 from \cite{surface}. Strictly speaking, the authors of \cite{surface} proved \Cref{thm:internal_structure} only for the class of graphs embeddable on an orientable surface of genus $g \in \N\cup\{0\}$, but they pointed out that \Cref{thm:internal_structure} generalises to the more general setting of \pl\ graph classes (see \cite[Remark 8.3]{surface}).

\begin{thm}[\cite{uni,luczak_pittel_wierman}]\label{thm:critcalRegime}
	Let $m=n/2+s$, where $s\leq \constant n^{2/3}$ for some constant $\constant \in \R$ and let $G=G(n,m)\ur \mathcal G(n,m)$ be the uniform random graph. Then the following hold. 
	\begin{enumerate}
		\item \label{thm:critcalRegime_a}
		$
		\liminf_{n \to \infty} \prob{G \text{ has no complex component}}>0;
		$
		\item \label{thm:critcalRegime_b}
		$\numberVertices{\largestComponent{G}}=O_p\left(n^{2/3}\right)$.
	\end{enumerate}
\end{thm}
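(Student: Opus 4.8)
Both parts are instances of the classical description of $G(n,m)$ in the critical window, and my plan is to reduce each of them first to the single edge count $m'=\rounddown{n/2+\constant n^{2/3}}$ by monotonicity. Using the coupling in which $G(n,m)$ is obtained from $G(n,m+1)$ by deleting a uniformly random edge, one sees that $\numberVertices{\largestComponent{G(n,m)}}$ is non-decreasing in $m$ (adding an edge only merges components) and that the event ``$G(n,m)$ has a complex component'' is increasing in $m$ (adding an edge never destroys a cycle). Hence for $m=n/2+s$ with $s\le\constant n^{2/3}$, so that $m\le m'$, we have $\prob{\numberVertices{\largestComponent{G(n,m)}}\ge t}\le\prob{\numberVertices{\largestComponent{G(n,m')}}\ge t}$ and $\prob{G(n,m)\text{ has no complex component}}\ge\prob{G(n,m')\text{ has no complex component}}$, so it suffices to prove (a) and (b) for $m=m'$, with $\constant$ an arbitrary fixed constant.

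For (b) I would combine a first moment over vertices with a branching-process domination. Since ``$\numberVertices{\largestComponent{\cdot}}\ge t$'' is an increasing property, one may pass to the binomial model: for a suitable $p$ with $np=1+O\big(n^{-1/3}\big)$ and $\prob{\numberEdges{G(n,p)}\ge m'}$ bounded away from $0$, one has $\prob{\numberVertices{\largestComponent{G(n,m')}}\ge t}=O\big(\prob{\numberVertices{\largestComponent{G(n,p)}}\ge t}\big)$. Exploring the component of a fixed vertex of $G(n,p)$ by breadth-first search, the number of vertices discovered is stochastically dominated by the total progeny $|T|$ of a Galton--Watson tree with $\mathrm{Bin}(n,p)$ offspring distribution, which is near-critical with mean $1+O(n^{-1/3})$ and bounded variance; a standard estimate (via the Otter--Dwass hitting-time identity and a local limit theorem) gives $\prob{|T|\ge k}=O\big(k^{-1/2}\big)$ uniformly. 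Since a component of order at least $k$ forces at least $k$ vertices to lie in a component of order at least $k$, Markov's inequality yields
\begin{align*}
\prob{\numberVertices{\largestComponent{G(n,p)}}\ge An^{2/3}}\ \le\ \frac{n}{An^{2/3}}\,\prob{|T|\ge An^{2/3}}\ =\ O\big(A^{-3/2}\big),
\end{align*}
which tends to $0$ as $A\to\infty$; transporting this back to $G(n,m')$ gives $\numberVertices{\largestComponent{G(n,m')}}=O_p\big(n^{2/3}\big)$.

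For (a), observe that $G$ has no complex component exactly when every component is a tree or unicyclic, equivalently when the $2$-core of $G$ is a disjoint union of cycles. Let $X$ be the number of complex components; a first moment estimate (using Wright's enumeration of connected graphs with given excess, together with (b) to restrict attention to components of order $O(n^{2/3})$) shows $\expec{X}=O(1)$, so $X$ is tight. The real content, however, is that $\prob{X=0}$ stays bounded away from $0$, i.e., that the limiting law of $X$ charges $0$. Here I would invoke the bivariate generating-function analysis behind \cite{uni}: the number of labelled graphs on $[n]$ with $m$ edges and no complex component equals $n!\,[x^{n}y^{m}]\exp\!\big(\mathcal T(x,y)+\mathcal U(x,y)\big)$, where $\mathcal T,\mathcal U$ are the exponential generating functions of trees and of connected unicyclic graphs with edges marked by $y$, and a saddle-point comparison of this with $\binom{\binom n2}{m}$ shows the ratio converges to a constant in $(0,1)$ for $m=n/2+O\big(n^{2/3}\big)$. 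Alternatively one may quote the scaling limit of the critical random graph from \cite{luczak_pittel_wierman} and its refinements: the rescaled component sizes and surpluses of $G\big(n,n/2+tn^{2/3}\big)$ converge to those of the limiting object built from the excursions of Brownian motion with parabolic drift, and the event that no excursion carries two or more surplus marks has positive limiting probability. The main obstacle is precisely this lower bound $\prob{X=0}\ge c>0$: the first moment controls $X$ only from above and a naive second moment gives no lower bound on $\prob{X=0}$ either, so one is forced into the full singularity analysis of \cite{uni} or the scaling-limit machinery; by contrast (b) and the monotonicity reductions are routine.
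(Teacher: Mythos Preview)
The paper does not prove \Cref{thm:critcalRegime}; it is quoted from \cite{uni,luczak_pittel_wierman} as a known fact and used as a black box. So there is no ``paper's own proof'' to compare against, and your proposal is really a sketch of how one would recover these classical statements from the literature.

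That said, your outline is sound. The monotone coupling reducing an arbitrary $s\le \constant n^{2/3}$ to the single value $m'=\lfloor n/2+\constant n^{2/3}\rfloor$ is correct for both parts, since ``largest component has at least $t$ vertices'' and ``some component is complex'' are both edge-increasing. For \ref{thm:critcalRegime_b} the transfer to $G(n,p)$ for increasing events, the Galton--Watson domination of the BFS exploration, and the $O(k^{-1/2})$ tail for the total progeny of a near-critical branching process are all standard and combine exactly as you say to give the $O_p(n^{2/3})$ bound. For \ref{thm:critcalRegime_a} you correctly identify that a first or second moment on the number of complex components cannot by itself yield a \emph{lower} bound on $\prob{X=0}$, and that one must invoke either the generating-function enumeration of graphs without complex components (as in \cite{uni}) or the joint scaling limit of component sizes and surpluses in the critical window (as in \cite{luczak_pittel_wierman}); either route is precisely what the cited references supply. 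In short, your proposal is an accurate roadmap of the classical arguments, and the paper simply cites those arguments rather than reproducing them.
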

We note that if $\planarClass$ is a \pl\ class of graphs, then each graph without a complex component lies in $\planarClass$ (see \Cref{lem:k4}). Thus, \Cref{thm:critcalRegime}\ref{thm:critcalRegime_a} implies $\liminf_{n \to \infty} \prob{G(n,m) \in \planarClass}>0$ in the case of $s\leq Mn^{2/3}$. That will be useful in the proof of \Cref{thm:general}.

\section{P\'olya urn model}\label{sec:polya}
In this section we present several useful results on the P\'olya urn model introduced in \Cref{sub:construction_kernel_core}. 

\subsection{Model with $N$ colours}
Given $N,k\in \mathbb N$, there are initially $\nc$ balls of $\nc$ distinct colours $\colour_1, \ldots, \colour_{\nc}$ in a urn. In each step we draw a ball uniformly at random from the urn. Then the drawn ball is returned to the urn along with an additional ball of the same colour. We repeat that procedure $\nd$ times. For each $\ind\in [\nc]$ we denote by $X_\ind$ the number of drawn balls of colour $\colour_\ind$ at the end of the procedure (i.e. after $\nd$ steps).

To derive bounds on the length of the shortest and longest cycle in the core (see \Cref{lem:random_core2}), we need the following bounds on the minimum and maximum values of the total numbers $X_1, \ldots, X_{f}$ of drawn balls of the first $f$ colours when $N$, $k$, and $f$ are functions in $n$. Although we believe such results should be known, we could not find them in literature and therefore we include their proofs in \Cref{sec:polya_proofs} for completeness.
 
\begin{thm}\label{main1}
For every $n \in \N$, we let $\nc=\nc(n), \nd=\nd(n) \in \N$, and $f=f(n) \in \N$ with $1\leq f \leq \nc$. We assume that $\nc=\omega(1)$ and that $f=O(1)$ or $f=\omega(1)$. Then the following hold.
	\begin{enumerate}
		\item\label{thm:main1_1}
		\begin{equation*}
	\hspace{-7ex}	X_*\, :=\, \min_{1\leq \ind \leq f}X_\ind\ =\ 
		\begin{cases}
		\Theta_p\left(\frac{\nd}{\nc f}\right) &\quad \text{if}~~ \nd=\omega\left(\nc f\right),
		\\
		O_p(1) &\quad \text{if}~~ \nd=O\left(\nc f\right).
		\end{cases}
		\end{equation*}
		\item \label{thm:main1_2}
		\begin{equation*}
		X^*\, :=\, \max_{1\leq \ind \leq f}X_\ind\ =\ 
		\begin{cases}
		\Theta_p\left(\frac{\nd}{\nc}\left(1+\log f\right)\right) &\quad \text{if}~~ \nd=\omega\left(\nc\right),
		\\
		O_p(1+\log f) &\quad \text{if}~~ \nd=O\left(\nc\right).
		\end{cases}
		\end{equation*}
	\end{enumerate}
\end{thm}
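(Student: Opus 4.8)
The plan is to analyse the individual variables $X_\ind$ via the classical P\'olya urn machinery and then take minima/maxima by a union bound combined with matching one-sided estimates. Recall that in the standard P\'olya urn with $\nc$ initial colours (one ball each) run for $\nd$ steps, the vector $(X_1, \ldots, X_\nc)$ (number of \emph{additional} draws of each colour) has a Dirichlet-multinomial distribution; equivalently, for any fixed $\ind$ the variable $X_\ind$ is distributed as a mixture $\mathrm{Bin}(\nd, \beta_\ind)$ where $\beta_\ind \sim \mathrm{Beta}(1, \nc-1)$, and more generally for the first $f$ colours the sum $X_1 + \cdots + X_f$ is $\mathrm{BetaBin}(\nd, f, \nc - f)$. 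From this I extract the basic facts I will need: $\expec{X_\ind} = \nd/\nc$ (exactly, since $\expec{\beta_\ind} = 1/\nc$), and tail bounds. For the \emph{upper} tail of a single $X_\ind$ I would use that $\prob{X_\ind \geq t}$ is controlled by $\prob{\beta_\ind \geq t/(2\nd)} + \prob{\mathrm{Bin}(\nd, t/(2\nd)) \geq t}$, with the first term $\leq (1 - t/(2\nd))^{\nc - 1} \leq \exp(-\Omega(t\nc/\nd))$ and the second term exponentially small by Chernoff; so $\prob{X_\ind \geq C \tfrac{\nd}{\nc}(1 + \log f)} \leq f^{-2}$ for $C$ large, which after a union bound over $\ind \in [f]$ gives the upper bound in part (b) when $\nd = \omega(\nc)$ (and the $O_p(1 + \log f)$ statement when $\nd = O(\nc)$ by the same computation with $\nd/\nc$ absorbed into the constant).

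For the \emph{lower} bounds I would argue as follows. For part (a), to show $X_* \geq c\,\nd/(\nc f)$ when $\nd = \omega(\nc f)$: the sum $S := X_1 + \cdots + X_f = \mathrm{BetaBin}(\nd, f, \nc - f)$ has mean $\nd f / \nc = \omega(\nd/\nc \cdot f/f) \cdot$, precisely $\nd f/\nc$, and is concentrated (its variance is $O((\nd f/\nc)^2 \cdot \nc/\nd) = o((\nd f/\nc)^2)$ since $\nd \gg \nc$), so $\whp$ $S \geq \tfrac12 \nd f/\nc$. Then, conditioned on $S$, the split of these $S$ draws among the $f$ colours is itself a P\'olya urn on $f$ colours run for (roughly) $S$ steps, and symmetry plus a second-moment/concentration argument shows each of the $f$ coordinates gets $\Theta(S/f) = \Theta(\nd/(\nc f))$ balls $\whp$ — here I need that $S/f = \omega(1)$, which is exactly $\nd = \omega(\nc f)$. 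For the $O_p(1)$ half of (a) when $\nd = O(\nc f)$: it suffices to show $\prob{X_1 = 0}$ is bounded away from $0$, because then $\expec{X_*} $ stays bounded; indeed $\prob{X_1 = 0} = \binom{\nc - 1 + \nd - 1}{\nd} / \binom{\nc + \nd - 1}{\nd} = \prod_{t=0}^{\nd - 1} \frac{\nc - 1 + t}{\nc + t} \geq (1 - 1/\nc)^{\nd} = \exp(-\Theta(\nd/\nc))$, which is $\Omega(1)$ when $\nd = O(\nc)$; when $\nc \ll \nd = O(\nc f)$ one instead shows $\min_\ind X_\ind = O_p(1)$ by a first-moment argument on the number of colours exceeding a large constant, or more simply notes $\prob{X_* \geq A} \leq f \prob{X_1 \geq A}$ is small for $A$ large since $\expec{X_1} = \nd/\nc \leq$ bounded $\cdot f$ — wait, this needs care, so here I would argue via $\prob{X_* \ge A} \le f\,\prob{X_1 \ge A}$ together with $\prob{X_1 \ge A} \le \prob{\beta_1 \ge A/(2\nd)} + \prob{\mathrm{Bin}(\nd,\beta_1)\ge A \mid \beta_1 < A/(2\nd)}$ and choose $A = A(\delta)$ so that $f$ times this is $< \delta$, using $\nd/\nc = O(f)$.

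For the lower bound in part (b), $X^* \geq c \tfrac{\nd}{\nc}(1 + \log f)$: I would isolate the first $f$ colours, look at $S = X_1 + \cdots + X_f$, which $\whp$ is $\Theta(\nd f/\nc)$ when $\nd = \omega(\nc)$ (concentration of the Beta-binomial as above, now only needing $\nd \gg \nc$ so that $\mathrm{Var}(S)/\expec{S}^2 = O(\nc/\nd) = o(1)$), and then show that when $S$ balls are distributed among $f$ colours by a P\'olya urn, the \emph{maximum} count is $\whp$ $\Omega(\tfrac{S}{f}(1 + \log f))$. The cleanest route is the embedding into exponential random variables: the limiting proportions $(\beta_1, \ldots, \beta_f)$ restricted to the first $f$ colours and renormalised form a $\mathrm{Dirichlet}(1, \ldots, 1)$ vector, realisable as $(E_1, \ldots, E_f)/\sum E_\indSecond$ with $E_\indSecond$ i.i.d. $\mathrm{Exp}(1)$; since $\max_\indSecond E_\indSecond = (1 + o_p(1)) \log f$ and $\sum E_\indSecond = (1 + o_p(1)) f$, the largest proportion among the first $f$ colours is $\Theta_p(\tfrac{\log f}{f})$, hence the largest count is $\Theta_p(S \cdot \tfrac{\log f}{f}) = \Theta_p(\tfrac{\nd}{\nc} \log f)$; adding the trivial bound $X^* \geq X_1 = \Theta_p(\nd/\nc)$ handles the \lq$1+$\rq. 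The case $\nd = O(\nc)$ follows by the same Dirichlet computation with the concentration of $S$ replaced by $\expec{S} = \Theta(\nd f/\nc) = O(f)$ and a first-moment bound, giving $O_p(1 + \log f)$.

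The main obstacle I expect is making the \lq conditioning\rq\ step rigorous: once we restrict attention to the $S$ draws that landed in the first $f$ colours, the conditional law of $(X_1, \ldots, X_f)$ given $S$ is \emph{not} literally a fresh P\'olya urn run for $S$ steps (the interleaving with the other colours matters), so I would instead bypass this by working directly with the joint Dirichlet-multinomial / Beta representation, where the marginal on $(X_1, \ldots, X_f)$ is \emph{exactly} $\mathrm{DirMult}(\nd; 1, \ldots, 1, \nc - f)$ and the renormalised limiting proportions are exactly $\mathrm{Dirichlet}(1, \ldots, 1, \nc - f)$ — this sidesteps the dynamic argument entirely and reduces everything to (i) concentration of a Beta-binomial sum $S$ and (ii) extreme-value behaviour of i.i.d. exponentials, both standard. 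The remaining fiddly point is the regime $\nc \ll \nd$ but $\nd = O(\nc f)$ in part (a), where $S$ is large but $S/f$ need not be, so the per-colour counts are $O_p(1)$ rather than concentrated; there I rely purely on the union bound $\prob{X_* \ge A} \le f\,\prob{X_1 \ge A}$ and the single-colour tail estimate, choosing the constant as a function of $\delta$.
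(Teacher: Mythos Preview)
Your approach via the Beta/Dirichlet embedding is a legitimate alternative to the paper's route (which works directly with the explicit marginal $\prob{X_i = x} = \binom{k+N-x-2}{N-2}/\binom{k+N-1}{N-1}$ together with the negative dependence of the $X_i$), and for the bounds on $X^*$ your argument via the exponential representation of the Dirichlet is correct. However, there are two genuine problems on the $X_*$ side.

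\textbf{Main gap: the $O_p(1)$ bound on $X_*$ in the regime $N \ll k = O(Nf)$ with $f = \omega(1)$.} Your proposed bound $\prob{X_* \geq A} \leq f\,\prob{X_1 \geq A}$ is true but useless here: since $\expec{X_1} = k/N \to \infty$, the probability $\prob{X_1 \geq A}$ is close to $1$ for any fixed $A$, so the right-hand side is $\Omega(f)$. The first-moment variant you mention has the same defect. What you actually need is a \emph{product} bound $\prob{X_* \geq A} = \prob{\bigcap_{i\leq f} \{X_i \geq A\}} \leq \prod_{i\leq f} \prob{X_i \geq A}$, which holds because the $X_i$ are negatively associated; taking $A$ a large multiple of $k/(Nf)$ one has $\prob{X_1 \geq A} \leq 1 - \Theta(AN/k) = 1 - \Theta(1/f)$, and the $f$-th power is then at most $e^{-\Theta(1)}$, tunable below any $\delta$. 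Alternatively, your own exponential embedding would give this directly: $\min_{i \leq f} X_i \approx (k/N)\min_{i \leq f} E_i = \Theta_p\bigl((k/N)\cdot(1/f)\bigr) = \Theta_p(k/(Nf)) = O_p(1)$ --- but you only deployed that embedding for the maximum.

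\textbf{Secondary error: the $\Omega_p$ bound on $X_*$ when $k = \omega(Nf)$.} Your claim that ``a second-moment/concentration argument shows each of the $f$ coordinates gets $\Theta(S/f)$'' is false: conditioned on $S$, each coordinate has law $\mathrm{BetaBin}(S,1,f-1)$, whose coefficient of variation is $\Theta(1)$, so there is no concentration around $S/f$. Moreover your arithmetic $S/f = k/(Nf)$ is off by a factor of $f$ (in fact $S/f \approx k/N$). The two errors happen to cancel, and the correct computation via exponentials gives $\min_i X_i = \Theta_p(S/f^2) = \Theta_p(k/(Nf))$, but the argument as written does not establish this. The paper's route here is a plain union bound $\prob{X_* \leq x} \leq f\,\prob{X_1 \leq x} \leq f(x+1)N/(k+N)$, which is simpler and avoids the conditioning altogether.
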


Another useful fact about the P\'olya urn model (which will be used in the proof of \Cref{lem:random_core}\ref{lem:random_core_a}) is the following result on the distribution of $X_i$, whose proof can be found in \Cref{sec:polya_proofs}.

\begin{prop}\label{prop:polya_main}
	Let $N,k\in \mathbb N$ be given. Then we have 
		\begin{align*}
		\sum_{\ind=1}^{\nc}\prob{X_\ind \leq 1}\ \leq\ \frac{2\nc^2}{\nd}.
		\end{align*}
\end{prop}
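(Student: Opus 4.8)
The plan is to compute $\prob{X_\ind \le 1}$ exactly using the standard Pólya urn formula for the joint distribution, then sum over $\ind$. Recall that in the urn with $\nc$ colours, where each colour starts with one ball and we draw $\nd$ times, the vector $(X_1,\ldots,X_{\nc})$ has the Dirichlet-multinomial (Pólya-Eggenberger) distribution
\begin{align*}
\prob{X_1=x_1,\ldots,X_{\nc}=x_{\nc}}=\binom{\nd}{x_1,\ldots,x_{\nc}}\frac{\prod_{\ind=1}^{\nc}x_\ind!}{\nd!}\cdot\frac{\nd!}{\binom{\nc+\nd-1}{\nd}\,\nd!}\cdot(\text{const}),
\end{align*}
so that by exchangeability the marginal of a single $X_\ind$ is the Beta-binomial/negative-hypergeometric law $\prob{X_\ind=\val}=\binom{\nd}{\val}\binom{\nc+\nd-\val-2}{\nd-\val}\big/\binom{\nc+\nd-1}{\nd}$; in particular $\prob{X_\ind=0}=\binom{\nc+\nd-2}{\nd}\big/\binom{\nc+\nd-1}{\nd}=\frac{\nc-1}{\nc+\nd-1}$, and similarly $\prob{X_\ind=1}=\nd\binom{\nc+\nd-3}{\nd-1}\big/\binom{\nc+\nd-1}{\nd}=\frac{\nd(\nc-1)}{(\nc+\nd-1)(\nc+\nd-2)}$. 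The cleanest way to see the $X_\ind=0$ probability is directly: $X_\ind=0$ iff colour $\colour_\ind$ is never drawn, and at the $t$-th draw the urn has $\nc+t-1$ balls of which exactly one is of colour $\colour_\ind$ (since it has never been drawn), so $\prob{X_\ind=0}=\prod_{t=1}^{\nd}\frac{\nc+t-2}{\nc+t-1}=\frac{\nc-1}{\nc+\nd-1}$.

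Adding these, $\prob{X_\ind\le 1}=\frac{\nc-1}{\nc+\nd-1}\Bigl(1+\frac{\nd}{\nc+\nd-2}\Bigr)=\frac{\nc-1}{\nc+\nd-1}\cdot\frac{\nc+2\nd-2}{\nc+\nd-2}\le \frac{(\nc-1)(\nc+2\nd-2)}{(\nc+\nd-2)^2}$ (using $\nc+\nd-1\ge\nc+\nd-2$). One then checks the elementary inequality $\frac{(\nc-1)(\nc+2\nd-2)}{(\nc+\nd-2)^2}\le \frac{2\nc}{\nd}$ for all $\nc,\nd\ge 1$; summing over the $\nc$ colours gives $\sum_{\ind=1}^{\nc}\prob{X_\ind\le1}\le \nc\cdot\frac{2\nc}{\nd}=\frac{2\nc^2}{\nd}$. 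For small $\nd$ (say $\nd\le 2\nc^2/\nc=2\nc$, or wherever the bound $2\nc^2/\nd$ exceeds the trivial bound $\nc$) the statement is immediate since each probability is at most $1$; so it suffices to verify the inequality in the regime where $2\nc^2/\nd\le\nc$, i.e. $\nd\ge 2\nc$, where it is a short computation (cross-multiplying reduces it to a polynomial inequality in $\nc,\nd$ that holds termwise).

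The only mild obstacle is bookkeeping: getting the exact marginal right and then choosing the crudest bound that still closes. An alternative that avoids the exact Beta-binomial marginal entirely is to bound $\expec{(\nd-X_\ind)(\nd-X_\ind-1)\cdots}$-type quantities, or simply to note $\prob{X_\ind\le 1}\le \prob{X_\ind=0}+\prob{X_\ind=1}$ and bound each term separately as above; I expect the direct product computation for $\prob{X_\ind=0}$ and a one-line conditional computation for $\prob{X_\ind=1}$ (condition on the single step at which $\colour_\ind$ is drawn) to be the shortest route. I would also double-check the edge case $\nd<1$ is excluded by hypothesis ($\nd\in\N$), and that $\nc\ge 1$ so no denominator vanishes; when $\nc=1$ the left side is $\prob{X_1\le 1}$ which is $0$ for $\nd\ge 2$ and the bound $2/\nd$ is trivially fine.
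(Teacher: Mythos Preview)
Your approach is essentially the same as the paper's: both bound $\prob{X_i\le 1}$ via the explicit marginal distribution and then sum over $i$. The paper does it more economically by invoking the monotonicity $\prob{X_i=x}\le\prob{X_i=0}\le \frac{N}{N+k}$ (its Proposition~A.1), which immediately gives $\prob{X_i\le 1}\le \frac{2N}{N+k}\le \frac{2N}{k}$ without computing $\prob{X_i=1}$ separately or verifying a polynomial inequality; summing yields $2N^2/k$ in one line. Your explicit computation of $\prob{X_i=0}$ and $\prob{X_i=1}$ and subsequent case split is correct but more laborious.

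One small slip to fix: the general marginal you state, $\prob{X_i=x}=\binom{k}{x}\binom{N+k-x-2}{k-x}\big/\binom{N+k-1}{k}$, carries an extra factor $\binom{k}{x}$; the correct formula is simply $\binom{N+k-x-2}{k-x}\big/\binom{N+k-1}{k}$ (equivalently $\binom{N+k-x-2}{N-2}\big/\binom{N+k-1}{N-1}$). Your stated values for $x=0$ and $x=1$ are nevertheless the correct ones, so the argument goes through once the formula is corrected.
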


\smallskip
\subsection{Model with two colours}\label{sub:polya2}
Given $\bb, \wb, \nd \in \N$, there are initially $\bb$ black and $\wb$ white balls in a urn. Then we draw $\nd$ times uniformly at random a ball from the urn. In each step we return the drawn ball together with an additional ball of the same colour. Then we denote by $X$ the number of drawn black balls at the end of the procedure, i.e. after $\nd$ steps. 

\begin{thm}[\cite{eggenberger_polya}]\label{thm:polya2}
Let $\bb, \wb, \nd \in \N$ and $X$ be the number of drawn black balls after $k$ steps in the P\'olya urn model with initially $\bb$ black and $\wb$ white balls. Then we have
\begin{align*}
\expec{X}&=\frac{\bb\nd}{\bb+\wb} \quad \text{and}\quad 
\variance{X}=\frac{\bb\wb\nd\left(\bb+\wb+\nd\right)}{\left(\bb+\wb\right)^2\left(\bb+\wb+1\right)}.
\end{align*}
\end{thm}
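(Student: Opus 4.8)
The plan is to reduce the statement to a classical fact about the Pólya–Eggenberger urn and then extract the first two moments by a direct computation with the explicit distribution of $X$. First I would recall that in this urn scheme the probability of any particular ordered sequence of $k$ draws depends only on how many black and white balls are drawn, not on the order: if a sequence contains exactly $r$ black draws and $k-r$ white draws, its probability is the ``exchangeable'' product
\begin{align*}
\frac{\bb(\bb+1)\cdots(\bb+r-1)\,\wb(\wb+1)\cdots(\wb+k-r-1)}{(\bb+\wb)(\bb+\wb+1)\cdots(\bb+\wb+k-1)}.
\end{align*}
Multiplying by the $\binom{k}{r}$ orderings gives the beta–binomial form
\begin{align*}
\prob{X=r}=\binom{k}{r}\frac{\bb^{(r)}\,\wb^{(k-r)}}{(\bb+\wb)^{(k)}},\qquad 0\le r\le k,
\end{align*}
where $a^{(j)}:=a(a+1)\cdots(a+j-1)$ denotes the rising factorial. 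This is the only structural input needed; everything else is algebra.

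Next I would compute $\expec{X}$ and $\expec{X(X-1)}$ from this distribution, since factorial moments interact cleanly with rising factorials. For the mean, pull a factor of $\bb/(\bb+\wb)$ out of $\binom{k}{r}\bb^{(r)}/(\bb+\wb)^{(k)}$ after writing $r\binom{k}{r}=k\binom{k-1}{r-1}$; the remaining sum is exactly the total mass of a beta–binomial urn with parameters $(\bb+1,\wb)$ over $k-1$ draws, hence equals $1$, giving $\expec{X}=\bb k/(\bb+\wb)$. The same manoeuvre with $r(r-1)\binom{k}{r}=k(k-1)\binom{k-2}{r-2}$ and the identity $\bb^{(r)}=\bb(\bb+1)\,(\bb+2)^{(r-2)}$ yields
\begin{align*}
\expec{X(X-1)}=\frac{k(k-1)\,\bb(\bb+1)}{(\bb+\wb)(\bb+\wb+1)}.
\end{align*}
Then $\variance{X}=\expec{X(X-1)}+\expec{X}-\expec{X}^2$, and collecting the three terms over the common denominator $(\bb+\wb)^2(\bb+\wb+1)$ and simplifying the numerator gives $\bb\wb k(\bb+\wb+k)$, which is the claimed formula.

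Since this is a known result (attributed to Eggenberger and Pólya in the excerpt), an alternative — and perhaps cleaner — route is to cite the distribution directly and perform only the moment extraction, or even to invoke the de Finetti representation: conditionally on a latent $p\sim\mathrm{Beta}(\bb,\wb)$, $X$ is $\mathrm{Bin}(k,p)$, so $\expec{X}=k\,\expec{p}$ and $\variance{X}=k\,\expec{p(1-p)}+k^2\variance{p}$, and then substitute the standard Beta moments $\expec{p}=\bb/(\bb+\wb)$, $\variance{p}=\bb\wb/((\bb+\wb)^2(\bb+\wb+1))$. I expect no genuine obstacle here; the only place to be careful is the final numerator simplification in the variance, where the three fractions must be brought to a common denominator before cancelling, and one should double-check the edge interpretations of the rising factorials when $r\in\{0,1\}$ so that the index shifts in the binomial identities are valid.
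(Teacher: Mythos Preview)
Your argument is correct: the beta--binomial distribution of $X$ is derived exactly as you describe via exchangeability, the factorial-moment computation of $\expec{X}$ and $\expec{X(X-1)}$ is valid, and the final algebraic simplification of the variance numerator to $\bb\wb\nd(\bb+\wb+\nd)$ checks out. The de~Finetti alternative you sketch is equally sound.

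That said, the paper does not actually prove this theorem at all. It is stated with a citation to Eggenberger and P\'olya and used as a black box (only in the proof of \Cref{lem:block_core}, where Chebyshev is applied). So your proposal is not comparable to the paper's own proof --- there is none --- but rather supplies a self-contained justification where the paper is content to cite the classical source. Either of your two routes would be perfectly acceptable if one wanted to make the paper self-contained on this point; the de~Finetti version is arguably cleaner since it avoids the index-shift bookkeeping you flag at the end.
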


\section{Random core}\label{sec:CoreLabelled}
In this section we study the process of obtaining a random core $C=C(K,k)$ from a fixed kernel $K$ by {\em randomly} subdividing the edges of $K$ with $\nd$ additional vertices for given $(K,k)$. Because it is hard to directly analyse $C$, we circumvent this difficulty by introducing an auxiliary random multigraph $\randomMultiCore$ which behaves asymptotically like $C$ and fits into the scheme of the P\'olya urn model.

\begin{definition}\label{def:random_core}
Given a pair $(K,\nd)$ of a multigraph $K$ on vertex set $[\numberVertices{K}]$ of minimum degree at least three and $\nd\in \N$, we denote by $\mathcal{C}(K,\nd)$ the set of all {\em simple} graphs on vertex set $[\numberVertices{K}+\nd]$ obtained from $K$ by subdividing the edges of $K$ by the vertices $\numberVertices{K}+1,\dots, \numberVertices{K}+k$. In other words, $\mathcal{C}(K,\nd)$ is the set of all cores on vertex set $[\numberVertices{K}+\nd]$ whose kernel is equal to $K$. Let $\randomCore=\randomCore(K,\nd)\ur \mathcal{C}(K,\nd)$. In addition, we define a random {\em multigraph} $\randomMultiCore=\randomMultiCore(K,\nd)$ by the following random experiment: we start with $G_0=K$. Given the multigraph $G_{i-1}$ we construct $G_{i}$ as follows (for $i=1, \ldots, \nd$). We choose uniformly at random an edge $e$ of $G_{i-1}$ and subdivide $e$ by one additional vertex, which obtains the label $\numberVertices{K}+i$. We note that $\edgeSet{G_{i-1}}$ is a multiset, i.e. if there are $r$ edges between vertices $v$ and $w$, we choose one of these edges with probability $r/\numberEdges{G_{i-1}}$. Then we let $\randomMultiCore=G_{\nd}$ be the resulting multigraph after $k$ steps.
\end{definition}

Later we will study a random \pl\ graph $\planarRandomGraph$ conditioned on the event that $\kernel{\planarRandomGraph}=K$ and $\subdivisionNumber{\planarRandomGraph}=\nd$ for some fixed kernel $K$ and fixed $k \in \N$. The next lemma says that the core of this \lq conditional\rq\ random graph is distributed like $\randomCore(K, \nd)$ from \Cref{def:random_core}. That fact will be quite useful when we apply \Cref{lem:splitting}.

\begin{lem}\label{rem:core_equal}
Let $\planarClass$ be a \pl\ class of graphs and $\mathcal{K}$ the class of all kernels of graphs in $\planarClass$. Given a pair $(K, \nd)$ of a multigraph $K\in \mathcal{K}$ on vertex set $[\numberVertices{K}]$ and $\nd \in \N$, we let $\widehat \planarClass_{(K, \nd)}(n,m)$ be the subclass of $\planarClass(n,m)$ consisting of all graphs $F$ whose kernel $K(F)$ is equal to $K$ and whose subdivision number $S(F)$ is equal to $k$, i.e. 
\[\widehat \planarClass_{(K, \nd)}(n,m):=\left\{F\in \planarClass(n,m) \ \mid\ \kernel{F}=K \ \ \text{and}\ \ \subdivisionNumber{F}=\nd\right\}.\]
Let $\condGraph{\planarRandomGraph}{\left(K, \nd\right)}\ \ur \ \widehat \planarClass_{(K, \nd)}(n,m)$ and $\randomCore(K, \nd)$ be the random core as defined in \Cref{def:random_core}. Then the core of $\condGraph{\planarRandomGraph}{\left(K, \nd\right)}$ is distributed like $\randomCore(K, \nd)$: for each fixed graph $\fixGraph$, we have
\[
\prob{\core{\condGraph{\planarRandomGraph}{\left(K, \nd\right)}}=\fixGraph}\ =\ \prob{\randomCore(K, \nd)=\fixGraph}.
\]
\end{lem}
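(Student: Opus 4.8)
The plan is to establish the claimed identity by a direct counting argument, showing that both random objects $\core{\condGraph{\planarRandomGraph}{(K,\nd)}}$ and $\randomCore(K,\nd)$ induce the \emph{uniform} distribution on the set $\mathcal{C}(K,\nd)$ of all cores on vertex set $[\numberVertices{K}+\nd]$ with kernel $K$. For the left-hand side this is nearly immediate: by property (K1) (stability), a graph $F$ lies in $\planarClass$ if and only if $\kernel{F}\in\mathcal{K}$, and more is true — given a fixed core $\fixGraph$, the number of graphs $F\in\planarClass(n,m)$ with $\core{F}=\fixGraph$ depends only on $\numberVertices{\fixGraph}$ and $n,m$ (it counts the ways to attach a rooted forest on the remaining vertices and the simple part, which is a class-independent count once we know $\fixGraph\in\planarClass$, and $\fixGraph\in\planarClass$ holds for every $\fixGraph\in\mathcal{C}(K,\nd)$ since these have kernel $K\in\mathcal{K}$, using (K1) together with \Cref{lem:k4} for the tree/unicyclic remainder). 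Hence conditioning $\planarRandomGraph$ on $\{\kernel{\planarRandomGraph}=K,\ \subdivisionNumber{\planarRandomGraph}=\nd\}$ and then reading off the core yields the uniform distribution on $\mathcal{C}(K,\nd)$, i.e.\ the distribution of $\randomCore(K,\nd)$. The only care needed here is to check that the fibre sizes are genuinely constant across $\mathcal{C}(K,\nd)$; this is where the ``core-kernel approach'' decomposition from \Cref{sub:dec} does the work, since every core with $S(F)=\nd$ uses exactly $\numberVertices{K}+\nd$ vertices, so the leftover vertex count is the same for all of them.

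It therefore suffices to prove $\randomMultiCore(K,\nd)$, \emph{conditioned on the event that the resulting multigraph is simple}, is uniformly distributed on $\mathcal{C}(K,\nd)$ — and then separately argue that $\randomCore(K,\nd)$ was \emph{defined} to be uniform on $\mathcal{C}(K,\nd)$, so the statement reduces to identifying the conditional law of $\randomMultiCore$; but in fact the cleanest route is to observe that $\randomCore(K,\nd)$ is uniform on $\mathcal{C}(K,\nd)$ by definition, and to directly compare with the conditional law just computed for the left-hand side. So the real content is the fibre-counting claim above. I would formalize it as follows: fix $\fixGraph_0\in\mathcal{C}(K,\nd)$; a graph $F\in\planarClass(n,m)$ with $\core{F}=\fixGraph_0$ is obtained by choosing a rooted-forest structure on the $n-(\numberVertices{K}+\nd)$ vertices outside the core, attaching the roots to vertices of $\fixGraph_0$, and adding the acyclic/unicyclic simple part using the remaining edges. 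The number of such completions is a function $g(n,m,\numberVertices{K}+\nd)$ only, \emph{provided} each resulting $F$ is actually in $\planarClass$; and it is, because adding forests and a simple part to a graph already in $\planarClass$ keeps it in $\planarClass$ — here we use that $\planarClass$ is weakly addable (P1) for the between-component edges and closed under taking minors (P1) so that subdividing/contracting within the simple part is harmless, plus \Cref{lem:k4} to see the pieces with no complex component are admissible. Thus $|\widehat\planarClass_{(K,\nd)}(n,m)| = g(n,m,\numberVertices{K}+\nd)\cdot|\mathcal{C}(K,\nd)|$ and $\prob{\core{\condGraph{\planarRandomGraph}{(K,\nd)}}=\fixGraph_0} = 1/|\mathcal{C}(K,\nd)| = \prob{\randomCore(K,\nd)=\fixGraph_0}$.

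The main obstacle, I expect, is the bookkeeping for the fibre count $g(n,m,\numberVertices{K}+\nd)$: one must be careful that the completion of a core to a full graph in $\planarClass(n,m)$ really is insensitive to \emph{which} core in $\mathcal{C}(K,\nd)$ one started from, and in particular that no planarity-type obstruction (or, for a general \pl\ class, no membership obstruction) can arise for some cores in $\mathcal{C}(K,\nd)$ but not others. This is exactly the point where properties (P1) and (K1) are indispensable: (K1) guarantees every member of $\mathcal{C}(K,\nd)$ is in $\planarClass$ (its kernel is $K\in\mathcal{K}$), and weak addability plus minor-closedness guarantee that the forest-and-simple-part completions never leave $\planarClass$, so the completion count depends only on the vertex budget $n-\numberVertices{\fixGraph_0}$ and edge budget $m-\numberEdges{\fixGraph_0}=m-\numberEdges{\fixGraph_0}$ — and $\numberEdges{\fixGraph_0}=\numberEdges{K}+\nd$ is also constant across $\mathcal{C}(K,\nd)$. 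Once this invariance is nailed down, the equality of the two distributions is a one-line consequence. A secondary subtlety worth a remark is that $\randomCore(K,\nd)$ in \Cref{def:random_core} is the uniform core and \emph{not} a priori the conditional law of $\randomMultiCore$; the lemma as stated only asserts the former equals $\core{\condGraph{\planarRandomGraph}{(K,\nd)}}$, so the auxiliary multigraph $\randomMultiCore$ plays no role in this particular proof and I would not invoke it here.
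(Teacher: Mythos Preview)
Your proposal is correct and follows essentially the same approach as the paper's proof: both arguments reduce the claim to showing that the fibre $\{F\in\planarClass(n,m):\core{F}=\fixGraph\}$ has constant size as $\fixGraph$ ranges over $\mathcal{C}(K,\nd)$, and both justify this by noting that the completion (rooted trees at core vertices plus a simple part) depends only on $n,m,\numberVertices{\fixGraph},\numberEdges{\fixGraph}$, with $\numberVertices{\fixGraph}=\numberVertices{K}+\nd$ and $\numberEdges{\fixGraph}=\numberEdges{K}+\nd$ fixed across $\mathcal{C}(K,\nd)$. Your discussion of $\randomMultiCore$ is an unnecessary detour (as you yourself note), and the paper's write-up is correspondingly more concise, but the substance is the same.
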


In the next step, we provide a relation between the two random (multi-) graphs $\randomCore$ and $\randomMultiCore$ (see \Cref{lem:random_core}). In particular, we show that if $\nd=\omega\left(\nc^2\right)$, they are contiguous in the sense of \Cref{def:contiguous} (see \Cref{cor:contiguous}), where $\nc=\numberEdges{K}$ is the number of edges in $K$. In our applications we will have $k=\Theta\left(sn^{-1/3}\right)$ and $\nc=\Theta\left(sn^{-2/3}\right)$ (see \Cref{thm:internal_structure}) and we note that $sn^{-1/3}=\omega\left(\left(sn^{-2/3}\right)^2\right)$. In order to state this result, we need the following definition.

\begin{definition}[\2s]
Given a graph $\fixGraph$ we consider the construction of obtaining the core $\core{\fixGraph}$ from the kernel $\kernel{\fixGraph}$ (described in \Cref{sub:dec}). We call $\fixGraph$ {\em \2s} if each edge of $\kernel{\fixGraph}$ is subdivided by at least two vertices in that construction.
\end{definition}

\begin{lem}\label{lem:random_core}
Let $K$ be a multigraph on vertex set $[\numberVertices{K}]$ of minimum degree at least three with $\nc$ edges and let $\nd \in \N$. In addition, let the random multigraphs $\randomCore=\randomCore(K, \nd)$ and $\randomMultiCore=\randomMultiCore(K,\nd)$ be defined as in \Cref{def:random_core}. 
\begin{enumerate}
	\item \label{lem:random_core_a}
We have 
	\[ \prob{\randomMultiCore\text{ is simple}} \ \geq\ \prob{\randomMultiCore\text{ is 2-simple}}\ \geq\ 1-\frac{2\nc^2}{\nd}.\]
	\item \label{lem:random_core_b}
	Conditioning on the event that $\randomMultiCore$ is simple, the distributions of $\randomMultiCore$ and $\randomCore$ are the same: for each graph $H$, we have
	\[
	\prob{\randomMultiCore=H\;\middle|\;\randomMultiCore\text{ is simple}}\ =\ \prob{\randomCore=H}.
	\]
\end{enumerate}
\end{lem}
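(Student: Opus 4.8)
The plan is to prove the two parts in turn, since part~\ref{lem:random_core_b} only needs part~\ref{lem:random_core_a} to guarantee that the conditioning event has positive probability.

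For part~\ref{lem:random_core_a}, the first inequality is trivial: if $\randomMultiCore$ is $2$-simple then in particular no edge of $K$ survives as a loop or as a member of a parallel class, so $\randomMultiCore$ is simple. For the second inequality I would translate the event ``$\randomMultiCore$ is not $2$-simple'' into the P\'olya-urn language of \Cref{sub:construction_kernel_core}. Recall that the $i$-th edge $e_i$ of $K$ is subdivided by exactly $X_i$ vertices, where $(X_1,\dots,X_N)$ is the vector of draw-counts in the $N$-colour P\'olya urn run for $k$ steps. An edge $e_i$ is subdivided by at least two vertices precisely when $X_i\ge 2$; hence $\randomMultiCore$ fails to be $2$-simple exactly when $X_i\le 1$ for some $i\in[N]$, i.e.
\[
\prob{\randomMultiCore\text{ is not 2-simple}}\ =\ \prob{\min_{1\le i\le N}X_i\le 1}\ \le\ \sum_{i=1}^{N}\prob{X_i\le 1}.
\]
Now \Cref{prop:polya_main} bounds the right-hand side by $2N^2/k$, which gives the claim. (Strictly, I should note that $\randomMultiCore$ is $2$-simple in the sense of the defining construction iff every edge of $K$ receives at least two subdivision vertices, which is exactly the condition $X_i\ge2$ for all $i$; the case $N=1$ or small $k$ is covered since then the bound $2N^2/k$ may exceed $1$ and the statement is vacuous.)

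For part~\ref{lem:random_core_b}, I would argue that both $\randomMultiCore$ conditioned on simplicity and $\randomCore$ are the uniform distribution on $\mathcal{C}(K,k)$. On one hand, $\randomCore\ur\mathcal{C}(K,k)$ is uniform by definition. On the other hand, I claim the random process defining $\randomMultiCore$ assigns equal probability to every sequence of subdivision choices that produces a given multigraph, and in particular to every element of $\mathcal{C}(K,k)$. The key combinatorial observation is that for a fixed simple outcome $H\in\mathcal{C}(K,k)$, the number of step-by-step histories $G_0=K, G_1,\dots,G_k=H$ producing $H$, together with the product of the step-probabilities $1/\numberEdges{G_{i-1}}=1/(N+i-1)$ along each history, is the same for all $H$. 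Indeed, the denominators $N, N+1,\dots,N+k-1$ depend only on the step index, not on $H$, so $\prob{\randomMultiCore=H}=c_k\cdot(\text{number of valid histories for }H)$ with $c_k=\prod_{i=1}^{k}(N+i-1)^{-1}$ independent of $H$. It then remains to check that the number of valid histories is the same for every $H\in\mathcal{C}(K,k)$: a history corresponds to a choice, for each edge $e_i$ of $K$, of the order in which its $X_i(H)$ subdivision vertices are inserted along $e_i$ (together with an interleaving of these insertions across edges), and when $H$ is simple the set of subdivision vertices on each edge and the placement of $K$'s vertices are determined by $H$, so counting histories reduces to counting the linear extensions of the forest of ``insertion orders'', which is a function of the multiset $\{X_1(H),\dots,X_N(H)\}$ only up to a symmetric quantity --- and, crucially, $H$ simple forces each insertion on an edge to land in one of the current sub-segments in a way that is in bijection with such orderings independently of $H$. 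I would make this precise by exhibiting the explicit bijection between histories of $\randomMultiCore$ ending at $H$ and pairs (an ordered set partition of $[k]$ recording which step subdivides which edge of $K$, a choice of left/right position at each step), and observing both factors are $H$-independent given the $X_i$; summing over the simple outcomes and normalising yields that $\prob{\randomMultiCore=H\mid\randomMultiCore\text{ simple}}$ is constant on $\mathcal{C}(K,k)$, hence equals $\prob{\randomCore=H}$.

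The main obstacle I anticipate is the bijective bookkeeping in part~\ref{lem:random_core_b}: one must argue carefully that, restricted to simple outcomes $H$, the number of insertion-histories is genuinely independent of $H$ and not merely a function of the partition type of $(X_1,\dots,X_N)$ in a way that could still vary. The cleanest route is probably to set up the history as a sequence of (edge-of-$K$, side) choices and show directly that each simple $H$ is hit by exactly $\prod_{i=1}^N X_i(H)!$ histories divided by the appropriate interleaving count --- or, even more simply, to forget the side choices and observe that once we fix \emph{which} of the $k$ steps subdivides \emph{which} edge of $K$ (an ordered set partition of $[k]$ into $N$ blocks, the $i$-th of size $X_i$), the resulting multigraph is determined and simple iff all $X_i\ge1$ in the right pattern; this reduces the count to multinomial coefficients that cancel against the uniform normalisation. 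I would present whichever of these is shortest once the exact labelling conventions of \Cref{def:random_core} are pinned down.
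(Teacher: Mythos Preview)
Your argument for part~\ref{lem:random_core_a} is correct and matches the paper's proof exactly: union bound plus \Cref{prop:polya_main}.

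For part~\ref{lem:random_core_b}, however, your counting is off because you have not used the labelling convention in \Cref{def:random_core}. The vertex inserted at step $i$ receives the label $\numberVertices{K}+i$. Consequently, for a fixed simple target $H\in\mathcal{C}(K,k)$, the sequence of intermediate multigraphs $(G_0,\dots,G_k)$ is \emph{uniquely} determined by $H$: at each step $i$ you know exactly which labelled vertex was inserted and who its two neighbours are, so $G_{i-1}$ is obtained from $G_i$ by suppressing the vertex $\numberVertices{K}+i$. There is no ``choice of order in which subdivision vertices are inserted along $e_i$'' and no ``interleaving across edges'' --- both are dictated by the labels. Your ordered-set-partition bijection therefore does not describe the histories of $\randomMultiCore$, and your fallback claim that fixing which original edge of $K$ is hit at each step determines the outcome is also false (the positions of the labelled vertices along a subdivided edge depend on which sub-edge is chosen at each step).

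The actual source of non-uniqueness, and the point the paper exploits, is this: once $(G_0,\dots,G_k)$ is fixed, at step $i$ there may be $q_i$ parallel edges in $G_{i-1}$ between the two neighbours of $\numberVertices{K}+i$, any of which can be subdivided to reach $G_i$. So the number of histories is exactly $\prod_{i=1}^{k}q_i$. The crux is then to show this product is independent of $H$; the paper does this by observing that multiple edges can only arise from subdividing loops and must all be destroyed by the end (since $H$ is simple), forcing $\prod_i q_i = 1/\weight{K}$, the reciprocal of the compensation factor of $K$. This is the step your proposal is missing entirely, and without it the uniformity on $\mathcal{C}(K,k)$ does not follow.
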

\begin{coro}\label{cor:contiguous}
For every $n \in \N$, let $K=K(n)$ be a multigraph on vertex set $[\numberVertices{K}]$ of minimum degree at least three with $\nc=\nc(n)$ edges and let $\nd=\nd(n) \in \N$. Let $\randomCore=\randomCore(n)=\randomCore(K(n),k(n))$ and $\randomMultiCore=\randomMultiCore(n)=\randomMultiCore(K(n),k(n))$ be as in \Cref{def:random_core}. If $\nd=\omega\left(\nc^2\right)$, then $\randomCore$ and $\randomMultiCore$ are contiguous.
\end{coro}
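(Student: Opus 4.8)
The plan is to deduce contiguity directly from \Cref{lem:random_core}. By definition (\Cref{def:contiguous}), I need to show that for every graph property $\property$,
\[
\lim_{n\to\infty}\prob{\randomCore(n)\in\property}=1 \iff \lim_{n\to\infty}\prob{\randomMultiCore(n)\in\property}=1.
\]
First I would observe that the hypothesis $\nd=\omega(\nc^2)$ gives $2\nc^2/\nd=o(1)$, so by \Cref{lem:random_core}\ref{lem:random_core_a} the event $E_n:=\{\randomMultiCore(n)\text{ is simple}\}$ satisfies $\prob{E_n}=1-o(1)$; in particular $\randomMultiCore(n)$ is \whp\ simple.

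Next, for the forward implication, suppose $\prob{\randomCore(n)\in\property}\to 1$. Using \Cref{lem:random_core}\ref{lem:random_core_b}, which identifies the conditional law of $\randomMultiCore(n)$ given $E_n$ with the law of $\randomCore(n)$, I would write
\[
\prob{\randomMultiCore(n)\in\property}\ \geq\ \prob{\randomMultiCore(n)\in\property \;\middle|\; E_n}\prob{E_n}\ =\ \prob{\randomCore(n)\in\property}\prob{E_n}\ \to\ 1.
\]
For the reverse implication, suppose $\prob{\randomMultiCore(n)\in\property}\to 1$. Then, since conditioning on an event of probability $1-o(1)$ changes probabilities by at most $o(1)$,
\[
\prob{\randomCore(n)\in\property}\ =\ \prob{\randomMultiCore(n)\in\property \;\middle|\; E_n}\ \geq\ \frac{\prob{\randomMultiCore(n)\in\property}-\prob{\overline{E_n}}}{\prob{E_n}}\ \to\ 1.
\]
This establishes both directions and hence contiguity.

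There is essentially no serious obstacle here: the corollary is a soft consequence of \Cref{lem:random_core}, and the only thing to be careful about is the elementary bookkeeping that conditioning on a high-probability event preserves \whp-statements in both directions. The one genuine input is the quantitative bound $\prob{\randomMultiCore\text{ is simple}}\geq 1-2\nc^2/\nd$ from part \ref{lem:random_core_a}, which is exactly what makes $\nd=\omega(\nc^2)$ the right hypothesis; all the work in proving that bound (via \Cref{prop:polya_main}) has already been done upstream.
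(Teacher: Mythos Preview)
Your proof is correct and follows essentially the same approach as the paper: both use \Cref{lem:random_core}\ref{lem:random_core_a} to get $\prob{\randomMultiCore\text{ simple}}=1-o(1)$ under $\nd=\omega(\nc^2)$, and then \Cref{lem:random_core}\ref{lem:random_core_b} to identify the conditional law with that of $\randomCore$. The paper packages both directions into the single identity $\prob{\randomMultiCore\in\property}=(1+o(1))\prob{\randomCore\in\property}+o(1)$ via the law of total probability, whereas you split into two inequalities, but the content is the same.
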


Amongst others, \Cref{lem:random_core}\ref{lem:random_core_a} states that \whp\ $\randomMultiCore$ is \2s\ if $\nd=\omega\left(\nc^2\right)$. Using that we can deduce the following result, which we will use later in the proof of \Cref{lem:kernel_cubic}.
\begin{coro}\label{cor:subdivision_twice}
Let $\planarClass$ be a \pl\ class of graphs and $P=\planarRandomGraph(n,m) \ur \mathcal \planarRandomGraph(n,m)$.
Assume $m=n/2+s$ for $s=s(n)=o(n)$ and $s^3n^{-2} \to \infty$. Then \whp\ $\planarRandomGraph$ is \2s.
\end{coro}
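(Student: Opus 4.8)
The plan is to run the conditioning template of \Cref{sub:conditional}: reduce from $\planarRandomGraph$ to the random core $\randomCore(K,\nd)$ via \Cref{lem:splitting} and \Cref{rem:core_equal}, and then apply the tail bound of \Cref{lem:random_core}\ref{lem:random_core_a}. Recall that $\planarRandomGraph$ is $2$-simple precisely when, in the construction of $\core{\planarRandomGraph}$ from $\kernel{\planarRandomGraph}$, every edge of $\kernel{\planarRandomGraph}$ receives at least two subdivision vertices, an event determined by the pair $\left(\kernel{\planarRandomGraph},\core{\planarRandomGraph}\right)$ alone. First I would fix constants $C\ge c>0$ (chosen in the next step) and let $\cl(n)\subseteq\planarClass(n,m)$ be the subclass of those $H$ with $\kernel{H}$ cubic, $\numberEdges{\kernel{H}}\le Csn^{-2/3}$, and $\subdivisionNumber{H}\ge csn^{-1/3}$; put $\cl:=\bigcup_{n}\cl(n)$ and $\func(H):=\left(\kernel{H},\subdivisionNumber{H}\right)$. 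Since the three defining conditions depend only on $\func(H)$, we have $\left\{H\in\cl(n):\func(H)=(K,\nd)\right\}=\widehat\planarClass_{(K,\nd)}(n,m)$, so \Cref{rem:core_equal} applies to conditional random graphs drawn from $\cl$.

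Next, using \Cref{thm:internal_structure} one checks that \whp\ $\planarRandomGraph\in\cl(n)$ for suitable $C,c$: \whp\ $\kernel{\planarRandomGraph}$ is cubic; moreover $\kernel{\planarRandomGraph}=\kernel{\LargestComponent}\cup\kernel{\rest}$ with $\numberVertices{\kernel{\LargestComponent}}=\Theta\left(sn^{-2/3}\right)$ \whp\ and $\numberVertices{\kernel{\rest}}=O_p(1)=o\left(sn^{-2/3}\right)$ (as $sn^{-2/3}\to\infty$), so cubicity gives \whp\ $\numberEdges{\kernel{\planarRandomGraph}}=\tfrac32\numberVertices{\kernel{\planarRandomGraph}}\le Csn^{-2/3}$; likewise $\subdivisionNumber{\planarRandomGraph}\ge\subdivisionNumber{\LargestComponent}=\numberVertices{\core{\LargestComponent}}-\numberVertices{\kernel{\LargestComponent}}\ge csn^{-1/3}$ \whp, using $\numberVertices{\core{\LargestComponent}}=\Theta\left(sn^{-1/3}\right)$ \whp\ and $sn^{-2/3}=o\left(sn^{-1/3}\right)$. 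Now let $\seq=(K_n,\nd_n)$ be feasible for $\left(\cl,\func\right)$; then $K_n$ is cubic with $N_n:=\numberEdges{K_n}\le Csn^{-2/3}$ and $\nd_n\ge csn^{-1/3}$, hence $N_n^2/\nd_n\le (C^2/c)\,sn^{-1}\to0$ because $s=o(n)$. By \Cref{rem:core_equal}, $\core{\condGraph{\planarRandomGraph}{\seq}}$ is distributed as $\randomCore(K_n,\nd_n)$, and ``$\condGraph{\planarRandomGraph}{\seq}$ is $2$-simple'' is exactly the event that every edge of $K_n$ is subdivided at least twice in $\randomCore(K_n,\nd_n)$. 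Since $\randomCore(K_n,\nd_n)$ equals $\randomMultiCore(K_n,\nd_n)$ conditioned on being simple (\Cref{lem:random_core}\ref{lem:random_core_b}) and $2$-simplicity implies simplicity, \Cref{lem:random_core}\ref{lem:random_core_a} yields
\[
\prob{\randomCore(K_n,\nd_n)\text{ is not }2\text{-simple}}\ \le\ \frac{2N_n^2/\nd_n}{1-2N_n^2/\nd_n}\ \longrightarrow\ 0 ,
\]
so \whp\ $\condGraph{\planarRandomGraph}{\seq}$ is $2$-simple. By \Cref{lem:splitting} the uniform random graph in $\cl(n)$ is \whp\ $2$-simple, and since \whp\ $\planarRandomGraph\in\cl(n)$ and, conditioned on that, $\planarRandomGraph$ is uniform on $\cl(n)$, we conclude \whp\ $\planarRandomGraph$ is $2$-simple.

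The only genuinely delicate point is verifying \whp\ $\planarRandomGraph\in\cl(n)$: one must absorb the contribution of the part outside the giant, which is controlled only in probability ($\numberVertices{\kernel{\rest}}=O_p(1)$), into clean whp-bounds on $\numberEdges{\kernel{\planarRandomGraph}}$ and $\subdivisionNumber{\planarRandomGraph}$. This works precisely because in the weakly supercritical regime $sn^{-2/3}\to\infty$, so the $O_p(1)$ term is negligible against $sn^{-2/3}$, while $sn^{-2/3}=o\left(sn^{-1/3}\right)$ separates the kernel-size bound from the subdivision-number bound with room to spare, giving the crucial ratio $N_n^2/\nd_n=\Theta(s/n)\to0$. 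Everything else is a routine assembly of \Cref{thm:internal_structure}, \Cref{rem:core_equal}, \Cref{lem:random_core}, and \Cref{lem:splitting}; no new probabilistic estimate is required.
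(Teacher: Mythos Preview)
Your proof is correct and follows essentially the same approach as the paper: define an appropriate subclass $\cl(n)$ on which the kernel size and subdivision number are controlled, use \Cref{thm:internal_structure} to land in $\cl(n)$ \whp, then apply \Cref{rem:core_equal} and the bound from \Cref{lem:random_core}\ref{lem:random_core_a} to the conditional core, and finish with \Cref{lem:splitting}. The only cosmetic differences are that the paper uses two-sided $\Theta$ bounds in defining $\cl(n)$ and invokes \Cref{cor:contiguous} to pass from $\randomMultiCore$ to $\randomCore$, whereas you use one-sided bounds and compute the conditional probability directly via \Cref{lem:random_core}\ref{lem:random_core_b}; both are equally valid.
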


Our next results provide bounds on the lengths $\girth{\randomMultiCore}$ and $\longestCycle{\randomMultiCore}$ of the shortest and longest cycle in $\randomMultiCore$. We note that if $\nd=\omega\left(\nc^2\right)$, these results also hold for $\randomCore$ due to \Cref{cor:contiguous}.

\begin{lem}\label{lem:random_core2}
For every $n \in \N$, let $K=K(n)$ be a multigraph on vertex set $[\numberVertices{K}]$ of minimum degree at least three with $\nc=\nc(n)$ edges and $\NumberLoops=\NumberLoops(n)$ loops. In addition, let $\nd=\nd(n) \in \N$ and the random multigraph $\randomMultiCore=\randomMultiCore(n)=\randomMultiCore(K(n),k(n))$ be as in \Cref{def:random_core}. We assume that $\nc=\omega(1)$.
\begin{enumerate}
	\item\label{lem:random_core2_a}
If $\NumberLoops=O(1)$ or $\NumberLoops=\omega(1)$, then
		\begin{align*}
		\girth{\randomMultiCore}&=
		\begin{cases}
		O_p\left(\frac{\nd}{\nc \NumberLoops}\right) &\text{if}~~ \nd=\omega\left(\nc \NumberLoops\right) \text{ and } \NumberLoops\neq 0,
		\\
		O_p(1) &\text{if}~~ \nd=O\left(\nc \NumberLoops\right),
		\end{cases}
		\\
		\text{ and }~~~~~\girth{\randomMultiCore}&=
		\Omega_p\left(\frac{\nd}{\nc^2}\right) ~~~\text{if}~~ \nd=\omega\left(\nc ^2\right).~~~~~~~~~~~~~~~~~~
		\end{align*}
		In particular, if $\NumberLoops=\Theta\left(\nc\right)$, then
		\begin{equation*}
		\girth{\randomMultiCore}=
		\begin{cases}
		\Theta_p\left(\frac{\nd}{\nc^2}\right) & \text{if}~~ \nd=\omega\left(\nc^2\right),
		\\
		O_p(1) & \text{if}~~ \nd=O\left(\nc ^2\right).
		\end{cases}
		\end{equation*}
		\item \label{lem:random_core2_b}
		If $\NumberLoops> 0$ and $\nd=\omega(\nc)$,
		then we have $\longestCycle{\randomMultiCore}=\Omega_p\left(\frac{\nd}{\nc}\left(1+\log\NumberLoops\right)\right)$.
	\end{enumerate}
\end{lem}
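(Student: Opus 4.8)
The plan is to reduce both parts of \Cref{lem:random_core2} to the P\'olya urn bounds in \Cref{main1} via the inequalities \eqref{bound1}--\eqref{bound3}, after matching up the parameters of the two models. Recall that if $e_1,\dots,e_\NumberLoops$ are the loops of $K$ and $e_{\NumberLoops+1},\dots,e_\nc$ the remaining edges, then the random subdivision process defining $\randomMultiCore$ is exactly the P\'olya urn with $N=\nc$ colours and $k$ draws, where $X_i$ counts the vertices placed on $e_i$. A loop $e_i$ subdivided by $X_i$ vertices becomes a cycle of length $X_i+1$ in $\randomMultiCore$ (so it survives as a genuine cycle as soon as $X_i\ge 1$, which is why the $+1$ appears in \eqref{bound1} but not in \eqref{bound3}, since a length-$(X_i+1)$ cycle certifies $\longestCycle{\randomMultiCore}\ge X_i+1\ge X_i$). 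Any cycle in $\randomMultiCore$ projects onto a closed walk in $K$ using at least one edge, and each edge it uses contributes at least $X_j$ extra vertices, giving the lower bound \eqref{bound2}: $\girth{\randomMultiCore}\ge\min_{1\le j\le\nc}X_j$ provided $K$ itself has a cycle, which it does since it has minimum degree at least three.

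For part \ref{lem:random_core2_a}, the upper bounds on $\girth{\randomMultiCore}$ follow from \eqref{bound1} together with \Cref{main1}\ref{thm:main1_1} applied with $f=\NumberLoops$: when $\NumberLoops\ne 0$ and $k=\omega(\nc\NumberLoops)$ we get $\min_{1\le i\le\NumberLoops}X_i=\Theta_p(k/(\nc\NumberLoops))=O_p(k/(\nc\NumberLoops))$, and when $k=O(\nc\NumberLoops)$ we get $O_p(1)$; in both cases adding $1$ is absorbed. The hypothesis that $\NumberLoops=O(1)$ or $\NumberLoops=\omega(1)$ is precisely the dichotomy needed to invoke \Cref{main1}. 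For the lower bound, apply \eqref{bound2} and \Cref{main1}\ref{thm:main1_1} with $f=\nc$: since $\nc=\omega(1)$ and $k=\omega(\nc^2)=\omega(\nc\cdot f)$, we obtain $\min_{1\le i\le\nc}X_i=\Theta_p(k/\nc^2)$, hence $\girth{\randomMultiCore}=\Omega_p(k/\nc^2)$. The ``in particular'' clause with $\NumberLoops=\Theta(\nc)$ is just the combination: the upper bound becomes $O_p(k/\nc^2)$ (since $\nc\NumberLoops=\Theta(\nc^2)$, and the case split $k=\omega(\nc^2)$ versus $k=O(\nc^2)$ lines up), and when $k=\omega(\nc^2)$ the lower bound $\Omega_p(k/\nc^2)$ matches, yielding $\Theta_p(k/\nc^2)$.

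For part \ref{lem:random_core2_b}, since $\NumberLoops>0$ there is at least one loop, and \eqref{bound3} gives $\longestCycle{\randomMultiCore}\ge\max_{1\le i\le\NumberLoops}X_i$. Apply \Cref{main1}\ref{thm:main1_2} with $f=\NumberLoops$: under the hypothesis $k=\omega(\nc)$ this yields $\max_{1\le i\le\NumberLoops}X_i=\Theta_p\bigl(\tfrac{k}{\nc}(1+\log\NumberLoops)\bigr)$, whence $\longestCycle{\randomMultiCore}=\Omega_p\bigl(\tfrac{k}{\nc}(1+\log\NumberLoops)\bigr)$. One subtlety: \Cref{main1} requires $f=O(1)$ or $f=\omega(1)$, so strictly one should treat the two regimes of $\NumberLoops$ separately, but when $\NumberLoops=O(1)$ the factor $1+\log\NumberLoops=\Theta(1)$ and the bound reads $\Omega_p(k/\nc)$, which follows already from $\eqref{bound3}$ with a single fixed loop and \Cref{main1}\ref{thm:main1_2} with $f=1$.

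I expect the routine parts to be the careful bookkeeping of the case distinctions (matching ``$k=\omega(\nc\NumberLoops)$ versus $k=O(\nc\NumberLoops)$'' in the lemma with the ``$k=\omega(\nc f)$ versus $k=O(\nc f)$'' in \Cref{main1}, and likewise for the second moment statement), and the only genuine point requiring thought is verifying that a shortest cycle in $\randomMultiCore$ cannot be shorter than $\min_i X_i$ — i.e. that every cycle of $\randomMultiCore$ uses at least one original edge of $K$ and that the vertices placed on a used edge all lie on the cycle. This is immediate from the structure of the subdivision process (subdividing an edge replaces it by a path through all the inserted vertices, so a cycle through part of that path must traverse all of it), but it is the step that should be stated explicitly to justify \eqref{bound2}. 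The inequalities \eqref{bound1} and \eqref{bound3} are even more direct, as each loop of $K$ yields an explicit cycle of length $X_i+1$ in $\randomMultiCore$.
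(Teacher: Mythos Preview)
Your proposal is correct and follows essentially the same approach as the paper's proof: apply \Cref{main1}\ref{thm:main1_1} with $f=\NumberLoops$ together with \eqref{bound1} for the upper bounds on $\girth{\randomMultiCore}$, \Cref{main1}\ref{thm:main1_1} with $f=\nc$ together with \eqref{bound2} for the lower bound, and \Cref{main1}\ref{thm:main1_2} together with \eqref{bound3} for part~\ref{lem:random_core2_b}. You are in fact more careful than the paper in two places: you spell out why \eqref{bound2} holds (every cycle of $\randomMultiCore$ traverses at least one full subdivided edge-path), and you flag the hypothesis ``$f=O(1)$ or $f=\omega(1)$'' in \Cref{main1} that is not explicitly assumed on $\NumberLoops$ in part~\ref{lem:random_core2_b}, handling the bounded case by taking $f=1$.
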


In order to use \Cref{lem:random_core2} in the proof of \Cref{thm:general}, we will study the number of loops in a typical kernel of a random \pl\ graph in \Cref{sec:random_kernel} (see \Cref{cor:linear_loop}). 

We conclude this section with a result on the block structure of $\randomMultiCore$, which provides also an insight into the block structure of $C$ due to \Cref{cor:contiguous}. Roughly speaking, the next lemma says that the $i$-th largest block of a cubic kernel $K$ translates (during the process of constructing $\randomMultiCore$) to the $i$-th largest block of $\randomMultiCore$, provided the block is not too \lq small\rq. 

\begin{lem}\label{lem:block_core}
	For every $n \in \N$, let $\nd=\nd(n) \in \N$, $K=K(n)$ be a cubic multigraph on vertex set $[\numberVertices{K}]$, and the random multigraph $\randomMultiCore=\randomMultiCore(n)=\randomMultiCore(K(n),k(n))$ be defined as in \Cref{def:random_core}. We assume that $k=\omega\left(\numberVertices{K}\right)$ and let $i\in \N$ be fixed such that $\blockOrder{i}{K}=\omega\left(\left(\numberVertices{K}\right)^{1/2}\right)$. Then \whp
	\begin{align*}
	\blockOrder{i}{\randomMultiCore}=\Theta\left(\frac{\nd \blockOrder{i}{K}}{\numberVertices{K}}\right).
	\end{align*}
\end{lem}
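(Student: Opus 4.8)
The plan is to reduce the statement to the two-colour P\'olya urn of \Cref{sub:polya2} and then apply the concentration afforded by \Cref{thm:polya2} together with a union bound. Let $\block=\blockLargest{i}{K}$ be the $i$-th largest block of the cubic kernel $K$, write $\numberVertices{K}=2t$ (so $\numberEdges{K}=3t$ since $K$ is cubic), and let $\bb:=\numberEdges{\block}$ be the number of edges of $K$ lying in $\block$. Since $\block$ is a block of a cubic multigraph, a standard degree count gives $\bb=\Theta\left(\blockOrder{i}{K}\right)$; in particular the hypothesis $\blockOrder{i}{K}=\omega\left(\left(\numberVertices{K}\right)^{1/2}\right)$ translates into $\bb=\omega\left(t^{1/2}\right)$, and $\wb:=\numberEdges{K}-\bb=\Theta(t)$. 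In the process defining $\randomMultiCore$ we colour a ball black if its edge lies in $\block$ and white otherwise; the total number $X$ of black draws after $\nd$ steps is then exactly distributed as in \Cref{sub:polya2}, and the block $\tilde\block$ of $\randomMultiCore$ obtained from $\block$ satisfies $\numberVertices{\tilde\block}=\numberVertices{\block}+X=\blockOrder{i}{K}+X$. So it suffices to show \whp\ $X=\Theta\left(\nd\bb/\numberEdges{K}\right)=\Theta\left(\nd\blockOrder{i}{K}/\numberVertices{K}\right)$, together with the structural fact that $\tilde\block$ really is the $i$-th largest block of $\randomMultiCore$.

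First I would handle the concentration of $X$. By \Cref{thm:polya2}, $\expec{X}=\bb\nd/(\bb+\wb)=\Theta\left(\nd\bb/\numberEdges{K}\right)$ and $\variance{X}=O\left(\bb\wb\nd(\bb+\wb+\nd)/(\bb+\wb)^3\right)=O\left(\nd^2\bb/\numberEdges{K}^2\cdot(\numberEdges{K}+\nd)/\numberEdges{K}\right)$. Using $k=\omega\left(\numberVertices{K}\right)=\omega(t)$ and $\numberEdges{K}=\Theta(t)$, one checks $\variance{X}=O\left(\nd^2\bb/\numberEdges{K}^2 \cdot \nd/\numberEdges{K}\right)$ when $\nd\gg\numberEdges{K}$, so that $\variance{X}/\expec{X}^2=O\left(\nd/(\numberEdges{K}\bb)\right)=o(1)$ precisely because $\bb=\omega\left(t^{1/2}\right)$ forces $\numberEdges{K}\bb=\omega\left(t^{3/2}\right)$ while $\nd=o\left(t^{3/2}\right)$ is \emph{not} guaranteed — so I should be a little more careful and instead note $\variance{X}=O\left(\expec{X}\cdot(\numberEdges{K}+\nd)/\numberEdges{K}\right)=O\left(\expec{X}\cdot \nd/\numberEdges{K}\right)$, hence $\variance{X}/\expec{X}^2=O\left(1/\bb\right)=o(1)$ since $\bb\to\infty$. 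Chebyshev then gives $X=(1+o(1))\expec{X}=\Theta\left(\nd\bb/\numberEdges{K}\right)$ \whp, which is the desired order.

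Next I would argue that $\tilde\block$ is genuinely the $i$-th largest block of $\randomMultiCore$. Every block of $\randomMultiCore$ arises from a unique block of $K$ by subdividing its edges; if $\block'$ is a block of $K$ with $\numberEdges{\block'}=\bb'$ balls coloured, the resulting block has order $\numberVertices{\block'}+(\text{black draws for }\block')$. For the $i-1$ blocks of $K$ that are at least as large as $\block$, the same two-colour analysis (taking that block's edges as black) shows their images have order $\Theta\left(\nd\,\bb'/\numberEdges{K}\right)$ with $\bb'\ge\bb$, hence remain $\ge(1-o(1))\numberVertices{\tilde\block}$; for the smaller blocks $\block''$ of $K$, since $\blockOrder{i+1}{K}\le\blockOrder{i}{K}$ one gets, using $\expec{\text{draws}}+O(\sqrt{\cdot})$ bounds and a union bound over the at most $\numberVertices{K}$ blocks, that their images have order $O\left(\nd\,\blockOrder{i}{K}/\numberVertices{K}\right)$ and in fact strictly below $\numberVertices{\tilde\block}$ \whp\ when they are strictly smaller in $K$; ties are broken consistently. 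Combining, $\tilde\block$ is \whp\ the $i$-th largest block and $\blockOrder{i}{\randomMultiCore}=\numberVertices{\tilde\block}=\blockOrder{i}{K}+X=\Theta\left(\nd\blockOrder{i}{K}/\numberVertices{K}\right)$, where I also use $X=\omega\left(\blockOrder{i}{K}\right)$ (from $\nd=\omega(\numberVertices{K})$) so that the $+X$ term dominates.

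The main obstacle is the comparison step for \emph{all} other blocks simultaneously: I need a uniform upper tail bound on the number of black draws for every block of $K$ at once to rule out that some moderately large block $\block''$ with $\numberEdges{\block''}$ only slightly below $\bb$ overtakes $\tilde\block$ after random subdivision, or that an aggregate of small blocks distorts the ranking. This requires either a Chernoff-type tail for the P\'olya urn (obtainable from \Cref{thm:polya2} via Chebyshev at the cost of weaker bounds, which suffices since we only need order-of-magnitude separation) combined with the crude bound that there are at most $\numberVertices{K}$ blocks, or a direct exchangeability argument; handling the borderline case $\numberEdges{\block''}\asymp\bb$ but $\blockOrder{i+1}{K}<\blockOrder{i}{K}$ cleanly is the delicate point, and is where I would spend most of the effort.
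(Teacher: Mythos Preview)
Your approach is essentially the paper's: both reduce to the two-colour P\'olya urn of \Cref{sub:polya2} and apply \Cref{thm:polya2} with Chebyshev. The difference is that you flag the union bound over all other blocks as the main obstacle, whereas the paper dispatches it in two lines. The observation you are missing is that for any block $j\ge i$ one has, by Chebyshev and $\variance{X_j}=O\bigl(k^2 b_j(K)/\numberVertices{K}^2\bigr)$,
\[
\prob{X_j\ge \frac{2k\,b_i(K)}{\numberVertices{K}}}\ \le\ \frac{\variance{X_j}}{\bigl(k\,b_i(K)/\numberVertices{K}\bigr)^2}\ =\ O\!\left(\frac{b_j(K)}{b_i(K)^2}\right),
\]
and summing over \emph{all} such blocks gives $O\bigl(\sum_{j\ge i} b_j(K)/b_i(K)^2\bigr)=O\bigl(\numberVertices{K}/b_i(K)^2\bigr)=o(1)$, exactly by the hypothesis $b_i(K)=\omega\bigl(\numberVertices{K}^{1/2}\bigr)$. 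So you do not need a uniform tail bound; you need tail bounds that are \emph{summable}, and Chebyshev already delivers those because the variance of $X_j$ scales with $b_j(K)$.

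Your concern about ties, exact ranking, and ``borderline'' blocks with $\numberEdges{B''}\asymp b$ is also unnecessary. The statement is about the number $b_i(\randomMultiCore)$, not about which particular block of $K$ becomes the $i$-th block of $\randomMultiCore$. For the lower bound it is enough that each of the $i$ blocks $j\le i$ of $K$ receives $\Omega\bigl(k\,b_i(K)/\numberVertices{K}\bigr)$ subdivision vertices (a union over $i=O(1)$ events, each failing with probability $O(1/b_j(K))=o(1)$), so at least $i$ blocks of $\randomMultiCore$ are that large. For the upper bound it is enough that no block $j\ge i$ receives more than $2k\,b_i(K)/\numberVertices{K}$ vertices, by the summation above; then at most $i-1$ blocks of $\randomMultiCore$ can exceed that threshold. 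No tie-breaking or overtaking analysis is needed.
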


\begin{remark}\label{rem:improve}
	The condition in \Cref{lem:block_core} that $\blockOrder{i}{K}$ is not too small, i.e. $\blockOrder{i}{K}=\omega\left(\left(\numberVertices{K}\right)^{1/2}\right)$, can be weakened by using stronger concentration results on the P\'olya urn model than the results in \Cref{thm:polya2}. We believe that using similar results as in \Cref{main1} one can show that the statement of \Cref{lem:block_core} is true even under the condition that $\blockOrder{i}{K}=\omega\left(\left(\numberVertices{K}\right)^\varepsilon\right)$ for some $\varepsilon>0$.
\end{remark}

\section{Random kernel}\label{sec:random_kernel}
Throughout this section, let $\planarClass$ be a \pl\ class of graphs and $\mathcal{K}$ the class of all kernels of graphs in $\planarClass$. 
We let $\planarRandomGraph=\planarRandomGraph(n,m)\ur \planarClass(n,m)$ and consider the weakly supercritical regime when $m=n/2+s$ for $s=s(n)=o(n)$ and $s^3n^{-2} \to \infty$. 

Due to \Cref{thm:internal_structure} we know that \whp\ $\kernel{\planarRandomGraph}$ is cubic and $\numberVertices{\kernel{\planarRandomGraph}}=\Theta\left(sn^{-2/3}\right)$. Hence, we might expect that $\kernel{\planarRandomGraph}$ \lq behaves\rq\ asymptotically like a graph chosen uniformly at random from all cubic multigraphs in $\mathcal{K}$ with $\Theta\left(sn^{-2/3}\right)$ many vertices. In the next lemma we show that this is indeed true. We note, however, that this result is not straightforward, since $K(\planarRandomGraph)$ is not equally distributed on the set of all possible cubic kernels in $\mathcal{K}$ with $\Theta\left(sn^{-2/3}\right)$ many vertices.
\begin{lem}\label{lem:kernel_cubic}
	Let $\planarClass$ be a \pl\ class of graphs and $\mathcal{K}$ the class of all kernels of graphs in $\planarClass$. Let $F:\planarClass \to \N$ be a graph theoretic function and $g_1, g_2:\N \to \N$ non-decreasing functions. We assume that for $K(2n,3n)\ur \mathcal{K}(2n,3n)$, we have \whp\ $g_1(n)\leq F(K(2n,3n))\leq g_2(n)$. In addition, assume $m=n/2+s$ for $s=s(n)=o(n)$ and $s^3n^{-2} \to \infty$. Then there exist constants $c_2\geq c_1>0$ such that for $\planarRandomGraph=\planarRandomGraph(n,m)\ur \mathcal \planarRandomGraph(n,m)$ we have \whp
	\[
	g_1\left(c_1sn^{-2/3}\right)\ \leq\ F\left(\kernel{\planarRandomGraph}\right)\ \leq\ g_2\left(c_2sn^{-2/3}\right).
	\]
\end{lem}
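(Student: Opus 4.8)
The plan is to transfer the hypothesis from a uniform random cubic kernel $K(2\ell,3\ell)$ to $\kernel{\planarRandomGraph(n,m)}$ through the conditioning principle \Cref{lem:splitting}; the whole task then reduces to identifying the conditional distribution of $\kernel{\planarRandomGraph(n,m)}$ once a few coarse parameters have been exposed. First I would use \Cref{thm:internal_structure}\ref{thm:internal_structure_b}, \ref{thm:internal_structure_d}, \ref{thm:internal_structure_e} together with \Cref{cor:subdivision_twice} to find constants $0<c_1\le c_2$ such that \whp\ the graph $\planarRandomGraph=\planarRandomGraph(n,m)$ is \2s, its kernel is cubic, and $c_1 sn^{-2/3}\le\numberVertices{\kernel{\planarRandomGraph}}/2\le c_2 sn^{-2/3}$ (for the lower bound pass to $\kernel{\largestComponent{\planarRandomGraph}}$; the $O_p(1)$ contribution of $\Rest{\planarRandomGraph}$ is harmless for the upper bound because $sn^{-2/3}\to\infty$). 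These $c_1,c_2$ will be the constants asserted in the lemma. Then I would let $\cl(n)\subseteq\planarClass(n,m)$ consist of the \2s\ graphs $H$ with cubic kernel and $\lceil c_1 sn^{-2/3}\rceil\le\numberVertices{\kernel{H}}/2\le\lfloor c_2 sn^{-2/3}\rfloor$, put $\cl:=\bigcup_n\cl(n)$, so that \whp\ $\planarRandomGraph\in\cl(n)$, and invoke \Cref{lem:splitting} with the statistic $\func(H):=\numberVertices{\kernel{H}}/2$.

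The heart of the proof is the claim that, for $\randomGraph=\randomGraph(n)\ur\cl(n)$ conditioned on $\func(\randomGraph)=\ell$, the kernel $\kernel{\randomGraph}$ is distributed exactly as $K(2\ell,3\ell)\ur\mathcal{K}(2\ell,3\ell)$, i.e.
\[
\condprob{\kernel{\randomGraph}=K}{\func(\randomGraph)=\ell}\ =\ \frac{\weight{K}}{|\mathcal{K}(2\ell,3\ell)|}\qquad\text{for every cubic }K\in\mathcal{K}\text{ on }2\ell\text{ vertices.}
\]
To prove this I would count $\#\{H\in\cl(n):\kernel{H}=K\}$ by first choosing the core. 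Since $H$ is \2s, a core with kernel $K$ and subdivision number $k$ amounts to replacing each edge of $K$ by an internally disjoint path through an ordered set of at least two subdivision labels, these sets partitioning the set of $k$ subdivision labels; as all the blocks have size at least two and use distinct labels, the only identifications among the resulting labelled cores come from reversing the blocks sitting on loops and from permuting blocks of parallel edges, and that action is \emph{free}. Consequently the number of such cores equals $\weight{K}$ times a quantity $\tau(\ell,k)$ depending on $(\ell,k)$ alone — this is exactly the compensation-factor bookkeeping of \cite{comp_fac}, and the \2s\ condition is what makes it at once exact and insensitive to the loop/multi-edge structure of $K$. Next, attaching rooted forests at core vertices or adjoining tree- or unicyclic components leaves the kernel unchanged, so by condition (K1) such an operation keeps a graph whose kernel lies in $\mathcal{K}$ inside $\planarClass$; hence the number of ways to complete a \2s\ core on $2\ell+k$ vertices to a member of $\planarClass(n,m)$ is a quantity $W(\ell,k,n,m)$ independent of $K$ and of the core. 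Summing over $k$ gives $\#\{H\in\cl(n):\kernel{H}=K\}=\weight{K}\cdot\widehat{W}(\ell,n,m)$ with $\widehat W$ independent of $K$; a realisability check (subdivide each edge of $K$ exactly twice — legitimate by (K1) — and adjoin a forest with the appropriate number of components to reach $n$ vertices and $m$ edges, possible since $\ell=o(n)$ and $s=o(n)$) shows the admissible kernels are precisely $\mathcal{K}(2\ell,3\ell)$. Normalising, and recalling $|\mathcal{K}(2\ell,3\ell)|=\sum_K\weight{K}$, yields the claimed identity.

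Granting the claim, the rest is routine. For a sequence $\seq=(\ell_n)$ feasible for $(\cl,\func)$ one has $\ell_n\in[\lceil c_1 sn^{-2/3}\rceil,\lfloor c_2 sn^{-2/3}\rfloor]$, so $\ell_n=\Theta(sn^{-2/3})\to\infty$ since $s^3n^{-2}\to\infty$. By the claim $\kernel{\condGraph{\randomGraph}{\seq}}$ is distributed as $K(2\ell_n,3\ell_n)$, and the hypothesis (applied with $\ell_n$ in the role of $n$, which is legitimate as $\ell_n\to\infty$) gives \whp\ $g_1(\ell_n)\le F(\kernel{\condGraph{\randomGraph}{\seq}})\le g_2(\ell_n)$. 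Because $g_1,g_2$ are non-decreasing and $\ell_n$ lies between $c_1 sn^{-2/3}$ and $c_2 sn^{-2/3}$ (up to roundings, which may be swallowed by slightly shrinking $c_1$ and enlarging $c_2$), this becomes \whp\ $g_1(c_1 sn^{-2/3})\le F(\kernel{\condGraph{\randomGraph}{\seq}})\le g_2(c_2 sn^{-2/3})$. As this holds for every feasible $\seq$, \Cref{lem:splitting} yields the same \whp\ bound for $F(\kernel{\randomGraph})$, and finally for $F(\kernel{\planarRandomGraph})$ since \whp\ $\planarRandomGraph\in\cl(n)$ and $\planarRandomGraph$ conditioned on $\cl(n)$ is uniform on $\cl(n)$.

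The step I expect to be the main obstacle is the exact identification of the conditional kernel distribution in the second paragraph: one must make the free-action count of \2s\ subdivisions of a prescribed multigraph kernel precise (this is where the compensation factor and the \2s\ reduction do the work), use (K1) to see that completing a core by forests and simple components neither leaves $\planarClass$ nor creates any $K$-dependence, and check realisability of each cubic kernel within the prescribed vertex/edge budget. It is worth noting that exact equidistribution is more than we need here — it would already suffice that $\kernel{\condGraph{\randomGraph}{\seq}}$ be contiguous to $K(2\ell_n,3\ell_n)$, since the event in question depends only on the graph and on the order of its kernel.
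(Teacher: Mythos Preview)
Your proposal is correct and follows essentially the same route as the paper: restrict to \2s\ graphs with cubic kernel of the right size, condition via \Cref{lem:splitting} on $\func(H)=\numberVertices{\kernel{H}}/2$, and use that the number of \2s\ cores with a given kernel $K$ factors as $\weight{K}$ times a quantity depending only on $(\ell,k)$ (the paper writes this as $\weight{K}\binom{k-3\ell-1}{3\ell-1}k!$ and appeals to \Cref{rem:core_equal}), so the conditional kernel is uniform on $\mathcal{K}(2\ell_n,3\ell_n)$. Your elaboration of the free-action counting and the realisability check are more detailed than the paper's terse treatment, but the argument is the same.
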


We recall that $\numberLoops{H}$ denotes the number of loops in a multigraph $H$. Next, we will show that for $K(2n,3n)\ur \mathcal{K}(2n,3n)$, we have \whp\ $\numberLoops{K(2n,3n)}=\Theta(n)$, which implies \whp\ $\numberLoops{\kernel{\planarRandomGraph}}=\Theta\left(sn^{-2/3}\right)$ by \Cref{lem:kernel_cubic}. Our proof of that result will be based on the following observation. Let $H$ be a cubic multigraph with a single loop at $w$. Then $w$ has precisely one neighbour $x\neq w$. Assuming that there is no loop at $x$, there are two (not necessarily distinct) additional neighbours $y$ and $z$ of $x$. Then we obtain again a cubic multigraph if we delete $w$ and $x$ in $H$ and add an edge $yz$. We note that by reversing this operation we can create a multigraph with a loop at $w$. This reverse operation leads to the following definition (see also \Cref{fig:loopOp}).
\begin{definition}[\LoopInsertion]\label{def:loop_insertion}
	Let $H_1, H_2$ be multigraphs, $yz\in \edgeSet{H_1}$, and $(w, x)\in \left(\vertexSet{H_2}\right)^2$ with $w\neq x$. We say that $H_2$ can be obtained from $H_1$ by a {\em\loopInsertion}\ at edge $yz$ with vertex pair $(w,x)$ if 
	\begin{align*}
	\vertexSet{H_2}&=\vertexSet{H_1}~\dot\cup~\{w,x\}\\
\text{ and }~~~~~	\edgeSet{H_2}&=\edgeSet{H_1}-yz+xy+xz+wx+ww.~~~~~
	\end{align*}
\end{definition}

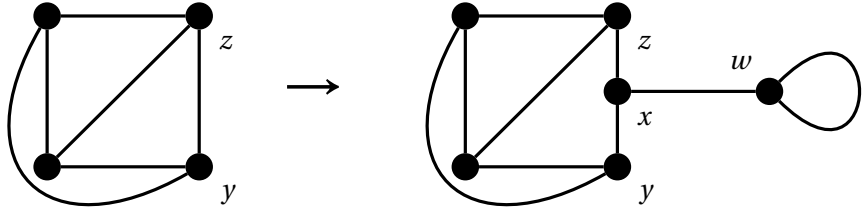
\begin{figure}[t]
	\centering
	\begin{tikzpicture}[scale=1, very thick, every node/.style={circle, fill=black}]
	\clip (-1cm,-0.52cm) rectangle (11.5cm,2.2cm);
	
	\node (1) at (0,0) [circle,draw]{};
	\node (2) at (0,2) [circle,draw]{};
	\node (3) at (2,0) [circle,draw, label=below right:$y$]{};
	\node (4) at (2,2) [circle,draw, label=below right:$z$]{};	
	
	\node (5) at (3.5,1) [circle, fill=none] {\Huge $\rightarrow$};
	
	\draw (1) to (2);
	\draw (1) to (3);
	\draw (1) to (4);
	\draw (2) -- (4);
	\draw (3) -- (4); 	
	\draw (2) to [out=240,in=210,looseness=2] (3);
	
	\node (6) at (5.5,0) [circle,draw]{};
	\node (7) at (5.5,2) [circle,draw]{};
	\node (8) at (7.5,0) [circle,draw, label=below right:$y$]{};
	\node (9) at (7.5,2) [circle,draw, label=below right:$z$]{};	
	\node (10) at (7.5,1) [circle,draw, label=below right:$x$]{};
	\node (11) at (9.5,1) [circle,draw, label=above left:$w$]{};	
	
	\draw (6) to (7);
	\draw (6) to (8);
	\draw (6) to (9);
	\draw (7) -- (9);
	\draw (8) to (10); 
	\draw (9) to (10);	
	\draw (7) to [out=240,in=210,looseness=2] (8);
	\draw (10) to (11);
	\draw (11) to [out=315,in=45,looseness=20] (11);
	
	\end{tikzpicture}
	\caption{A \loopInsertion\ performed at edge $yz$ with vertex pair $(w,x)$.}
	\label{fig:loopOp}
\end{figure}

Next, we state some simple observations about \loopInsertions.
\begin{remark}\label{lem:loop_insertion_stable}
	$\mathcal{K}$ is stable under \loopInsertions: If $H_2$ can be obtained by a \loopInsertion\ in $H_1$, then we have $H_1 \in \mathcal{K}$ if and only if $H_2 \in \mathcal{K}$.
\end{remark}

Note that subdividing an edge which occurs $r$ times in a multigraph $H$ increases the weight of $H$ by a factor of $r$. Therefore, we obtain the following.
\begin{remark}\label{prop:subdividing}
	Let $\fixGraph$ be a multigraph on vertex set $[n]$ and $\mathcal{\fixGraph}$ be the set of all multigraphs which can be obtained by subdividing precisely one edge of $\fixGraph$ by additional vertex $n+1$. Then we have $\left|\mathcal{\fixGraph}\right|=w(\fixGraph)\cdot \numberEdges{\fixGraph}$, where $w(\fixGraph)$ is the weight of $\fixGraph$ defined in (\ref{eq:weight}) and $\left|\mathcal{\fixGraph}\right|$ is the total weight of all multigraphs in $\mathcal{\fixGraph}$.
\end{remark}

As a consequence of \Cref{prop:subdividing} we obtain the number of ways to construct multigraphs by a loop insertion. 
\begin{remark}\label{prop:loop_insertion}
	Let $\fixGraph$ be a multigraph on vertex set $[n]$ and $\mathcal{\fixGraph}$ be the set of all multigraphs which can be obtained by performing a loop insertion (cf. \Cref{def:loop_insertion}) at some edge $e\in \edgeSet{\fixGraph}$ with vertex pair $(n+1,n+2)$. Then we have $\left|\mathcal{\fixGraph}\right|=w(\fixGraph)\cdot \numberEdges{\fixGraph}/2$, where $w(\fixGraph)$ is the weight of $\fixGraph$ defined in (\ref{eq:weight}).
\end{remark}
In order to see that \Cref{prop:loop_insertion} is true, we imagine that we perform a loop insertion in two steps. Firstly, we choose the edge $e\in \edgeSet{\fixGraph}$ and subdivide it by the vertex $n+1$. Secondly, we add the vertex $n+2$ together with an edge $\left\{n+1, n+2\right\}$ and a loop at $n+2$. Then \Cref{prop:loop_insertion} follows from \Cref{prop:subdividing} and the fact that inserting a loop halves the weight of a graph.

Next, we show that typically the number of loops in the kernel is linear (in the number of edges in the kernel). Due to \Cref{lem:kernel_cubic} it suffices to prove this result only for a random cubic kernel chosen from $\mathcal{K}(2n,3n)$. We use a \loopInsertion\ to construct all graphs in $\mathcal{K}(2n,3n)$ with a loop at a fixed vertex $v$. By doing that we can estimate the expected number of loops. Then we use the second moment method to show concentration around the mean. Recall that the number of loops in a graph $\fixGraph$ is denoted by $\numberLoops{\fixGraph}$.

\begin{lem}\label{thm:linear_loop}
	Let $\planarClass$ be a \pl\ class of graphs, $\mathcal{K}$ the class of all kernels of graphs in $\planarClass$, and $K(2n,3n)\ur \mathcal{K}(2n,3n)$. Then \whp
	\begin{align*}
	\numberLoops{K(2n,3n)}=(1+o(1)) \frac{3}{2\gamma}n,
	\end{align*}
	where $\gamma>0$ is as in \Cref{def:planar_like}.
\end{lem}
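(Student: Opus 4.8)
The plan is to first pin down $\expec{\numberLoops{K(2n,3n)}}$ up to a $(1+o(1))$ factor via \loopInsertions, and then apply the second moment method. I would begin by noting that in a cubic multigraph a loop at a vertex already uses two of its three half-edges, so no vertex carries more than one loop; hence $\numberLoops{H}$ is exactly the number of loop-bearing vertices of $H$ for every $H\in\mathcal K(2n,3n)$. Since $\mathcal K(2n,3n)$ is closed under relabelling, all vertices play the same role, and counting the pairs $(H,v)$ with $v$ a loop-bearing vertex of $H$ in two ways gives
\begin{align*}
\expec{\numberLoops{K(2n,3n)}}\;=\;\frac{2n\cdot N_1}{|\mathcal K(2n,3n)|},
\end{align*}
where $N_1$ denotes the total weight of those $H\in\mathcal K(2n,3n)$ that have a loop at vertex $1$.

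To evaluate $N_1$ I would reverse the \loopInsertion. Let $H\in\mathcal K(2n,3n)$ have a loop at $1$, and let $x$ be the unique non-loop neighbour of $1$. Either (i) $x$ also carries a loop, so that $\{1,x\}$ spans a $2$-vertex component of $H$ (an edge with a loop at each end), or (ii) $x$ carries no loop and has two further neighbours $y,z$ (possibly $y=z$); in case (ii), deleting $1$ and $x$ and inserting the edge $yz$ yields, after order-preserving relabelling of the remaining vertices, a multigraph in $\mathcal K(2n-2,3n-3)$ (using that $\mathcal K$ is stable under \loopInsertions, \Cref{lem:loop_insertion_stable}, and that $3(n-1)=3n-3$). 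Conversely, from a fixed $H_1\in\mathcal K(2n-2,3n-3)$ one recovers all such $H$ by choosing the label $x\in\{2,\dots,2n\}$ of the middle vertex ($2n-1$ choices) and performing a \loopInsertion\ on $H_1$ with vertex pair $(1,x)$; by \Cref{prop:loop_insertion} (applied with this vertex pair, legitimate since the weight is relabelling-invariant) this produces total weight $\weight{H_1}\cdot\numberEdges{H_1}/2=\weight{H_1}(3n-3)/2$ from $H_1$. Summing, the case-(ii) contribution to $N_1$ is $\tfrac{3(2n-1)(n-1)}{2}\,|\mathcal K(2n-2,3n-3)|$, whereas the case-(i) contribution is at most $(2n-1)\cdot\tfrac14\cdot|\mathcal K(2n-2,3n-3)|$ (choice of $x$, times the weight $\tfrac14$ of the small component, times $|\mathcal K(2n-2,3n-3)|$ since $\mathcal K$ is closed under deleting a component), hence negligible. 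Thus $N_1=(1+o(1))\tfrac{3(2n-1)(n-1)}{2}|\mathcal K(2n-2,3n-3)|$, and substituting this together with the asymptotics $|\mathcal K(2j,3j)|=(1+o(1))cj^{-\ce}\gamma^j(2j)!$ from condition (K2) in \Cref{def:planar_like} (for $j=n$ and $j=n-1$; the ratio $|\mathcal K(2n-2,3n-3)|/|\mathcal K(2n,3n)|$ is then $(1+o(1))\tfrac1\gamma\cdot\tfrac1{2n(2n-1)}$) yields $\expec{\numberLoops{K(2n,3n)}}=(1+o(1))\tfrac{3}{2\gamma}n$.

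For concentration, set $Y=\numberLoops{K(2n,3n)}$; then $\expec{Y^2}=\expec Y+\sum_{u\neq v}\prob{u\text{ and }v\text{ both bear loops}}$, and by symmetry it is enough to show $\prob{1\text{ and }2\text{ both bear loops}}=(1+o(1))\bigl(N_1/|\mathcal K(2n,3n)|\bigr)^2$. I would prove this exactly as above, now reversing a \emph{double} \loopInsertion: the generic configuration (both middle vertices loopless, the two inserted gadgets vertex-disjoint) is, after choosing the four new labels and the two subdivided edges, in weighted correspondence with $\mathcal K(2n-4,3n-6)$, while the non-generic configurations (a middle vertex with a loop, or the two gadgets overlapping) are of smaller order. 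Since (K2) gives $|\mathcal K(2n-4,3n-6)|/|\mathcal K(2n,3n)|=(1+o(1))\bigl(|\mathcal K(2n-2,3n-3)|/|\mathcal K(2n,3n)|\bigr)^2$ and the combinatorial factors square up correspondingly, the claimed identity follows, whence $\variance Y=o(\expec Y^2)$ and Chebyshev's inequality gives $Y=(1+o(1))\tfrac{3}{2\gamma}n$ \whp.

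The hard part is the weighted bookkeeping: one must carry the compensation factor consistently through the subdivisions and \loopInsertions\ so that the loops and multi-edges created along the way are weighted in accordance with \Cref{prop:subdividing} and \Cref{prop:loop_insertion}, and one must carefully verify that the non-generic configurations — principally those in which a loop-bearing vertex has a loop-bearing neighbour, or (in the second moment) in which two loop gadgets meet — contribute only a lower-order share. These error estimates, not the leading term, are where the real work lies.
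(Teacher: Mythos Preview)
Your proposal is correct and follows essentially the same route as the paper's proof: compute $\prob{Z_v=1}$ via a \loopInsertion\ and (K2), then verify $\prob{Z_u=Z_v=1}=(1+o(1))\prob{Z_u=1}\prob{Z_v=1}$ and apply Chebyshev. The only cosmetic difference is that for the second moment the paper reduces once to $\mathcal K(2(n-1),3(n-1))$ conditioned on a loop at $u$ (recycling the first-moment estimate) rather than performing a double \loopInsertion\ down to $\mathcal K(2(n-2),3(n-2))$; both arguments require the same negligibility checks for the degenerate configurations you identify.
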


\begin{coro}\label{cor:linear_loop}
	Let $\planarClass$ be a \pl\ class of graphs and $P=\planarRandomGraph(n,m) \ur \mathcal \planarRandomGraph(n,m)$.
	Assume $m=n/2+s$ for $s=s(n)=o(n)$ and $s^3n^{-2} \to \infty$. Then \whp
	\begin{align*}
	\numberLoops{\kernel{\planarRandomGraph}}=\Theta\left(sn^{-2/3}\right).
	\end{align*}
	Furthermore, the largest component $\LargestComponent=\largestComponent{\planarRandomGraph}$ of $\planarRandomGraph$ satisfies \whp
	\begin{align*}
	\numberLoops{\kernel{\LargestComponent}}=\Theta\left(sn^{-2/3}\right).
	\end{align*}
\end{coro}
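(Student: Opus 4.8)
The plan is to derive \Cref{cor:linear_loop} by feeding \Cref{thm:linear_loop} into the transfer principle \Cref{lem:kernel_cubic}, and then to pass from $\kernel{\planarRandomGraph}$ to $\kernel{\LargestComponent}$ by exploiting that the number of loops splits additively over complex components.

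First I would treat the kernel of $\planarRandomGraph$. Apply \Cref{lem:kernel_cubic} with the graph-theoretic function $F=\NumberLoops$, the number of loops. By \Cref{thm:linear_loop}, for $K(2n,3n)\ur \mathcal K(2n,3n)$ we have \whp\ $\numberLoops{K(2n,3n)}=(1+o(1))\frac{3}{2\gamma}n$; in particular there are non-decreasing functions $g_1,g_2:\N\to\N$, each linear up to rounding (for instance $g_1(n)=\lfloor cn\rfloor$ and $g_2(n)=\lceil c'n\rceil$ with $0<c<\frac{3}{2\gamma}<c'$), such that \whp\ $g_1(n)\leq \numberLoops{K(2n,3n)}\leq g_2(n)$. \Cref{lem:kernel_cubic} then supplies constants $c_2\geq c_1>0$ with, \whp,
\[
g_1\left(c_1 sn^{-2/3}\right)\ \leq\ \numberLoops{\kernel{\planarRandomGraph}}\ \leq\ g_2\left(c_2 sn^{-2/3}\right).
\]
Since $s^3n^{-2}\to\infty$ forces $sn^{-2/3}\to\infty$, both arguments diverge, and because $g_1,g_2$ are linear up to rounding this reads \whp\ $\numberLoops{\kernel{\planarRandomGraph}}=\Theta\left(sn^{-2/3}\right)$, which is the first assertion.

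For the statement about $\LargestComponent$ I would first note that \whp\ $\LargestComponent$ is a complex component, since $\numberVertices{\core{\LargestComponent}}=\Theta(sn^{-1/3})\to\infty$ by \Cref{thm:internal_structure}\ref{thm:internal_structure_a}. On this event the kernel of $\planarRandomGraph$ is the disjoint union of $\kernel{\LargestComponent}$ and $\kernel{\rest}$ (the union of the kernels of the complex components lying outside $\LargestComponent$), and hence $\numberLoops{\kernel{\planarRandomGraph}}=\numberLoops{\kernel{\LargestComponent}}+\numberLoops{\kernel{\rest}}$. By \Cref{thm:internal_structure}\ref{thm:internal_structure_e} the kernel $\kernel{\planarRandomGraph}$, and therefore also $\kernel{\rest}$, is \whp\ cubic, so $\numberLoops{\kernel{\rest}}\leq \numberEdges{\kernel{\rest}}=3\numberVertices{\kernel{\rest}}/2=O_p(1)$ by \Cref{thm:internal_structure}\ref{thm:internal_structure_d}. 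Subtracting this lower-order term (using once more $sn^{-2/3}\to\infty$) from the bound of the previous paragraph gives \whp\ $\numberLoops{\kernel{\LargestComponent}}=\Theta\left(sn^{-2/3}\right)$.

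I do not expect a genuine obstacle here: all the real work — the second-moment estimate via \loopInsertions\ and the contiguity-type transfer from a uniformly random cubic kernel to $\kernel{\planarRandomGraph}$ — is already carried out in \Cref{thm:linear_loop} and \Cref{lem:kernel_cubic}. The only mildly delicate points are rephrasing the sharp asymptotic of \Cref{thm:linear_loop} as the two-sided bound $g_1(n)\leq F\leq g_2(n)$ demanded by \Cref{lem:kernel_cubic}, and, for the second part, verifying that the loop count is additive over complex components while the kernel outside $\LargestComponent$ carries only $O_p(1)$ loops.
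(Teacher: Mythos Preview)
Your proposal is correct and follows essentially the same route as the paper: the first assertion is obtained by plugging \Cref{thm:linear_loop} into \Cref{lem:kernel_cubic} with $F=\NumberLoops$, and the second by writing $\numberLoops{\kernel{\planarRandomGraph}}=\numberLoops{\kernel{\LargestComponent}}+\numberLoops{\kernel{\rest}}$ and bounding $\numberLoops{\kernel{\rest}}\leq \numberEdges{\kernel{\rest}}=O_p(1)$ via \Cref{thm:internal_structure}\ref{thm:internal_structure_d}. The only difference is that you spell out the choice of $g_1,g_2$ and the passage $\numberEdges{\kernel{\rest}}=\tfrac{3}{2}\numberVertices{\kernel{\rest}}$ through cubicity, which the paper leaves implicit.
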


\section{Block structure}\label{sec:block_structure}
In this section we present several results which lead to the conclusion that \Cref{thm:block_structure} is also true for all \pl\ classes of graphs $\planarClass$ that are addable and have a critical exponent $3<\ce<4$ (see \Cref{thm:block_structure_general}). First we will analyse the block structure of a random cubic multigraph chosen from an appropriate class of multigraphs $\bridgeClass$ (the so-called \bridgeStable\ class), which will be later chosen as the class of connected kernels of graphs in $\planarClass$. Then we will deduce the block structure of the random \pl\ graph in \Cref{thm:block_structure_general} by using \Cref{lem:block_core,lem:kernel_cubic}. 

We recall that, as in the case of a simple graph, a block of a multigraph $H$ is a maximal 2-connected subgraph of $H$. Here we insist that a vertex with a loop forms a block. In order to understand the block structure of a random cubic multigraph chosen from the class $\bridgeClass$, we will first analyse \bridgeNumbers\ defined below (see also \Cref{fig:bridgeIns}). 

\begin{definition}[\BridgeNumber]\label{def:bridge_number}
	Given a connected multigraph $\fixMultigraph$, an edge $\bridge \in \edgeSet{\fixMultigraph}$ is called a {\em bridge} if $\fixMultigraph-\bridge$ is disconnected. For two distinct vertices $w, x \in \vertexSet{\fixMultigraph}$ we define the \bridgeNumber\ $\bn{wx}$ as follows. If $wx$ is a bridge in $\fixMultigraph$, then we set $\bn{wx}=\Bn{\fixMultigraph}{wx}:=\min\left\{\numberVertices{C_1}, \numberVertices{C_2}\right\}$, where $C_1$ and $C_2$ are the two components of $\fixMultigraph-wx$. Otherwise, we define $\bn{wx}:=0$.
\end{definition}

We observe that if $wx\notin \edgeSet{\fixMultigraph}$, then we have $\bn{wx}=0$. We will determine the distribution of the \bridgeNumber\ $\bn{wx}$ for fixed vertices $w$ and $x$. Intuitively, if $\bn{wx}$ is typically \lq small\rq, then we should have a unique largest block which is significantly larger than all other blocks. We will show that this is indeed the case (see \Cref{lem:bridge_number6,lem:bridge_number7}). To do so, we introduce the \bridgeInsertion, which is a operation that connects two components of a graph via a bridge (see also \Cref{fig:bridgeIns}).

\begin{definition}[\BridgeInsertion]\label{def:bridge_insertion}
	Let $H_1, H_2, H_3$ be connected multigraphs, $e_1=y_1z_1\in \edgeSet{H_1}, e_2=y_2z_2\in \edgeSet{H_2}$, and $w, x\in \vertexSet{H_3}$ be distinct. Then we say that $H_3$ can be obtained from $H_1$ and $H_2$ by a {\em \bridgeInsertion}\ at edges $e_1$ and $e_2$ with vertices $w$ and $x$ if 
	\begin{align*}
	\vertexSet{H_3}&=\vertexSet{H_1}~\dot\cup~\vertexSet{H_2}~\dot\cup~\{w,x\}\\
	\text{ and }~~~~~\edgeSet{H_3}&=\edgeSet{H_1}~\dot\cup~\edgeSet{H_2}-y_1z_1-y_2z_2+wy_1+wz_1+wx+xy_2+xz_2.~~~~~
	\end{align*}
\end{definition}

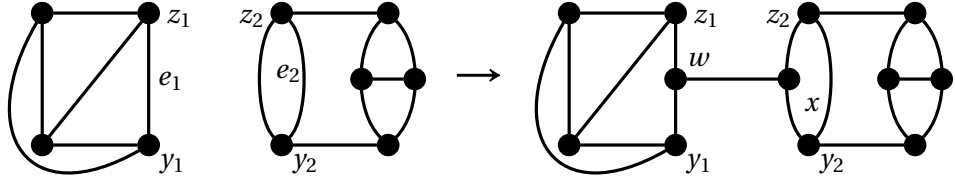
\begin{figure}[t]
	\centering
	\begin{tikzpicture}[scale=0.7, very thick, every node/.style={circle, fill=black}]
	\clip (-1cm,-0.62cm) rectangle (17.5cm,2.9cm);
	
	\node (1) at (0,0) [circle,draw, scale=0.8]{};
	\node (2) at (0,2.5) [circle,draw, scale=0.8]{};
	\node (3) at (2,0) [circle,draw, scale=0.8, label={[xshift=0.3cm, yshift=-0.8cm]$y_1$}]{};
	\node (4) at (2,2.5) [circle,draw, scale=0.8, label={[xshift=0.4cm, yshift=-0.6cm]$z_1$}]{};
	\node (5) at (4.5,0) [circle,draw, scale=0.8, label={[xshift=0.3cm, yshift=-0.8cm]$y_2$}]{};
	\node (6) at (4.5,2.5) [circle,draw, scale=0.8, label={[xshift=-0.4cm, yshift=-0.6cm]$z_2$}]{};
	\node (7) at (6.5,0) [circle,draw, scale=0.8]{};
	\node (8) at (6.5,2.5) [circle,draw, scale=0.8]{};	
	\node (10) at (9.9,0) [circle,draw, scale=0.8]{};
	\node (11) at (9.9,2.5) [circle,draw, scale=0.8]{};
	\node (12) at (11.9,0) [circle,draw, label={[xshift=0.3cm, yshift=-0.8cm]$y_1$}, scale=0.8]{};
	\node (13) at (11.9,2.5) [circle,draw, label={[xshift=0.4cm, yshift=-0.6cm]$z_1$}, scale=0.8]{};
	\node (14) at (14.4,0) [circle,draw, label={[xshift=0.3cm, yshift=-0.8cm]$y_2$}, scale=0.8]{};
	\node (15) at (14.4,2.5) [circle,draw, label={[xshift=-0.4cm, yshift=-0.6cm]$z_2$}, scale=0.8]{};
	\node (16) at (16.4,0) [circle,draw, scale=0.8]{};
	\node (17) at (16.4,2.5) [circle,draw, scale=0.8]{};		
	\node (18) at (11.9,1.25) [circle,draw, label={[xshift=0.3cm, yshift=-0.25cm]$w$}, scale=0.8]{};
	\node (19) at (14.03,1.25) [circle,draw, label={[xshift=0.3cm, yshift=-0.8cm]$x$}, scale=0.8]{};	
	
	\node (20) at (6,1.25) [circle,draw, scale=0.8]{};
	\node (21) at (7,1.25) [circle,draw, scale=0.8]{};
	\node (22) at (15.9,1.25) [circle,draw, scale=0.8]{};
	\node (23) at (16.9,1.25) [circle,draw, scale=0.8]{};		
	
	\node (9) at (8.2,1.25) [circle, fill=none] {\huge $\rightarrow$};
	
	\draw (1) to (2);
	\draw (1) to (3);
	\draw (1) to (4);
	\draw (2) -- (4);
	\draw (3) -- (4) node [fill=none, below=0.9cm, right=-0.1cm] {$e_1$}; 	
	\draw (2) to [out=240,in=210,looseness=2] (3);
	\draw (5) to (7);
	\draw (6) to (8);
	\draw [out=135,in=225,looseness=0.6] (5) to (6) node [fill=none, below=0.8cm, right=-0.3cm] {$e_2$};
	\draw (5) to [out=45,in=315,looseness=0.6] (6);
	\draw (7) to [out=135,in=225,looseness=0.8] (8);
	\draw (7) to [out=45,in=315,looseness=0.8] (8);
	\draw (10) to (11);
	\draw (10) to (12);
	\draw (10) to (13);
	\draw (11) -- (13);
	\draw (12) -- (13); 	
	\draw (11) to [out=240,in=210,looseness=2] (12);
	\draw (14) to (16);
	\draw (15) to (17);
	\draw (14) to [out=135,in=225,looseness=0.6] (15);
	\draw (14) to [out=45,in=315,looseness=0.6] (15);
	\draw (16) to [out=135,in=225,looseness=0.8] (17);
	\draw (16) to [out=45,in=315,looseness=0.8] (17);
	\draw (18) to (19);
	\draw (20) to (21);
	\draw (22) to (23);
	
	\end{tikzpicture}
	\caption{A \bridgeInsertion\ performed at edges $e_1$ and $e_2$ with vertices $w$ and $x$. The \bridgeNumber\ $\bn{wx}$ in the graph on the right-hand side is 5.}
	\label{fig:bridgeIns}
\end{figure}

In order to successfully apply \bridgeInsertions\ in some graph class $\bridgeClass$, we require two natural properties of $\bridgeClass$. 

\begin{definition}[\BridgeStable]\label{def:bridge_stable}
	A class $\bridgeClass$ of connected multigraphs is called {\em \bridgeStable} if the following two condition hold.
	\begin{itemize}
		\item[(B1)]
		$\bridgeClass$ is stable under \bridgeInsertions: if $H_3$ can be obtained by a \bridgeInsertion\ in $H_1$ and $H_2$, then we have $H_1, H_2 \in \bridgeClass$ if and only if $H_3 \in \bridgeClass$.
		\item[(B2)] Let $\bridgeClass(2n,3n)$ be the subclass of $\bridgeClass$ consisting of all connected multigraphs on vertex set $[2n]$ having $3n$ edges. Then there exist constants $\gamma>0$, $c>0$, and $\ce \in \R$ such that 
		\begin{align}\label{eq:formula_bridge}
		|\bridgeClass(2n, 3n)|\ =\ (1+o(1))cn^{-\ce}\gamma^{n}(2n)!.
		\end{align}
	\end{itemize}
	The constant $\ce$ in (B2) is called the {\em critical exponent}.
\end{definition}

We note that the class of all connected kernels of an addable \pl\ class is \bridgeStable. Next, let $\bridgeClass$ be some \bridgeStable\ class of multigraphs. We study the asymptotic distribution of the bridge number $\Bn{\bridgeGraph}{vw}$ for fixed vertices $v\neq w$ and $\bridgeGraph=\bridgeGraph(2n,3n) \ur \bridgeClass(2n,3n)$. More precisely, we estimate the probability $\prob{\Bn{\bridgeGraph}{wx}\geq 2f(n)+1}$ for an arbitrary function $f$. 

\begin{lem}\label{lem:bridge_number1}
	Let $\bridgeClass$ be a \bridgeStable\ class of multigraphs with critical exponent $\ce>2$ and $f$ a function with $f(n)=\omega(1)$, but $f(n)=o(n)$. In addition, let $\bridgeGraph=\bridgeGraph(2n,3n)\ur \bridgeClass(2n,3n)$ and let $c$ and $\gamma$ be as in (\ref{eq:formula_bridge}). Then for any pair of distinct vertices $w, x \in [2n]$ we have 
	\[
	\prob{\Bn{\bridgeGraph}{wx}\geq 2f(n)+1}\ =\ (1+o(1))\frac{9c}{2\gamma (\ce-2)}\cdot\frac{1}{f(n)^{\ce-2}n}.
	\]
\end{lem}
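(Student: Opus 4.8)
The plan is to express the probability $\prob{\Bn{\bridgeGraph}{wx}\geq 2f(n)+1}$ as a ratio of (weighted) cardinalities of subclasses of $\bridgeClass(2n,3n)$, and then to evaluate the numerator by a double-counting argument based on \bridgeInsertions. Writing $f=f(n)$ for brevity, the event $\{\Bn{\bridgeGraph}{wx}\geq 2f+1\}$ means that $wx$ is a bridge and the smaller side of $\bridgeGraph-wx$ has at least $2f+1$ vertices. Since $\bridgeGraph$ is cubic on $2n$ vertices, both sides of such a bridge have an even number of vertices, so the smaller side has $2j$ vertices for some $j$ with $f+1\le j\le n/2$; moreover after removing the bridge $wx$, each side loses one half-edge at $w$ respectively $x$, so the smaller side is a graph on $2j$ vertices in which all vertices have degree $3$ except $w$, which has degree $2$, and similarly for the larger side. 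The key observation is that such a configuration is exactly what a \bridgeInsertion\ produces: a graph $H_3$ with a bridge $wx$ whose removal, after suppressing the degree-$2$ vertices $w$ and $x$, yields two smaller cubic multigraphs $H_1\in\bridgeClass(2j,3j)$ and $H_2\in\bridgeClass(2(n-j-1),3(n-j-1))$ together with a choice of edge in each. By (B1) the class $\bridgeClass$ is closed under this operation in both directions, so I would set up a weight-preserving correspondence counting triples $(H_1,e_1,H_2,e_2)$, being careful about the factor coming from the number of ways to relabel the $2n$ vertices so that $w$ and $x$ receive the prescribed labels and the rest are distributed between the two parts, i.e. a binomial coefficient of the form $\binom{2n-2}{2j}$.

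Carrying this out, the weighted number of graphs in $\bridgeClass(2n,3n)$ with a bridge $wx$ of bridge number exactly $2j$ (for the smaller side) equals, up to lower-order corrections,
\[
\binom{2n-2}{2j}\cdot 3j\cdot 3(n-j-1)\cdot\frac{|\bridgeClass(2j,3j)|\,|\bridgeClass(2(n-j-1),3(n-j-1))|}{(2j)!\,(2(n-j-1))!}\,,
\]
where the factors $3j$ and $3(n-j-1)$ count the choices of $e_1$ and $e_2$ (each part has $3\cdot(\text{number of vertices})/2$ edges, but the asymmetry introduced by $w$, $x$ having degree $2$ has to be tracked precisely; up to $(1+o(1))$ it is $3j$ and $3(n-j-1)$). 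Dividing by $|\bridgeClass(2n,3n)|$ and plugging in the asymptotic formula (B2), namely $|\bridgeClass(2\ell,3\ell)|=(1+o(1))c\ell^{-\ce}\gamma^{\ell}(2\ell)!$, the factorials cancel against the binomial coefficient and the powers of $\gamma$ telescope (since $j+(n-j-1)+1=n$), leaving
\[
\prob{\Bn{\bridgeGraph}{wx}=2j}\ =\ (1+o(1))\,\frac{9c}{\gamma}\cdot\frac{j(n-j-1)}{n}\cdot\frac{1}{j^{\ce}(n-j-1)^{\ce}}\cdot\frac{1}{n^{-\ce}}
\]
which, for $j=o(n)$, simplifies to $(1+o(1))\tfrac{9c}{\gamma}\,j^{1-\ce}n^{-1}$ after the $(n-j-1)$-factors combine with $n^{\ce}$ to give $(1+o(1))$. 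Summing over $j\ge f+1$ and approximating the sum $\sum_{j\ge f+1}j^{1-\ce}$ by the integral $\int_f^\infty t^{1-\ce}\,dt=f^{2-\ce}/(\ce-2)$ (valid since $\ce>2$ and $f=\omega(1)$) yields the factor $\tfrac{1}{2}$ only if one is careful: in fact $\Bn{\bridgeGraph}{wx}\ge 2f+1$ forces the smaller side to have $\ge 2f+2$ vertices, i.e. $j\ge f+1$, and the extra factor $1/2$ in the statement comes from the fact that each unordered bridge with $wx$ as endpoints is counted once but the two parts of the construction are distinguishable only as $(\text{smaller},\text{larger})$ — I expect the bookkeeping of this symmetry factor, together with the exact edge-counts $3j$ vs.\ $3j\pm\text{const}$, to be where the stated constant $\tfrac{9c}{2\gamma(\ce-2)}$ is pinned down.

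The main obstacle will be making the double-counting rigorous and tracking all the $(1+o(1))$ error terms uniformly in $j$: the asymptotic formula (B2) is only stated as $\ell\to\infty$, so it cannot be applied when $j$ is bounded or when $n-j-1$ is bounded, and these boundary ranges of $j$ must be shown to contribute negligibly (for small $j$ this requires a crude upper bound on $|\bridgeClass(2j,3j)|$, e.g.\ $O((2j)!\,\gamma^j)$ with an absolute implied constant, which follows from (B2) and monotonicity-type considerations; for $j$ close to $n/2$ the term $j^{1-\ce}$ is already $\Theta(n^{1-\ce})$ and summable). A secondary subtlety is that a \bridgeInsertion\ as defined always creates a bridge with two fresh degree-$2$ vertices $w,x$, whereas in a cubic multigraph a bridge connects two vertices of degree $3$; the resolution is that in the cubic kernel the bridge $wx$ is an edge between degree-$3$ vertices, and deleting it and then \emph{suppressing} $w$ and $x$ (contracting the two incident edges on each side) is the correct inverse operation — so I would phrase the correspondence in terms of "bridge followed by suppression" rather than literal \bridgeInsertion, and check that the weight (compensation factor) behaves multiplicatively under this, using \Cref{prop:subdividing} to handle the reverse (subdivision) direction. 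Once the $j=2$ leading term and the tail integral are in hand, the claimed identity follows.
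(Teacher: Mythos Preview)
Your approach is the same as the paper's: enumerate the graphs with $\Bn{M}{wx}\ge 2f+1$ via \bridgeInsertions, plug in the asymptotics (B2), and sum. However, several details are off and would not give the stated constant if left as written. First, a \bridgeInsertion\ (\Cref{def:bridge_insertion}) already produces a \emph{cubic} multigraph: $w$ gets degree $3$ (two from the subdivided edge $e_1$ plus the bridge $wx$), and likewise $x$; so your ``secondary subtlety'' about degree-$2$ vertices is a misreading and no suppression step is needed. Correspondingly, the two components of $H_3-wx$ have \emph{odd} orders $2j+1$ and $2k+1$ with $j+k=n-1$, not even orders. Second, the displayed count should not carry the factors $1/(2j)!\,/(2(n-j-1))!$: the paper's (and the correct) weighted count is simply $\sum_{j,k\ge f,\ j+k=n-1}\binom{2n-2}{2j}\,m_j m_k\,(3j)(3k)$, where $m_\ell=|\bridgeClass(2\ell,3\ell)|$, and one then divides by $m_n$. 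Third, the factor $\tfrac12$ in the answer is not a symmetry factor (the vertices $w,x$ are labelled, so each $H$ is hit exactly once by the construction); it arises because $(2n-2)!/(2n)!\sim 1/(4n^2)$ contributes a $1/4$, while the sum over $(j,k)$ picks up a factor of $2$ from the two tails $j\sim f$ and $k\sim f$, giving $\tfrac14\cdot 2=\tfrac12$. With these corrections your computation goes through and matches the paper's proof (which packages the summation as \Cref{lem:sum1}).
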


Using the second moment method, we will show that this number is concentrated around its expectation. 

\begin{lem}\label{lem:bridge_number2}
Let $\bridgeClass$ be a \bridgeStable\ class of multigraphs with critical exponent $\ce>3$ and $f$ be a function with $f(n)=\omega(1)$, $f(n)=o(n)$, and $f(n)=o(n^{1/(\ce-2)})$. Let $\bridgeGraph=\bridgeGraph(2n,3n)\ur \bridgeClass(2n,3n)$. In addition, let $w_1, w_2, w_3, w_4 \in [2n]$ be pairwise distinct and let $c$ and $\gamma$ be as in (\ref{eq:formula_bridge}). Then we have 
	\[
	\prob{\Bn{\bridgeGraph}{w_1w_2}, \Bn{\bridgeGraph}{w_3w_4}\geq 2f(n)+1}\ =\ (1+o(1))\left(\frac{9c}{2\gamma (\ce-2)}\cdot\frac{1}{f(n)^{\ce-2}n}\right)^2.
	\]
\end{lem}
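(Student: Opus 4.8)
The statement of \Cref{lem:bridge_number2} is a second moment estimate: it claims that the events $\Bn{\bridgeGraph}{w_1w_2}\geq 2f(n)+1$ and $\Bn{\bridgeGraph}{w_3w_4}\geq 2f(n)+1$ are, to leading order, asymptotically independent. The natural approach is to follow the proof of \Cref{lem:bridge_number1}, but now keeping track of two bridges simultaneously. I would set up the analogous \bridgeInsertion\ counting: the event that $w_1w_2$ is a bridge with bridge number $\ell_1$ and $w_3w_4$ is a bridge with bridge number $\ell_2$ corresponds (up to the weight bookkeeping of \Cref{prop:subdividing} and the factors coming from the definition of a \bridgeInsertion) to choosing small multigraphs $H^{(1)},H^{(2)}$ on $2\ell_1$ and $2\ell_2$ vertices respectively from $\bridgeClass$, together with a large ``central'' multigraph on the remaining $2n-2\ell_1-2\ell_2-4$ vertices, and then performing two independent \bridgeInsertions.

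\medskip
\noindent\textbf{Key steps.} First I would write
\[
\prob{\Bn{\bridgeGraph}{w_1w_2}\geq 2f(n)+1,\ \Bn{\bridgeGraph}{w_3w_4}\geq 2f(n)+1}
=\frac{1}{|\bridgeClass(2n,3n)|}\sum_{\ell_1,\ell_2> f(n)} N(\ell_1,\ell_2),
\]
where $N(\ell_1,\ell_2)$ is the total weight of multigraphs in $\bridgeClass(2n,3n)$ in which $w_1w_2$ is a bridge cutting off a component of order $2\ell_1$ (the smaller side) and $w_3w_4$ is a bridge cutting off a component of order $2\ell_2$. Here one must be a little careful: the two bridges could be ``nested'' (one of the small sides contains the other bridge) rather than ``disjoint''. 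I would argue that the nested configurations contribute a lower-order term — essentially because nesting forces one of the $\ell_i$ to be comparable to $n$, or forces an extra small-component factor, and the sum over such configurations is $O(n^{-1}f^{-(\ce-2)})$ times the main term, hence negligible. For the disjoint case, a double-counting argument with two \bridgeInsertions\ (using (B1) for stability and (B2) for the asymptotic count $|\bridgeClass(2m,3m)|=(1+o(1))cm^{-\ce}\gamma^m(2m)!$ applied to each of the three pieces) gives, after the Stirling-type cancellations that already appear in \Cref{lem:bridge_number1},
\[
N(\ell_1,\ell_2)=(1+o(1))\,|\bridgeClass(2n,3n)|\cdot\left(\frac{9c}{2\gamma}\right)^2\frac{1}{\ell_1^{\ce-1}\,\ell_2^{\ce-1}\,n^2}
\]
uniformly for $f(n)<\ell_i=o(n)$. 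Summing over $\ell_1,\ell_2>f(n)$ and using $\sum_{\ell>f(n)}\ell^{-(\ce-1)}=(1+o(1))\frac{1}{(\ce-2)f(n)^{\ce-2}}$ (valid since $\ce>3$) in each variable yields exactly the claimed square. The hypotheses $f(n)=o(n)$ and $f(n)=o(n^{1/(\ce-2)})$ are used respectively to keep the pieces in the ``linear regime'' where (B2) applies with a $(1+o(1))$ error and to guarantee the tail sum starts late enough that the asymptotic $\prob{\cdot}=o(1)$, matching the setting of \Cref{lem:bridge_number1}.

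\medskip
\noindent\textbf{Main obstacle.} The routine part is the weight/factorial bookkeeping, which is mechanically identical to the single-bridge computation in \Cref{lem:bridge_number1}, just squared. The genuinely delicate point is controlling the overlapping configurations — not only the nested bridges mentioned above, but also near-degenerate cases where $\ell_1+\ell_2$ is close to $n$ (so that the ``central'' piece is small and (B2) is being applied to a piece of sublinear order) and cases where the vertex sets of the two inserted pieces or the two bridge-endpoint pairs interact. I expect the bulk of the work to be showing that all of these exceptional contributions sum to $o\big(f(n)^{-2(\ce-2)}n^{-2}\big)$, so that the disjoint, linear-regime configurations dominate; this is where the conditions on $\ce$ and on the growth of $f$ are really needed, and it is the only step that is not a direct transcription of the proof of \Cref{lem:bridge_number1}.
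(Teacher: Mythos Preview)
Your proposal is correct and follows essentially the same route as the paper: split off the degenerate case where an edge runs between $\{w_1,w_2\}$ and $\{w_3,w_4\}$, and for the remaining configurations perform two \bridgeInsertions\ to decompose $\bridgeGraph$ into three cubic pieces, then sum using (B2). The one substantive difference is bookkeeping. You parametrise by the pair of bridge numbers $(\ell_1,\ell_2)$, which forces you into a nested/disjoint case analysis; the paper instead parametrises directly by the sizes $(j,k,l)$ of the three pieces (with $j,k\ge f(n)$ the outer pieces attached at $w_1w_2$ and $w_3w_4$, and $l$ the middle piece), which handles all configurations in a single sum evaluated by an elementary computation (\Cref{lem:sum2}). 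With that parametrisation the ``nested'' case is simply the range $j>k+l+1$ and needs no separate treatment. One caution about your version: your asserted uniform formula for $N(\ell_1,\ell_2)$ is not actually uniform --- when $\ell_1-\ell_2$ is bounded the nested contribution can dominate the disjoint one pointwise --- so what you really need (and what is true) is that the \emph{total} nested contribution is $o\bigl(f(n)^{-2(\ce-2)}n^{-2}\bigr)$; this follows because $\sum_{l\ge1} l^{-\ce+2}$ converges for $\ce>3$. The paper's handling of the adjacency case $E_3$ is also lighter than you anticipate: a one-line bound $\prob{E_1\wedge E_2\wedge E_3}\le 4\,\prob{E_1}\cdot\condprob{w_2w_3,w_3w_4\in\edgeSet{\bridgeGraph}}{E_1}=O\!\left(f(n)^{-(\ce-2)}n^{-3}\right)$, which is $o\!\left(f(n)^{-2(\ce-2)}n^{-2}\right)$ precisely because $f(n)=o\!\left(n^{1/(\ce-2)}\right)$.
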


Now we can combine \Cref{lem:bridge_number1,lem:bridge_number2} to determine the number of pairs $\{w,x\}$ with $\Bn{\bridgeGraph}{wx}\geq 2f(n)+1$.
\begin{lem}\label{lem:bridge_number3}
	Let $\bridgeClass$ be a \bridgeStable\ class of multigraphs with critical exponent $\ce>3$ and $f$ be a function with $f(n)=\omega(1)$, $f(n)=o(n)$, and $f(n)=o\left(n^{1/(\ce-2)}\right)$. In addition, let $\bridgeGraph=\bridgeGraph(2n,3n)\ur \bridgeClass(2n,3n)$ and let $c$ and $\gamma$ be as in (\ref{eq:formula_bridge}). Then \whp\ $\bridgeGraph$ has
	\begin{align*}
	(1+o(1))\frac{9c}{2\gamma (\ce-2)}\cdot \frac{1}{f(n)^{\ce-2}n}\cdot \binom{n}{2}
	\end{align*}
	many unordered pairs of vertices $w, x\in [2n]$ with $\Bn{\bridgeGraph}{wx}\geq 2f(n)+1$.
\end{lem}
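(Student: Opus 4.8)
The plan is to deduce \Cref{lem:bridge_number3} from \Cref{lem:bridge_number1,lem:bridge_number2} by a standard second-moment (concentration) argument applied to the counting random variable. Let $Y=Y(n)$ denote the number of unordered pairs $\{w,x\}$ of distinct vertices of $\bridgeGraph=\bridgeGraph(2n,3n)$ with $\Bn{\bridgeGraph}{wx}\geq 2f(n)+1$. Writing $Y=\sum_{\{w,x\}}\mathbf{1}\left[\Bn{\bridgeGraph}{wx}\geq 2f(n)+1\right]$, where the sum runs over all $\binom{2n}{2}$ unordered pairs of vertices, the expectation is immediate from \Cref{lem:bridge_number1} and linearity:
\[
\expec{Y}\ =\ \binom{2n}{2}\cdot(1+o(1))\frac{9c}{2\gamma(\ce-2)}\cdot\frac{1}{f(n)^{\ce-2}n}.
\]
(Note $\binom{2n}{2}=(1+o(1))\cdot 4\binom n2$, so the constant here differs from the one in the statement only by bookkeeping in how one counts pairs; I would make sure the normalisation in the final expression matches \Cref{lem:bridge_number1}, i.e. that $\binom n2$ in the claim is meant after this rescaling, and state it consistently.) Since $f(n)=\omega(1)$ and $f(n)=o(n^{1/(\ce-2)})$, we have $\expec{Y}=\omega(1)$, which is what makes concentration meaningful.

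Next I would bound the second moment. Expanding $\expec{Y^2}=\sum_{\{w,x\}}\sum_{\{w',x'\}}\prob{\Bn{\bridgeGraph}{wx}\geq 2f(n)+1,\ \Bn{\bridgeGraph}{w'x'}\geq 2f(n)+1}$, I split the double sum according to $|\{w,x\}\cap\{w',x'\}|\in\{0,1,2\}$. The diagonal contribution ($\{w,x\}=\{w',x'\}$) is exactly $\expec Y$. The contribution from pairs sharing exactly one vertex has $O(n^3)$ terms, each probability at most $\prob{\Bn{\bridgeGraph}{wx}\geq 2f(n)+1}=O(1/(f(n)^{\ce-2}n))$ by \Cref{lem:bridge_number1} (monotonicity: the joint event implies the marginal), giving a contribution of order $n^3\cdot n^{-1}f(n)^{-(\ce-2)}=n^2 f(n)^{-(\ce-2)}$. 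The contribution from disjoint pairs is controlled by \Cref{lem:bridge_number2}: there are $(1+o(1))\binom{2n}{2}^2$ such ordered choices and each contributes $(1+o(1))\left(\frac{9c}{2\gamma(\ce-2)}\frac{1}{f(n)^{\ce-2}n}\right)^2$, so this part equals $(1+o(1))(\expec Y)^2$. Hence $\variance Y=\expec{Y^2}-(\expec Y)^2 = o\left((\expec Y)^2\right) + O\!\left(\expec Y\right) + O\!\left(n^2 f(n)^{-(\ce-2)}\right)$. Using $\expec Y=\Theta\left(n^2 f(n)^{-(\ce-2)}\right)$ and $\expec Y=\omega(1)$, every error term is $o\left((\expec Y)^2\right)$, so $\variance Y = o\left((\expec Y)^2\right)$, and Chebyshev's inequality gives $Y=(1+o(1))\expec Y$ \whp, which is the assertion.

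The main thing to be careful about — and the only real obstacle — is verifying that all the "off-diagonal" error terms are genuinely of smaller order than $(\expec Y)^2$, which is exactly where the hypothesis $f(n)=o\left(n^{1/(\ce-2)}\right)$ is needed: it guarantees $\expec Y=\omega(1)$, so that $\expec Y = o\left((\expec Y)^2\right)$, and it is also precisely the range in which \Cref{lem:bridge_number2} is available. I would also double-check that the pairs-sharing-one-vertex term is correctly counted as $O(n^3)$ and that \Cref{lem:bridge_number1} applies to give the uniform marginal bound there (it does, since the bound in \Cref{lem:bridge_number1} is uniform over the pair $\{w,x\}$). Everything else is routine: linearity of expectation for $\expec Y$, the inclusion of events for the one-vertex-overlap estimate, and a single application of Chebyshev. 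I would present it in this order — compute $\expec Y$, bound $\expec{Y^2}$ by the three-case split, conclude $\variance Y=o((\expec Y)^2)$, apply Chebyshev — noting at the end that, since $\expec Y=\omega(1)$, the statement "$\whp\ Y=(1+o(1))\expec Y$" gives exactly the claimed count.
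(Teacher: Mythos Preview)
Your overall plan --- second moment method via \Cref{lem:bridge_number1,lem:bridge_number2} and Chebyshev --- is the same as the paper's. However, there is a genuine gap in the one-vertex-overlap case, masked by an arithmetic slip.

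You write $\expec{Y}=\Theta\!\left(n^2 f(n)^{-(\ce-2)}\right)$, but in fact
\[
\expec{Y}\ =\ \binom{2n}{2}\cdot\Theta\!\left(\frac{1}{f(n)^{\ce-2}\,n}\right)\ =\ \Theta\!\left(\frac{n}{f(n)^{\ce-2}}\right).
\]
With the correct order, your bound on the contribution of pairs $\{w,x\},\{w,y\}$ sharing one vertex --- $O(n^3)$ terms times the marginal probability $O\!\left(1/(f(n)^{\ce-2}n)\right)$ --- gives $O\!\left(n^2 f(n)^{-(\ce-2)}\right)$, whereas $(\expec Y)^2=\Theta\!\left(n^2 f(n)^{-2(\ce-2)}\right)$. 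Since $f(n)^{\ce-2}\to\infty$, your overlap term is \emph{larger} than $(\expec Y)^2$ by a factor $f(n)^{\ce-2}$, so the variance bound fails as written.

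The paper closes this gap with one extra observation: if $p=\{w,x\}$, $q=\{w,y\}$ and $Z_q=1$, then in particular $wy\in \edgeSet{\bridgeGraph}$ (a bridge is an edge), and $\condprob{wy\in\edgeSet{\bridgeGraph}}{Z_p=1}=O(1/n)$ since $\bridgeGraph$ is cubic. Hence $\expec{Z_pZ_q}=O\!\left(\prob{Z_p=1}/n\right)$, and the total overlap contribution drops to $O(n^3)\cdot O\!\left(1/(f(n)^{\ce-2}n^2)\right)=O\!\left(n/f(n)^{\ce-2}\right)=O(\expec Y)$, which is indeed $o\!\left((\expec Y)^2\right)$ because $\expec Y=\omega(1)$. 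With this correction your argument goes through exactly as in the paper.
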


Next, we show in two steps that \whp\ $\bridgeGraph=\bridgeGraph(2n,3n)\ur \bridgeClass(2n,3n)$ has a block with linearly many vertices. Firstly, we prove that \whp\ $\bridgeGraph$ has a \largeBlock\ block (see \Cref{def:large_block} and \Cref{lem:bridge_number5}). Secondly, we show that \whp\ this \largeBlock\ block has linearly many vertices (see \Cref{lem:bridge_number6}).
\begin{definition}\label{def:large_block}
	Let $\fixMultigraph$ be a connected multigraph. A block $\block$ of $\fixMultigraph$ is called \textit{\largeBlock}\ if every bridge $\bridge$ that shares a vertex with $\block$ satisfies $\numberVertices{C_\bridge}>\numberVertices{R_\bridge}$, where $C_\bridge$ is the component of $\fixMultigraph-\bridge$ containing $\block$ and $R_\bridge$ the other component of $\fixMultigraph-\bridge$.
\end{definition}

\Cref{lem:bridge_number1} says that \whp\ $\bridgeGraph=\bridgeGraph(2n,3n)\ur \bridgeClass(2n,3n)$ has no bridge $\bridge$ with $\Bn{\bridgeGraph}{\bridge}=\omega\left(n^{1/(\ce-2)}\right)$. Thus, assuming $\ce>3$ we obtain that \whp\ $\Bn{\bridgeGraph}{\bridge}=o(n)$ for every bridge $\bridge$. Intuitively, in such a case, we should have a \largeBlock\ block. The following lemma shows that this is indeed true.
\begin{lem}\label{lem:bridge_number5}
	Let $\fixMultigraph$ be a connected cubic multigraph on vertex set $[2n]$ such that $\Bn{\fixMultigraph}{wx}\leq(2n-2)/3$ for every pair of distinct vertices $w,x \in [2n]$. Then $\fixMultigraph$ contains a \largeBlock\ block. 
\end{lem}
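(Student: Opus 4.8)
The plan is to run a weighted centroid argument on the \emph{bridge tree} of $\fixMultigraph$. First I would contract each $2$-edge-connected component (i.e.\ each maximal bridgeless subgraph) of $\fixMultigraph$ to a single node, obtaining a tree $T$ whose nodes $D_1,\dots,D_t$ are these components and whose edges are precisely the bridges of $\fixMultigraph$ (connectedness of $\fixMultigraph$ makes $T$ connected, and a cycle in $T$ would produce a cycle of $\fixMultigraph$ through a bridge). Assign to $D_i$ the weight $\numberVertices{D_i}$, so the weights sum to $2n$. For a bridge $\bridge$ and an incident node $D$, let $\phi(\bridge,D)$ be the number of vertices in the component of $\fixMultigraph-\bridge$ containing $D$; then $\phi(\bridge,D)+\phi(\bridge,D')=2n$ for the two endpoints $D,D'$ of $\bridge$, and $\Bn{\fixMultigraph}{\bridge}=\min\{\phi(\bridge,D),\phi(\bridge,D')\}$. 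Since $(2n-2)/3<n$, the hypothesis $\Bn{\fixMultigraph}{\bridge}\le(2n-2)/3$ says that every bridge induces a strictly unbalanced split, so $\phi(\bridge,D)\neq\phi(\bridge,D')$ for every bridge.

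Next I would locate a central node. Orient each edge of $T$ towards its heavier side; this is well defined by the previous observation. As $T$ has $t$ nodes and $t-1$ edges, the total out-degree is $t-1<t$, so some node $D^*$ has out-degree $0$, meaning every incident bridge points towards $D^*$. Hence $\phi(\bridge,D^*)>n$, equivalently the component of $\fixMultigraph-\bridge$ \emph{not} containing $D^*$ has fewer than $n$ vertices, for every bridge $\bridge$ incident to $D^*$.

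The crucial step is to show that $D^*$ contains a block. A connected bridgeless multigraph with an edge contains a cycle, and any cycle lies inside a (unique) block, which must be contained in $D^*$; so $D^*$ fails to contain a block only if $D^*$ is a single vertex $v$ with no loop. Using that $\fixMultigraph$ is cubic, all three edges at $v$ would then be bridges, and the three components of $\fixMultigraph-v$ would have sizes $a\le b\le c$ with $a+b+c=2n-1$ and, by the previous paragraph, $a,b,c<n$. Applying the hypothesis to the three bridges gives $a,b\le(2n-2)/3$, and since $c\le(2n-2)/3$ would force $a+b+c\le 2n-2$, we get $a+b+1\le(2n-2)/3$, whence $c=2n-1-(a+b)\ge(4n+2)/3>n$, contradicting $c<n$. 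Therefore $D^*$ contains a block $B^*$ (the loop if $D^*$ is a single loop-vertex, and any block of the $2$-edge-connected multigraph $D^*$ otherwise). To finish, let $\bridge$ be any bridge of $\fixMultigraph$ sharing a vertex with $B^*$; that vertex lies in $\vertexSet{D^*}$, and as $\bridge$ is a bridge it is not an edge of the bridgeless graph $D^*$, so $\bridge$ is one of the $T$-edges incident to $D^*$. Consequently $\numberVertices{C_\bridge}=\phi(\bridge,D^*)>n>\numberVertices{R_\bridge}$, i.e.\ $B^*$ is \largeBlock.

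I expect the main obstacle to be the third step: one must verify that a node that is central in the bridge tree is actually substantial enough to host a block, and this is exactly where the precise constant $(2n-2)/3$ is used — a bound of the mere form $\Bn{\fixMultigraph}{\bridge}<n$ would not suffice, since a degree-$3$ vertex with three nearly balanced branches could otherwise be the centroid. The remaining points (well-definedness of the orientation, the bridge-contraction being a tree, and a maximal bridgeless subgraph with an edge containing a block) are standard and routine.
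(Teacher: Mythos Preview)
Your argument is correct and follows essentially the same route as the paper: form the bridge tree, orient each bridge toward the heavier side, locate a sink node, and rule out the possibility that this sink is a single loopless vertex using the bound $(2n-2)/3$. The paper contracts blocks rather than $2$-edge-connected components, but in a cubic multigraph these coincide (no vertex can lie in two blocks); your case split at the singleton vertex can also be shortened, since once $a,b,c<n$ each of them \emph{is} the corresponding bridge number, giving $a+b+c\le 3\cdot(2n-2)/3=2n-2<2n-1$ directly.
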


Due to \Cref{lem:bridge_number1,lem:bridge_number5} we know that \whp\ $\bridgeGraph=\bridgeGraph(2n,3n)\ur \bridgeClass(2n,3n)$ has a \largeBlock\ block $B$ (in the case $\ce>3$). Let $\bridge_1, \ldots \bridge_r$ be the bridges that share a vertex with $B$. Then we get $2n=\numberVertices{\bridgeGraph}=\numberVertices{B}+\sum_{i=1}^{r}\Bn{\bridgeGraph}{e_i}$. Assuming that the bridge numbers $\Bn{\bridgeGraph}{e_i}$ are \lq small\rq, we get that $\numberVertices{B}$ or $r$ needs to be \lq large\rq. We note that in the latter case we again obtain that $\numberVertices{B}$ is \lq large\rq, since each vertex in $B$ can lie in at most one bridge. In \Cref{lem:bridge_number6} we will make this idea more precise. We already saw in \Cref{lem:bridge_number1} that the bridge numbers are typically \lq small\rq. However, we need the following stronger result in the proof of \Cref{lem:bridge_number6}.

\begin{lem}\label{lem:bridge_number4}
Let $\bridgeClass$ be a \bridgeStable\ class of multigraphs with critical exponent $\ce>2$, $0<\exponent<\ce-2$, and $\bridgeGraph=\bridgeGraph(2n,3n)\ur \bridgeClass(2n,3n)$. Then for any pair of distinct vertices $w, x \in [2n]$ we have
	\begin{align*}
	\expec{\left(\Bn{\bridgeGraph}{wx}\right)^\exponent}=\Theta(1/n).
	\end{align*}
\end{lem}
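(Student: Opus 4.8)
The statement to prove is \Cref{lem:bridge_number4}: for a \bridgeStable\ class $\bridgeClass$ with critical exponent $\ce>2$, any $0<\exponent<\ce-2$, and $\bridgeGraph=\bridgeGraph(2n,3n)\ur\bridgeClass(2n,3n)$, we have $\expec{\left(\Bn{\bridgeGraph}{wx}\right)^\exponent}=\Theta(1/n)$ for any two fixed distinct vertices $w,x$. The natural approach is to compute this expectation via the tail sum, using the precise tail estimate already established in \Cref{lem:bridge_number1}. Since $\Bn{\bridgeGraph}{wx}$ is a nonnegative integer-valued random variable, I would write
\begin{align*}
\expec{\left(\Bn{\bridgeGraph}{wx}\right)^\exponent}\ =\ \sum_{t\geq 1}\left(t^\exponent-(t-1)^\exponent\right)\prob{\Bn{\bridgeGraph}{wx}\geq t},
\end{align*}
and then split the range of $t$ at a suitable threshold. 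Observe $t^\exponent-(t-1)^\exponent=\Theta(t^{\exponent-1})$ for $t\geq 1$ (it equals $1$ at $t=1$ and is $\Theta(t^{\exponent-1})$ by the mean value theorem for $t\geq 2$), so up to constants the quantity of interest is $\sum_{t\geq 1}t^{\exponent-1}\prob{\Bn{\bridgeGraph}{wx}\geq t}$.

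\textbf{Upper bound.} First I would record the crude bound $\Bn{\bridgeGraph}{wx}\leq n$ always (the smaller side of a bipartition of $2n$ vertices), so the sum is finite, running over $1\leq t\leq n$. For the \emph{lower} part of the range, say $t\leq C$ for a large constant $C$, I would use $\prob{\Bn{\bridgeGraph}{wx}\geq t}\leq 1$, contributing $O(1)\cdot\sum_{t\leq C}t^{\exponent-1}=O(1)$; but this $O(1)$ is far too big — we need $O(1/n)$. The fix is that $\prob{\Bn{\bridgeGraph}{wx}\geq 1}$ itself is already $O(1/n)$: indeed $\Bn{\bridgeGraph}{wx}\geq 1$ forces $wx$ to be a bridge, and by \Cref{lem:bridge_number1} (applied with any slowly growing $f$, or more directly by the same counting/\bridgeInsertion\ argument that underlies it) the probability that a fixed pair forms a bridge is $O(1/n)$. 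So for $t$ in the bounded range, $\prob{\Bn{\bridgeGraph}{wx}\geq t}\leq\prob{\Bn{\bridgeGraph}{wx}\geq 1}=O(1/n)$, giving an $O(1/n)$ contribution. For the \emph{upper} part, $t\geq C$, I apply \Cref{lem:bridge_number1} with $f(n)=\lfloor t/2\rfloor$ (legitimate as long as $t=\omega(1)$ and $t=o(n)$; the endpoints $t$ close to $n$ are handled by the trivial bound and contribute $O(n^{\exponent-1}\cdot n^{-(\ce-1)})=o(1/n)$ since $\exponent<\ce-2$), obtaining $\prob{\Bn{\bridgeGraph}{wx}\geq t}=O\left(t^{-(\ce-2)}n^{-1}\right)$. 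Then
\begin{align*}
\sum_{C\leq t\leq n}t^{\exponent-1}\cdot O\!\left(t^{-(\ce-2)}n^{-1}\right)\ =\ \frac{O(1)}{n}\sum_{t\geq C}t^{\exponent-\ce+1},
\end{align*}
and the series converges because $\exponent-\ce+1<-1\iff\exponent<\ce-2$. Hence the total is $O(1/n)$.

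\textbf{Lower bound.} For $\Omega(1/n)$ it suffices to keep the single term $t=1$: $\expec{\left(\Bn{\bridgeGraph}{wx}\right)^\exponent}\geq 1^\exponent\cdot\prob{\Bn{\bridgeGraph}{wx}\geq 1}$, and I need this probability to be $\Omega(1/n)$. This is the matching lower bound for the "$wx$ is a bridge" probability; it follows from the same \bridgeInsertion\ enumeration as in \Cref{lem:bridge_number1} — counting multigraphs in $\bridgeClass(2n,3n)$ in which $wx$ is a bridge with, say, $\Bn{\bridgeGraph}{wx}=1$ (so one side is a single vertex with a loop), which by (B1) and the asymptotic formula (B2) gives a count of order $n^{-1}|\bridgeClass(2n,3n)|$. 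Alternatively, $\prob{\Bn{\bridgeGraph}{wx}\geq 1}\geq\prob{\Bn{\bridgeGraph}{wx}\geq 2f(n)+1}$ for any admissible $f$, which is $\Omega(1/(f(n)^{\ce-2}n))$ by \Cref{lem:bridge_number1}; but that only gives $\Omega(1/(f(n)^{\ce-2}n))$, not $\Omega(1/n)$, so I genuinely need the $\Bn{}=1$ (bounded bridge number) case rather than a growing $f$.

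\textbf{Main obstacle.} The delicate point is the bounded range $t=O(1)$, and in particular pinning down $\prob{\Bn{\bridgeGraph}{wx}\geq 1}=\Theta(1/n)$ — \Cref{lem:bridge_number1} as stated requires $f=\omega(1)$ and so does not directly cover small bridge numbers. I would therefore carry out, as a small lemma or inline, the $O(1)$-analogue of the \bridgeInsertion\ count: the number of $H_3\in\bridgeClass(2n,3n)$ with $wx$ a bridge and a prescribed small component of size $2i$ is, by (B1), in bijection (up to the relabelling/weight bookkeeping of \Cref{def:bridge_insertion} and the compensation factors) with pairs $(H_1,H_2)\in\bridgeClass(2(n-i),3(n-i))\times\bridgeClass(2i,3i)$ together with a choice of edges $e_1,e_2$; summing the weights and using (B2) twice yields $\Theta(n^{-1}|\bridgeClass(2n,3n)|)$ after the sum over $i\geq1$ converges (again using $\ce>2$, so $\sum_i i^{-\ce}(\text{poly})$ converges). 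Everything else is routine tail-sum manipulation. I would also remark that the argument shows, more precisely, $\expec{\left(\Bn{\bridgeGraph}{wx}\right)^\exponent}=(c'+o(1))/n$ for an explicit constant $c'$, but the $\Theta$ statement is all that is needed downstream in the proof of \Cref{lem:bridge_number6}.
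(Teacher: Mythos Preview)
Your proof is correct, but it takes a slightly different route from the paper's. The paper computes the point-mass probabilities rather than the tails: from the \bridgeInsertion\ enumeration~(\ref{eq:construction_bridge}) together with~(\ref{eq:formula_bridge}) one gets $\prob{\Bn{\bridgeGraph}{wx}=2j+1}=\Theta(1)\,j^{-\ce+1}/n$ uniformly over all $1\leq j\leq\lfloor (n-1)/2\rfloor$, and $\prob{\Bn{\bridgeGraph}{wx}=1}\leq\prob{wx\in\edgeSet{\bridgeGraph}}=\Theta(1/n)$. Summing $(2j+1)^\exponent$ against these masses gives a series $\Theta(1/n)\sum_{j\geq 1}j^{\exponent-\ce+1}$, which converges precisely because $\exponent<\ce-2$, yielding $\Theta(1/n)$ in one line. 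The advantage of the paper's approach is that the pointwise estimate is valid for \emph{all} $j\geq 1$ (since for bounded $j$ the quantity $|\bridgeClass(2j,3j)|$ is just a fixed positive number), so the ``main obstacle'' you identify --- that \Cref{lem:bridge_number1} requires $f=\omega(1)$ and hence does not cover bounded bridge numbers --- simply never arises. Your tail-sum argument handles this obstacle correctly by a separate inline count for $\Bn{\bridgeGraph}{wx}\geq 1$, but this is exactly the same \bridgeInsertion\ computation underlying the paper's pmf estimate, so in the end the two proofs differ only in bookkeeping: you sum by tails and patch the small range, while the paper sums by values and needs no patch.
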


Recall that given a graph $\fixMultigraph$ and $i\in \N$, we denote by $\blockOrder{i}{\fixMultigraph}$ the number of vertices in the $i$-th largest block $\blockLargest{i}{\fixMultigraph}$ of $\fixMultigraph$.
\begin{lem}\label{lem:bridge_number6}
Let $\bridgeClass$ be a \bridgeStable\ class of multigraphs with critical exponent $\ce>3$. For $\bridgeGraph=\bridgeGraph(2n,3n)\ur \bridgeClass(2n,3n)$, we have $\blockOrder{1}{\bridgeGraph}=\Theta_p(n)$.
\end{lem}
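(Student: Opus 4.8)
The plan is to establish the two matching bounds separately. The bound $\blockOrder{1}{\bridgeGraph}=\Op{n}$ is trivial, since every block of $\bridgeGraph$ has at most $\numberVertices{\bridgeGraph}=2n$ vertices, so the work lies in the matching lower bound $\blockOrder{1}{\bridgeGraph}=\Omp{n}$. For this it suffices, given $\delta>0$, to produce a constant $c=c(\delta)>0$ such that $\prob{\blockOrder{1}{\bridgeGraph}\geq cn}\geq 1-\delta$ for all large $n$. First I would show that $\bridgeGraph$ has \whp\ a \largeBlock\ block. To this end, fix $\beta\in(1/(\ce-2),1)$, which exists because $\ce>3$, and apply \Cref{lem:bridge_number1} with $f(n):=\lceil n^{\beta}\rceil$: for each fixed pair of distinct vertices $w,x$ one obtains $\prob{\Bn{\bridgeGraph}{wx}\geq 2f(n)+1}=O\bigl(n^{-1-\beta(\ce-2)}\bigr)$, and since $\beta(\ce-2)>1$ a union bound over the $\binom{2n}{2}$ pairs shows that \whp\ $\Bn{\bridgeGraph}{wx}\leq 2f(n)+1=o(n)\leq(2n-2)/3$ simultaneously for all pairs. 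As $\bridgeGraph$ is cubic, \Cref{lem:bridge_number5} then gives \whp\ a \largeBlock\ block $B$.

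Next I would extract a counting identity around $B$. Let $e_1,\dots,e_r$ be the bridges of $\bridgeGraph$ incident to $B$ and, for each $i$, let $R_i$ be the component of $\bridgeGraph-e_i$ not containing $B$. Using that $B$ is $2$-connected one checks that the vertex sets $\vertexSet{R_i}$ are pairwise disjoint and partition $\vertexSet{\bridgeGraph}\setminus\vertexSet{B}$; that every vertex of $B$ has degree at least $2$ inside $B$ and degree $3$ in the cubic graph $\bridgeGraph$, hence lies on at most one bridge, so that $r\leq\numberVertices{B}$; and that, since $B$ is \largeBlock, $\numberVertices{R_i}$ is the smaller of the two sides of $\bridgeGraph-e_i$, i.e.\ $\numberVertices{R_i}=\Bn{\bridgeGraph}{e_i}$. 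Together these give the identity $2n=\numberVertices{B}+\sum_{i=1}^{r}\Bn{\bridgeGraph}{e_i}$.

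Finally I would feed a moment estimate into this identity. Fix $\exponent\in(1,\ce-2)$, which again exists because $\ce>3$. By \Cref{lem:bridge_number4} and the symmetry of $\bridgeClass$ under relabelling of vertices, $\expec{\sum_{\{w,x\}\subseteq[2n]}\bigl(\Bn{\bridgeGraph}{wx}\bigr)^{\exponent}}=\binom{2n}{2}\cdot\Theta(1/n)=\Theta(n)$, so Markov's inequality yields a constant $K=K(\delta)$ with $\prob{\sum_{\{w,x\}}\bigl(\Bn{\bridgeGraph}{wx}\bigr)^{\exponent}\leq Kn}\geq 1-\delta/2$. On the intersection of this event with the event from the first paragraph, H\"older's inequality (with exponents $\exponent$ and $\exponent/(\exponent-1)$) applied to the identity gives
\[
2n-\numberVertices{B}=\sum_{i=1}^{r}\Bn{\bridgeGraph}{e_i}\leq\Bigl(\sum_{i=1}^{r}\bigl(\Bn{\bridgeGraph}{e_i}\bigr)^{\exponent}\Bigr)^{1/\exponent}r^{\,1-1/\exponent}\leq (Kn)^{1/\exponent}\,\numberVertices{B}^{\,1-1/\exponent},
\]
where I used $r\leq\numberVertices{B}$ and $\sum_{i=1}^{r}\bigl(\Bn{\bridgeGraph}{e_i}\bigr)^{\exponent}\leq\sum_{\{w,x\}}\bigl(\Bn{\bridgeGraph}{wx}\bigr)^{\exponent}\leq Kn$. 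Writing $\numberVertices{B}=xn$, this reads $2\leq x+K^{1/\exponent}x^{\,1-1/\exponent}$; the right-hand side is continuous and strictly increasing for $x>0$ and vanishes as $x\to 0^{+}$, so there is $c=c(K)=c(\delta)>0$ for which the inequality fails whenever $x\leq c$. Hence on an event of probability at least $1-\delta$ we have $\numberVertices{B}>cn$, and therefore $\blockOrder{1}{\bridgeGraph}\geq\numberVertices{B}>cn$, as desired.

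I expect the main obstacle to be precisely this last passage: converting the purely local structure of a \largeBlock\ block into a genuinely \emph{linear} lower bound. The naive estimate from $2n=\numberVertices{B}+\sum_{i}\Bn{\bridgeGraph}{e_i}$, $r\leq\numberVertices{B}$ and a uniform bound $\Bn{\bridgeGraph}{e_i}=o(n)$ only yields $\numberVertices{B}=\Omega\bigl(n/\max_{i}\Bn{\bridgeGraph}{e_i}\bigr)$, which is sublinear. The point is that one must bound not the largest bridge number but the $\exponent$-th moment of all bridge numbers at once — exactly what \Cref{lem:bridge_number4} supplies for $\exponent\in(1,\ce-2)$, which is where the hypothesis $\ce>3$ is used — and then marry this to the combinatorial constraint $r\leq\numberVertices{B}$ via H\"older. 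The auxiliary claims used to obtain the identity (the disjointness and partition property of the $\vertexSet{R_i}$, and $r\leq\numberVertices{B}$) are routine consequences of $2$-connectivity of $B$ and of $\bridgeGraph$ being cubic, but should be written out with some care.
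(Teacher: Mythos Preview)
Your proposal is correct and follows essentially the same approach as the paper: both use \Cref{lem:bridge_number1} together with a union bound to force all bridge numbers below $(2n-2)/3$, invoke \Cref{lem:bridge_number5} to obtain a \largeBlock\ block $B$, exploit the identity $2n=\numberVertices{B}+\sum_{i}\Bn{\bridgeGraph}{e_i}$ with $r\leq\numberVertices{B}$, and then combine the moment bound from \Cref{lem:bridge_number4} with a convexity inequality to force $\numberVertices{B}$ to be linear. The only cosmetic differences are that the paper phrases the lower bound via the $h(n)=\omega(1)$ characterisation of $\Omega_p$ (showing $\numberVertices{B}\geq n/h(n)$) rather than your explicit $\delta$--$c$ formulation, and uses Jensen's inequality where you use H\"older; these are equivalent here.
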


Next, we consider the $i$-th largest block of $\bridgeGraph=\bridgeGraph(2n,3n)\ur \bridgeClass(2n,3n)$ for $i\geq 2$. We note that $\blockOrder{i}{\bridgeGraph}\leq \max \Bn{\bridgeGraph}{e}$ and thus $\blockOrder{i}{\bridgeGraph}=O_p\left(n^{1/(\ce-2)}\right)$ by \Cref{lem:bridge_number1}. On the other hand, we know by \Cref{lem:bridge_number3} that there is a bridge $\bridge$ in $\bridgeGraph$ with $\Bn{\bridgeGraph}{\bridge}=\Omega_p\left(n^{1/(\ce-2)}\right)$. Due to \Cref{lem:bridge_number6} we intuitively expect that both components of $\bridgeGraph-\bridge$ have again a block whose number of vertices is linear in the order of the component. By induction this would imply $\blockOrder{i}{\bridgeGraph}=\Omega_p\left(n^{1/(\ce-2)}\right)$. In the following lemma we show that this is indeed the case.

\begin{lem}\label{lem:bridge_number7}
	Let $\bridgeClass$ be a \bridgeStable\ class of multigraphs with critical exponent $\ce>3$ and $\bridgeGraph=\bridgeGraph(2n,3n)\ur \bridgeClass(2n,3n)$. Then for each $i\geq 2$, we have $\blockOrder{i}{\bridgeGraph}=\Theta_p\left(n^{1/(\ce-2)}\right)$. 
\end{lem}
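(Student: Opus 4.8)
The plan is to establish matching upper and lower bounds for $\blockOrder{i}{\bridgeGraph}$, both of order $n^{1/(\ce-2)}$, proceeding by induction on $i\geq 2$.

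First I would dispatch the upper bound. Fix $i\geq 2$. Among the blocks $\blockLargest{1}{\bridgeGraph}, \ldots, \blockLargest{i}{\bridgeGraph}$, at least two lie in distinct blocks of $\bridgeGraph$, so there is a bridge $\bridge$ such that $\blockLargest{1}{\bridgeGraph}$ and $\blockLargest{i}{\bridgeGraph}$ lie in different components of $\bridgeGraph-\bridge$; since $\blockLargest{1}{\bridgeGraph}$ is the largest block and $\blockOrder{1}{\bridgeGraph}=\Theta_p(n)$ by \Cref{lem:bridge_number6}, the component containing $\blockLargest{i}{\bridgeGraph}$ is the smaller one (up to the $O_p(1)$-type exceptional event), hence $\blockOrder{i}{\bridgeGraph}\leq\Bn{\bridgeGraph}{\bridge}\leq \max_{\bridge'}\Bn{\bridgeGraph}{\bridge'}$. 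By \Cref{lem:bridge_number1} with $f(n)=\omega\!\left(n^{1/(\ce-2)}\right)$, $f(n)=o(n)$, a union bound over the $O(n^2)$ pairs of vertices shows $\max_{\bridge'}\Bn{\bridgeGraph}{\bridge'}=O_p\!\left(n^{1/(\ce-2)}\right)$, giving $\blockOrder{i}{\bridgeGraph}=O_p\!\left(n^{1/(\ce-2)}\right)$.

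For the lower bound I would argue inductively. The base case $i=2$: by \Cref{lem:bridge_number3} with $f(n)=o\!\left(n^{1/(\ce-2)}\right)$, $f(n)=\omega(1)$, \whp\ there is a bridge $\bridge$ with $\Bn{\bridgeGraph}{\bridge}\geq 2f(n)+1$; write $C$ for the smaller component of $\bridgeGraph-\bridge$, so $\numberVertices{C}=\Omega_p\!\left(n^{1/(\ce-2)}\right)$ and (by the upper bound just proved applied to the whole graph, or directly) $\numberVertices{C}=O_p\!\left(n^{1/(\ce-2)}\right)$ as well, hence $\numberVertices{C}=\Theta_p\!\left(n^{1/(\ce-2)}\right)$. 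The key structural point is that, conditioned on $\numberVertices{C}=2t$, the component $C$ together with the half-edge at its attachment vertex is distributed like a member of a \bridgeStable-type class on $2t$ vertices with $3t$ edges — more precisely, one should check that a \bridgeInsertion\ decomposition identifies the conditional law of $C$ (with one marked vertex of degree two, i.e.\ effectively a rooted connected cubic multigraph) with a uniform random such object, up to the compensation-factor weighting, so that \Cref{lem:bridge_number6} applies to $C$ and yields a block of $C$ of order $\Theta_p\!\left(\numberVertices{C}\right)=\Theta_p\!\left(n^{1/(\ce-2)}\right)$. This block is a block of $\bridgeGraph$ distinct from $\blockLargest{1}{\bridgeGraph}$ (they are separated by $\bridge$), so $\blockOrder{2}{\bridgeGraph}=\Omega_p\!\left(n^{1/(\ce-2)}\right)$. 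For the inductive step from $i-1$ to $i$: having found $i-1$ blocks of order $\Theta_p\!\left(n^{1/(\ce-2)}\right)$ in distinct components obtained by deleting bridges, one uses \Cref{lem:bridge_number3} to locate several (say a growing number of) bridges with bridge number $\Theta_p\!\left(n^{1/(\ce-2)}\right)$ whose smaller-side components are pairwise disjoint; applying the conditional version of \Cref{lem:bridge_number6} to each produces $\Omega(1)$ — in fact arbitrarily many — disjoint blocks of the required order, in particular an $i$-th one.

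The main obstacle is the conditioning argument: making rigorous the claim that the smaller component $C$ of $\bridgeGraph-\bridge$, conditioned on its size, behaves like a uniformly random rooted connected cubic multigraph from a \bridgeStable\ class, and that \Cref{lem:bridge_number6} (stated for the unrooted, unconditioned model) transfers to it. This requires a careful generating-function / \bridgeInsertion\ bookkeeping, tracking the compensation factor and the marked vertex of degree two, and verifying that the rooting does not distort the $\Theta_p(n)$-largest-block conclusion — e.g.\ via \Cref{lem:splitting} applied to the class of connected cubic multigraphs with one distinguished degree-two vertex, together with a subdivision/smoothing bijection to return to the cubic setting. The probabilistic inputs (\Cref{lem:bridge_number1,lem:bridge_number3,lem:bridge_number6}) are otherwise exactly what is needed, and the disjointness of the chosen components is guaranteed because distinct bridges with small bridge numbers have nested or disjoint small sides, and nestedness can be broken by passing to minimal such sides.
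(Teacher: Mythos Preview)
Your upper bound is essentially the paper's: any two distinct blocks are separated by a bridge, so $\blockOrder{i}{\bridgeGraph}\leq\max_{w\neq x}\Bn{\bridgeGraph}{wx}$, and \Cref{lem:bridge_number1} gives the $O_p\bigl(n^{1/(\ce-2)}\bigr)$ bound directly. (The detour through $\blockOrder{1}{\bridgeGraph}=\Theta_p(n)$ is unnecessary.)

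For the lower bound you have located the right difficulty, but the proposed resolution has a genuine gap and is more complicated than needed.

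\emph{The conditioning.} You write ``\whp\ there is a bridge $\bridge$ with $\Bn{\bridgeGraph}{\bridge}\geq 2f(n)+1$; write $C$ for the smaller component'' and then want to treat $C$, conditioned on its size, as a uniform random rooted object. But $\bridge$ is not canonically specified (there may be many such bridges), and once you fix a rule for selecting $\bridge$ the event you condition on depends on the whole graph, so the conditional law of $C$ is \emph{not} uniform. The paper avoids this by changing the probability space: it passes to the class $\bridgeMarkedClass(n)$ of pairs $(\fixMultigraph,\bridge)$ with $\fixMultigraph\in\bridgeClass(2n,3n)$ and $\Bn{\fixMultigraph}{\bridge}\geq L n^{1/(\ce-2)}$, samples a uniform pair $(\bridgeGraph',\bridge')$, and proves a size-biasing transfer: if $\bridgeGraph'\in\property$ \whp\ for every constant $L>0$, then $\bridgeGraph\in\property$ \whp. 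This is the missing device. With the bridge now part of the sample, \Cref{lem:splitting} applied to $(\fixMultigraph,\bridge)\mapsto\bigl(\bridge,\Bn{\fixMultigraph}{\bridge}\bigr)$ reduces to a deterministic bridge with deterministic bridge number, and then the \bridgeInsertion\ decomposition makes the two sides honestly independent uniform samples from $\bridgeClass(2j,3j)$ and $\bridgeClass(2k,3k)$.

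\emph{The rooted detour and the induction.} Reversing a \bridgeInsertion\ (\Cref{def:bridge_insertion}) does not leave a half-edge or a degree-two vertex: it deletes both endpoints $w,x$ of the bridge and restores the subdivided edges, returning two genuine cubic multigraphs in $\bridgeClass$. So the ``rooted cubic with one degree-two vertex'' class is an unnecessary complication, and \Cref{lem:bridge_number6} applies to each side as stated. The paper's induction is also structurally simpler than yours: rather than locating $i$ disjoint small components (which needs the extra disjointness argument you sketch), it takes a \emph{single} marked bridge, lets $C_1$ be the larger side (so $j_n\geq(n-1)/2$, hence $j_n=\Theta(n)$) and $C_2$ the smaller. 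The induction hypothesis for $i-1$ applied to $C_1\sim\bridgeGraph(2j_n,3j_n)$ gives $\blockOrder{i-1}{C_1}=\Omega_p\bigl(j_n^{1/(\ce-2)}\bigr)=\Omega_p\bigl(n^{1/(\ce-2)}\bigr)$, and \Cref{lem:bridge_number6} applied to $C_2$ gives $\blockOrder{1}{C_2}=\Omega_p(k_n)=\Omega_p\bigl(n^{1/(\ce-2)}\bigr)$. Together these yield $i$ blocks of the required order in $\bridgeGraph'$, and the transfer principle pushes this back to $\bridgeGraph$.
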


\Cref{lem:bridge_number6,lem:bridge_number7} together with \Cref{lem:kernel_cubic} give us the block structure of a random kernel $\kernel{\planarRandomGraph}$. Now we combine this information with \Cref{lem:block_core} to obtain the block structure of a random \pl\ graph.

\begin{thm}\label{thm:block_structure_general}
	Let $\planarClass$ be a \pl\ class of graphs which is addable and has a critical exponent $3<\ce<4$. In addition, let $\planarRandomGraph=\planarRandomGraph(n,m)\ur \planarClass(n,m)$ and $\LargestComponent=\largestComponent{\planarRandomGraph}$ denote the largest component of $\planarRandomGraph$. Assume $m=n/2+s$ for $s=s(n)=o\left(n\right)$ and $s^3n^{-2} \to \infty$. Then the following hold.
	\begin{enumerate}
		\item\label{thm:block_structure_general_a}
		$\blockOrder{1}{\planarRandomGraph}=\Theta_p\left(sn^{-1/3}\right)$.
		\item\label{thm:block_structure_general_b}
		$\blockOrder{i}{\planarRandomGraph}=\Theta_p\left(s^{1/\left(\ce-2\right)}n^{\frac{\ce-4}{3(\ce-2)}}\right)$ for each $i\in \N$ with $i\geq 2$.
		\item\label{thm:block_structure_general_c}
		The number of blocks in $\LargestComponent$ is \whp\ $\Theta\left(sn^{-2/3}\right)$.
	\end{enumerate}
\end{thm}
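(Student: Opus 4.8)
The plan is to push the block structure of $\planarRandomGraph$ down to its cubic kernel via the core--kernel approach and then lift it back. By \Cref{thm:internal_structure}, when $s^3n^{-2}\to\infty$ we have whp that $\kernel{\planarRandomGraph}$ is cubic with $\numberVertices{\kernel{\LargestComponent}}=\Theta(sn^{-2/3})$ and $\numberVertices{\core{\LargestComponent}}=\Theta(sn^{-1/3})$, hence the subdivision number $\subdivisionNumber{\LargestComponent}=\numberVertices{\core{\LargestComponent}}-\numberVertices{\kernel{\LargestComponent}}=\Theta(sn^{-1/3})$, while $\numberVertices{\core{\rest}}=O_p(n^{1/3})$ and $\numberVertices{\kernel{\rest}}=O_p(1)$. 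Since $\LargestComponent$ is complex, its $2$-core equals $\core{\LargestComponent}$, so every block of $\planarRandomGraph$ on at least three vertices lies in $\core{\LargestComponent}$ or in $\core{\rest}$; as $\numberVertices{\core{\rest}}=O_p(n^{1/3})$ is far below the orders claimed in (a)--(b), the dominant blocks all sit in $\core{\LargestComponent}$. Moreover subdividing the edges of a kernel does not change its block structure (a subdivided bridge stays a chain of bridges, a subdivided loop becomes a cycle on at least three vertices of the simple graph $\core{\LargestComponent}$, a subdivided $2$-connected piece stays $2$-connected), so the blocks of $\LargestComponent$ biject with the non-bridge blocks of $\kernel{\LargestComponent}$, and for each fixed $i$ the $i$-th largest block of $\core{\LargestComponent}$ descends from the $i$-th largest block of $\kernel{\LargestComponent}$. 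It thus remains to (i) describe the blocks of a typical cubic kernel and (ii) see how random subdivision rescales them.

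For (i), the class $\mathcal{K}_C$ of connected kernels of the addable class $\planarClass$ is \bridgeStable\ with critical exponent $\ce$, so \Cref{lem:bridge_number6,lem:bridge_number7} give, for a uniform connected cubic multigraph on $2n$ vertices, a largest block of order $\Theta_p(n)$ and an $i$-th largest block of order $\Theta_p(n^{1/(\ce-2)})$ for $i\ge2$; by (K2)--(K3) a uniform element of $\mathcal{K}(2n,3n)$ is such a connected kernel on $2n-O_p(1)$ vertices together with an $O_p(1)$-vertex remainder, so the same orders persist for $K(2n,3n)$. In addition, the blocks of a cubic multigraph partition its non-bridge edges and each loop is a block, so the number of blocks of $K(2n,3n)$ lies between $\numberLoops{K(2n,3n)}=(1+o(1))\tfrac{3}{2\gamma}n$ (\Cref{thm:linear_loop}) and $3n$. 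Feeding these statistics into \Cref{lem:kernel_cubic} with the scaling $\numberVertices{\kernel{\planarRandomGraph}}=\Theta(sn^{-2/3})$ yields $\blockOrder{1}{\kernel{\LargestComponent}}=\Theta_p(sn^{-2/3})$, $\blockOrder{i}{\kernel{\LargestComponent}}=\Theta_p((sn^{-2/3})^{1/(\ce-2)})$, and whp $\Theta(sn^{-2/3})$ blocks in $\kernel{\LargestComponent}$. The last statement already gives (c): the number of blocks of $\LargestComponent$ equals that of $\kernel{\LargestComponent}$, which lies between $\numberLoops{\kernel{\LargestComponent}}=\Theta(sn^{-2/3})$ (\Cref{cor:linear_loop}) and $\numberEdges{\kernel{\LargestComponent}}=\tfrac32\numberVertices{\kernel{\LargestComponent}}=\Theta(sn^{-2/3})$, the $O_p(1)$ blocks of $\kernel{\rest}$ being irrelevant.

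For (ii), given a confidence level $\delta>0$, fix constants so that with probability at least $1-\delta$ the pair $(\kernel{\planarRandomGraph},\subdivisionNumber{\planarRandomGraph})$ satisfies a property $\mathcal{T}$ pinning down, within these fixed constant factors, that $\kernel{\planarRandomGraph}$ is cubic, $\numberVertices{\kernel{\planarRandomGraph}}\asymp sn^{-2/3}$, $\subdivisionNumber{\planarRandomGraph}\asymp sn^{-1/3}$, and $\blockOrder{i}{\kernel{\planarRandomGraph}}$ is of the order found in (i); condition on $\mathcal{T}$ and apply \Cref{lem:splitting} to reduce to a fixed feasible sequence $(K_n,k_n)$. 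By \Cref{rem:core_equal}, $\core{\condGraph{\planarRandomGraph}{(K_n,k_n)}}$ is distributed like $\randomCore(K_n,k_n)$, which by \Cref{cor:contiguous} is contiguous to $\randomMultiCore(K_n,k_n)$, because $k_n=\omega((\numberEdges{K_n})^2)$ — this holds since $k_n\asymp sn^{-1/3}$, $\numberEdges{K_n}\asymp sn^{-2/3}$, and $s=o(n)$. Now \Cref{lem:block_core} applies: $k_n=\omega(\numberVertices{K_n})$ and, for each fixed $i$, $\blockOrder{i}{K_n}=\Theta((\numberVertices{K_n})^{1/(\ce-2)})=\omega((\numberVertices{K_n})^{1/2})$ exactly because $\ce<4$; hence whp $\blockOrder{i}{\randomMultiCore}=\Theta(k_n\blockOrder{i}{K_n}/\numberVertices{K_n})$, with constants uniform over the sequences allowed by $\mathcal{T}$. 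Substituting the orders from $\mathcal{T}$ gives $\blockOrder{1}{\randomMultiCore}=\Theta(k_n)=\Theta(sn^{-1/3})$, which is (a), and $\blockOrder{i}{\randomMultiCore}=\Theta\big(n^{1/3}(sn^{-2/3})^{1/(\ce-2)}\big)=\Theta\big(s^{1/(\ce-2)}n^{(\ce-4)/(3(\ce-2))}\big)$, which is (b); since $s=\omega(n^{2/3})$ this dominates $n^{1/3}\ge\numberVertices{\core{\rest}}$, so nothing outside $\core{\LargestComponent}$ interferes. Reversing \Cref{cor:contiguous}, \Cref{lem:splitting}, and the conditioning on $\mathcal{T}$, and letting $\delta\to0$, gives the statements for $\planarRandomGraph$.

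The main obstacle is this de-conditioning bookkeeping: because (a)--(b) are only $\Theta_p$ statements, $\mathcal{T}$ can be fixed only after the confidence level $\delta$ is chosen, and one must verify that for every sequence allowed by $\mathcal{T}$ the hypotheses and the conclusion of \Cref{lem:block_core} and \Cref{cor:contiguous} hold with constants depending only on $\delta$, so that \Cref{lem:splitting} can be invoked with a single sequence-independent graph property. Built into this is the need to have already established the kernel estimates of (i) for $\kernel{\planarRandomGraph}$ itself (not merely for the uniform model $K(2n,3n)$), and to dispose of the $O_p(1)$-vertex non-giant part of $\kernel{\planarRandomGraph}$ and of the subdivided $\core{\rest}$ — which is exactly where the hypothesis $s^3n^{-2}\to\infty$ is used. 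A secondary point is the inequality $\blockOrder{i}{K_n}=\omega((\numberVertices{K_n})^{1/2})$: this is precisely where $\ce<4$ enters, and, as noted in \Cref{rem:improve}, a sharper P\'olya-urn concentration would allow it to be relaxed.
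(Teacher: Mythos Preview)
Your proof follows the paper's own argument almost step for step: transfer the block problem to the cubic kernel via \Cref{lem:kernel_cubic} and \Cref{lem:bridge_number6,lem:bridge_number7}, then lift to the core with \Cref{lem:block_core} through the conditioning machinery of \Cref{rem:core_equal}, \Cref{cor:contiguous}, and \Cref{lem:splitting}, and finally check that blocks outside the core are negligible. The use of $\ce<4$ to guarantee $\blockOrder{i}{K_n}=\omega((\numberVertices{K_n})^{1/2})$ is exactly what the paper does.

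There is one small gap. You assert that every block of $\planarRandomGraph$ on at least three vertices lies in $\core{\LargestComponent}$ or in $\core{\rest}$, and then dismiss the latter via $\numberVertices{\core{\rest}}=O_p(n^{1/3})$. But in the paper's convention the core is the $2$-core of the \emph{complex} part only; unicyclic components of $\rest$ contribute cycles that are blocks yet do not lie in $\core{\rest}$. Your bound on $\numberVertices{\core{\rest}}$ does not control them. The paper handles this by invoking \Cref{thm:general}: every block outside $\core{\planarRandomGraph}$ is a cycle, and all cycles outside $\LargestComponent$ have length $O_p(n^{1/3})$. Once you add that sentence, the comparison $n^{1/3}=o\big(s^{1/(\ce-2)}n^{(\ce-4)/(3(\ce-2))}\big)$ disposes of these blocks as well, and your argument is complete.
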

The proof of \Cref{thm:block_structure_general} can be found in \Cref{proof:thm:block_structure_general}.
\begin{remark}\label{rem:block_structure}
	We believe that \Cref{thm:block_structure_general} is also true when $\ce\geq 4$. In that case we would need an improved version of \Cref{lem:block_core} where we can weaken the condition that $\blockOrder{i}{K}=\omega\left(\left(\numberVertices{K}\right)^{1/2}\right)$ to $\blockOrder{i}{K}=\omega\left(\left(\numberVertices{K}\right)^{\varepsilon}\right)$ for some $\varepsilon>0$. Using the ideas presented in \Cref{sec:polya} one may deduce such an improved statement (see also \Cref{rem:improve}). Nevertheless, we omit details, since we expect that the proofs become rather technical but do not provide any new insights. 
\end{remark}

\section{Proofs of main results}\label{sec:proofs_main}
Throughout this section, let $\planarClass$ be a \pl\ class of graphs and $\mathcal{K}$ the class of all kernels of graphs in $\planarClass$. Let $\planarClass(n,m)$ be the subclass of $\planarClass$ consisting of all graphs on vertex set $[n]$ with $m=m(n)$ edges and $P=\planarRandomGraph(n,m) \ur \mathcal \planarRandomGraph(n,m)$. Assume $m=n/2+s$ for $s=s(n)=o(n)$. Given a graph $\fixGraph$ we let $\largestComponent{\fixGraph}$ denote the largest component of $\fixGraph$ and $\rest(\fixGraph):=\fixGraph \setminus \largestComponent{\fixGraph}$, i.e. the graph obtained from $\fixGraph$ by deleting the largest component $\largestComponent{\fixGraph}$.

\proofof{thm:general}\label{proofof:thm:general}
We first consider the cases when $s^3n^{-2} \to -\infty$ or $s=O\left(n^{2/3}\right)$. Due to \Cref{lem:k4} each graph without complex components lies in $\planarClass$. Hence, we obtain by \Cref{thm:critcalRegime}\ref{thm:critcalRegime_a} that
$
\liminf_{n \to \infty} \prob{G(n,m) \in \planarClass(n,m)}>0.
$
Thus, each property that holds \whp\ in $G(n,m)$ is also true \whp\ in $\planarRandomGraph(n,m)$ and statements \ref{thm:main1_a} and \ref{thm:main1_b} follow from \Cref{thm:known}.

To prove \ref{thm:main1_c}, we assume $s^3n^{-2}\to \infty$. By \Cref{thm:internal_structure}\ref{thm:internal_structure_a}, i.e. \whp\ $\numberVertices{\core{\largestComponent{\planarRandomGraph}}}=\Theta\left(sn^{-1/3}\right)$, and the simple observation that $\longestCycle{\largestComponent{\planarRandomGraph}}\leq \numberVertices{\core{\largestComponent{\planarRandomGraph}}}$ we obtain that \whp\ \[\longestCycle{\largestComponent{\planarRandomGraph}}=O\left(sn^{-1/3}\right).\]

In order to prove the two other results on the girth and circumference of the largest component $\largestComponent{\planarRandomGraph}$, i.e. $\girth{\largestComponent{\planarRandomGraph}}=\Theta_p\left(n s^{-1}\right)$ 
and 
$\longestCycle{\largestComponent{\planarRandomGraph}}=\Omega_p\left(n^{1/3}\log \left(sn^{-2/3}\right)\right)$, we will use typical properties of the core and kernel of $\planarRandomGraph$. More precisely, let $\cl(n)$ be the subclass of $\planarClass(n,m)$ consisting of those graphs $\fixGraph$ with largest component $\largestComponent{\fixGraph}$ satisfying the following properties
\begin{align*}
	\numberVertices{\core{\largestComponent{\fixGraph}}} &=\Theta \left(sn^{-1/3}\right), \\
	\numberEdges{\kernel{\largestComponent{\fixGraph}}}&= \Theta \left(sn^{-2/3}\right),\\
	\text{ and }~~~~~ \numberLoops{\kernel{\largestComponent{\fixGraph}}}&= \Theta \left(sn^{-2/3}\right).~~~~~
\end{align*}
From \Cref{thm:internal_structure}\ref{thm:internal_structure_a}, \ref{thm:internal_structure_b}, and \ref{thm:internal_structure_e} we obtain that \whp\ $\numberVertices{\core{\largestComponent{\planarRandomGraph}}}=\Theta\left(sn^{-1/3}\right)$ and $\numberEdges{\kernel{\largestComponent{\planarRandomGraph}}}=3/2\cdot \numberVertices{\kernel{\largestComponent{\planarRandomGraph}}}= \Theta \left(sn^{-2/3}\right)$.
Furthermore, \Cref{cor:linear_loop} says that \whp\ $\numberLoops{\kernel{\largestComponent{\planarRandomGraph}}}=\Theta \left(sn^{-2/3}\right)$. Therefore, we have \whp\ $\planarRandomGraph \in \cl(n)$.

Next, we will apply \Cref{lem:splitting} to the class $\cl:=\bigcup_{n\in\N}\cl(n)$. So, we define the function $\func:\cl \to \mathcal{K} \times \N$ which maps a graph $\fixGraph \in \cl$ to the pair of kernel $\kernel{\largestComponent{\fixGraph}}$ and subdivision number $\subdivisionNumber{\largestComponent{\fixGraph}}$, i.e. \[\func(\fixGraph):=\left(\kernel{\largestComponent{\fixGraph}}, \subdivisionNumber{\largestComponent{\fixGraph}}\right).\] 

Let $\seq=\left(K_n, \nd_n\right)_{n \in \N}$ be a sequence that is feasible for $\left(\cl, \func\right)$ (cf. \Cref{def:feasible}) and let $\randomGraph=\randomGraph(n)\ur \cl(n)$. Due to the definition of $\func$ all possible realisations of $\largestComponent{\condGraph{\randomGraph}{\seq}}$ have the same kernel $K_n$ and the same subdivision number $k_n$. Hence, by \Cref{rem:core_equal} we have that $\core{\largestComponent{\condGraph{\randomGraph}{\seq}}}$ is distributed like $\randomCore(K_n, \nd_n)$, a graph chosen uniformly at random from the class of all cores with kernel $K_n$ and subdivision number $\nd_n$. From the definition of $\cl(n)$ we have $\nd_n=\Theta\left(sn^{-1/3}\right)$, $\numberEdges{K_n}=\Theta\left(sn^{-2/3}\right)$, and $\numberLoops{K_n}=\Theta\left(sn^{-2/3}\right)$. In particular, this yields $\nd_n=\omega\left(\numberEdges{K_n}^2\right)$ and $\numberLoops{K_n}=\Theta\left(\numberEdges{K_n}\right)$. Hence, by combining \Cref{lem:random_core2} and \Cref{cor:contiguous} we obtain 
\begin{align*}
\girth{\largestComponent{\condGraph{\randomGraph}{\seq}}}&=\girth{\randomCore(K_n, \nd_n)}=\Theta_p\left(\frac{k_n}{\numberEdges{K_n}^2}\right)=\Theta_p\left(\frac{sn^{-1/3}}{\left(sn^{-2/3}\right)^2}\right)=\Theta_p\left(n/s\right)\\
\text{ and }~~~~~
\longestCycle{\largestComponent{\condGraph{\randomGraph}{\seq}}}&=\longestCycle{\randomCore(K_n, \nd_n)}=\Omega_p\left(\frac{k_n}{\numberEdges{K_n}}\log \numberLoops{K_n} \right)=\Omega_p\left(n^{1/3}\log \left(sn^{-2/3}\right)\right).
\end{align*}
As the sequence $\seq$ was arbitrary, \Cref{lem:splitting} implies that the results above hold also for the (unconditional) random graph $\randomGraph$. Since \whp\ $\planarRandomGraph \in \cl(n)$, the same is true for $\planarRandomGraph$, i.e. \[\girth{\largestComponent{\planarRandomGraph}}=\Theta_p\left(ns^{-1}\right)\quad \text{and}\quad \longestCycle{\largestComponent{\planarRandomGraph}}=\Omega_p\left(n^{1/3}\log \left(sn^{-2/3}\right)\right),\]
as desired.

Next, we study the graph $\Rest{\planarRandomGraph}:=\planarRandomGraph \setminus \largestComponent{\planarRandomGraph}$ and prove that its circumference satisfies $\longestCycle{\Rest{\planarRandomGraph}}=\Theta_p\left(n^{1/3}\right)$. To this end, we will show that $\Rest{\planarRandomGraph}$ behaves similarly like the uniform random graph $G\left(n-\numberVertices{\LargestComponent}, m-\numberEdges{\LargestComponent}\right)$ and then apply \Cref{thm:known}\ref{thm:known2}. More precisely, given some constant $\constant>0$ we denote by $\tilde{\cl}(n)$ the subclass of $\planarClass(n,m)$ consisting of those graphs $H$ such that 
\begin{align}\label{eq:interval}
\numberVertices{\largestComponent{H}},\ \numberEdges{\largestComponent{H}} \in \left[2s-\constant n^{2/3}, 2s+\constant n^{2/3}\right].
\end{align}
We set $\tilde{\cl}:=\bigcup_{n\in\N} \tilde{\cl}(n)$ and let $\delta>0$. By \Cref{thm:internal_structure}\ref{thm:internal_structure_f} and \ref{thm:internal_structure_g} we can choose the constant $\constant$ such that 
\begin{align}\label{eq:rest}
\prob{\planarRandomGraph\in \tilde{\cl}(n)}\geq 1-\delta
\end{align}
for all $n$ large enough. Let $\tilde{\randomGraph}=\tilde{\randomGraph}(n)\ur \tilde{\cl}(n)$ and $\tilde{\func}$ be the function that assigns each graph $\fixGraph \in \tilde{\cl}$ its largest component, i.e. $\tilde{\func}(\fixGraph):=\largestComponent{\fixGraph}$. Moreover, let $\mathbf{\fixGraph}=(\fixGraph_n)_{n\in\N}$ be a sequence that is feasible for $\left(\tilde{\cl}, \tilde{\func}\right)$. We denote by $n_U=n_U(n):=n-\numberVertices{H_n}$ and $m_U=m_U(n):=m-\numberEdges{H_n}$ the number of vertices and edges in $\Rest{\condGraph{\tilde{\randomGraph}}{\mathbf{\fixGraph}}}$, respectively. Next, we will study relations between the distributions of $\tilde{R}_n:=\Rest{\condGraph{\tilde{\randomGraph}}{\mathbf{\fixGraph}}}$ and the uniform random graph $R_n:=G(n_U, m_U)\ur \mathcal G(n_U, m_U)$. To this end, let $\mathcal{R}(n)$ be the subclass of $\mathcal{G}(n_U,m_U)$ consisting of all possible realisations of $\tilde{R}_n$, i.e. the set of graphs $R$ such that 
\begin{align}\label{eq:rest1}
R\cup H_n \in \tilde{\cl} \quad \text{and} \quad \largestComponent{R\cup H_n}=H_n.
\end{align}
We claim that each graph $R\in \mathcal{G}(n_U,m_U)$ having no complex components and satisfying $\largestComponent{R}<\numberVertices{\fixGraph_n}$ is in $\mathcal{R}(n)$. Indeed such a graph $R$ satisfies $\largestComponent{R\cup H_n}=H_n$. Moreover, we have $\kernel{R\cup H_n}=\kernel{H_n}$. Thus, by the stability property (K1) of \pl\ classes (cf. \Cref{def:planar_like}) we get $R\cup H_n \in \planarClass$ and therefore $R\cup H_n \in \tilde{\cl}$. This implies $R\in \mathcal{R}(n)$ due to (\ref{eq:rest1}). Next, we will show that $|\mathcal{R}|$ is \lq large\rq\ in the sense that $\left|\mathcal{R}(n)\right|/\left|\mathcal{G}(n_U,m_U)\right|$ is bounded away from 0. To this end, we use (\ref{eq:interval}) to obtain
\begin{align}\label{eq:rest2}
m_U= m-\numberEdges{H_n}\leq m-2s+Mn^{2/3}=\frac{n}{2}-s+Mn^{2/3}\leq \frac{n}{2}-\frac{\numberVertices{H_n}}{2}+\frac{3Mn^{2/3}}{2}=\frac{n_U}{2}+\frac{3Mn^{2/3}}{2}.
\end{align}
Using (\ref{eq:interval}) and the fact that $s=o(n)$ we get $n_U=(1+o(1))n$. Combining that with (\ref{eq:rest2}) yields that for $n$ large enough
\begin{align}\label{eq:rest5}
m_U\leq \frac{n_U}{2}+2Mn_U^{2/3}.
\end{align}
Together with \Cref{thm:critcalRegime}\ref{thm:critcalRegime_a} this implies
\begin{align}\label{eq:rest3}
\liminf_{n \to \infty}~ \prob{R_n \text{ has no complex component}}>0.
\end{align}
As $\numberVertices{\fixGraph_n}=\Theta(s)$, we obtain by \Cref{thm:critcalRegime}\ref{thm:critcalRegime_b} that \whp
\begin{align}\label{eq:rest4}
	\numberVertices{\largestComponent{R_n}}<\numberVertices{\fixGraph_n}.
\end{align}
Combining (\ref{eq:rest3}) and (\ref{eq:rest4}) with the claim shown above yields
\begin{align}\label{eq:rest6}
\liminf_{n \to \infty} ~ \prob{R_n\in \mathcal{R}(n)}>0.
\end{align}
Similarly as in (\ref{eq:rest2}) we use (\ref{eq:interval}) to get
\begin{align*}
m_U= m-\numberEdges{H_n}\geq m-2s-Mn^{2/3}=\frac{n}{2}-s-Mn^{2/3}\geq \frac{n}{2}-\frac{\numberVertices{H_n}}{2}-\frac{3Mn^{2/3}}{2}=\frac{n_U}{2}-\frac{3Mn^{2/3}}{2},
\end{align*}
which yields $m_U\geq n_U/2-2Mn_U^{2/3}$ for large $n$. Combining that with (\ref{eq:rest5}) we obtain $m_U=n_U/2+O\left(n_U^{2/3}\right)$. Hence, we get by \Cref{thm:known}\ref{thm:known2} that $\longestCycle{R_n}=\Theta_p\left(n_U^{1/3}\right)=\Theta_p\left(n^{1/3}\right)$. By (\ref{eq:rest6}) each property that holds \whp\ in $R_n$ is also true \whp\ in $\tilde{R}_n \ur \mathcal{R}(n)$. Thus, we have $\longestCycle{\tilde{R}_n}=\Theta_p\left(n^{1/3}\right)$. From the definition of $\tilde{R}_n$ we get $\longestCycle{\Rest{\condGraph{\tilde{\randomGraph}}{\mathbf{\fixGraph}}}}=\Theta_p\left(n^{1/3}\right)$. Hence, by \Cref{lem:splitting} we have $\longestCycle{\Rest{\tilde{\randomGraph}}}=\Theta_p\left(n^{1/3}\right)$. Finally, using (\ref{eq:rest}) and observing that the choice of $\delta>0$ was arbitrary we have 
\[\longestCycle{\Rest{\planarRandomGraph}}=\Theta_p\left(n^{1/3}\right),\]
which completes the proof.
\qed

\proofof{thm:main1}
The statement follows by combining \Cref{lem:planar_like}\ref{lem:planar_like_1} and \Cref{thm:general}. \qed

\proofof{coro:general}
The result follows directly from \Cref{lem:planar_like} and \Cref{thm:general}. \qed

\proofof{coro:outerplanar}
The results in the regimes $s^3n^{-2} \to -\infty$ and $s=O\left(n^{2/3}\right)$ follow analogously as for \pl\ classes (see \Cref{proofof:thm:general}). In \cite[Theorem 4]{sparse_outerplanar} it is proven that in the case $s^3n^{-2} \to \infty$ \whp\ a random outerplanar graph is a cactus graph. Thus, the statements in that regime follow directly from \Cref{coro:general}. \qed

\proofofL{thm:block_structure_general}
We start by considering the blocks of the kernel $\kernel{\planarRandomGraph}$. By \Cref{lem:kernel_cubic} we have that $\kernel{\planarRandomGraph}$ \lq behaves\rq\ like a random cubic multigraph chosen from $\mathcal{K}$. Furthermore, by \Cref{thm:internal_structure}\ref{thm:internal_structure_b} and \ref{thm:internal_structure_d}, we know that \whp\ $\numberVertices{\kernel{\planarRandomGraph}}=\Theta\left(sn^{-2/3}\right)$. Combining that with \Cref{lem:bridge_number6} implies 
\begin{align}\label{eq:block1}
\blockOrder{1}{\kernel{\planarRandomGraph}}=\Theta_p\left(sn^{-2/3}\right).
\end{align}
Similarly, by \Cref{lem:bridge_number7} we have that for each $i\geq 2$
\begin{align}\label{eq:block2}
\blockOrder{i}{\kernel{\planarRandomGraph}}=\Theta_p\left(\numberVertices{\kernel{\planarRandomGraph}}^{1/\left(\ce-2\right)}\right)=\Theta_p\left(s^{1/\left(\ce-2\right)}n^{-2/\left(3(\ce-2)\right)}\right).
\end{align}
Next, we determine the orders of the blocks in the core $\core{\planarRandomGraph}$. To this end, we will use \Cref{lem:splitting,lem:block_core} and we fix $i\geq 2$. Let $\cl(n)$ be the subclass of $\planarClass(n,m)$ consisting of those graphs $\fixGraph$ satisfying the following properties
\begin{align*}
\numberVertices{\core{\fixGraph}}&=\Theta \left(sn^{-1/3}\right),\\
\numberEdges{\kernel{\fixGraph}}&=\Theta \left(sn^{-2/3}\right),\\
\blockOrder{1}{\kernel{\fixGraph}}&=\Theta\left(sn^{-2/3}\right),\\
\blockOrder{i}{\kernel{\fixGraph}}&=\Theta\left(s^{1/(\ce-2)}n^{-2/\left(3(\ce-2)\right)}\right),
\\
\text{and}~~~~~~~~~~~~~~~~~~~\kernel{\fixGraph}&\text{ is cubic}.
\end{align*} 
Due to \Cref{thm:internal_structure}\ref{thm:internal_structure_a}--\ref{thm:internal_structure_e}, (\ref{eq:block1}), and (\ref{eq:block2}) we can choose the implicit hidden constants in the above equations such that $\prob{\planarRandomGraph \in \cl(n)}>1-\delta$, for a fixed constant $\delta>0$. Let $\cl:=\bigcup_{n\in\N} \cl(n)$, $\tilde{\randomGraph}=\tilde{\randomGraph}(n)\ur \tilde{\cl}(n)$, and define the function $\func:\cl \to \mathcal{K}\times \N$ which assigns each graph $\fixGraph \in \cl$ to the pair of its kernel $\kernel{\fixGraph}$ and subdivision number $\subdivisionNumber{\fixGraph}$, i.e. 
\begin{align*}
\func\left(\fixGraph\right):=\left(\kernel{\fixGraph}, \subdivisionNumber{\fixGraph}\right).
\end{align*}
Let $\seq=\left(K_n, \nd_n\right)_{n\in \N}$ be a sequence feasible for $\left(\cl, \func\right)$ and $\randomCore_n:=\randomCore(K_n, \nd_n)$ as in \Cref{def:random_core}, i.e. a graph chosen uniformly at random from all cores with kernel $K_n$ and subdivision number $\nd_n$. By the definition of $\cl(n)$ we have $\nd_n=\omega\left(\numberEdges{K_n}^2\right)$. Thus, combining \Cref{cor:contiguous} and \Cref{lem:block_core} yields 
\begin{align*}
\blockOrder{1}{\randomCore_n}&=\Theta\left(\frac{k_n\blockOrder{1}{K_n}}{\numberVertices{K_n}}\right)=\Theta\left(sn^{-1/3}\right)\\
\text{ and }~~~~~\blockOrder{i}{\randomCore_n}&=\Theta\left(\frac{k_n\blockOrder{i}{K_n}}{\numberVertices{K_n}}\right)=\Theta\left(s^{1/\left(\ce-2\right)}n^{\frac{\ce-4}{3(\ce-2)}}\right).
\end{align*}
Due to \Cref{rem:core_equal} we have that $\core{\condGraph{\randomGraph}{\seq}}$ is distributed like $\randomCore_n$. Hence, by \Cref{lem:splitting} we obtain that \whp\
\begin{align*}
\blockOrder{1}{\core{\randomGraph}}=\Theta\left(sn^{-1/3}\right) 
\quad \text{and}\quad \blockOrder{i}{\core{\randomGraph}}=\Theta\left(s^{1/\left(\ce-2\right)}n^{\frac{\ce-4}{3(\ce-2)}}\right).
\end{align*}
As $\delta>0$ was arbitrary and $\prob{\planarRandomGraph \in \cl(n)}>1-\delta$, this implies
\begin{align}\label{eq:blockincore}
	\blockOrder{1}{\core{\planarRandomGraph}}=\Theta_p\left(sn^{-1/3}\right) 
	\quad \text{and}\quad \blockOrder{i}{\core{\planarRandomGraph}}=\Theta_p\left(s^{1/\left(\ce-2\right)}n^{\frac{\ce-4}{3(\ce-2)}}\right).
\end{align}
We note that each block outside the core $\core{\planarRandomGraph}$ is a cycle. Due to \Cref{thm:general}, the length of such a cycle is of order $O_p\left(n^{1/3}\right)$. This together with \eqref{eq:blockincore} and the observations that $n^{1/3}=o\left(s^{1/\left(\ce-2\right)}n^{\frac{\ce-4}{3(\ce-2)}}\right)$ implies statements \ref{thm:block_structure_general_a} and \ref{thm:block_structure_general_b}. 

For statement \ref{thm:block_structure_general_c}, we first observe that the number of blocks in $\core{\largestComponent{\planarRandomGraph}}$ is at most $\numberVertices{\kernel{\largestComponent{\planarRandomGraph}}}$. Thus, by \Cref{thm:internal_structure}\ref{thm:internal_structure_a} we have that \whp\ $\core{\largestComponent{\planarRandomGraph}}$, and therefore also $\largestComponent{\planarRandomGraph}$, has $O\left(sn^{-2/3}\right)$ many blocks. On the other hand, \whp\ $\kernel{\largestComponent{\planarRandomGraph}}$ has $\Theta\left(sn^{-2/3}\right)$ many loops due to \Cref{cor:linear_loop}. All these loops \lq translate\rq\ to different blocks in the core $\core{\largestComponent{\planarRandomGraph}}$. Thus, \whp\ $\largestComponent{\planarRandomGraph}$ has $\Omega\left(sn^{-2/3}\right)$ many blocks. Summing up, \whp\ $\largestComponent{\planarRandomGraph}$ contains $\Theta\left(sn^{-2/3}\right)$ many blocks.\qed

\proofof{thm:block_structure}
The statement follows by combining \Cref{lem:planar_like}\ref{lem:planar_like_1} and \Cref{thm:block_structure_general}. \qed

\section{Proofs of auxiliary results}\label{sec:proofs_aux}
\proofof{lem:splitting}
For each $n\in \N$ let $s_n^\ast \in \mathcal{S}$ be such that $\condprob{\randomGraph(n) \in \property}{\func\left(\randomGraph(n)\right)=s}$ is minimised for $s=s_n^\ast$ (among all $s$ for which there exists a graph $\fixGraph\in \cl(n)$ with $\func(H)=s$). Note that the sequence $\seq^\ast=\left(s_n^\ast\right)_{n\in \N}$ is feasible for $(\cl, \func)$ and therefore we obtain
\begin{align*}
\prob{\randomGraph(n) \in \property}
&\ =\ \sum_{s \in \mathcal{S}}\prob{\func(A(n))=s} \cdot \condprob{\randomGraph(n) \in \property}{\func\left(\randomGraph(n)\right)=s}
\\
&\ \geq\ \sum_{s \in \mathcal{S}}\prob{\func(A(n))=s}\cdot \condprob{\randomGraph(n) \in \property}{\func\left(\randomGraph(n)\right)=s_n^\ast}
\\
&\ =\ \prob{\left(\condGraph{\randomGraph}{\seq^\ast}\right)(n)\in \property}\\
&\ =\ 1-o(1),
\end{align*}
where the sums are taken over all $s\in \mathcal{S}$ for which there exists some $\fixGraph\in \cl(n)$ with $\func(H)=s$.
\qed

\proofof{lem:k3}
The assertion follows along the lines of the proof of Lemma 2 in \cite{planar}. \qed

\proofof{lem:k4}
We use the properties of $\planarClass$ from \Cref{def:planar_like}. By (K2) there exists some $\fixGraph_1 \in \planarClass$. Due to (K1) this yields $\fixGraph \cup \fixGraph_1 \in \planarClass$, as $\kernel{H\cup H_1}=\kernel{H}$. Finally, (P1) implies $\fixGraph \in \planarClass$. \qed

\proofof{rem:core_equal}
The assertion is equivalent to the statement that $\prob{\core{\condGraph{\planarRandomGraph}{\left(K, \nd\right)}}=\fixGraph}$ is independent of the choice of $\fixGraph\in \mathcal{C}(K,\nd)$. Hence, it suffices to prove that for each $\fixGraph\in \mathcal{C}(K,\nd)$ the set $\left\{F\in \planarClass(n,m) \ \mid\ \core{F}=\fixGraph,\ \kernel{F}=K,\ \subdivisionNumber{F}=\nd\right\}$ has the same number of elements. We observe that $\left\{F\in \planarClass(n,m) \ \mid\ \core{F}=\fixGraph,\ \kernel{F}=K,\ \subdivisionNumber{F}=\nd\right\}=\left\{F\in \planarClass(n,m) \ \mid\ \core{F}=\fixGraph\right\}$ and each graph $F\in \planarClass(n,m)$ having $\fixGraph$ as its core can be constructed as follows. First we replace each vertex in $\fixGraph$ by a rooted tree and then we attach a graph without complex components such that we obtain a graph on $n$ vertices and $m$ edges (see \cite[Section 3]{surface} for details). The number of different possibilities of that construction depends only on $n$, $m$, $\numberVertices{\fixGraph}$, and $\numberEdges{\fixGraph}$. We have $\numberVertices{\fixGraph}=\numberVertices{K}+\nd$ and $\numberEdges{\fixGraph}=\numberEdges{K}+\nd$, i.e. $\numberVertices{\fixGraph}$ and $\numberEdges{\fixGraph}$ are independent of the choice of $\fixGraph\in \mathcal{C}(K,\nd)$. Hence, the size of the set $\left\{F\in \planarClass(n,m) \ \mid\ \core{F}=\fixGraph\right\}$ is the same for all $\fixGraph\in \mathcal{C}(K,\nd)$, which implies the statement.
\qed

\proofof{lem:random_core}
Let $e_1, \ldots, e_\nc$ be the edges of $K$ and we denote by $X_i$ the number of vertices that are placed on edge $e_i$ when we subdivide $K$ to obtain $\randomMultiCore$. To prove \ref{lem:random_core_a}, we observe that 

\begin{align*}
\prob{\randomMultiCore\text{ is simple}}&\geq \prob{\randomMultiCore\text{ is \2s}}= \prob{\bigwedge_{\ind=1}^\nc \left(X_\ind \geq 2\right)}
 \geq 1- \sum_{\ind=1}^{\nc}\prob{X_\ind \leq 1} \geq 1-\frac{2\nc^2}{\nd}, 
\end{align*} 
where the last inequality follows from \Cref{prop:polya_main}.
 
In order to prove \ref{lem:random_core_b}, it suffices to show that $\prob{\randomMultiCore=H}$ is independent of the choice of $H \in \mathcal{C}(K,\nd)$. To that end, we count the number of ways our random process ends up with $\randomMultiCore=H$. We observe that there is a unique sequence $(G_0, \ldots, G_\nd)$ that leads to $G_\nd=H$. Thus, in each step $i$ there is a unique unordered pair of vertices $\left\{u_i, v_i\right\}$ such that subdividing an edge between $u_i$ and $v_i$ in $G_{i-1}$ leads to $G_i$. We denote by $q_i$ the number of edges in $G_{i-1}$ between $u_i$ and $v_i$. Then, there are $\prod_{i=1}^{\nd}q_i$ many ways of creating $H$. We note that the only way a multiple edge can be created during the process is by subdividing a loop and that all loops and multiple edges are destroyed in the end. Thus, we obtain $\prod_{i=1}^{\nd}q_i=\frac{1}{\weight{K}}$, where $\weight{K}$ is defined as in (\ref{eq:weight}). This shows \ref{lem:random_core_b}, since $\weight{K}$ is independent of the choice of $H \in \mathcal{C}(K,\nd)$.
\qed

\proofof{cor:contiguous}
We observe that by \Cref{lem:random_core} we have that \whp\ $\randomMultiCore$ is simple. For each graph property $\property$ we obtain by \Cref{lem:random_core}\ref{lem:random_core_b}
\begin{align*}
\prob{\randomMultiCore \in \property}&=\prob{\randomMultiCore \text{ is simple }}\condprob{\randomMultiCore \in \property}{\randomMultiCore \text{ is simple }}+\prob{\randomMultiCore \text{ is not simple}}\condprob{\randomMultiCore \in \property}{\randomMultiCore \text{ is not simple }}
\\
&=(1+o(1))\prob{\randomCore \in \property}+o(1).
\end{align*}
This implies that \whp\ $\randomCore \in \property$ if and only if \whp\ $\randomMultiCore\in\property$. \qed 

\proofof{cor:subdivision_twice}
We will use \Cref{lem:splitting}. Let $\property$ be the graph property of being \2s. In addition, let $\cl(n)$ be the subclass of $\planarClass(n,m)$ consisting of all graphs $\fixGraph$ with $\numberVertices{\core{H}}=\Theta\left(sn^{-1/3}\right)$ and $\numberEdges{\kernel{H}}=\Theta\left(sn^{-2/3}\right)$. Due to \Cref{thm:internal_structure}\ref{thm:internal_structure_a} and \ref{thm:internal_structure_c} we have that \whp\ $\numberVertices{\core{\planarRandomGraph}}=\Theta\left(sn^{-1/3}\right)$. In addition, by \Cref{thm:internal_structure}\ref{thm:internal_structure_b}, \ref{thm:internal_structure_d}, and \ref{thm:internal_structure_e}
we have that \whp\
\begin{align*}
\numberEdges{\kernel{\planarRandomGraph}}=3/2 \cdot \numberVertices{\kernel{\planarRandomGraph}}=\Theta\left(sn^{-2/3}\right).
\end{align*}
Hence, we obtain that \whp\ $\planarRandomGraph \in \cl(n)$. Let $\cl:=\bigcup_{n\in\N} \cl(n)$ and define the function $\func$ for a graph $\fixGraph \in \cl$ by $\func(\fixGraph):=\left(\kernel{\fixGraph}, \subdivisionNumber{\fixGraph}\right)$. Let $\seq=\left(K_n, \nd_n\right)_{n\in \N}$ be a sequence feasible for $(\cl, \func)$ and let $\randomCore(K_n, \nd_n)$ and $\randomMultiCore(K_n, \nd_n)$ be as in \Cref{def:random_core}. By definition of $\cl(n)$ we have $\nd_n=\omega\left(\numberEdges{K_n}^2\right)$. Thus, by \Cref{lem:random_core}\ref{lem:random_core2_a} we have that \whp\ $\randomMultiCore(K_n, \nd_n)$ is \2s. By \Cref{cor:contiguous} this is also true for $\randomCore(K_n, \nd_n)$. Let $\randomGraph=\randomGraph(n) \ur \cl(n)$. We note that $\core{\condGraph{\randomGraph}{\seq}}$ is distributed like $\randomCore(K_n, \nd_n)$ due to \Cref{rem:core_equal}. Thus, by \Cref{lem:splitting}, we have that \whp\ $\core{\randomGraph}$ is \2s. Since \whp\ $\planarRandomGraph \in \cl(n)$, it is also true that \whp\ $\core{\planarRandomGraph}$ is \2s. Finally, the statement follows, because $\planarRandomGraph$ is \2s\ if and only if $\core{\planarRandomGraph}$ is.
\qed

\proofof{lem:random_core2}
Let $e_1, \ldots, e_\nc$ be the edges of $K$ and $X_i$ the number of vertices that are placed on edge $e_i$ if we subdivide $K$ to obtain $\randomMultiCore$. Without loss of generality we may assume that $e_1, \ldots, e_\NumberLoops$ are the loops of $K$. Then the upper bounds on $\girth{\randomMultiCore}$ follow by \Cref{main1}\ref{thm:main1_1} and inequality (\ref{bound1}). For the lower bound on $\girth{\randomMultiCore}$ we use \Cref{main1}\ref{thm:main1_1} and (\ref{bound2}). The \lq in particular\rq\ statements follow immediately by combining the lower and upper bounds on $\girth{\randomMultiCore}$. Finally, we note that \ref{lem:random_core2_b} follows by \Cref{main1}\ref{thm:main1_2} and (\ref{bound3}).\qed

\proofof{lem:block_core}
We note that the blocks of $\randomMultiCore$ are the blocks of $K$ with additional vertices placed on the edges of $K$. For $j\in \N$ let $X_j$ be the total number of vertices that are placed on edges of the $j$-th largest block $\blockLargest{j}{K}$ of $K$. The minimum degree of each block is at least two and together with the fact that $K$ is cubic, this implies
\begin{align}\label{eq:1}
\blockOrder{j}{K}\leq \numberEdges{\blockLargest{j}{k}}\leq 3/2\blockOrder{j}{K}.
\end{align}
By \Cref{thm:polya2} we have
\begin{align}\label{eq:2}
\expec{X_j}=\frac{k\numberEdges{\blockLargest{j}{k}} }{\numberEdges{K}}\quad \text{ and }\quad \variance{X_j}=O\left(1\right)\frac{k^2\numberEdges{\blockLargest{j}{k}} }{\numberEdges{K}^2}.
\end{align}
Thus, by Chebyshev's inequality, (\ref{eq:1}), and (\ref{eq:2}) we obtain
\begin{align*}
\prob{X_j\leq \frac{\nd \blockOrder{j}{K}}{3\numberVertices{K}}}\leq
\prob{X_j\leq \frac{\expec{X_j}}{2}}\leq \frac{4\variance{X_j}}{\expec{X_j}^2}=O\left(1\right)\frac{k^2\numberEdges{\blockLargest{j}{k}} }{\numberEdges{K}^2}\frac{\numberEdges{K}^2}{k^2\numberEdges{\blockLargest{j}{k}}^2}=O\left(1\right)\blockOrder{j}{K}^{-1}.
\end{align*} 
Hence, \whp\ for all $j\leq i$ we have $X_j\geq \frac{\nd \blockOrder{j}{K}}{3\numberVertices{K}}\geq \frac{\nd \blockOrder{i}{K}}{3\numberVertices{K}}$, which shows \whp\ $\blockOrder{i}{\randomMultiCore}=\Omega\left(\frac{\nd \blockOrder{i}{K}}{\numberVertices{K}}\right)$. By again applying Chebyshev's inequality, (\ref{eq:1}), and (\ref{eq:2}), we have uniformly over all $j\geq i$ 
\begin{align*}
\prob{X_j\geq \frac{2\nd\blockOrder{i}{K}}{\numberVertices{K}}}&\leq
\prob{X_j\geq \expec{X_j}+\frac{\nd\blockOrder{i}{K}}{\numberVertices{K}}}
\leq
\frac{\numberVertices{K}^2\variance{X_j}}{\nd^2\blockOrder{i}{K}^2}
=O(1)\frac{\blockOrder{j}{K}}{\blockOrder{i}{K}^2}.
\end{align*}
Thus, by a standard union bound we obtain
\begin{align*}
\prob{\exists j\geq i: X_j\geq \frac{2\nd\blockOrder{i}{K}}{\numberVertices{K}}}=O(1)\blockOrder{i}{K}^{-2}\sum_{j\geq i}\blockOrder{j}{K}=O(1)\blockOrder{i}{K}^{-2}\numberVertices{K}=o(1).
\end{align*}
That yields \whp\ $\blockOrder{i}{\randomMultiCore}=O\left(\frac{\nd \blockOrder{i}{K}}{\numberVertices{K}}\right)$, which completes the proof.\qed

\proofof{lem:kernel_cubic}
We will use \Cref{lem:splitting}. To this end, let $c_1, c_2>0$ and $\cl(n)$ be the subclass of $\planarClass(n,m)$ consisting of all \2s graphs $H$ with a cubic kernel $\kernel{H}$ and satisfying $c_1sn^{-2/3}\leq \numberVertices{\kernel{H}}/2\leq c_2sn^{-2/3}$. Due to \Cref{cor:subdivision_twice} we know that \whp\ $\planarRandomGraph$ is \2s. Moreover, by \Cref{thm:internal_structure}\ref{thm:internal_structure_b}, \ref{thm:internal_structure_d}, and \ref{thm:internal_structure_e} we have that \whp\ $\kernel{\planarRandomGraph}$ is cubic and $\numberVertices{\kernel{\planarRandomGraph}}=\Theta\left(sn^{-2/3}\right)$. Thus, we can choose $c_1, c_2$ such that \whp\ $\planarRandomGraph \in \cl(n)$. Let $ \cl:=\bigcup_{n\in\N} \cl(n)$ and define the function $\func$ for a graph $\fixGraph \in \cl$ by 
\begin{align*}
\func(\fixGraph):=\numberVertices{\kernel{\fixGraph}}/2.
\end{align*}
Let $\seq=\left(\ell_n\right)_{n\in \N}$ be a sequence feasible for $\left(\cl, \func\right)$ and $A=A(n)\ur \cl(n)$. We note that for a fixed kernel $K\in \mathcal{K}(2\ell, 3\ell)$ and a fixed $\nd \in \N \cup \{0\}$ there are $\weight{K}\binom{\nd-3\ell-1}{3\ell-1}\nd!$ many ways to construct a \2s\ core with kernel $K$ and subdivision number $\nd$. Thus, $\kernel{\condGraph{\randomGraph}{\seq}}$ is distributed like $K\left(2\ell_n, 3\ell_n\right)$ by \Cref{rem:core_equal}. Hence, we obtain
\begin{align*}
&\prob{g_1\left(c_1sn^{-2/3}\right)\leq F\left(\kernel{\condGraph{\randomGraph}{\seq}}\right)\leq g_2\left(c_2sn^{-2/3}\right)}
\\
&\geq \prob{g_1(\ell_n)\leq F\left(K(2\ell_n, 3\ell_n)\right)\leq g_2(\ell_n)}\\
&=1-o(1),
\end{align*}
as $\ell_n\geq c_1sn^{-2/3}\to \infty$. Thus, the statement follows by \Cref{lem:splitting}.
\qed

\proofof{thm:linear_loop}
We recall that $\numberLoops{K}$ is the number of loops in $K=K(2n,3n)$ and observe that $\numberLoops{K}=\sum_{w\in [2n]}Z_w$, where $Z_w$ is the indicator random variable for the event that there is a loop at vertex $w$. In order to apply the second moment method, we estimate the probabilities $\prob{Z_w=1}$ and $\prob{Z_u=Z_w=1}$ for $u\neq w$. To this end, we will use \loopInsertions\ (cf. \Cref{def:loop_insertion}). We fix a vertex $w \in [2n]$ and consider all multigraphs in $\mathcal{K}(2n,3n)$ with a loop at $w$. We note that in all these multigraphs $w$ has precisely one neighbour $x\neq w$. We distinguish two cases depending on whether there is a loop at $x$ or not. Due to \Cref{lem:loop_insertion_stable} we can enumerate all these multigraphs with no loop at $x$ as follows:
\begin{itemize}
	\item
	choose a vertex $x \in [2n]\setminus\{w\}$;
	\item
	choose $H\in \mathcal{K}(2(n-1), 3(n-1))$ and relabel the vertices with the labels $[2n]\setminus \{w, x\}$;
	\item
	choose an edge $e \in E(H)$ and
	\item[] 
	 perform a \loopInsertion\ at edge $e$ with vertex pair $(w,x)$.
\end{itemize}
For simplicity we set $a_n:=\left|\mathcal{K}(2n,3n)\right|$. If $x$ and $H$ are fixed, the total weight of all multigraphs that can be built by choosing an edge $e\in E(H)$ and performing a \loopInsertion\ at edge $e$ with vertex pair $(w,x)$ is $\weight{H}3(n-1)/2$ by \Cref{prop:loop_insertion}. Hence, the total weight of all multigraphs which can be obtained by the above construction is
\begin{align}\label{eq:construction_1}
(2n-1)\cdot a_{n-1} \cdot 3(n-1)/2.
\end{align}
On the other hand, if there is a loop at $x$, the vertices $w$ and $x$ form a component with weight $1/4$. Thus, all such multigraphs can be enumerated as follows:
\begin{itemize}
	\item
	choose a vertex $x \in [2n]\setminus\{w\}$;
	\item
	choose $H\in \mathcal{K}(2(n-1), 3(n-1))$ and relabel the vertices with the labels $[2n]\setminus \{w, x\}$;
	\item
	add the component $C$ with $\vertexSet{C}=\{w,x\}$ and $\edgeSet{C}=\{ww, xx, wx\}$ to $H$.
\end{itemize}
The total weight of all multigraphs constructed in that way is
\begin{align}\label{eq:construction_2}
(2n-1)a_{n-1}/4.
\end{align}
We observe that each multigraph can be obtained at most once by one of the two above constructions. Thus, by combining (\ref{eq:construction_1}) and (\ref{eq:construction_2}) we obtain
\begin{align}\label{proof:linear_loop_0}
\prob{Z_v=1}=\frac{(2n-1)a_{n-1}3(n-1)/2+(2n-1)a_{n-1}/4}{a_n}=(1+o(1))3n^2\frac{a_{n-1}}{a_n}.
\end{align}
By using (K2) in \Cref{def:planar_like} we obtain 
\begin{align}\label{eq:construction_3}
\frac{a_{n-1}}{a_n}=\frac{\left(1+o(1)\right)}{4n^2\gamma}.
\end{align}
Plugging in (\ref{eq:construction_3}) in (\ref{proof:linear_loop_0}) yields 
\begin{align}\label{proof:linear_loop_1}
\prob{Z_v=1}=(1+o(1))\frac{3}{4\gamma}.
\end{align}
Similarly, we estimate the number of multigraphs with loops at $u$ and $w$. We observe that all such multigraphs in which $u$ and $w$ are not adjacent can be construct as follows:
\begin{itemize}
	\item choose a vertex $x \in [2n]\setminus\{u, w\}$;
	\item choose $H \in \mathcal{K}(2(n-1), 3(n-1))$ and relabel the vertices with the labels $[2n]\setminus \{w, x\}$ such that we obtain a multigraph with a loop at $u$;
	\item
	choose an edge $e \neq uu$ in $H$ and 
	\item[]
	perform a \loopInsertion\ at edge $e$ with vertex pair $(w,x)$ .
\end{itemize}
In the above construction we have $(2n-2)$ possible choices for $x$ and by (\ref{proof:linear_loop_1}) the weight of all multigraphs that can be chosen for $H$ is
\begin{align*}
\left(1+o(1)\right)\frac{3}{4\gamma}a_{n-1}.
\end{align*} 
For fixed $x$ and $H$, the total weight of all multigraphs obtained by choosing an edge $e$ and performing the loop insertion is $\weight{H}(3n-4)/2$ due to \Cref{prop:loop_insertion}. 
On the other hand, if $u$ and $w$ are adjacent and there are loops at $u$ and $w$, then $u$ and $w$ form an own component with weight $1/4$. Combining these two cases we get 
\begin{align*}
\prob{Z_u=Z_w=1}&=\left((2n-2)\left(1+o(1)\right)\frac{3}{4\gamma}a_{n-1}\frac{3n-4}{2}+\frac{1}{4}a_{n-1}\right)/a_n
=(1+o(1))\frac{9n^2}{4\gamma}\frac{a_{n-1}}{a_n}.
\end{align*}
Finally, from this together with (\ref{eq:construction_3}) we obtain 
\begin{align*}
\prob{Z_u=Z_w=1}
= (1+o(1))\left(\frac{3}{4\gamma}\right)^2
=(1+o(1))\prob{Z_u=1}\prob{Z_w=1}.
\end{align*}
Hence, the statement follows by the second moment method. \qed

\proofof{cor:linear_loop}
The first statement follows directly from \Cref{lem:kernel_cubic,thm:linear_loop}. For the second statement we note that $\numberLoops{\kernel{\rest}}\leq \numberEdges{\kernel{\rest}}=O_p(1)$ by \Cref{thm:internal_structure}\ref{thm:internal_structure_d}. Thus, by combining that with the first statement we obtain \whp \[\numberLoops{\kernel{\LargestComponent}}=\numberLoops{\kernel{\planarRandomGraph}}-\numberLoops{\kernel{\rest}}=\Theta\left(sn^{-2/3}\right).\]
\qed
 
In order to prove \Cref{lem:bridge_number1,lem:bridge_number2}, we need the following two results, whose proofs are elementary and can be found in \Cref{sec:appendix_sum}.

\begin{claim}\label{lem:sum1}	
	Let $\alpha>2$ and $f$ be a function such that $f(n)=\omega(1)$ and $f(n)=o\left(n\right)$. 
	We set $I(n):=\{(j,k)\in \N^2 \mid j+k=n-1, j,k \geq f(n) \}$ and $m_i:=\left|\bridgeClass(2i,3i)\right|$ for $i\in \N$. Then we have 
\begin{align*}
\frac{1}{m_n}\sum_{(j,k)\in I(n)}\binom{2n-2}{2j}m_jm_k j k=(1+o(1))\frac{c}{2\gamma(\ce-2)}\cdot\frac{1}{f(n)^{\ce-2}n}.
\end{align*}
\end{claim}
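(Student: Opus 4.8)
The plan is to substitute the asymptotic formula~(B2) for $m_j$, $m_k$, and $m_n$, cancel factorials by a binomial identity, and thereby reduce the whole quantity to a power sum of the form $\sum(jk)^{1-\ce}$, which I then evaluate by comparison with an integral. First note that $f(n)=\omega(1)$ forces $j,k\ge f(n)\to\infty$ for every $(j,k)\in I(n)$, so~(B2) applies \emph{uniformly}: writing $\varepsilon_i:=m_i/\left(c\,i^{-\ce}\gamma^i(2i)!\right)-1$, one has $\sup_{i\ge f(n)}|\varepsilon_i|=o(1)$, and hence, uniformly over $(j,k)\in I(n)$,
\[
m_jm_k=(1+o(1))\,c^2(jk)^{-\ce}\gamma^{j+k}(2j)!(2k)!.
\]
Since $(j,k)\in I(n)$ means $2j+2k=2n-2$, we have the identity $\binom{2n-2}{2j}(2j)!(2k)!=(2n-2)!$, so
\[
\binom{2n-2}{2j}m_jm_k\,jk=(1+o(1))\,c^2(2n-2)!\,\gamma^{n-1}(jk)^{1-\ce}.
\]
Summing over $I(n)$, dividing by $m_n=(1+o(1))\,c\,n^{-\ce}\gamma^n(2n)!$, and using $(2n-2)!/(2n)!=(1+o(1))/(4n^2)$ and $\gamma^{n-1}/\gamma^n=1/\gamma$, I obtain
\[
\frac{1}{m_n}\sum_{(j,k)\in I(n)}\binom{2n-2}{2j}m_jm_k\,jk=(1+o(1))\,\frac{c\,n^{\ce-2}}{4\gamma}\,T(n),\qquad T(n):=\sum_{(j,k)\in I(n)}(jk)^{1-\ce}.
\]

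It then suffices to show $T(n)=(1+o(1))\,\frac{2}{\ce-2}\,f(n)^{2-\ce}\,n^{1-\ce}$, since substituting this back gives exactly $(1+o(1))\,\frac{c}{2\gamma(\ce-2)}\cdot\frac{1}{f(n)^{\ce-2}n}$. To estimate $T(n)$ I would use the symmetry $j\leftrightarrow k$ to write $T(n)=2\sum_{f(n)\le j\le n/2}j^{1-\ce}(n-1-j)^{1-\ce}$ up to the single possible middle term $j=k=(n-1)/2$, which is $O(n^{2-2\ce})$ and negligible. Fix a small $\eta>0$ and split the range at $j=\eta n$. On $f(n)\le j\le\eta n$ we have $n-1-j=(1+O(\eta))n$, so this part equals $(1+O(\eta))\,n^{1-\ce}\sum_{f(n)\le j\le\eta n}j^{1-\ce}$; since $\ce>2$, the series converges and $\sum_{j\ge f(n)}j^{1-\ce}=(1+o(1))\int_{f(n)}^{\infty}x^{1-\ce}\,dx=(1+o(1))\,f(n)^{2-\ce}/(\ce-2)$, while the tail past $\eta n$ contributes only $O\!\left((\eta n)^{2-\ce}\right)=o\!\left(f(n)^{2-\ce}\right)$ because $f(n)=o(n)$. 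On $\eta n<j\le n/2$ I bound $j^{1-\ce}\le(\eta n)^{1-\ce}$ and $(n-1-j)^{1-\ce}=O(n^{1-\ce})$ over $O(n)$ terms, for a total of $O\!\left(\eta^{1-\ce}n^{3-2\ce}\right)=o\!\left(n^{1-\ce}f(n)^{2-\ce}\right)$ since $(n/f(n))^{2-\ce}\to0$. Combining these and letting $\eta\to0$ after $n\to\infty$ yields the claimed asymptotics for $T(n)$.

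The factorial cancellation and the integral comparison are routine; the only points demanding care are that the $o(1)$ coming from~(B2) be genuinely uniform over the summation range (which is guaranteed by $j,k\ge f(n)\to\infty$), and that the two-sided estimate of $T(n)$ be organised as a $\limsup/\liminf$ bound in $\eta$ rather than an informal interchange of limits. I do not anticipate any deeper obstacle.
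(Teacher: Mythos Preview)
Your proposal is correct and follows essentially the same approach as the paper: substitute the asymptotic~(B2) uniformly (valid since $j,k\ge f(n)\to\infty$), reduce to the power sum $T(n)=\sum_{(j,k)\in I(n)}(jk)^{1-\ce}$ with prefactor $(1+o(1))\,c\,n^{\ce-2}/(4\gamma)$, and evaluate $T(n)$ by splitting the range and comparing with $\int x^{1-\ce}\,dx$. The only cosmetic difference is that the paper splits at an intermediate scale $h(n)$ with $f(n)\ll h(n)\ll n$, which absorbs the $\eta\to0$ step into a single $n\to\infty$ limit and avoids the $\limsup/\liminf$ bookkeeping you flag; either device works.
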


\begin{claim}\label{lem:sum2}
	Let $\alpha>3$ and $f$ be a function with $f(n)=\omega(1)$, $f(n)=o(n)$, and $f(n)=o(n^{1/(\ce-2)})$. We set $I(n):=\left\{\left(j,k,l\right) \in \N^3 \mid j+k+l=n-2, j,k\geq f(n)\right\}$ and $m_i:=\left|\bridgeClass(2i,3i)\right|$ for $i \in \N$. Then we have
		\begin{align*}
\frac{324}{m_n}\sum_{(j,k,l)\in I(n)}\binom{2n-4}{2j,2k,2l}m_jm_km_lj k l^2= (1+o(1))\left(\frac{9c}{2\gamma\left(\ce-2\right)}\cdot \frac{1}{f(n)^{\alpha-2}n}\right)^2.
	\end{align*}
\end{claim}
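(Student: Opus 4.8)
The plan is to follow the same strategy as for \Cref{lem:sum1}: first use the counting formula (B2) to peel off all combinatorial weights, reducing the statement to a purely elementary asymptotic estimate for a triple sum of powers, and then locate the dominant block of that sum. Since the summation constraint forces $j,k\ge f(n)\to\infty$, condition (B2) gives $m_j=(1+o(1))\,c\,j^{-\ce}\gamma^{j}(2j)!$ and $m_k=(1+o(1))\,c\,k^{-\ce}\gamma^{k}(2k)!$ with a \emph{uniform} $(1+o(1))$, while for the unconstrained variable $l$ (which may be bounded) it is enough to use the crude uniform bound $m_l\le C\,l^{-\ce}\gamma^{l}(2l)!$, valid for every $l\ge1$ because $m_l/\big(l^{-\ce}\gamma^{l}(2l)!\big)$ is a convergent, hence bounded, sequence. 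Substituting these, using $j+k+l=n-2$ to cancel the powers of $\gamma$, the identity $\binom{2n-4}{2j,2k,2l}=(2n-4)!/\big((2j)!(2k)!(2l)!\big)$ to cancel the factorials, and $(2n-4)!/(2n)!=(1+o(1))/(16n^{4})$ together with $m_n=(1+o(1))\,c\,n^{-\ce}\gamma^{n}(2n)!$, I reduce the left-hand side (by matching lower and upper bounds, so that the $(1+o(1))$ is used on the main part and the crude bound only on the remainder) to
\begin{align*}
\frac{324}{m_n}\sum_{(j,k,l)\in I(n)}\binom{2n-4}{2j,2k,2l}m_jm_km_l\,jkl^{2}\;=\;(1+o(1))\,\frac{324\,c^{2}}{16\,\gamma^{2}}\,n^{\ce-4}\,\Sigma(n),
\end{align*}
where $\Sigma(n):=\sum_{(j,k,l)\in I(n)}j^{1-\ce}k^{1-\ce}l^{2-\ce}$ and $l=n-2-j-k$ throughout. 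Since $324/16=81/4$ and $n^{\ce-4}n^{2-\ce}=n^{-2}$, it then suffices to prove the elementary estimate $\Sigma(n)=(1+o(1))\,(\ce-2)^{-2}\,f(n)^{4-2\ce}\,n^{2-\ce}$, which reproduces the claimed $\big(\tfrac{9c}{2\gamma(\ce-2)}\big)^{2}f(n)^{4-2\ce}n^{-2}$.

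The key point — and the place where the behaviour genuinely differs from the two-variable sum of \Cref{lem:sum1} — is that the dominant contribution to $\Sigma(n)$ comes from the \emph{unbalanced} regime in which $j$ and $k$ are both as small as allowed, of order $f(n)$, while $l$ absorbs almost all of the remaining $n-2$ vertices. Concretely, I isolate $\Sigma_0(n):=\sum_{f(n)\le j,k\le f(n)h(n)}j^{1-\ce}k^{1-\ce}l^{2-\ce}$ for a slowly growing $h=h(n)\to\infty$ with $f(n)h(n)=o(n)$; on this range $j+k=o(n)$, hence $l=(1+o(1))n$ and $l^{2-\ce}=(1+o(1))n^{2-\ce}$, so
\[
\Sigma_0(n)=(1+o(1))\,n^{2-\ce}\Big(\sum_{f(n)\le j\le f(n)h(n)}j^{1-\ce}\Big)^{2}=(1+o(1))\,\frac{f(n)^{4-2\ce}}{(\ce-2)^{2}}\,n^{2-\ce},
\]
using $\sum_{j\ge f}j^{1-\ce}=(1+o(1))f^{2-\ce}/(\ce-2)$ for $\ce>2$ together with $h\to\infty$. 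This already gives the lower bound and matches the target.

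It remains to show that $\Sigma(n)-\Sigma_0(n)$, the sum over terms with $\max(j,k)>f(n)h(n)$, is $o\big(f(n)^{4-2\ce}n^{2-\ce}\big)$. By symmetry it suffices to bound $\sum j^{1-\ce}k^{1-\ce}l^{2-\ce}$ over $j>f(n)h(n)$, $k\ge f(n)$, $l\ge1$. Splitting according to $l\ge n/2$ or $l<n/2$ (in the second case one of $j,k$ must exceed $n/4$, contributing a factor $O(n^{1-\ce})$), and using $\ce>3$ so that $\sum_{l\ge1}l^{2-\ce}=O(1)$, one checks that each piece is either $O\big(n^{2-\ce}f^{4-2\ce}h^{2-\ce}\big)$ or $O\big(n^{1-\ce}f^{2-\ce}\big)$ times bounded factors; relative to the target these are smaller by a factor $h^{2-\ce}=o(1)$ and by a factor $f(n)^{\ce-2}/n=o(1)$ respectively — and it is precisely the latter estimate, i.e. the hypothesis $f(n)=o\big(n^{1/(\ce-2)}\big)$, that makes the unbalanced regime dominant. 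Combining the bounds gives $\Sigma(n)=(1+o(1))\Sigma_0(n)$, completing the proof. The main obstacle here is conceptual rather than computational: one must recognise that, unlike in \Cref{lem:sum1}, the extremal contribution sits where $j,k$ are minimal and $l$ is maximal, and then organise the error analysis so that the delicate terms — one of $j,k$ of order $n$ with $l$ bounded, where the asymptotics for $m_l$ are unavailable and where the convergence of $\sum_{l}l^{2-\ce}$ is essential — are kept under control.
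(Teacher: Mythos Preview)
Your proof is correct and follows essentially the same approach as the paper: both identify the dominant regime $j,k\asymp f(n)$, $l\asymp n$, use the uniform bound $m_l\le C\,l^{-\ce}\gamma^{l}(2l)!$ to handle the terms where $l$ may be bounded, and invoke $\ce>3$ together with $f(n)=o\big(n^{1/(\ce-2)}\big)$ to kill the remainder. The only difference is organisational---the paper splits first on $l\le n/2$ versus $l>n/2$ and then on the size of $j,k$, whereas you split first on whether $\max(j,k)\le f(n)h(n)$ and then on $l$---but the resulting pieces and their estimates coincide.
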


\proofof{lem:bridge_number1} 
Each multigraph $\fixMultigraph \in \bridgeClass(2n,3n)$ with $\Bn{\fixMultigraph}{wx}\geq2f(n)+1$ can be constructed in the following way:
\begin{itemize}
	\item
	choose $j,k \in \N$ such that $j+k=n-1$ and $j,k\geq f(n)$;
	\item
	choose $2j$ labels from $[2n]\setminus \{w,x\}$ and denote them by $L$;
	\item
	choose $\fixMultigraph_1\in \bridgeClass(2j,3j)$ and relabel the vertices with $L$;
	\item
	choose $\fixMultigraph_2\in \bridgeClass(2k,3k)$ and relabel the vertices with $[2n]\setminus \left(L\cup\{w,x\}\right)$;
	\item
	choose edges $e_1 \in \edgeSet{\fixMultigraph_1}, e_2 \in \edgeSet{\fixMultigraph_2}$ and 
	\item[]
	 perform a \bridgeInsertion\ (cf. \Cref{def:bridge_insertion}) at edges $e_1$ and $e_2$ with vertices $w$ and $x$.
\end{itemize}
By \Cref{prop:subdividing} this construction gives multigraphs with a total weight of
\begin{align}\label{eq:construction_bridge}
\sum_{(j,k)\in I(n)}\binom{2n-2}{2j}m_jm_k\cdot (3j) \cdot (3k),
\end{align}
where $I(n):=\{(j,k)\in \N^2 \mid j+k=n-1, j,k \geq f(n) \}$ and $m_i:=\left|\bridgeClass(2i,3i)\right|$ for $i\in \N$. Hence, using \Cref{lem:sum1} yields
\begin{align*}
\prob{\Bn{\bridgeGraph}{wx}\geq 2f(n)+1}
=\frac{1}{m_n}\sum_{(j,k)\in I(n)}\binom{2n-2}{2j}m_jm_k \cdot (3j) \cdot (3k)=(1+o(1))\frac{9c}{2\gamma(\ce-2)}\cdot\frac{1}{f(n)^{\ce-2}n},
\end{align*}
as desired. \qed
 
\proofof{lem:bridge_number2}
We denote by $E_i$ the event that $\bn{w_{2i-1}w_{2i}}\geq 2f(n)+1$, where $i\in \{1,2\}$. In addition, let $E_3$ be the event that there is an edge in $\bridgeGraph$ with one endpoint in $\{w_1, w_2\}$ and the other in $\{w_3, w_4\}$. We start by estimating the probability $\prob{E_1\wedge E_2 \wedge E_3}$. We observe that if $E_3$ is true, then at least one of the four events $w_1w_3 \in \edgeSet{\bridgeGraph}, w_1w_4 \in \edgeSet{\bridgeGraph}, w_2w_3 \in \edgeSet{\bridgeGraph}$, $w_2w_4 \in \edgeSet{\bridgeGraph}$ is true. Thus, by symmetry reasons we obtain
\begin{align}\label{eq:bridge1}
\prob{E_1 \wedge E_2 \wedge E_3}& \leq
4\prob{E_1 \wedge E_2 \wedge \left(w_2w_3\in \edgeSet{\bridgeGraph}\right)}. 
\end{align}
Next, we note that the event $E_2$ implies $w_3w_4\in \edgeSet{\bridgeGraph}$. Using that in (\ref{eq:bridge1}) yields
\begin{align*}
\prob{E_1 \wedge E_2 \wedge E_3}& \leq4\prob{E_1 \wedge \left(w_2w_3, w_3w_4 \in \edgeSet{\bridgeGraph}\right)}
\\
&=4\prob{E_1}\cdot \condprob{w_2w_3, w_3w_4 \in \edgeSet{\bridgeGraph}}{E_1}.
\end{align*}
Using \Cref{lem:bridge_number1} for an estimate of $\prob{E_1}$ and the fact that $\condprob{w_2w_3, w_3w_4 \in \edgeSet{\bridgeGraph}}{E_1}=\Theta\left(n^{-2}\right)$ we get 
\begin{align*}
\prob{E_1 \wedge E_2 \wedge E_3}&=O(1)\frac{1}{f(n)^{\ce-2}n}\frac{1}{n^2}=o(1)\frac{1}{f(n)^{2\ce-4}n^2}.
\end{align*}
Next, we estimate the probability $\prob{E_1 \wedge E_2 \wedge \neg E_3}$, where $\neg E_3$ is the event that $E_3$ is not true. We observe that we can enumerate all multigraphs $\fixMultigraph \in \bridgeClass(2n,3n)$ satisfying $\Bn{H}{w_1w_2}, \Bn{H}{w_3w_4}\geq 2f(n)+1$, and $w_1w_3, w_1w_4, w_2w_3, w_2w_4 \notin \edgeSet{H}$ by the following construction:
\begin{itemize}
	\item 
	choose $j, k, l \in \N$ with $j+k+l=n-2$ and $j,k\geq f(n)$;
	\item
	choose a partition $J~\dot\cup~K~\dot\cup~L=[2n]\setminus\{w_1, \ldots, w_4\}$ with $|J|=2j, |K|=2k, |L|=2l$;
	\item
	choose $\bridgeGraph_1 \in \bridgeClass(2j,3j)$, $\bridgeGraph_2 \in \bridgeClass(2k,3k)$, $\bridgeGraph_3 \in \bridgeClass(2l,3l)$ and relabel these multigraphs with the labels $J, K$ and $L$, respectively;
	\item
	choose $i\in \{1,2\}$ and $i'\in\{3,4\}$;
	\item
	choose edges $e_1 \in \edgeSet{\bridgeGraph_1}$, $e_2 \in \edgeSet{\bridgeGraph_2}$ and $e_3\neq e_4 \in \edgeSet{\bridgeGraph_3}$ and 
	\item[]
	perform bridge insertions (cf. \Cref{def:bridge_insertion}) with vertices $w_i, w_{3-i}$ and $w_{i'}, w_{7-i'}$ at the edges $e_1, e_3$ and $e_2, e_4$, respectively.
\end{itemize}
Hence, letting $I(n):=\left\{\left(j,k,l\right) \in \N^3 \mid j+k+l=n-2, j,k\geq f(n)\right\}$, $m_i:=\left|\bridgeClass(2i,3i)\right|$ for $i \in \N$, we obtain 
\begin{align*}
\prob{E_1 \wedge E_2 \wedge \neg E_3}
&= \frac{1}{m_n}\sum_{(j,k,l)\in I(n)}\binom{2n-4}{2j,2k,2l}m_jm_km_l\cdot 4 \cdot 3j \cdot 3k \cdot \left(3l\right)^2\\
&= (1+o(1))\left(\frac{9c}{2\gamma\left(\ce-2\right)}\cdot \frac{1}{f(n)^{\alpha-2}n}\right)^2,
\end{align*}
where the last equality follows from \Cref{lem:sum2}.
\qed

\proofof{lem:bridge_number3}
We will use the second moment method. To that end, let $Z_p$ be the indicator random variable that an unordered pair $p=\left\{w,x\right\}$ of vertices satisfies $\bn{wx}\geq 2f(n)+1$ and $Z=\sum_{p}Z_p$. Now let $w, x, y \in [2n]$ be distinct vertices and $p=\left\{w,x\right\}$ and $q=\left\{w,y\right\}$ be unordered pairs. Then we have
\begin{align*}
\expec{Z_pZ_q}&=\prob{Z_p=1}\condprob{Z_q=1}{Z_p=1}\leq \prob{Z_p=1}\condprob{wy\in \edgeSet{\bridgeGraph}}{Z_p=1}
\\
&=O(1)\prob{Z_p=1}/n.
\end{align*}
Hence, by using \Cref{lem:bridge_number1,lem:bridge_number2} we obtain
\begin{align*}
\variance{Z}&\leq \sum_{p}\expec{Z_p}+\sum_{\left|p \cap q\right|=1}\expec{Z_pZ_q}+\sum_{p \cap q=\emptyset}\covariance{Z_p}{Z_q}
\\
&\leq \expec{Z}+O(1)\expec{Z}+o(1)\expec{Z}^2,
\end{align*}
which implies the statement due to the second moment method.
\qed

\proofof{lem:bridge_number5}
We construct an auxiliary directed graph $\widetilde{H}$ as follows: First we orient each bridge $wx$ in $H$ from $w$ to $x$ such that $\numberVertices{H_w}>\numberVertices{H_x}$, where $H_w$ and $H_x$ are the components of $H-wx$ containing $w$ and $x$, respectively. Then we contract each block to a single vertex and denote by $\widetilde{H}$ the obtained directed graph. We note that the edges of $\widetilde{H}$ are exactly the bridges of $H$ and that each vertex of $\widetilde{H}$ corresponds to either a vertex or a block in $H$. Furthermore, $\widetilde{H}$ contains no cycle and therefore, has a vertex $v$ with no incoming edge. If $v$ is a vertex in $H$, then $v$ has three neighbours $w_1, w_2, w_3$ in $H$ and the edges $vw_1, vw_2, vw_3$ are all bridges and oriented away from $v$. Hence, we obtain 
\begin{align*}
2n=\numberVertices{H}=1+\Bn{H}{vw_1}+\Bn{H}{vw_2}+\Bn{H}{vw_3}\leq 1+3\cdot (2n-2)/3=2n-1,
\end{align*}
where we used in the last inequality the assumption that each \bridgeNumber\ is at most $(2n-2)/3$. This implies that $v$ corresponds to a block in $H$. As all bridges are oriented away from $v$, this block is \largeBlock. \qed

\proofof{lem:bridge_number4}
We note that $\prob{\bn{wx}=1}\leq \prob{wx\in \edgeSet{\bridgeGraph}}=\Theta(1/n)$. In addition, by (\ref{eq:formula_bridge}) and (\ref{eq:construction_bridge}) we have that uniformly over all $1\leq j \leq (n-1)/2$ 
\begin{align*}
\prob{\bn{wx}=2j+1}=\Theta(1)j^{-\ce+1}/n.
\end{align*}
Thus, we obtain
\begin{align*}
\expec{\Bn{\bridgeGraph}{wx}^\exponent}&=\prob{\bn{wx}=1}+\sum_{j=1}^{\lfloor(n-1)/2\rfloor}(2j+1)^\exponent \ \prob{\bn{wx}=2j+1}
\\
&=O\left(1/n\right)+\Theta(1/n)\sum_{j=1}^{\lfloor(n-1)/2\rfloor}j^{-\ce+\exponent+1}=\Theta(1/n),
\end{align*}
since $-\ce+\exponent+1<-1$. This completes the proof. \qed

\proofof{lem:bridge_number6}
Let $h(n)=\omega(1)$ be a function and $1< \exponent \leq \ce-2$ a constant. Using \Cref{lem:bridge_number4} we have that \whp\ $\bridgeGraph$ satisfies $\sum_{w\neq x}\Bn{\bridgeGraph}{wx}^\exponent\leq nh(n)^{\exponent-1}$. Moreover, due to \Cref{lem:bridge_number1} \whp\ all pairs of vertices $w\neq x \in [2n]$ satisfy $\Bn{\bridgeGraph}{wx}\leq (2n-2)/3$. Hence, $\bridgeGraph$ contains \whp\ a \largeBlock\ block by \Cref{lem:bridge_number5}. Now it suffices to show that each $\fixMultigraph \in \bridgeClass(2n,3n)$ with a \largeBlock\ block and satisfying $\sum_{w\neq x}\Bn{H}{wx}^\exponent\leq nh(n)^{\exponent-1}$ has a block with at least $n/h(n)$ many vertices. To that end, let $\block$ be a \largeBlock\ block in $\fixMultigraph$ and $\bridge_1, \ldots, \bridge_r$ the bridges that share a vertex with $\block$. For $i \in [r]$ we denote by $C_i$ the component of $\fixMultigraph-\bridge_i$ not containing $\block$. We note that $\numberVertices{\block}+\sum_{i=1}^{r}\numberVertices{C_i}=2n$ and $\numberVertices{C_i}=\Bn{H}{e_i}$. Hence, we get $\sum_{i=1}^{r}\numberVertices{C_i}^\exponent\leq nh(n)^{\exponent-1}$. Now we assume that $\numberVertices{\block}<n/h(n)$. Then by using Jensen's inequality for the convex function $x\mapsto x^\exponent$ and the simple fact $r\leq \numberVertices{\block}$, we obtain that for $n$ large enough 
\begin{align*}
nh(n)^{\exponent-1}&\geq \sum_{i=1}^{r}\numberVertices{C_i}^\exponent\geq \left(\sum_{i=1}^{r}\numberVertices{C_i}\right)^\exponent/r^{\exponent-1}
\\
&\geq\left(2n-n/h(n)\right)^\exponent/\left(n/h(n)\right)^{\exponent-1}>nh(n)^{\exponent-1},
\end{align*}
a contradiction. Hence, we obtain $\numberVertices{B}\geq n/h(n)$, which completes the proof. \qed

\begin{remark}
	In the proof of \Cref{lem:bridge_number6} we actually showed the following stronger statement. If there are $\Theta_p(n)$ many vertices outside the largest block of $\bridgeGraph$, then $\bridgeGraph$ contains a block which shares a vertex with $\Theta_p(n)$ many bridges. We note that in most of our applications this assumption is satisfied, for example, when \whp\ $\bridgeGraph$ contains linearly many loops (see \Cref{thm:linear_loop}).
\end{remark}

\proofof{lem:bridge_number7}
We note that for every pair of two different blocks $B\neq B'$ there is a bridge $e$ such that $B$ and $B'$ lie in different components of $\bridgeGraph-\bridge$. Hence, we obtain by \Cref{lem:bridge_number1} that $\blockOrder{i}{\bridgeGraph}\leq\max_{w\neq x}\Bn{\bridgeGraph}{wx}=O_p\left(n^{1/\left(\ce-2\right)}\right)$. Next, we show $\blockOrder{i}{\bridgeGraph}=\Omega_p\left(n^{1/\left(\ce-2\right)}\right)$ by induction on $i$. To that end, let $L>0$ be a constant and $L'=L'(n)=Ln^{1/(\ce-2)}$. In addition, let $\bridgeMarkedClass(n)$ be the class of pairs $(\fixMultigraph, \bridge)$, where $\fixMultigraph \in \bridgeClass(2n,3n)$ and $\Bn{\fixMultigraph}{\bridge}\geq L'(n)$. Moreover, let $\left(\bridgeGraph', \bridge'\right)$ be a pair chosen uniformly at random from $\bridgeMarkedClass(n)$. 
Next, we show that the distributions of $\bridgeGraph$ and $\bridgeGraph'$ are \lq similar\rq. More precisely, let $\property$ be some graph property. We claim that 
\begin{align}\label{eq:claim}
\left(\forall L>0: \bridgeGraph' \in \property~ \text{whp}\right) \quad \Longrightarrow \quad\left(\bridgeGraph \in \property~ \text{whp}\right).
\end{align}
To prove it, we define $\Nb{\fixMultigraph}{j}:=\left|\left\{\bridge \in \edgeSet{\fixMultigraph}\mid \Bn{\fixMultigraph}{\bridge}\geq j\right\}\right|$ for a multigraph $\fixMultigraph$ and $j\in \N$. With this notation we obtain by \Cref{lem:bridge_number1}
\begin{align}\label{eq:second_largest1}
\left|\bridgeMarkedClass(n)\right|=\left|\bridgeClass(2n,3n)\right| \cdot \expec{\Nb{\bridgeGraph}{L'}}=\Theta(1)\left|\bridgeClass(2n,3n)\right|.	
\end{align} Next, we observe that for each $\fixMultigraph \in \bridgeClass(2n,3n)$ we have 
\begin{align}\label{eq:second_largest2}
\prob{\bridgeGraph'=\fixMultigraph}=\prob{\bridgeGraph=\fixMultigraph}\Nb{\fixMultigraph}{L'}\frac{\left|\bridgeClass(2n,3n)\right|}{\left|\bridgeMarkedClass(n)\right|}.
\end{align}
Combining (\ref{eq:second_largest1}) and (\ref{eq:second_largest2}) yields
\begin{align*}
\prob{\bridgeGraph \notin \property}&\leq \prob{\Nb{\bridgeGraph}{L'}=0}+\prob{\bridgeGraph \notin \property \wedge \Nb{\bridgeGraph}{L'}\geq 1}
\\
&= \prob{\Nb{\bridgeGraph}{L'}=0}+O(1)\prob{\bridgeGraph'\notin \property}\\
&= \prob{\Nb{\bridgeGraph}{L'}=0}+o(1),
\end{align*}
where we assumed that \whp\ $\bridgeGraph'\in \property$. Finally, we observe that for each $\delta>0$, we can choose $L>0$ such that $\prob{\Nb{\bridgeGraph}{L'}=0}\leq \delta$ by \Cref{lem:bridge_number1}. This shows (\ref{eq:claim}).

Next, we prove $\blockOrder{i}{\bridgeGraph'}=\Omega_p\left(n^{1/\left(\ce-2\right)}\right)$, which implies $\blockOrder{i}{\bridgeGraph}=\Omega_p\left(n^{1/\left(\ce-2\right)}\right)$ by (\ref{eq:claim}). We note that we can enumerate all pairs $(\fixMultigraph, \bridge)\in \bridgeMarkedClass(n)$ as follows:
\begin{itemize}
	\item choose an unordered pair $\left\{w,x\right\}$ with $w\neq x \in [2n]$ and set $e=wx$;
	\item choose $j,k \in \N$ such that $j+k=n-1$ and $j,k\geq L'/2$;
	\item choose a partition $J~\dot\cup~K=[2n]\setminus\{w, x\}$ with $|J|=2j$ and $|K|=2k$;
	\item choose $H_1\in \bridgeClass(2j,3j)$ and $H_2\in \bridgeClass(2k,3k)$ and relabel the vertices with $J$ and $K$, respectively;
	\item choose edges $e_1\in \edgeSet{H_1}$ and $e_2\in \edgeSet{H_2}$ and 
	\item[]
	perform a \bridgeInsertion\ (cf. \Cref{def:bridge_insertion}) at edges $e_1$ and $e_2$ with vertices $w$ and $x$.
\end{itemize}
Next, we will use \Cref{lem:splitting}, which we only formulated for graph classes, but it is straightforward that it is also true for classes of graphs where one edge is marked. Let $\cl(n)=\bridgeMarkedClass(n)$ and define the function $\func$ for a pair $(\fixMultigraph, \bridge) \in \cl:=\bigcup_{n\in\N} \cl(n)$ by $\func(\fixMultigraph, \bridge):=\left(e, \Bn{\fixMultigraph}{\bridge}\right)$. Let $\left(\bridge_n, j_n\right)_{n\in \N}$ be a sequence feasible for $\left(\cl, \func\right)$ and $(\condRandomGraph_n, \bridge_n)$ be a pair chosen uniformly at random from all elements in $\cl(n)$ that evaluate to $\left(\bridge_n, j_n\right)$ under $\func$. Now let $k_n=n-1-j_n$ and let $C_1$ and $C_2$ be the two graphs obtained by reversing the bridge insertion of $e_n$ in $\condRandomGraph_n$. By the above construction, $C_1$ and $C_2$ are distributed like $\bridgeGraph(2j_n, 3j_n)$ and $\bridgeGraph(2k_n, 3k_n)$, respectively. Without loss of generality we may assume $j_n\geq (n-1)/2$. Now by \Cref{lem:bridge_number6} and induction hypothesis we obtain $\blockOrder{i-1}{C_1}=\Omega_p\left(n^{1/(\ce-2)}\right)$ and $\blockOrder{1}{C_2}=\Omega_p(k_n)=\Omega_p\left(n^{1/(\ce-2)}\right)$. Hence, we have $\blockOrder{i}{\condRandomGraph_n}=\Omega_p\left(n^{1/(\ce-2)}\right)$. Now we get $\blockOrder{i}{\bridgeGraph'}=\Omega_p\left(n^{1/(\ce-2)}\right)$ by \Cref{lem:splitting}. This together with (\ref{eq:claim}) implies that $\blockOrder{i}{\bridgeGraph}=\Omega_p\left(n^{1/(\ce-2)}\right)$.
\qed

\section{Discussion}\label{sec:discussion}
\Cref{thm:main1}\ref{thm:main1_c} raises the question about the precise asymptotic order of the circumference $\longestCycle{\LargestComponent}$ of the largest component $L_1$ of a uniform random planar graph in the weakly supercritical regime. The reason why we provided only a lower and an upper bound for $\longestCycle{\LargestComponent}$ is partly because we could not determine the precise order of the circumference $\longestCycle{K}$ of a random {\em cubic planar} multigraph $K$. If there were a function $f=f(n)$ such that $\longestCycle{K}=\Theta_p(f)$, then our proof would imply that $\longestCycle{\LargestComponent}=\Omega_p\left(n^{1/3}f\left(sn^{-2/3}\right)\right)$ and $\longestCycle{\LargestComponent}=O_p\left(n^{1/3}f\left(sn^{-2/3}\right)\log n\right)$. That closes the gap up to a factor of $\log n$. Moreover, if $f(n)=\omega\left(n^{1/2}\right)$, then our methods lead even to $\longestCycle{\LargestComponent}=\Theta_p\left(n^{1/3}f\left(sn^{-2/3}\right)\right)$. 

We note that Robinson and Wormald \cite{hamiltonian} showed that \whp\ a random cubic (general, not necessarily planar) graph has a Hamiltonian cycle. We know that this is not the case for random cubic planar multigraphs, since the longest cycle misses all vertices which have a loop attached. By \Cref{thm:linear_loop} there are linearly many such vertices. Nevertheless, we believe that \whp\ there is a cycle of linear length.
\begin{conjecture}
Let $K=K(2n,3n)$ be a graph chosen uniformly at random from the class of all cubic planar multigraphs on vertex set $[2n]$. Then, we have $\longestCycle{K}=\Theta_p(n)$.
\end{conjecture}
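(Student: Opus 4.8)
Only the lower bound $\longestCycle{K}=\Omega_p(n)$ requires an argument, since $\longestCycle{K}\le\numberVertices{K}=2n$ trivially. The plan is to reduce the statement, using the block-structure results of \Cref{sec:block_structure}, to a question about random $3$-connected cubic planar maps, and then attack that reduced question; the latter is where the real difficulty lies. (We work, as throughout the paper, with $K=K(2n,3n)\ur\mathcal K(2n,3n)$ carrying the compensation-factor weight; the argument is insensitive to replacing this by the strictly uniform measure.)

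\textbf{Reduction to the largest block.} By (K3) in \Cref{def:planar_like}, \whp\ the giant component of $K$ has $2n-\Op{1}$ vertices, so $\blockLargest{1}{K}$ is \whp\ the largest block of that component. The class of connected cubic planar multigraphs is \bridgeStable\ with critical exponent $\ce=7/2>3$, so \Cref{lem:bridge_number6} gives \whp\ $\blockOrder{1}{K}=\Theta_p(n)$. Since $\longestCycle{K}\ge\longestCycle{\blockLargest{1}{K}}$, it suffices to prove $\longestCycle{\blockLargest{1}{K}}=\Omega_p(n)$.

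\textbf{From the largest block to a random $3$-connected cubic planar map.} A pointing argument in the spirit of \Cref{rem:core_equal} shows that, conditioned on its order, $\blockLargest{1}{K}$ is — up to the bounded-size structures attached at its $O(n)$ degree-$2$ vertices (each of which is the endpoint of a bridge of $K$) — uniform among $2$-connected planar multigraphs of that order with maximum degree at most $3$; suppressing the degree-$2$ vertices yields a cubic $2$-connected planar multigraph $B'$ on $\Theta_p(n)$ vertices, and a cycle of $B'$ lifts to an at least as long cycle of $\blockLargest{1}{K}$. Running the decomposition of $B'$ into $3$-connected components — the subclasses that arise (connected, $2$-connected, $3$-connected cubic planar) are enumeratively well behaved, with counts of the shape $(1+o(1))cn^{-\ce}\gamma^n(2n)!$, so the ``giant piece'' arguments of \Cref{sec:block_structure} transfer — produces \whp\ a $3$-connected cubic planar component of linear order, again essentially uniform. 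By Whitney's uniqueness-of-embedding theorem this object is a uniform random $3$-connected cubic planar \emph{map} on $N=\Theta_p(n)$ vertices, equivalently, by planar duality, a uniform random simple planar triangulation on $N/2+2$ vertices. Thus it remains to prove: \emph{a uniform random simple planar triangulation has circumference linear in its number of vertices \whp.}

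\textbf{The reduced question.} For this I would try two routes. (a) A structural route through the classical theorem of Whitney that every $4$-connected planar triangulation is Hamiltonian: if the decomposition of a random triangulation along its separating triangles can be shown to contain a $4$-connected part of linear order — plausible by analogy with the linear $3$-connected core of a random planar graph — then that part is Hamiltonian and its Hamilton cycle is a cycle of linear length in the whole triangulation. (b) A moment route: fix a small constant $c>0$, let $Z_\ell$ count the $\ell$-cycles of the triangulation for $\ell=\lfloor cN\rfloor$; an $\ell$-cycle splits the sphere into two triangulated discs, so the weighted number of pairs (triangulation, $\ell$-cycle) is reachable by the same singularity analysis that yields the enumeration formula, from which one expects $\expec{Z_\ell}\to\infty$ when $c$ is small; one would then finish by a second-moment estimate $\expec{Z_\ell^2}=(1+o(1))\expec{Z_\ell}^2$, or, avoiding the two-cycle count, by bounding from above the number of triangulations of circumference less than $\ell$ via a surgery that modifies such a triangulation in $\Omega(N)$ essentially independent ways.

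\textbf{Main obstacle.} The reduced question is the crux and is not accessible by the methods developed here. In route (a) the difficulty is that a random triangulation has $\Theta(N)$ separating triangles, so it is not clear there is a $4$-connected part of linear size; failing that, one must splice the Hamilton cycles of the $\Theta(N)$ $4$-connected pieces while keeping a constant fraction of the vertices at each of the linearly many splice points, which I do not see how to arrange. In route (b), while $\expec{Z_\ell}$ is in principle computable by map enumeration, the second moment demands counting planar maps carrying \emph{two} marked long cycles, whose mutual positions form an intricate family, and the alternative upper bound on low-circumference maps has no evident handle. Resolving this step would, as noted in the Discussion, also pin down the order of $\longestCycle{\largestComponent{\planarRandomGraph(n,m)}}$ in the weakly supercritical regime.
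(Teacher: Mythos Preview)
The statement is a \emph{conjecture} in the paper (see \Cref{sec:discussion}), not a proved result; the authors explicitly flag it as the missing ingredient for pinning down $\longestCycle{\largestComponent{\planarRandomGraph(n,m)}}$ in the weakly supercritical regime and offer no proof. There is therefore nothing in the paper to compare your attempt against.

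You are candid that your proposal is a strategy with an unresolved core, and your identification of the hard step---linear circumference for a uniform $3$-connected cubic planar map, equivalently a uniform simple triangulation---is the natural one. Two caveats on the reduction itself, which you pass over too quickly. First, the claim that $\blockLargest{1}{K}$ is, conditional on its order, essentially uniform among $2$-connected planar multigraphs of maximum degree $3$ is not a consequence of \Cref{rem:core_equal}: that lemma works because the number of completions of a core to a graph in $\planarClass(n,m)$ depends only on $\numberVertices{\core{\cdot}}$ and $\numberEdges{\core{\cdot}}$, whereas the number of cubic completions of a block depends on the configuration of its degree-$2$ vertices (each of which must receive a bridge), not just their count; this step needs a genuine argument, not an appeal by analogy. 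Second, the passage from $2$-connected to $3$-connected is a \emph{different} decomposition (along $2$-cuts rather than bridges), so the \bridgeStable\ machinery of \Cref{sec:block_structure} does not transfer verbatim; you would need enumeration formulas of the shape in (B2) for $2$- and $3$-connected cubic planar graphs and a reworked giant-piece lemma for that decomposition.

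As you yourself note, route (a) stalls because random triangulations have $\Theta(N)$ separating triangles and no linear $4$-connected core is available, while route (b) requires a two-cycle count that is well beyond the tools of this paper. In short: the paper leaves this open, you correctly locate the obstruction, and nothing in your outline or in the paper closes it.
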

If the above conjecture were true, we would immediately obtain the following result.
\begin{conjecture}
Let $\mathcal \planarRandomGraph$ be the class of planar graphs, $\planarRandomGraph=\planarRandomGraph(n,m) \ur \mathcal \planarRandomGraph(n,m)$, and $\LargestComponent=\largestComponent{\planarRandomGraph}$ the largest component of $\planarRandomGraph$. Assume $m=n/2+s$ for $s=s(n)=o\left(n\right)$ and $s^3n^{-2} \to \infty$. Then $\longestCycle{\LargestComponent}=\Theta_p\left(sn^{-1/3}\right)$.
\end{conjecture}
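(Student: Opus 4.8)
The plan is to assume the preceding conjecture — that $\longestCycle{K(2n,3n)}=\Tp{n}$ for a uniform random cubic planar multigraph $K(2n,3n)$ — and to feed it into the core-kernel machinery. The upper bound needs nothing new: $\longestCycle{\LargestComponent}=O\left(sn^{-1/3}\right)$ \whp\ is already part of \Cref{thm:main1}\ref{thm:main1_c}. Hence the entire content is the matching lower bound $\longestCycle{\LargestComponent}=\Omp{sn^{-1/3}}$, which I would obtain by re-running the lower-bound half of the proof of \Cref{thm:general}, replacing the \emph{family of loops} of the kernel used there by a \emph{single long cycle} of the kernel. The reason this route avoids the $\log n$-loss flagged in the discussion is that for $f(n)=n$ the target $n^{1/3}f\left(sn^{-2/3}\right)=sn^{-1/3}$ agrees up to constants with $\numberVertices{\core{\LargestComponent}}$, so the trivial upper bound is already tight and no union bound over the (exponentially many) cycles of the kernel is needed.

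The first step is to transfer the conjecture to the kernel: $\longestCycle{\kernel{\LargestComponent}}=\Omp{sn^{-2/3}}$. By property (K3) of \Cref{def:planar_like}, the longest cycle of $K(2n,3n)$ lies \whp\ in its giant component, which is contiguous to a uniform random \emph{connected} cubic planar multigraph on $2n-\Op{1}$ vertices; so the conjecture is equivalent to the same $\Theta_p$-statement for the connected model. I would then apply \Cref{lem:kernel_cubic} with $F=\longestCycle{\cdot}$ — or, since the conjecture is only a $\Theta_p$- and not a \whp-statement, re-prove the transfer directly through \Cref{lem:splitting} with auxiliary classes parametrised by a fixed $\delta>0$, exactly as in the proof of \Cref{thm:general} — using that \whp\ $\numberEdges{\kernel{\LargestComponent}}=\tfrac{3}{2}\numberVertices{\kernel{\LargestComponent}}=\T{sn^{-2/3}}$ and $\kernel{\LargestComponent}$ is cubic and connected (\Cref{thm:internal_structure}\ref{thm:internal_structure_b},\ref{thm:internal_structure_d},\ref{thm:internal_structure_e}).

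For the second step, let $\cl(n)$ be the subclass of $\planarClass(n,m)$ of graphs $\fixGraph$ whose largest component $L$ satisfies $\numberVertices{\core{L}}=\T{sn^{-1/3}}$, $\numberEdges{\kernel{L}}=\T{sn^{-2/3}}$, $\kernel{L}$ cubic, and $\longestCycle{\kernel{L}}=\Om{sn^{-2/3}}$; by the previous step and \Cref{thm:internal_structure} the hidden constants can be chosen so that $\prob{\planarRandomGraph\in\cl(n)}\geq 1-\delta$ for $n$ large. Put $\func(\fixGraph):=\left(\kernel{\largestComponent{\fixGraph}},\subdivisionNumber{\largestComponent{\fixGraph}}\right)$, let $\seq=(K_n,\nd_n)$ be feasible for $(\cl,\func)$ and $\randomGraph\ur\cl(n)$. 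By \Cref{rem:core_equal}, $\core{\largestComponent{\condGraph{\randomGraph}{\seq}}}$ is distributed like $\randomCore(K_n,\nd_n)$; since $\nd_n=\T{sn^{-1/3}}=\omega\left(\left(sn^{-2/3}\right)^2\right)=\omega\left(\left(\numberEdges{K_n}\right)^2\right)$ (here $s=o(n)$ is used), \Cref{cor:contiguous} lets me pass to the P\'olya-urn subdivision $\randomMultiCore(K_n,\nd_n)$ of \Cref{def:random_core}. Now fix a longest cycle $C$ of $K_n$; it has $|C|=\Om{sn^{-2/3}}$ edges, and subdividing them produces a cycle of $\randomMultiCore(K_n,\nd_n)$ on $|C|+X$ vertices, where $X$ counts the subdivision vertices landing on $\edgeSet{C}$. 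Colouring $\edgeSet{C}$ black and the remaining $\numberEdges{K_n}-|C|$ edges white identifies $X$ with the two-colour P\'olya urn, so by \Cref{thm:polya2} one has $\expec{X}=\nd_n|C|/\numberEdges{K_n}=\T{sn^{-1/3}}$ and $\variance{X}=O\left(\expec{X}^2/|C|\right)$; as $|C|\to\infty$ (because $s^3n^{-2}\to\infty$), Chebyshev gives $X\geq\expec{X}/2$ \whp, hence $\longestCycle{\randomMultiCore(K_n,\nd_n)}=\Omp{sn^{-1/3}}$, and then (via \Cref{cor:contiguous}) $\longestCycle{\core{\largestComponent{\condGraph{\randomGraph}{\seq}}}}=\Omp{sn^{-1/3}}$. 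As $\seq$ was arbitrary, \Cref{lem:splitting} yields $\longestCycle{\core{\randomGraph}}=\Omp{sn^{-1/3}}$, and since $\delta>0$ was arbitrary, $\longestCycle{\LargestComponent}\geq\longestCycle{\core{\LargestComponent}}=\Omp{sn^{-1/3}}$; together with the upper bound this is $\Tp{sn^{-1/3}}$.

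I expect the transfer step to be the main obstacle: because the conjecture is only a $\Theta_p$-statement one cannot quote \Cref{lem:kernel_cubic} verbatim but must thread the $\delta$'s through the splitting argument, and one must reconcile the \emph{connected} kernel model with the unconstrained model of the conjecture (using (K3) together with $\numberVertices{\kernel{\rest}}=\Op{1}$, i.e.\ \Cref{thm:internal_structure}\ref{thm:internal_structure_d}) — technical rather than conceptual. The remainder is a direct specialisation of the lower-bound half of the proof of \Cref{thm:general}: the only genuinely new point is that a \emph{single} long kernel cycle now suffices, so the elementary variance bound of \Cref{thm:polya2} already delivers the needed concentration and the sharper $\nc$-colour estimates of \Cref{main1} (used there to control all loops at once) are not required.
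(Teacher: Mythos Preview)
Your proposal is correct and matches the approach the paper sketches in the discussion: the paper does not give a formal proof (the statement is a conjecture), but says only that ``if the above conjecture were true, we would immediately obtain the following result'' and, a few lines earlier, that when $f(n)=\omega(n^{1/2})$ ``our methods lead even to $\longestCycle{\LargestComponent}=\Theta_p\left(n^{1/3}f\left(sn^{-2/3}\right)\right)$''. Your elaboration --- trivial upper bound from \Cref{thm:main1}\ref{thm:main1_c}, transfer of the kernel conjecture via \Cref{lem:kernel_cubic}/\Cref{lem:splitting}, and then the two-colour P\'olya urn of \Cref{thm:polya2} with Chebyshev on a single long kernel cycle (exactly as in the proof of \Cref{lem:block_core}, whose hypothesis $\blockOrder{i}{K}=\omega\big(\numberVertices{K}^{1/2}\big)$ is precisely the paper's stated threshold $f(n)=\omega(n^{1/2})$) --- is what the authors have in mind; the technical caveats you flag about threading $\delta$'s and reconciling the connected versus general kernel model are real but routine.
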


In \Cref{thm:block_structure_general} we determined the block structure for \pl\ classes of graphs which are addable and have a critical exponent $3<\ce<4$. We already pointed out that it should be straightforward to generalise these results to the case $\ce\geq 4$ (see \Cref{rem:block_structure}). However, we believe that this is not the case any more if $\ce<3$. Panagiotou and Steger \cite{ps} showed that in random $n$-vertex graphs, there is a drastic change in the block structure when the critical exponent $\ce$ is around 3. For example, they showed that a random planar graph on $n$ vertices (whose critical exponent is known to be $\ce=7/2$) has \whp\ a block of order linear in $n$, while the largest block of a random outerplanar graph or in a random series-parallel graph on $n$ vertices (whose critical exponent is known to be $\ce=5/2$) is of order $O\left(\log n\right)$. This leads to the following conjecture (see \Cref{lem:bridge_number6,lem:bridge_number7} for comparable results for the case $\ce>3$).

\begin{conjecture}
	Let $\bridgeClass$ be a \bridgeStable\ class of multigraphs with critical exponent $\ce<3$ and $\bridgeGraph=\bridgeGraph(2n,3n)\ur \bridgeClass(2n,3n)$. Then for each $i \in \N$, we have $\blockOrder{i}{\bridgeGraph}=O\left(\log n\right)$. 
\end{conjecture}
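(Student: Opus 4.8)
The plan is to mirror the bridge-number machinery of \Cref{lem:bridge_number1,lem:bridge_number5,lem:bridge_number6,lem:bridge_number7}, but to run it \emph{in reverse}: when $\ce>3$ the series $\sum_j j^{-\ce+2}$ converges, which forces a unique giant block, whereas for $\ce<3$ it diverges, and the heuristic is that $\bridgeGraph=\bridgeGraph(2n,3n)$ is then fragmented by a tree of bridges into $\Theta(n)$ blocks, each of size $O(\log n)$. Since $\blockOrder{i}{\bridgeGraph}\le\blockOrder{1}{\bridgeGraph}$ for every $i$, it suffices to show that \whp\ $\blockOrder{1}{\bridgeGraph}\le C\log n$ for a suitable constant $C$. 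I would establish this by a first-moment estimate on the number of $2$-connected subgraphs of $\bridgeGraph$ of a given order, using the block--bridge decomposition together with the enumeration identities that already underlie \Cref{lem:bridge_number1}.

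Concretely, the steps are as follows. (i) First extend \Cref{lem:bridge_number1} to all ranges $2\le f\le n$ (in particular to $f=\Theta(\log n)$): the construction in its proof, combined with the ratio estimate for $m_{n-1}/m_n$ coming from (B2), yields $\prob{\Bn{\bridgeGraph}{wx}\ge 2f(n)+1}=\Theta\!\left(f(n)^{-(\ce-2)}n^{-1}\right)$ uniformly, exactly as in the proof of \Cref{lem:bridge_number4}. (ii) Let $\bridgeClass_2\subseteq\bridgeClass$ be the subclass of \emph{$2$-connected} multigraphs (and, more generally, the $2$-connected cubic-like multigraphs on $2\ell$ vertices with a prescribed set of degree-two vertices, the rest of degree three). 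Prove that $\bridgeClass_2$ is \emph{subcritical}, i.e.\ its exponential generating function has radius of convergence \emph{strictly} larger than that of $\bridgeClass$; equivalently, there is $\delta<1$ with $|\bridgeClass_2(2\ell,3\ell)|\le \delta^{\ell}\gamma^{\ell}(2\ell)!$ for all large $\ell$. (iii) A block $B$ of $\bridgeGraph(2n,3n)$ of order $b$ is obtained by choosing such a $2$-connected object $B$ and then attaching, at each of its degree-two vertices, a rooted pendant multigraph so that the whole lies in $\bridgeClass(2n,3n)$; the $\gamma^{b}(2b)!$ and $\gamma^{n-b}(2(n-b))!$ factors together with the label-distribution binomials recombine to $\gamma^{n}(2n)!$, so after dividing by $m_n\asymp\gamma^{n}(2n)!\,n^{-\ce}$ the expected number of blocks of order $b$ is $\le \mathrm{poly}(n)\cdot\delta^{b}$. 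Summing over $b\ge C\log n$ gives $\mathrm{poly}(n)\cdot n^{-C\log(1/\delta)}\to 0$ once $C>1/\log(1/\delta)$, whence \whp\ $\blockOrder{1}{\bridgeGraph}=O(\log n)$. An equivalent, possibly cleaner organisation of (iii) is recursive: note that by step (i) we have $\expec{\#\{e:\bn{e}\ge\varepsilon n\}}\to\infty$ for $\ce<3$, so \whp\ some bridge splits $\bridgeGraph$ into two linear halves; iterate this splitting $O(\log n)$ times via \Cref{lem:splitting}, and use (ii) to see that the recursion cannot get stuck at a $2$-connected piece of superlogarithmic size.

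The main obstacle is step (ii): rigorously establishing strict subcriticality of the $2$-connected subclass. Property (B2) is an axiom about the \emph{connected} class only, and $\ce<3$ is merely the conjectural combinatorial signature of subcriticality in this model (it holds for the series--parallel and cactus kernels, whose critical exponent is $5/2$). Deriving it would require either adding subcriticality to the definition of a \bridgeStable\ class, or a singularity-analysis argument — in the spirit of the block-decomposition composition scheme $C^{\bullet}(z)=z\exp(B'(C^{\bullet}(z)))$ — showing that a singular exponent $\ce<3$ of the connected EGF is incompatible with the $2$-connected EGF reaching its own singularity; the split identity $\sum_{(j,k)\in I(n)}\binom{2n-2}{2j}m_jm_k(3j)(3k)$ implicit in \Cref{sec:block_structure}, which already encodes how block sizes compose multiplicatively, should be the right starting point. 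Two further technical nuisances, which affect the small-block bookkeeping but not the asymptotics, are the compensation-factor weights for loops and multi-edges (to be handled as in \Cref{prop:subdividing,prop:loop_insertion}) and the convention that a vertex carrying a loop is itself a block.
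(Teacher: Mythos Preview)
The statement you are attempting to prove is stated in the paper as a \emph{conjecture} (in \Cref{sec:discussion}); the authors explicitly leave it open and give no proof. There is therefore nothing on the paper's side to compare against.

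As for the proposal itself: the outline is the natural strategy, and you correctly flag step~(ii)---strict subcriticality of the $2$-connected subclass $\bridgeClass_2$---as the crux. But this is not a technical nuisance; it is essentially the entire conjecture. The axioms (B1)--(B2) of \Cref{def:bridge_stable} constrain only the connected class $\bridgeClass$, and the condition $\ce<3$ on its singular exponent does not by itself force the radius of convergence of the $2$-connected generating function to exceed $\gamma^{-1}$. In the classical block--vertex composition scheme $C^\bullet(z)=z\exp\bigl(B'(C^\bullet(z))\bigr)$ such an implication can be extracted from the specific analytic form of the composition, but the \bridgeStable\ setting is different: bridges attach at \emph{edges}, the pieces are cubic multigraphs rather than arbitrary graphs, and the relevant composition identity (the one implicit in the proof of \Cref{lem:bridge_number1}) has a different shape. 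Your step~(iii) is sound once~(ii) is granted, but without~(ii) there is no $\delta<1$ to feed into the first-moment bound $\mathrm{poly}(n)\cdot\delta^{b}$. The recursive alternative you sketch---iterated bridge-splitting $O(\log n)$ times---hits the same wall: to prevent the recursion from stalling at a large $2$-connected piece you again need~(ii). In short, the proposal correctly locates the obstruction but does not surmount it; a proof would require either adding subcriticality of $\bridgeClass_2$ as an explicit hypothesis or a new singularity-analysis argument linking $\ce<3$ for $\bridgeClass$ to the analytic behaviour of $\bridgeClass_2$.
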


If this were true, we would obtain the following result (see \Cref{thm:block_structure_general} for a comparable statement for $\ce>3$).

\begin{conjecture}
	Let $\planarClass$ be a \pl\ class of graphs which is addable and has a critical exponent $\ce<3$ and $\planarRandomGraph=\planarRandomGraph(n,m)\ur \planarClass(n,m)$. Assume $m=n/2+s$ for $s=s(n)=o\left(n\right)$ and $s^3n^{-2} \to \infty$. Then for each $i\in \N$, we have
		$\blockOrder{i}{\planarRandomGraph}=\Omega_p\left(n^{1/3}\right)$ and $\blockOrder{i}{\planarRandomGraph}=O_p\left(n^{1/3}\left(\log n\right)^2\right)$.
\end{conjecture}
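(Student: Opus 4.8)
The plan is to reuse the proof scheme of \Cref{thm:block_structure_general} conditionally on the preceding conjecture, which for our purposes reads: if $\bridgeClass$ is a \bridgeStable\ class of multigraphs with critical exponent $\ce<3$, then $\blockOrder{j}{\bridgeGraph(2N,3N)}=O(\log N)$ \whp\ for every fixed $j$. Since $\planarClass$ is addable, its connected kernels form a \bridgeStable\ class, and the class $\mathcal{K}$ of all kernels of $\planarClass$ satisfies (K3); combining the conjecture with \Cref{lem:kernel_cubic} therefore gives that \whp\ \emph{every} block of $\kernel{\LargestComponent}$ has $O\left(\log\left(sn^{-2/3}\right)\right)=O(\log n)$ vertices and hence, $\kernel{\LargestComponent}$ being cubic (\Cref{thm:internal_structure}\ref{thm:internal_structure_e}), at most $O(\log n)$ edges. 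Alongside this I would collect the usual inputs for the P\'olya urn: \whp\ $\numberVertices{\core{\LargestComponent}}=\Theta\left(sn^{-1/3}\right)$ and $\numberEdges{\kernel{\LargestComponent}}=\Theta\left(sn^{-2/3}\right)$ by \Cref{thm:internal_structure}\ref{thm:internal_structure_a}, \ref{thm:internal_structure_b}, \ref{thm:internal_structure_e}, and $\numberLoops{\kernel{\LargestComponent}}=\Theta\left(sn^{-2/3}\right)$ by \Cref{cor:linear_loop}. Exactly as in the proof of \Cref{thm:block_structure_general}, \Cref{lem:splitting}, \Cref{rem:core_equal} and \Cref{cor:contiguous} then reduce everything to the random multigraph $\randomMultiCore(K_n,\nd_n)$ obtained by subdividing a typical cubic kernel $K_n$ — with $\nc=\Theta\left(sn^{-2/3}\right)$ edges, $\Theta\left(sn^{-2/3}\right)$ loops, and all blocks of order $O(\log n)$ — by $\nd_n=\Theta\left(sn^{-1/3}\right)$ additional vertices; since $\nd_n=\omega\left(\nc^2\right)$ we have that \whp\ $\randomMultiCore$ is \2s, so its blocks agree with those of the genuine random core $\randomCore(K_n,\nd_n)$.

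For the upper bound I would use that every block of $\randomMultiCore$ is either a loop or a double edge of $K_n$ carrying subdivision vertices (hence a cycle), or a block of $K_n$ with each of its edges subdivided. \Cref{main1}\ref{thm:main1_2} applied with $f=\nc$ shows that \whp\ no single edge of $K_n$ receives more than $O_p\left(\frac{\nd_n}{\nc}\log\nc\right)=O_p\left(n^{1/3}\log n\right)$ subdivision vertices. A subdivided loop therefore spans $O_p\left(n^{1/3}\log n\right)$ vertices, while a subdivided block of $K_n$, having $O(\log n)$ edges, spans $O(\log n)\cdot O_p\left(n^{1/3}\log n\right)+O(\log n)=O_p\left(n^{1/3}(\log n)^2\right)$ vertices. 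All remaining blocks of $\planarRandomGraph$ lie in $\Rest{\planarRandomGraph}$ and have order $O_p\left(n^{1/3}\right)$, by \Cref{thm:general} (cycles in unicyclic components) and \Cref{thm:internal_structure}\ref{thm:internal_structure_c} (blocks in the small complex part of $\Rest{\planarRandomGraph}$). Threading this back through \Cref{lem:splitting} as in \Cref{thm:block_structure_general} gives $\blockOrder{i}{\planarRandomGraph}=O_p\left(n^{1/3}(\log n)^2\right)$ for every fixed $i$.

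For the lower bound I would, for $n$ large, split the $\Theta\left(sn^{-2/3}\right)$ loops of $K_n$ into $i$ groups, each still of size $\Theta\left(sn^{-2/3}\right)$, and apply \Cref{main1}\ref{thm:main1_2} within each group — equivalently, the bound (\ref{bound3}) used in \Cref{lem:random_core2}\ref{lem:random_core2_b}. This produces \whp, in each group, at least one loop subdivided $\Omega_p\left(\frac{\nd_n}{\nc}\right)=\Omega_p\left(n^{1/3}\right)$ times; as $\randomMultiCore$ is \2s, such a subdivided loop is a genuine cycle and hence a block of the core of order $\Omega_p\left(n^{1/3}\right)$, and the $i$ blocks arising from the $i$ groups are pairwise distinct, since they sit at distinct loop-vertices of $K_n$. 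Hence $\blockOrder{i}{\core{\LargestComponent}}=\Omega_p\left(n^{1/3}\right)$, and therefore $\blockOrder{i}{\planarRandomGraph}=\Omega_p\left(n^{1/3}\right)$, once more after unwinding \Cref{lem:splitting}.

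The real obstacle is the preceding conjecture itself — the $O(\log N)$ bound on all block orders in a \bridgeStable\ class with $\ce<3$. The dominant-block machinery of \Cref{lem:bridge_number5,lem:bridge_number6} collapses here: the Jensen step in the proof of \Cref{lem:bridge_number6} needs an exponent $\exponent$ with $1<\exponent\leq\ce-2$, which is vacuous for $\ce<3$, and by \Cref{lem:bridge_number1} the \bridgeNumbers\ now genuinely have heavy tails (the expected number of bridges with \bridgeNumber\ at least $f$ diverges for \emph{every} $f=o(n)$ when $\ce<3$), so one expects a whole block--bridge tree of comparably sized blocks rather than a single dominant one. A genuinely different tool seems unavoidable: a direct generating-function count of large blocks, or a branching-process description of the block--bridge tree in the spirit of \cite{ps}. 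A secondary and much milder gap is the factor $(\log n)^2$ versus the presumably correct $n^{1/3}\log n$ for $i=1$: closing it would require replacing the crude ``$O(\log n)$ edges, each carrying $O_p(n^{1/3}\log n)$ balls'' estimate by a sharp count of the total number of subdivision vertices landing on a block of order $\Theta(\log n)$ — precisely the strengthened P\'olya-urn concentration envisaged in \Cref{rem:improve} and \Cref{rem:block_structure}.
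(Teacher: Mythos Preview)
This statement is a \emph{conjecture} in the paper, not a theorem; the paper gives no proof. It merely writes ``If this were true, we would immediately obtain the following result,'' where ``this'' is the preceding conjecture that $\blockOrder{i}{\bridgeGraph(2N,3N)}=O(\log N)$ for \bridgeStable\ classes with $\ce<3$. So there is no paper proof to compare against, only a one-line claim that the present conjecture is a consequence of the preceding one.

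Your proposal is exactly a fleshed-out version of that implication: you take the preceding conjecture as a black box, feed it through \Cref{lem:kernel_cubic}, condition via \Cref{lem:splitting} and \Cref{rem:core_equal}, and run the P\'olya-urn estimates from \Cref{main1} on the random core, mirroring the scheme of \Cref{thm:block_structure_general}. This is precisely what the authors had in mind by ``immediately,'' and you correctly isolate the genuine obstruction as the preceding conjecture itself, including the specific reason the \Cref{lem:bridge_number6} argument fails (no admissible $\exponent$ with $1<\exponent\leq\ce-2$ when $\ce<3$). Your diagnosis of the $(\log n)^2$ slack is also in line with the paper's own remarks (\Cref{rem:improve}, \Cref{rem:block_structure}). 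In short, your conditional argument matches the paper's intended route and is honest about what remains open.
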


\section*{Acknowledgement}
The authors thank the anonymous referees for many helpful remarks to
improve the presentation of this paper.

\bibliographystyle{plain}
\bibliography{kang-missethan-cycles}

\begin{thebibliography}{10}

\bibitem{longCycle}
M.~Ajtai, J.~Koml\'{o}s, and E.~Szemer\'{e}di.
\newblock The longest path in a random graph.
\newblock {\em Combinatorica}, 1(1):1--12, 1981.

\bibitem{bbg}
P.~Balister, B.~Bollob\'{a}s, and S.~Gerke.
\newblock Connectivity of addable graph classes.
\newblock {\em J. Combin. Theory Ser. B}, 98(3):577--584, 2008.

\bibitem{negative_dependence}
H.~W. Block, T.~H. Savits, and M.~Shaked.
\newblock Negative dependence.
\newblock In {\em Survival analysis ({C}olumbus, {O}hio, 1981)}, volume~2 of
  {\em IMS Lecture Notes Monogr. Ser.}, pages 206--215. Inst. Math. Statist.,
  Hayward, CA, 1982.

\bibitem{block1982}
H.~W. Block, T.~H. Savits, and M.~Shaked.
\newblock Some concepts of negative dependence.
\newblock {\em Ann. Probab.}, 10(3):765--772, 1982.

\bibitem{general1}
B.~Bollob\'{a}s.
\newblock The evolution of random graphs.
\newblock {\em Trans. Amer. Math. Soc.}, 286(1):257--274, 1984.

\bibitem{rg3}
B.~Bollob\'{a}s.
\newblock {\em Random graphs}, volume~73 of {\em Cambridge Studies in Advanced
  Mathematics}.
\newblock Cambridge University Press, Cambridge, second edition, 2001.

\bibitem{uni}
V.~E. Britikov.
\newblock {The structure of a random graph near a critical point}.
\newblock {\em Diskret. Mat.}, 1(3):121--128, 1989.

\bibitem{chap}
G.~Chapuy, \'{E}. Fusy, O.~Gim\'{e}nez, B.~Mohar, and M.~Noy.
\newblock Asymptotic enumeration and limit laws for graphs of fixed genus.
\newblock {\em J. Combin. Theory Ser. A}, 118(3):748--777, 2011.

\bibitem{chap2}
G.~Chapuy, \'{E}. Fusy, O.~Gim\'{e}nez, and M.~Noy.
\newblock On the diameter of random planar graphs.
\newblock In {\em 21st {I}nternational {M}eeting on {P}robabilistic,
  {C}ombinatorial, and {A}symptotic {M}ethods in the {A}nalysis of {A}lgorithms
  ({A}of{A}'10)}, Discrete Math. Theor. Comput. Sci. Proc., AM, pages 65--78.
  Assoc. Discrete Math. Theor. Comput. Sci., Nancy, 2010.

\bibitem{dks}
C.~Dowden, M.~Kang, and P.~Spr\"{u}ssel.
\newblock The evolution of random graphs on surfaces.
\newblock {\em SIAM J. Discrete Math.}, 32(1):695--727, 2018.

\bibitem{eggenberger_polya}
F.~Eggenberger and G.~P\'olya.
\newblock Über die statistik verketteter vorgänge.
\newblock {\em ZAMM - Journal of Applied Mathematics and Mechanics /
  Zeitschrift für Angewandte Mathematik und Mechanik}, 3(4):279--289, 1923.

\bibitem{erdoes1}
P.~Erd\H{o}s and A.~R\'{e}nyi.
\newblock On random graphs. {I}.
\newblock {\em Publ. Math. Debrecen}, 6:290--297, 1959.

\bibitem{erdoes2}
P.~Erd\H{o}s and A.~R\'{e}nyi.
\newblock On the evolution of random graphs.
\newblock {\em Magyar Tud. Akad. Mat. Kutat\'{o} Int. K\"{o}zl.}, 5:17--61,
  1960.

\bibitem{rg1}
A.~Frieze and M.~Karo\'{n}ski.
\newblock {\em Introduction to random graphs}.
\newblock Cambridge University Press, Cambridge, 2016.

\bibitem{gim}
O.~Gim\'{e}nez and M.~Noy.
\newblock Asymptotic enumeration and limit laws of planar graphs.
\newblock {\em J. Amer. Math. Soc.}, 22(2):309--329, 2009.

\bibitem{GNR2013}
O.~Gim\'{e}nez, M.~Noy, and J.~Ru\'{e}.
\newblock Graph classes with given 3-connected components: asymptotic
  enumeration and random graphs.
\newblock {\em Random Structures Algorithms}, 42(4):438--479, 2013.

\bibitem{asy_not}
S.~{Janson}.
\newblock {Probability asymptotics: notes on notation}.
\newblock {\em Institute Mittag-Leffler Report 12}, 2011.

\bibitem{comp_fac}
S.~Janson, D.~E. Knuth, T.~{\L}uczak, and B.~Pittel.
\newblock The birth of the giant component.
\newblock {\em Random Structures Algorithms}, 4(3):231--358, 1993.

\bibitem{rg2}
S.~Janson, T.~{\L}uczak, and A.~Rucinski.
\newblock {\em Random graphs}.
\newblock Wiley-Interscience Series in Discrete Mathematics and Optimization.
  Wiley-Interscience, New York, 2000.

\bibitem{johnson_kotz}
N.~L. Johnson and S.~Kotz.
\newblock {\em Urn models and their application}.
\newblock John Wiley \& Sons, New York-London-Sydney, 1977.
\newblock An approach to modern discrete probability theory, Wiley Series in
  Probability and Mathematical Statistics.

\bibitem{planar}
M.~Kang and T.~{\L}uczak.
\newblock Two critical periods in the evolution of random planar graphs.
\newblock {\em Trans. Amer. Math. Soc.}, 364(8):4239--4265, 2012.

\bibitem{sparse_outerplanar}
M.~Kang and M.~Missethan.
\newblock {The giant component and 2-core in sparse random outerplanar graphs}.
\newblock In {\em 31st International Conference on Probabilistic, Combinatorial
  and Asymptotic Methods for the Analysis of Algorithms (AofA 2020)}, Leibniz
  International Proceedings in Informatics (LIPIcs), pages 18:1--18:16, 2020.

\bibitem{surface}
M.~Kang, M.~Mo{\ss}hammer, and P.~Spr\"{u}ssel.
\newblock Phase transitions in graphs on orientable surfaces.
\newblock {\em Random Structures Algorithms}, 56(4):1117--1170, 2020.

\bibitem{kp}
M.~Kang and K.~Panagiotou.
\newblock On the connectivity of random graphs from addable classes.
\newblock {\em J. Combin. Theory Ser. B}, 103(2):306--312, 2013.

\bibitem{KARLIN}
S.~Karlin and Y.~Rinott.
\newblock Classes of orderings of measures and related correlation
  inequalities. {II}. {M}ultivariate reverse rule distributions.
\newblock {\em J. Multivariate Anal.}, 10(4):499--516, 1980.

\bibitem{kolchin}
V.~F. Kolchin.
\newblock On the behavior of a random graph near a critical point.
\newblock {\em Theory Probab. Appl.}, 31(3):439--451, 1987.

\bibitem{general2}
T.~{\L}uczak.
\newblock Component behavior near the critical point of the random graph
  process.
\newblock {\em Random Structures Algorithms}, 1(3):287--310, 1990.

\bibitem{luczak}
T.~{\L}uczak.
\newblock Cycles in a random graph near the critical point.
\newblock {\em Random Structures Algorithms}, 2(4):421--439, 1991.

\bibitem{luczak_pittel_wierman}
T.~{\L}uczak, B.~Pittel, and J.~C. Wierman.
\newblock The structure of a random graph at the point of the phase transition.
\newblock {\em Trans. Amer. Math. Soc.}, 341(2):721--748, 1994.

\bibitem{mahmoud}
H.~Mahmoud.
\newblock {\em Polya Urn Models}.
\newblock Chapman {\&} Hall/CRC, 1st edition, 2008.

\bibitem{mcd}
C.~McDiarmid.
\newblock Random graphs on surfaces.
\newblock {\em J. Combin. Theory Ser. B}, 98(4):778--797, 2008.

\bibitem{mcdr}
C.~McDiarmid and B.~Reed.
\newblock On the maximum degree of a random planar graph.
\newblock {\em Combin. Probab. Comput.}, 17(4):591--601, 2008.

\bibitem{msw}
C.~McDiarmid, A.~Steger, and D.~Welsh.
\newblock Random planar graphs.
\newblock {\em J. Combin. Theory Ser. B}, 93(2):187--205, 2005.

\bibitem{msw1}
C.~McDiarmid, A.~Steger, and D.~Welsh.
\newblock Random graphs from planar and other addable classes.
\newblock In {\em Topics in discrete mathematics}, volume~26 of {\em Algorithms
  Combin.}, pages 231--246. Springer, Berlin, 2006.

\bibitem{master_thesis_mosshammer}
M.~Mo{\ss}hammer.
\newblock {Phase transitions in series-parallel graphs}.
\newblock Master's thesis, {Graz University of Technology}, 2013.
\newblock
  {http://diglib.tugraz.at/download.php?id=576a74db1215c\&location=browse}.

\bibitem{prob_planarity}
M.~Noy, V.~Ravelomanana, and J.~Ru\'{e}.
\newblock On the probability of planarity of a random graph near the critical
  point.
\newblock {\em Proc. Amer. Math. Soc.}, 143(3):925--936, 2015.

\bibitem{Pan09}
K.~Panagiotou.
\newblock Blocks in constrained random graphs with fixed average degree.
\newblock In {\em 21st {I}nternational {C}onference on {F}ormal {P}ower
  {S}eries and {A}lgebraic {C}ombinatorics ({FPSAC} 2009)}, Discrete Math.
  Theor. Comput. Sci. Proc., AK, pages 733--744. Assoc. Discrete Math. Theor.
  Comput. Sci., Nancy, 2009.

\bibitem{ps}
K.~Panagiotou and A.~Steger.
\newblock Maximal biconnected subgraphs of random planar graphs.
\newblock {\em ACM Trans. Algorithms}, 6(2):Art. 31, 21, 2010.

\bibitem{hamiltonian}
R.~W. Robinson and N.~C. Wormald.
\newblock Almost all cubic graphs are {H}amiltonian.
\newblock {\em Random Structures Algorithms}, 3(2):117--125, 1992.

\bibitem{Stu2019}
B.~Stufler.
\newblock Local convergence of random planar graphs.
\newblock {\em arXiv:1908.04850}, 2019.

\bibitem{Stu2020}
B.~Stufler.
\newblock Limits of random tree-like discrete structures.
\newblock {\em Probab. Surv.}, 17:318--477, 2020.

\end{thebibliography}

\appendix
\newpage
\section{Proofs of \Cref{main1} and \Cref{prop:polya_main}} \label{sec:polya_proofs}

Before providing the proof of \Cref{main1} we briefly illustrate the proof strategy. As for an upper bound for $\minimum$, we will find a \lq small\rq\ function $g_1=g_1(n)$ such that $\prob{\minimum\geq g_1}=o(1)$. For $1\leq \ind \leq f$ we denote by $A_\ind$ the event that $X_\ind\geq g_1$ and observe that $\minimum\geq g_1$ if and only if $A_\ind$ is true for {\em all} $1\leq \ind \leq f$. Moreover, we intuitively expect that 
\begin{align}\label{eq:negative_dependent}
\prob{\bigwedge_{\ind=1}^{f}A_\ind}\leq \prod_{\ind=1}^{f}\prob{A_\ind},
\end{align}
because given that $X_1, \ldots, X_{\ind-1}$ are \lq large\rq\ (for some $1\leq \ind \leq f)$, the probability that $X_\ind$ is also \lq large\rq\ might decrease. 
If (\ref{eq:negative_dependent}) holds (see \Cref{prop:polya_3}), then we obtain
\begin{align*}
\prob{\minimum\geq g_1}=\prob{\bigwedge_{\ind=1}^{f}A_\indSecond}\leq \prod_{\ind=1}^{f}\prob{A_\ind} = \prod_{\ind=1}^{f} \prob{X_\ind \geq g_1}.
\end{align*}

In order to derive a lower bound for $\minimum$, we will determine a \lq large\rq\ function $g_2=g_2(n)$ such that $\prob{\minimum\leq g_2}=o(1)$. To that end, we observe that if $\minimum \leq g_2$, then there is {\em at least one} $1\leq \ind \leq f$ such that $X_\ind \leq g_2$. Thus, we obtain
\begin{align*}
\prob{\minimum\leq g_2}\leq \sum_{\ind=1}^{f}\prob{X_\ind \leq g_2}.
\end{align*}
Therefore, in both cases, it is enough to find good bounds for $\prob{X_\ind \geq g_1}$ and $\prob{X_\ind \leq g_2}$. Such bounds are obtained in \Cref{prop:polya_4}. 

In order to make the aforementioned idea more precise, we need two known facts about the P\'olya urn model. The first one is about the marginal distribution of $X_i$ and will be our starting point for deducing bounds on $\prob{X_\ind \geq g_1}$ and $\prob{X_\ind \leq g_2}$. 

\begin{prop}[{\cite[Theorem 3.1]{mahmoud}}]\label{prop:polya}
	Let $N,k\in \mathbb N$ be given. For $\ind \in \left[\nc\right]$ and $\val \in \{0, \ldots, \nd\}$, we have
	\begin{equation*}
	\prob{X_\ind=\val} \, =\, \frac{\binom{\nd+\nc-\val-2}{\nc-2}}{\binom{\nd+\nc-1}{\nc-1}},
	\end{equation*}
	and in particular, $\prob{X_\ind=\val} \le \prob{X_\ind=0} \le \frac{\nc}{\nd+\nc}$.
\end{prop}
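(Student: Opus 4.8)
The plan is to reduce the statement to the classical observation that the Pólya urn produces a joint distribution of $(X_1,\dots,X_{\nc})$ that is \emph{uniform} over all weak compositions of $\nd$ into $\nc$ nonnegative parts, and then to extract the marginal of $X_\ind$ by elementary counting.

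First I would compute the probability of an arbitrary fixed sequence of $\nd$ draws. If in that sequence colour $\colour_\ind$ is drawn exactly $x_\ind$ times (so that $\sum_\ind x_\ind=\nd$), then when $\colour_\ind$ is drawn for the $j$-th time the urn contains exactly $j$ balls of that colour (one initial ball plus $j-1$ added ones), while at the $t$-th draw the urn contains $\nc+t-1$ balls in total. Hence the sequence has probability $\bigl(\prod_{\ind=1}^{\nc}x_\ind!\bigr)\big/\prod_{t=1}^{\nd}(\nc+t-1)$, which depends only on the colour counts and not on the order of the draws. Summing over the $\binom{\nd}{x_1,\dots,x_{\nc}}=\nd!/\prod_\ind x_\ind!$ sequences realising those counts, the factors $x_\ind!$ cancel, so
\[
\prob{(X_1,\dots,X_{\nc})=(x_1,\dots,x_{\nc})}=\frac{\nd!\,(\nc-1)!}{(\nc+\nd-1)!}=\binom{\nc+\nd-1}{\nc-1}^{-1},
\]
independently of $(x_1,\dots,x_{\nc})$.

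Next I would read off the marginal: $\prob{X_\ind=\val}$ equals $\binom{\nc+\nd-1}{\nc-1}^{-1}$ times the number of weak compositions of $\nd$ into $\nc$ parts whose $\ind$-th coordinate equals $\val$, i.e.\ times the number $\binom{\nd-\val+\nc-2}{\nc-2}$ of weak compositions of $\nd-\val$ into the remaining $\nc-1$ parts; this is exactly the claimed formula. For the ``in particular'' part, note that $\binom{\nd+\nc-\val-2}{\nc-2}$ is nonincreasing in $\val$, so $\prob{X_\ind=\val}\le\prob{X_\ind=0}$, and a telescoping computation $\prob{X_\ind=0}=\prod_{t=1}^{\nd}\frac{\nc+t-2}{\nc+t-1}=\frac{\nc-1}{\nc+\nd-1}\le\frac{\nc}{\nc+\nd}$ completes the argument.

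There is no serious obstacle here; the only step that needs a moment's thought is the exchangeability observation that a draw sequence's probability depends only on the colour counts, and even that is immediate once one tracks the number of balls of each colour at the time of each draw. (Alternatively one could invoke de Finetti's theorem, representing the urn via a $\mathrm{Beta}(1,\nc-1)$ mixing measure and obtaining a Beta--binomial marginal, but the direct count is shorter.)
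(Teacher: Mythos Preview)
Your argument is correct. The paper itself does not prove this proposition at all; it is stated with a citation to \cite[Theorem 3.1]{mahmoud} and used as a black box, so there is no ``paper's own proof'' to compare against. Your approach---establishing exchangeability to see that $(X_1,\dots,X_{\nc})$ is uniform on weak compositions of $\nd$ into $\nc$ parts, then reading off the marginal by a stars-and-bars count---is the standard elementary route and is cleanly executed. The telescoping computation for $\prob{X_\ind=0}=(\nc-1)/(\nc+\nd-1)$ and the final inequality are also fine.
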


\begin{prop}[{\cite{block1982, KARLIN}}]\label{prop:polya_3}
	Let $N\in \mathbb N$ be given. For $\ind \in \left[\nc\right]$ and $\val_1, \ldots, \val_\ind \in \N_0$, we have
	\begin{align}
	\prob{\bigwedge_{\indSecond=1}^\ind \left(X_\indSecond \geq \val_\indSecond\right)}&\leq \prod_{\indSecond=1}^{\ind}\prob{X_\indSecond \geq \val_\indSecond} \label{eq:polya1}\\
	\text{ and }~~~~~\prob{\bigwedge_{\indSecond=1}^\ind \left(X_\indSecond \leq \val_\indSecond\right)}&\leq \prod_{\indSecond=1}^{\ind}\prob{X_\indSecond \leq \val_\indSecond}.~~~~~~~~~~~~~~~~~\label{eq:polya2}
	\end{align}
\end{prop}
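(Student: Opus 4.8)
The plan is to prove \eqref{eq:polya1} and \eqref{eq:polya2} together, by induction on $\ind$, using the way a P\'olya urn factorises when its colours are merged into groups. Fix $\ind\in[\nc]$, put $A=\{1,\dots,\ind-1\}$ and $B=\{\ind,\dots,\nc\}$, and recall the following elementary \emph{group decomposition} of the urn, which one verifies one draw at a time: the sequence of group labels ($A$ or $B$) of the $\nd$ draws is distributed as a two-colour P\'olya urn started from $\ind-1$ balls of colour $A$ and $\nc-\ind+1$ balls of colour $B$; and, conditioned on this label sequence, the refinement of the $A$-draws into the colours $1,\dots,\ind-1$ is an \emph{independent} P\'olya urn on $\ind-1$ colours run for $S:=X_1+\dots+X_{\ind-1}$ steps, while the refinement of the $B$-draws into the colours $\ind,\dots,\nc$ is an independent P\'olya urn on $\nc-\ind+1$ colours run for $\nd-S$ steps.

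Write $\mathcal E:=\bigwedge_{\indSecond=1}^{\ind-1}\{X_\indSecond\ge \val_\indSecond\}$. By the group decomposition, conditioned on $S=\val$ the event $\mathcal E$ depends only on the $A$-sub-urn, so $r(\val):=\prob{\mathcal E\mid S=\val}$ is well defined; and the conditional law of $X_\ind$ depends only on the length $\nd-\val$ of the $B$-sub-urn, so $q(\val):=\prob{X_\ind\ge \val_\ind\mid S=\val}$ is well defined. Since the count of a fixed colour in a P\'olya urn is non-decreasing in the number of steps, a monotone coupling in the number of steps shows that $r$ is non-decreasing and $q$ is non-increasing in $\val$. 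Using the decomposition once more to express everything as a mixture over $S$,
\begin{align*}
\prob{X_\ind\ge \val_\ind\ \wedge\ \mathcal E}
&=\sum_{\val}\prob{S=\val}\,r(\val)\,q(\val)\\
&\le\Bigl(\sum_{\val}\prob{S=\val}\,r(\val)\Bigr)\Bigl(\sum_{\val}\prob{S=\val}\,q(\val)\Bigr)
=\prob{\mathcal E}\,\prob{X_\ind\ge \val_\ind},
\end{align*}
where the inequality is Chebyshev's sum inequality (the covariance of a non-decreasing and a non-increasing function of the single random variable $S$ is non-positive). Together with the induction hypothesis $\prob{\mathcal E}\le\prod_{\indSecond=1}^{\ind-1}\prob{X_\indSecond\ge \val_\indSecond}$ this yields \eqref{eq:polya1}; the base case $\ind=1$ is an equality. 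For \eqref{eq:polya2} the argument is identical, except that $\bigwedge_{\indSecond<\ind}\{X_\indSecond\le \val_\indSecond\}$ is a \emph{decreasing} event in the $A$-sub-urn, so the corresponding $r$ is non-increasing while $\val\mapsto\prob{X_\ind\le \val_\ind\mid S=\val}$ is non-decreasing, and Chebyshev's sum inequality applies with the roles swapped.

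The step that needs care is lining up the monotonicities with the right sign. Since the coordinates of a P\'olya urn vector are \emph{negatively} associated, a direct covariance bound on two functions of $X_1,\dots,X_{\ind-1}$ would point the wrong way; conditioning on the aggregate $S$ first is precisely what collapses the problem to a one-dimensional Chebyshev inequality, where a non-decreasing and a non-increasing function of the same variable do have non-positive covariance. A shorter but less self-contained route is to invoke the negative association of the Dirichlet--multinomial law (Joag-Dev--Proschan), which delivers both inequalities at once; the group-decomposition argument above avoids quoting that machinery.
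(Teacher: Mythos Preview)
Your argument is correct. The paper does not actually give a proof of this proposition: it simply cites \cite{block1982,KARLIN} and remarks that a more general version appears as \cite[Example~5.5]{block1982}, relying on a result of Karlin. So you have supplied a self-contained proof where the paper only quotes the literature.

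Your route---conditioning on the aggregate $S=X_1+\dots+X_{\ind-1}$, using the aggregation/group-decomposition property of P\'olya urns to obtain conditional independence of the $A$- and $B$-sub-urns, and then applying the one-dimensional Chebyshev correlation inequality to the monotone functions $r$ and $q$ of $S$---is clean and correct. The monotonicity claims are justified by the obvious step-coupling (running the sub-urn one step longer can only increase each coordinate), and the conditional-independence step is exactly what the aggregation property delivers. The cited approach in \cite{block1982} goes through the negative dependence of the Dirichlet--multinomial distribution, which is more general but requires importing that machinery; your argument is more elementary and tailored to the P\'olya setting, and as you note yourself, it sidesteps the pitfall that the $X_j$ are negatively associated (so a direct covariance bound on two increasing functions of $(X_1,\dots,X_{\ind-1})$ would point the wrong way).
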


We note that a more general version of \Cref{prop:polya_3} was proven in \cite[Example 5.5]{block1982} by using a fact from \cite[(1.8)]{KARLIN}. A random vector $(X_1, \ldots, X_\nc)$ satisfying (\ref{eq:polya1}) and (\ref{eq:polya2}) is also called {\em negatively dependent} (see e.g. \cite{negative_dependence} for details). Next, we derive some bounds for $\prob{X_\ind\leq \val}$ and $\prob{X_\ind\geq \val}$ by using \Cref{prop:polya}.

\begin{prop}\label{prop:polya_4}
	Let $N,k\in \mathbb N$ be given.
	\begin{enumerate}	
		\item \label{prop:polya_4b}	
		For $\ind \in \left[\nc\right]$ and $\val \in \{0, \ldots, \nd\}$, we have
		\begin{align*}
		\prob{X_\ind\leq \val}&\leq (\val+1) \frac{\nc}{\nd+\nc}\\
		\text{ and }~~~~~\prob{X_\ind\geq \val}&\leq 2\exp\left(-\frac{(\nc-2)}{\nd+\nc}\val\right).~~~~~~~~~~~~~~
		\end{align*}
		\item \label{prop:polya_4d}
		If in addition $\val\leq \frac{\nd}{2}$, then we have
		\begin{equation*}
		\prob{X_\ind\geq \val}\leq 1-\frac{(\nc-1)}{\nd+\nc} \val \exp\left(-\frac{2\nc}{\nd} \val\right).
		\end{equation*} 
		\item \label{prop:polya_4e}
		If in addition $\nd\geq 8\nc$ and $\val\leq \frac{\nd}{2}$, then we have
		\begin{equation*}
		\prob{X_\ind\leq \val}\leq \exp \left(-\frac{1}{64}\exp\left(-\frac{2\nc }{\nd}\val\right)\right).
		\end{equation*}
	\end{enumerate}
\end{prop}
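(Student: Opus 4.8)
The plan is to reduce everything to the exact marginal distribution recorded in \Cref{prop:polya} together with one consequence of it: a product formula for the upper tail of $X_i$. Summing $\prob{X_i=y}=\binom{k+N-y-2}{N-2}/\binom{k+N-1}{N-1}$ over $y\ge x$ and applying the hockey-stick identity gives the closed form
\[
\prob{X_i\ge x}\ =\ \frac{\binom{k+N-x-1}{N-1}}{\binom{k+N-1}{N-1}}\ =\ \prod_{j=0}^{x-1}\frac{k-j}{k+N-1-j}\ =\ \prod_{j=0}^{x-1}\left(1-\frac{N-1}{k+N-1-j}\right)
\]
for $0\le x\le k$, with $\prob{X_i\ge x}=0$ for $x>k$. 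This representation, together with a handful of elementary inequalities, is all that is needed; no probabilistic idea beyond \Cref{prop:polya} enters.

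For the first inequality of \ref{prop:polya_4b} I would simply sum: $\prob{X_i\le x}=\sum_{y=0}^{x}\prob{X_i=y}\le(x+1)\prob{X_i=0}\le(x+1)\frac{N}{k+N}$, using the monotonicity of the marginal already stated in \Cref{prop:polya}. For the second inequality, each factor of the product above satisfies $1-\frac{N-1}{k+N-1-j}\le 1-\frac{N-1}{k+N-1}\le\exp\bigl(-\tfrac{N-1}{k+N-1}\bigr)$, so $\prob{X_i\ge x}\le\exp\bigl(-\tfrac{(N-1)x}{k+N-1}\bigr)$; since $\tfrac{N-1}{k+N-1}\ge\tfrac{N-2}{k+N}$ (equivalent to $k+2N-2\ge 0$), this already yields $\prob{X_i\ge x}\le\exp\bigl(-\tfrac{(N-2)x}{k+N}\bigr)$, beating the claimed bound (the constant $2$ is slack).

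Part \ref{prop:polya_4d} requires a two-sided estimate. I would write $\prob{X_i\ge x}=1-\prob{X_i\le x-1}$ and bound $\prob{X_i\le x-1}\ge x\,\prob{X_i=x-1}$, using that $y\mapsto\prob{X_i=y}$ is nonincreasing (its ratio is $\tfrac{k-y}{k+N-y-2}\le 1$). From the product formula, $\prob{X_i=x-1}=\prob{X_i\ge x-1}-\prob{X_i\ge x}=\prob{X_i\ge x-1}\cdot\frac{N-1}{k+N-x}\ge\frac{N-1}{k+N}\prod_{j=0}^{x-2}\bigl(1-\frac{N-1}{k+N-1-j}\bigr)$; applying $1-t\ge e^{-t/(1-t)}$ to each factor gives a factor $\ge\exp\bigl(-\tfrac{N-1}{k-j}\bigr)\ge\exp\bigl(-\tfrac{2N}{k}\bigr)$ because $k-j\ge k-x\ge k/2$ when $x\le k/2$, so the product is $\ge e^{-2Nx/k}$ and combining yields $\prob{X_i\ge x}\le 1-\frac{(N-1)x}{k+N}e^{-2Nx/k}$. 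For \ref{prop:polya_4e} the cleanest route is to repeat this estimate directly on $\prob{X_i\le x}=1-\prod_{j=0}^{x}\bigl(1-\frac{N-1}{k+N-1-j}\bigr)$: again by $1-t\ge e^{-t/(1-t)}$ the product is $\ge\exp\bigl(-\sum_{j=0}^{x}\tfrac{N-1}{k-j}\bigr)$, and $\sum_{j=0}^{x}\tfrac{N-1}{k-j}\le(N-1)\tfrac{2(x+1)}{k}\le\tfrac{2Nx}{k}+\tfrac{2N}{k}\le\tfrac{2Nx}{k}+\ln 64$ once $k\ge 8N$ (so that $\tfrac{2N}{k}\le\tfrac14\le\ln 64$); hence the product is $\ge\tfrac1{64}e^{-2Nx/k}$ and $\prob{X_i\le x}\le 1-\tfrac1{64}e^{-2Nx/k}\le\exp\bigl(-\tfrac1{64}e^{-2Nx/k}\bigr)$ by $1-u\le e^{-u}$.

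I do not expect a genuine obstacle here: the substance is the product formula for $\prob{X_i\ge x}$, and the rest is bookkeeping. The one place to be careful is matching the elementary inequalities to the ranges permitted by the hypotheses — in particular that $x\le k/2$ is exactly what makes $k-j\ge k/2$ in the products of \ref{prop:polya_4d} and \ref{prop:polya_4e}, and that $k\ge 8N$ is exactly what absorbs the $\tfrac{2N}{k}$ term into $\ln 64$ — together with the boundary values $x\in\{0,k\}$ (and $x=k/2$ in part \ref{prop:polya_4e}), which are trivial but should be checked.
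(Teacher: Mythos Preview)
Your argument is correct. The key difference from the paper is that you first derive, via the hockey-stick identity, the closed product
\[
\prob{X_i\ge x}=\prod_{j=0}^{x-1}\Bigl(1-\tfrac{N-1}{k+N-1-j}\Bigr),
\]
and then bound each factor directly. The paper never writes this product down: for the upper-tail bound in \ref{prop:polya_4b} it instead bounds $\prob{X_i=x}$ and sums a geometric series using the ratio $\prob{X_i=y+1}/\prob{X_i=y}$, and for \ref{prop:polya_4e} it again bounds $\prob{X_i>x}$ by a truncated geometric series of ratio bounds before invoking the pointwise lower bound on $\prob{X_i=x}$. Your route is shorter and yields a slightly sharper constant in \ref{prop:polya_4b} (the factor $2$ is genuinely slack in your bound); for \ref{prop:polya_4d} the two arguments are essentially the same, differing only in whether one bounds $\prob{X_i=x}$ from below via $\bigl(\tfrac{k-x}{k}\bigr)^N$ as the paper does or via the product and $1-t\ge e^{-t/(1-t)}$ as you do. The trade-off is that the paper's ratio-and-geometric-series approach would work verbatim for any marginal with monotone ratios, whereas your approach exploits the exact binomial form of the Pólya marginal.
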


\proofof{prop:polya_4}
Throughout the proof, we use \Cref{prop:polya} without stating explicitly. Then the first inequality in \ref{prop:polya_4b} follows by 
\begin{align*}
\prob{X_\ind\leq \val}=\sum_{\valSecond=0}^{\val}\prob{X_\ind=\valSecond}\leq\sum_{\valSecond=0}^{\val}\prob{X_\ind=0}\leq (\val+1)\frac{\nc}{\nd+\nc}.
\end{align*}
For the second inequality in \ref{prop:polya_4b} we may assume $\nc\geq 3$, since otherwise the statement is trivially fulfilled. We get by using $1+z\leq \exp(z)$ for $z\in \R$
\begin{align*}
\prob{X_\ind=\val}=\frac{\nc-1}{\nd+\nc-1}\prod_{a=2}^{\nc-1}\frac{\nd+\nc-\val-a}{\nd+\nc-a}
\leq\frac{\nc-1}{\nd+\nc-1}\left(\frac{\nd+\nc-\val-2}{\nd+\nc-2}\right)^{\nc-2}
\leq
\frac{\nc-1}{\nd+\nc-1}\exp\left(-\frac{(\nc-2)\val}{\nd+\nc}\right).
\end{align*}
Next, we observe that for $\valSecond\in \{0, \ldots ,\nd-1\}$
\[\frac{\prob{X_\ind=\valSecond+1}}{\prob{X_\ind=\valSecond}}=\frac{\nd-\valSecond}{\nd+\nc-\valSecond-2}\leq 1-\frac{\nc-2}{\nd+\nc-2}.\]
Hence, we obtain
\begin{align*}
\prob{X_\ind\geq \val}&=\sum_{\valSecond=\val}^{\nd}\prob{X_\ind=\valSecond}
\\
&\leq \prob{X_\ind=\val}\sum_{\valSecond=\val}^{\nd}\left(1-\frac{\nc-2}{\nd+\nc-2}\right)^{\valSecond-\val}
\\
&\leq \frac{\nc-1}{\nd+\nc-1}\exp\left(-\frac{(\nc-2)\val}{\nd+\nc}\right) \frac{\nd+\nc-2}{\nc-2}
\\
&\leq 2\exp\left(-\frac{(\nc-2)\val}{\nd+\nc}\right),
\end{align*}
which proves \ref{prop:polya_4b}. Next, we assume $\val \leq \frac{\nd}{2}$ and show \ref{prop:polya_4d}. To that end, we use $1-z\geq \exp(-2z)$ for $z\in \left[0,\frac{1}{2}\right]$ to obtain
\begin{align}\label{proof:prop_polya_4d}
\prob{X_\ind=\val}=\frac{\nc-1}{\nd+\nc-1}\prod_{a=2}^{\nc-1}\frac{\nd+\nc-\val-a}{\nd+\nc-a} 
\geq\frac{\nc-1}{\nd+\nc}\left(\frac{\nd-\val}{\nd}\right)^{\nc}
\geq \frac{\nc-1}{\nd+\nc}\exp\left(-\frac{2\nc \val}{\nd}\right).
\end{align}
Using that yields
\begin{align*}
\prob{X_i< \val}=\sum_{\valSecond=0}^{\val-1} \prob{X_\ind=\valSecond }\geq \val \prob{X_\ind=\val}
\geq \val\frac{\nc-1}{\nd+\nc}\exp\left(-\frac{2\nc \val}{\nd}\right).
\end{align*}
This shows \ref{prop:polya_4d}. Finally, we assume $\nd\geq 8\nc$ and $\val \leq \frac{\nd}{2}$. Then, we have for $\valSecond \leq \frac{3\nd}{4}$
\[\frac{\prob{X_\ind=\valSecond+1}}{\prob{X_\ind=\valSecond}}=\frac{\nd-\valSecond}{\nd+\nc-\valSecond-2}\geq 1-\frac{\nc}{\nd-\valSecond}\geq 1-\frac{4\nc}{\nd}.\]
Thus, for $\val\leq \frac{\nd}{2}$ we obtain by using (\ref{proof:prop_polya_4d})
\begin{align*}
\prob{X_\ind>\val}&\geq\sum_{\valSecond=\val+1}^{\frac{3\nd}{4}}\prob{X_\ind=\valSecond}\geq \prob{X_\ind=\val}\sum_{\valSecond=\val+1}^{\frac{3\nd}{4}}\left(1-\frac{4\nc}{\nd}\right)^{\valSecond-\val}
\\
&\geq \prob{X_\ind=\val} \frac{\nd}{8\nc}\left(1-\left(1-\frac{4\nc}{\nd}\right)^{\frac{\nd}{4}}\right)
\\
&\geq \prob{X_\ind=\val} \frac{\nd}{8\nc} \left(1-\exp\left(-\nc\right)\right)
\\
&\geq \frac{\nc-1}{\nd+\nc}\exp\left(-\frac{2\nc \val}{\nd}\right) \frac{\nd}{16\nc}
\\
&\geq
\frac{1}{64}\exp\left(-\frac{2\nc \val}{\nd}\right).
\end{align*}
Hence, we conclude the proof with
\begin{align*}
\prob{X_\ind\leq \val}\leq 1-\frac{1}{64}\exp\left(-\frac{2\nc \val}{\nd}\right)\leq \exp\left(-\frac{1}{64}\exp\left(-\frac{2\nc \val}{\nd}\right)\right).
\end{align*}\qed

\proofof{main1}
Throughout the proof, we let $n$ be large and $h=h(n)=\omega(1)$. To obtain the claimed bounds on $\minimum=\min_{1\leq \ind \leq f}X_\ind$, it suffices to show that 
\begin{enumerate}[label={(\alph*)}]
	\item\label{proof:main1_1}
	if $\nd=\omega\left(\nc f\right)$, then $\minimum=\Omega_p\left(\frac{\nd}{\nc f}\right)$;
	\item\label{proof:main1_2} 
	if $\nd=\Omega\left(\nc\right)$ and $f=\omega(1)$, then $\minimum=O_p\left(\frac{\nd}{\nc f}\right)$;
	\item\label{proof:main1_3}
	$\minimum=O_p\left(\frac{\nd}{\nc}\right)$.	
\end{enumerate}

To prove \ref{proof:main1_1}, it is enough to show $\prob{\minimum\leq \frac{\nd}{h\nc f}} =o(1)$ for any $h=o\left(\frac{\nd}{\nc f}\right)$. To this end, let $\val=\frac{\nd}{h\nc f}$. If $\minimum \leq x$, then $X_\ind \leq x$ for some $1\leq \ind \leq f$. Thus, by \Cref{prop:polya_4}\ref{prop:polya_4b} we obtain
\begin{align*}
\prob{\minimum\leq x}\leq \sum_{\ind=1}^{f}\prob{X_\ind\leq x}
\leq \sum_{\ind=1}^{f} (\val+1) \frac{\nc}{\nd+\nc} 
\leq 2f \val \frac{\nc}{\nd+\nc} 
=2f\frac{\nd}{h\nc f}\frac{\nc}{\nd+\nc}
= \Theta(1)\frac{1}{ h} =o(1),
\end{align*}
as desired.

To prove \ref{proof:main1_2}, it suffices to show $\prob{\minimum\geq \frac{h\nd}{\nc f}} =o(1)$ for any $h=o\left(f\right)$. Now let 
$x= \frac{h\nd}{\nc f}$ and for each $1\leq \ind \leq f$ we denote by $A_\ind$ the event that $X_\ind\geq x$. If $\minimum \geq x$, then $X_\ind \geq x$ for all $1\leq \ind \leq f$. Thus, by \Cref{prop:polya_3} we have
\begin{align}\label{proof:condition}
\prob{\minimum\geq x}&= \prob{\bigwedge_{i=1}^{f} A_i}\leq \prod_{\ind=1}^{f}\prob{A_\ind}. 
\end{align}
By \Cref{prop:polya_4}\ref{prop:polya_4d}, 
uniformly over all $1\leq \ind \leq f$, we have
\begin{align*}
\prob{A_\ind}
&\leq 1-\frac{(\nc-1) \val}{\nd+\nc}\exp\left(-\frac{2\nc \val}{\nd}\right)\\
&=1-\frac{\nc-1}{\nd+\nc}\frac{h\nd}{\nc f}\exp\left(-\frac{2\nc }{\nd}\frac{h\nd}{\nc f}\right)
\\
&= 1-\Theta(1)\frac{h}{f} 
\\
&\leq \exp\left(-\Theta(1)\frac{h}{f}\right).
\end{align*}
This together with (\ref{proof:condition}) yields the desired result
\begin{align*}
\prob{\minimum\geq x}\leq \exp\left(-\Theta(1)\frac{h}{f}\right)^f = \exp\left(-\Theta(1) h \right) = \exp\left(-\omega(1)\right) =o(1).
\end{align*}

Finally, \ref{proof:main1_3} follows by Markov's inequality and the fact that $\expec{\minimum}\leq\expec{X_1}=\frac{\nd}{\nc}$.

\bigskip
In order to derive the claimed bounds on $\maximum=\max_{1\leq \ind \leq f}X_\ind$, we prove the following assertions.
\begin{enumerate}[label={(\alph*)}]\addtocounter{enumi}{3}
	\item \label{proof:main1_5}
	If $\nd=\omega(\nc)$ and $f=\omega(1)$, then $\maximum=\Omega_p\left(\frac{\nd}{\nc}\left(1+\log f\right)\right)$;
	\item \label{proof:main1_6}
	if $\nd=\omega(\nc)$, then $\maximum=\Omega_p\left(\frac{\nd}{\nc}\right)$;
	\item \label{proof:main1_4}
	if $\nd=\omega(\nc)$, then $\maximum=O_p\left(\frac{\nd}{\nc}\left(1+\log f\right)\right)$;
	\item \label{proof:main1_7}
	if $\nd=O(\nc)$, then $\maximum=O_p(1+\log f)$.
\end{enumerate}

To show \ref{proof:main1_5}, we assume $\nd=\omega(\nc)$ and $f=\omega(1)$ and let $x=\frac{\nd}{h\nc}\left(1+\log f\right)$. If $\maximum \leq x$, then $X_\ind \leq x$ for all $1\leq \ind \leq f$. For each $1\leq \ind \leq f$ we denote by $B_\ind$ the event that $X_\ind\leq x$. Using \Cref{prop:polya_3} (for the first inequality) and \Cref{prop:polya_4}\ref{prop:polya_4e} (for the second inequality) yields for large $n$

\begin{align*}
\prob{\maximum \leq x}
&= \prob{\bigwedge_{i=1}^{f} B_i} 
\leq \prod_{\ind=1}^{f}\prob{B_\ind}
\\
&\leq \prod_{\ind=1}^{f}\exp \left(-\frac{1}{64}\exp\left(-\frac{2\nc }{\nd}\val\right)\right)
\\
&=\exp \left(-\frac{f}{64}\exp\left(-\frac{2\nc}{\nd}\frac{\nd}{h\nc}\left(1+\log f\right)\right)\right)
\\
&\leq \exp\left(-\frac{1}{64}\exp\left(\log f-\frac{4}{h}\log f\right)\right)
\\
&=\exp\left(-\frac{1}{64}\exp\left(\left(1-\frac{4}{h}\right)\log f\right)\right)
=o(1).
\end{align*}

In order to prove \ref{proof:main1_6}, we let $x=\frac{\nd}{h\nc}$ and use \Cref{prop:polya_4}\ref{prop:polya_4b} to get
\begin{align*}
\prob{\maximum \leq x}\leq \prob{X_1 \leq x}\leq (x+1) \frac{\nc}{\nd+\nc}=\left(\frac{\nd}{h\nc}+1\right)\frac{\nc}{\nd+\nc}\leq\frac{1}{h}+\frac{\nc}{\nd}=o\left(1\right),
\end{align*}
where we used in the last equality that $h=\omega(1)$ and $\nd=\omega(\nc)$.

To show \ref{proof:main1_4}, we assume $\nd=\omega(\nc)$ and let $x=\frac{h\nd}{\nc}\left(1+\log f\right)$. If $\maximum \leq x$, then $X_\ind \leq x$ for some $1\leq \ind \leq f$. Therefore, by \Cref{prop:polya_4}\ref{prop:polya_4b} we obtain
\begin{align*}
\prob{\maximum \geq x}&\leq \sum_{\ind=1}^{f}\prob{X_\ind \geq x}
\\
&\leq \sum_{\ind=1}^{f} 2\exp\left(-\frac{(\nc-2)}{\nd+\nc}\val\right)
\\
&=2f\exp\left(-\frac{(\nc-2)}{\nd+\nc}\frac{h\nd}{\nc}\left(1+\log f\right)\right)
\\
&=2\exp\left(\log f-\omega(1)\left(1+\log f\right)\right)=o(1).
\end{align*}

To prove \ref{proof:main1_7}, we assume $\nd=O(\nc)$ and let $\val=h\left(1+\log f\right)$. Using \Cref{prop:polya_4}\ref{prop:polya_4b} we get
\begin{align*}
\prob{\maximum \geq \val}&\leq \sum_{\ind=1}^{f}\prob{X_\ind\geq \val}
\\
&\leq 2f \exp\left(-\frac{\left(\nc-2\right)}{\nd+\nc} \val\right)
\\
&=2f \exp\left(-\frac{\left(\nc-2\right)}{\nd+\nc} h\left(1+\log f\right)\right)
\\
&=2\exp\left(\log f-\omega(1)\left(1+\log f\right)\right)=o(1).
\end{align*}
This concludes the proof. \qed

\proofof{prop:polya_main}
It follows directly from the first inequality of \Cref{prop:polya_4}\ref{prop:polya_4b}.
\qed
 
\section{Proofs of \Cref{lem:sum1,lem:sum2}}\label{sec:appendix_sum}
\proofof{lem:sum1}
 Using the formula for $m_i$ from (\ref{eq:formula_bridge}) yields
 \begin{align*}
 \frac{1}{m_n}\sum_{(j,k)\in I(n)}\binom{2n-2}{2j}m_jm_k j k=\left(1+o(1)\right)\frac{cn^{\ce-2}}{4\gamma}\sum_{(j,k)\in I(n)}j^{-\ce+1}k^{-\ce+1}.
 \end{align*}
 Thus, it suffices to show 
 \begin{align}\label{eq:appendix}
 \sum_{(j,k)\in I(n)}j^{-\ce+1}k^{-\ce+1}=\left(1+o(1)\right)\frac{2}{\ce-2}
 \cdot\frac{1}{f(n)^{\ce-2}n^{\ce-1}}.
 \end{align}
To that end, let $h=\omega(1)$ be such that $h(n)=\omega\left(f(n)\right)$ and $h(n)=o(n)$. We obtain
\begin{align*}
\sum_{(j,k)\in I(n)}j^{-\ce+1}k^{-\ce+1}&\leq 2\sum_{j=f(n)}^{h(n)}j^{-\ce+1}\left(n-1-j\right)^{-\ce+1}+2\sum_{j=h(n)+1}^{\lfloor n/2 \rfloor}j^{-\ce+1}\left(n-1-j\right)^{-\ce+1}
\\
&\leq(2+o(1))n^{-\ce+1}\int_{f(n)-1}^{h(n)}x^{-\ce+1}\,dx+\Theta(1)n^{-\ce+1}\int_{h(n)-1}^{\infty}x^{-\ce+1}\,dx
\\
&=(2+o(1))n^{-\ce+1}\frac{f(n)^{-\ce+2}}{\ce-2}.
\end{align*}
Similarly, we have that for $n \to \infty$
\begin{align*}
\sum_{(j,k)\in I(n)}j^{-\ce+1}k^{-\ce+1}&\geq 2\sum_{j=f(n)}^{h(n)}j^{-\ce+1}\left(n-1-j\right)^{-\ce+1}
\\
&\geq 2n^{-\ce+1}\int_{f(n)}^{h(n)}x^{-\ce+1}\,dx
\\
&=(2+o(1))n^{-\ce+1}\frac{f(n)^{-\ce+2}}{\ce-2}.
\end{align*}
This shows (\ref{eq:appendix}), which finishes the proof. \qed 

\proofof{lem:sum2}
We note that the asymptotic formula for $m_l$ in (\ref{eq:formula_bridge}) holds only for \lq large\rq\ $l$. Thus, we split the given sum into two parts, one for the terms where $l$ is \lq small\rq\ and the other for terms where $l$ is \lq big\rq. To make that more precise, we set $I_1(n):=\left\{\left(j,k,l\right) \in I(n) \mid l\leq n/2\right\}$. Due to (\ref{eq:formula_bridge}) there exists a constant $A>0$ such that $m_l\leq Al^{-\ce}\gamma^l (2l)!$ for all $l\in \N$. Combining that with the asymptotic formulas for $m_j, m_k$, and $m_n$ from (\ref{eq:formula_bridge}) yields
\begin{align}\label{eq:appendix2}
\frac{324}{m_n}\sum_{(j,k,l)\in I_1(n)}\binom{2n-4}{2j,2k,2l}m_jm_km_lj k l^2=O(1)n^{\ce-4}S_1,
\end{align}
where
\begin{align*}
S_1=\sum_{(j,k,l)\in I_1(n)}j^{-\ce+1}k^{-\ce+1}l^{-\ce+2}.
\end{align*}
We define
$J(l):=\left\{(j,k) \in \N^2 \mid j+k=n-2-l, j,k\geq f(n) \right\}$. Analogous to the proof of (\ref{eq:appendix}) we obtain
\begin{align*}
S_1&=\sum_{l=1}^{\lfloor n/2 \rfloor}l^{-\ce+2}\sum_{(j,k)\in J(l)}j^{-\ce+1}k^{-\ce+1}
\\
&=\sum_{l=1}^{\lfloor n/2 \rfloor}l^{-\ce+2} \sum_{(j,k)\in J(l)}\Theta(1)\left(n-l\right)^{-\ce+1}f(n)^{-\ce+2}
\\
&=\Theta(1)f(n)^{-\ce+2}n^{-\ce+1}\sum_{l=1}^{\lfloor n/2 \rfloor}l^{-\ce+2}
\\
&=\Theta(1)f(n)^{-\ce+2}n^{-\ce+1}
\\
&=o(1)f(n)^{-2\ce+4}n^{-\ce+2}.
\end{align*}
This in (\ref{eq:appendix2}) implies 
\begin{align}\label{eq:appendix4}
\frac{324}{m_n}\sum_{(j,k,l)\in I_1(n)}\binom{2n-4}{2j,2k,2l}m_jm_km_lj k l^2=o(1)f(n)^{-2\ce+4}n^{-2}.
\end{align}
Next, we consider those terms where $l$ is \lq big\rq, i.e. terms with indices in $\bar{I}_1(n):=I(n)\setminus I_1(n)$. Using (\ref{eq:formula_bridge}) we obtain 
\begin{align}\label{eq:appendix3}
\frac{324}{m_n}\sum_{(j,k,l)\in \bar{I}_1(n)}\binom{2n-4}{2j,2k,2l}m_jm_km_lj k l^2=\left(1+o(1)\right)\frac{81c^2}{4\gamma^2}n^{\ce-4}\bar{S}_1,
\end{align}
where
\begin{align*}
\bar{S}_1=\sum_{(j,k,l)\in \bar{I}_1(n)}j^{-\ce+1}k^{-\ce+1}l^{-\ce+2}.
\end{align*}
Now we partition $\bar{I}_1(n)$ into smaller parts. More precisely, 
let $h$ be a function such that $h(n)=\omega\left(f(n)\right)$, but $h(n)=o\left(n\right)$. We define 
\begin{align*}
I_2(n)&:=\left\{\left(j,k,l\right) \in \bar{I}_1(n) \mid j\geq h(n)\right\};
\\
I_3(n)&:=\left\{\left(j,k,l\right) \in \bar{I}_1(n) \mid j< h(n), k \geq h(n)\right\};
\\
I_4(n)&:=\left\{\left(j,k,l\right) \in \bar{I}_1(n) \mid j,k < h(n)\right\}.
\end{align*}
In addition, for $i\in \{2,3,4\}$ we set 
\begin{align*}
S_i:=\sum_{(j,k,l)\in I_i(n)}j^{-\ce+1}k^{-\ce+1}l^{-\ce+2}.
\end{align*}
Similarly as in the proof of (\ref{eq:appendix}), we have
\begin{align*}
S_2&\leq \left(n/2\right)^{-\ce+2}\sum_{j=h(n)}^{\infty}j^{-\ce+1}\sum_{k=f(n)}^{\infty}k^{-\ce+1}
\\
&=\Theta(1)n^{-\ce+2}h(n)^{-\ce+2}f(n)^{-\ce+2}
\\
&=o(1)f(n)^{-2\ce+4}n^{-\ce+2}.
\end{align*}
Next, we observe $S_3\leq S_2$ and
\begin{align*}
S_4&=(1+o(1))n^{-\ce+2}\sum_{j=f(n)}^{h(n)-1}j^{-\ce+1}\sum_{k=f(n)}^{h(n)-1}k^{-\ce+1}
\\
&=(1+o(1))n^{-\ce+2}\frac{f(n)^{-2\ce+4}}{\left(-\ce+2\right)^2}.
\end{align*}
Thus, we have 
\begin{align*}
\bar{S}_1=S_2+S_3+S_4=\frac{\left(1+o(1)\right)}{\left(\ce-2\right)^2}f(n)^{-2\ce+4}n^{-\ce+2}.
\end{align*}
Plugging this in (\ref{eq:appendix3}) yields
\begin{align*}
\frac{324}{m_n}\sum_{(j,k,l)\in \bar{I}_1(n)}\binom{2n-4}{2j,2k,2l}m_jm_km_lj k l^2=\left(1+o(1)\right)\frac{81c^2}{4\gamma^2\left(\ce-2\right)^2}f(n)^{-2\ce+4}n^{-2}.
\end{align*}
Combining that with (\ref{eq:appendix4}) yields the statement.
\qed

\end{document}